\documentclass[11pt, reqno]{amsart}

\setcounter{secnumdepth}{2}
\setcounter{tocdepth}{1}

\usepackage{amsfonts}
\usepackage{amssymb, amsmath, amsthm, enumitem}
\usepackage[colorlinks=true, linkcolor = blue, citecolor= Green]{hyperref}
\usepackage[alphabetic,lite]{amsrefs}
\usepackage{amscd}   
\usepackage{fullpage}
\usepackage[all]{xy} 
\usepackage{faktor}
\usepackage[utf8x]{inputenc}
\usepackage{verbatim}
\usepackage{mathrsfs}
\usepackage{stmaryrd}
\usepackage[new]{old-arrows}
\DeclareMathAlphabet{\mathpzc}{OT1}{pzc}{m}{it}
\DeclareFontEncoding{OT2}{}{} 
\newcommand{\textcyr}[1]{%
 {\fontencoding{OT2}\fontfamily{wncyr}\fontseries{m}\fontshape{n}\selectfont #1}}
\newcommand{\Sha}{{\mbox{\textcyr{Sh}}}}

\usepackage[usenames,dvipsnames]{color}


\newtheorem{lemma}{Lemma}[section]
\newtheorem{theorem}[lemma]{Theorem}
\newtheorem{proposition}[lemma]{Proposition}

\newtheorem{corollary}[lemma]{Corollary}
\newtheorem{cor}[lemma]{Corollary}

\newtheorem{claim*}{Claim}

\newtheorem{definition}[lemma]{Definition}

\theoremstyle{definition}
\newtheorem{remark}[lemma]{Remark}


\newcommand{\C}{{\mathbb C}}
\newcommand{\F}{{\mathbb F}}
\newcommand{\Q}{{\mathbb Q}}
\newcommand{\R}{{\mathbb R}}
\newcommand{\Z}{{\mathbb Z}}
\newcommand{\NN}{{\mathbb N}}


\newcommand{\calC}{{\mathcal C}}
\newcommand{\calD}{{\mathcal D}}

\newcommand{\calF}{{\mathcal F}}
\newcommand{\calG}{{\mathcal G}}

\newcommand{\calO}{{\mathcal O}}

\newcommand{\calS}{{\mathcal S}}
\newcommand{\calT}{{\mathcal T}}


\newcommand{\frakh}{{\mathfrak h}}

\newcommand{\frakp}{{\mathfrak p}}
\newcommand{\frakq}{{\mathfrak q}}

\newcommand{\frakP}{{\mathfrak P}}

\usepackage[mathscr]{euscript}


\DeclareMathOperator{\Frob}{Frob}
\DeclareMathOperator{\coker}{coker}

\DeclareMathOperator{\Char}{char}

\DeclareMathOperator{\im}{im}

\DeclareMathOperator{\Hom}{Hom}

\DeclareMathOperator{\Aut}{Aut}
\DeclareMathOperator{\Gal}{Gal}
\DeclareMathOperator{\Ind}{Ind}

\DeclareMathOperator{\Res}{Res}

\DeclareMathOperator{\Cl}{Cl}

\DeclareMathOperator{\ord}{ord}

\DeclareMathOperator{\Proj}{Proj}

\DeclareMathOperator{\tor}{tor}

\DeclareMathOperator{\N}{N}

\DeclareMathOperator{\id}{id}

\DeclareMathOperator{\B}{{\mbox{\textcyr{B}}}}
\DeclareMathOperator{\Mod}{Mod}

\DeclareMathOperator{\ad}{ad}

\DeclareMathOperator{\Prob}{Prob}


\newcommand{\hH}{\widehat{H}}

\numberwithin{equation}{section}
\numberwithin{table}{section}


\usepackage{tikz-cd}


\title{Presentations of Galois groups of maximal extensions with restricted ramification}
\author{Yuan Liu}
\address{Department of Mathematics\\
University of Illinois Urbana-Champaign \\ 1409 W Green St \\ Urbana, IL 61801
 USA}  
\email{yyyliu@illinois.edu}

\begin{document}

	\begin{abstract}
		Motivated by the work of Lubotzky, we use Galois cohomology to study the difference between the number of generators and the minimal number of relations in a presentation of $G_S(k)$, the Galois group of the maximal extension of a global field $k$ that is unramified outside a finite set $S$ of places, as $k$ varies among a certain family of extensions of a fixed global field $Q$. 
 		We prove a generalized version of the global Euler--Poincar\'{e} Characteristic
		and define a group $\B_S(k,A)$, for each finite simple $G_S(k)$-module $A$, to generalize the work of Koch and Shafarevich on the pro-$\ell$ completion of $G_S(k)$.
		We prove that $G_S(k)$ always admit a balanced presentation.
		In the setting of the non-abelian Cohen--Lenstra heuristics, we prove that the unramified Galois groups studied by the Liu--Wood--Zureick-Brown conjecture always admit a balanced presentation in the form of the random group in the conjecture. 
	\end{abstract}

	\maketitle

\section{Introduction}

	For a global field $k$ and a set $S$ of primes of $k$, we denote by $G_S(k)$ the Galois group of the maximal extension of $k$ that is unramified outside $S$. 
	Determining whether $G_{\O}(k)$ is finitely generated and finitely presented is a long-existing open question. It is well known by class field theory that the abelianization of $G_{\O}(k)$ is finitely presented and, in particular, is finite when $k$ is a number field. Golod and Shafarevich \cite{Golod-Shafarevich} constructed the first infinite $\ell$-class tower group of a number field, where the $\ell$-class tower group of $k$ is the pro-$\ell$ completion of $G_{\O}(k)$ for a prime number $\ell$. 
	The minimal numbers of generators and relations, which are called the generator rank and relator rank, in presentations of a pro-$\ell$ group is determined by its group cohomology with coefficient $\F_{\ell}$. Using this idea, Koch and Shafarevich \cite{Koch} employed Galois cohomology to give estimations of the generator rank and relator rank of the pro-$\ell$ completion of $G_S(k)$ when $S$ is finite and $\ell \neq \Char(k)$; and in particular, in such cases, the pro-$\ell$ completion of $G_S(k)$ is always finitely presented.
	
	Recently the development on the non-abelian Cohen--Lenstra program pushes us to study canonical quotients of $G_{\O}(k)$ beyond the pro-$\ell$ completion. 
	Let $\Gamma$ be a finite group, $Q$ the global field $\Q$ or $\F_q(t)$, and $\mu(Q)$ the group of roots of unity of $Q$. For a Galois extension $k/Q$ with $\Gal(k/Q) \simeq \Gamma$, define $k^{\#}$ to be the maximal unramified extension of $k$, that is split completely at places of $k$ over $\infty$ and of order relatively prime to $|\mu(Q)||\Gamma|$ and $\Char(Q)$ (if non-zero).  
	In work \cite{LWZB} of Wood, Zureick-Brown and the author, we constructed random group models to make conjectures on the distributions for some families of canonical quotients $\Gal(k^{\#}/k)$ of $G_{\O}(k)$ as $k$ varies among all $\Gamma$-extensions of $Q$ split completely at $\infty$. Because $\Gal(k^{\#}/k)$ has (supernatural) order prime to $|\Gamma|$, a homomorphic split of $\Gal(k^{\#}/Q) \twoheadrightarrow \Gal(k/Q)$ defines by conjugation a continuous $\Gamma$ action on $\Gal(k^{\#}/k)$; and this action satisfies some property which we called \emph{admissible} (see Definition~\ref{def:admissible}). The set of all isomorphism classes of all admissible profinite $\Gamma$-groups are closed under taking $\Gamma$-equivariant quotients, and we can construct \emph{the free admissible profinite $\Gamma$-group $\calF_n(\Gamma)$ on $n$ generators} (see Section~\ref{sect:pres-ad} for its definition).
	For a profinite $\Gamma$-group $G$ and a finite set of $\calC$ of isomorphism classes of finite $\Gamma$-groups, let $G^{\calC}$ denote the \emph{pro-$\calC$ completion} of $G$ with respect to the variety of groups generated by $\calC$ (see Section~\ref{sect:pro-C} for definition).
	The work \cite{LWZB} uses quotients of $\calF_n(\Gamma)$ as $n \to \infty$ to construct a random group model; this model together with the conjectures implies a surprising phenomenon of the structure of $\Gal(k^\#/k)$ that was not known before: for any finite set $\calC$ of finite $\Gamma$-groups, the followings happen to $k$ with probability 1
	\begin{enumerate}
		\item\label{item:phen-1} the pro-$\calC$ completion $\Gal(k^\#/k)^{\calC}$ is a finite group, and moreover,
		\item\label{item:phen-2} there exists some finite integer $n$ such that $\Gal(k^\#/k)^{\calC}$ can be presented as the quotient of $\calF_n(\Gamma)^{\calC}$ by $[r^{-1} \gamma(r)]_{r \in X, \, \gamma \in \Gamma}$ for some subset $X$ of $\calF_n(\Gamma)^{\calC}$ of cardinality $n+1$, where the symbol $[r^{-1} \gamma(r)]_{r \in X, \, \gamma \in \Gamma}$ denotes the $\Gamma$-closed normal subgroup of $\calF_n(\Gamma)^{\calC}$ generated by $r^{-1} \gamma(r)$ for all $r \in X$ and $\gamma \in \Gamma$.
	\end{enumerate}
	The statement in \eqref{item:phen-2} implies that the deficiency (i.e., the difference between the minimal number of generators and the minimal number of relations) of $\Gal(k^{\#}/k)$ has an upper bound depending only on the order of $\Gamma$, if $\Gal(k^{\#}/k)$ is finitely generated. In this paper, we prove that both of \eqref{item:phen-1} and \eqref{item:phen-2} hold for all $\Gamma$-extensions $k/Q$ split completely above $\infty$, which strongly supports that the random group model in \cite{LWZB} is the right object to study.
	
	\begin{theorem}\label{thm:main}
	Let $\Gamma$ be a nontrivial finite group and $Q$ be either $\Q$ or $\F_q(t)$ with $q$ relatively prime to $|\Gamma|$. Let $\calC$ be a finite set of isomorphism classes of finite $\Gamma$-groups all of whose orders are prime to $|\mu(Q)||\Gamma|$ and $\Char(Q)$ (if non-zero). Then for a Galois extension $k/Q$ with Galois group $\Gamma$ that is split completely over $\infty$, we have the following isomorphism of $\Gamma$-groups ($\Gamma$ acts on the left-hand side via $\Gamma\simeq \Gal(k/Q)$)
	\begin{equation}\label{eq:main}
		\Gal(k^\#/k)^{\calC} \simeq \faktor{\calF_n(\Gamma)^{\calC}}{[r^{-1}\gamma(r)]_{r\in X , \gamma \in \Gamma}} 
	\end{equation}
	for some positive integer $n$ and some set $X$ consisting of $n+1$ elements of $\calF_n(\Gamma)^{\calC}$.
\end{theorem}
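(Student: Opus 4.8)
The plan is to realize $G_{\O,\infty}(k)^{\calC}$ as a quotient of a free admissible $\Gamma$-group in the pro-$\calC$ category, and then to count generators and relations using the generalized Euler--Poincar\'e characteristic proved earlier. First I would record that, since every group in $\calC$ has order prime to $|\mu(Q)||\Gamma|$ and $\Char(Q)$, the arithmetic definition of $k^{\#}$ forces $G_{\O,\infty}(k)^{\calC}=\Gal(k^{\#}/k)^{\calC}$, so we may work with $G:=\Gal(k^{\#}/k)$, which by the cited results of \cite{LWZB} is an admissible $\Gamma$-group satisfying Property E. Admissibility gives a minimal number of generators $n$ of $G^{\calC}$ as a $\Gamma$-group; choosing a minimal generating set produces a surjection $\calF_n(\Gamma)^{\calC}\twoheadrightarrow G^{\calC}$ whose kernel $R$ is a closed normal $\Gamma$-subgroup. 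The content to extract is then: (i) $R$ is generated, as a closed normal $\Gamma$-subgroup, by $n+1$ elements; and (ii) after composing with $\Gamma$-equivariance one can actually take those $n+1$ normal generators to be of the special "twisted-commutator" form $r^{-1}\gamma(r)$, i.e. to lie in $X$ for a set $X$ of $n+1$ elements of $\calF_n(\Gamma)^{\calC}$.

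The mechanism for (i) is cohomological. Working one simple $G_S(k)$-module (equivalently finite simple $\Gamma$-module, twisted through $G$) at a time, the minimal number of $\Gamma$-normal generators of $R$ needed to hit a given simple quotient $A$ of $R/[R,R]$ is controlled by $\dim \HH^2$ of the relevant group with coefficients in $A$, while the number of generators is controlled by $\dim \HH^1$; the group $\B_S(k,A)$ introduced in the paper measures exactly the defect coming from the splitting condition at $\infty$ and from the $\Gamma$-action, i.e. the difference between "all of $\HH^1$" and "the part visible to the $\Gamma$-free presentation." The generalized global Euler--Poincar\'e characteristic then converts the alternating sum $\dim\HH^0-\dim\HH^1+\dim\HH^2$ into local terms at $S\cup\{\infty\}$ plus a contribution of the unit group / the module $\mu$; because $k^{\#}$ is taken totally split at $\infty$ and of order prime to $|\mu(Q)||\Gamma|$, all the "bad" local and unit contributions either vanish or are absorbed into the $\Gamma$-module structure, and one is left with precisely a surplus of $1$ (per $\Gamma$-orbit of simple modules) of relations over generators. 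This is where Property E is used: it guarantees the relevant $\HH^2$ is not larger than this count, so that $n+1$ normal generators suffice for \emph{every} simple module simultaneously, hence for $R$ itself after passing to the pro-$\calC$ completion (which only involves the finitely many simple modules occurring in groups in $\calC$).

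For (ii), the point is that $\Gal(k^{\#}/k)$ is a $\Gamma$-group, not merely a group, and a presentation of it as a $\Gamma$-group should only involve relations that are themselves $\Gamma$-equivariant in nature. Concretely, I would compare the $\Gamma$-free presentation $\calF_n(\Gamma)^{\calC}$ with the ordinary free pro-$\calC$ presentation on the underlying $n|\Gamma|$ (or $n(|\Gamma|-1)$, after the admissibility normalization) generators: the kernel of $\calF_n(\Gamma)^{\calC}\to G^{\calC}$ decomposes as the relations already built into the free admissible object plus the "new" relations, and the new relations can be organized so that each orbit contributes a single element, which by a standard manipulation can be rewritten in the form $r^{-1}\gamma(r)$ for suitable $r\in\calF_n(\Gamma)^{\calC}$ and $\gamma\in\Gamma$ — this is exactly the shape appearing in the random group \eqref{eq:LWZB-rgm}. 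So the set $X$ of size $n+1$ is produced by choosing one lift $r$ for each of the $n+1$ orbit-relations.

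The main obstacle I expect is step (i): pinning down that the relation module needs exactly $n+1$ (and not $n$ or $n+2$) normal generators uniformly over all simple modules. The inequality "$\le n+1$" is the hard direction and is precisely where the generalized Euler--Poincar\'e formula, the vanishing of the $\infty$-local terms from the total-splitting hypothesis, and Property E must be combined; getting the bookkeeping of the $\Gamma$-module twists right — in particular ensuring the "$+1$" is a single global contribution and does not get multiplied by the number of orbits — is the delicate part. The matching lower bound "$\ge n+1$" should follow from the structure of $\B_S(k,A)$ together with the fact that $\Gal(k^{\#}/k)^{\calC}$ is genuinely infinite-dimensional-looking at the level of $\HH^2$ exactly by the amount predicted, i.e. from Property E being an equality rather than a strict inequality in the relevant range.
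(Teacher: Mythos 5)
Your high-level strategy --- bound the number of relations via $\dim H^2-\dim H^1$, compute that difference with the generalized Euler--Poincar\'e characteristic and the groups $\B_S(k,A)$, and conclude that $n+1$ relations of the form $r^{-1}\gamma(r)$ suffice --- is indeed the paper's strategy, but two essential steps are missing or wrong. First, the cohomological bound you invoke lives on $G_{\O}(k)$, which is not known to be finitely generated (in the number field case), while the multiplicity formula of Proposition~\ref{prop:d'-cohom} applies to a finitely generated group being presented. You cannot simply bound the multiplicity of $A$ in a presentation of the finite quotient $G_{\O}(k)^{\calC}$ by the cohomology of the much larger group $G_{\O}(k)$: the paper has to interpose, for each fixed simple module $A$, a finitely generated quotient $G$ of $G_{\O}(k)$ built as an inverse limit of completely nonsplit extensions by powers of $A$ (\S\ref{ss:const-G}), whose key property $\Hom_G((\ker\varphi)^{ab},A)=0$ is exactly what makes the inflation--restriction sequence give $\dim H^2(G,A)^\Gamma-\dim H^1(G,A)^\Gamma\leq \delta_{k/Q,\O}(A)$ (Lemma~\ref{lem:delta-upper}). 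Your proposal has no substitute for this device. Relatedly, you use Property E as an \emph{input} ("Property E is used: it guarantees the relevant $H^2$ is not larger..."), whereas in the paper Property E is a \emph{consequence} of the multiplicity computation (Remark~\ref{rmk:property-E}); assuming it would be circular relative to the paper's logic, and in any case it does not by itself give the quantitative bound you need.

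Second, your step (ii) --- that the $n+1$ normal generators "can be rewritten in the form $r^{-1}\gamma(r)$ by a standard manipulation" --- is not correct as stated. Relations of the form $Y(r)=\{r^{-1}\gamma(r)\}_{\gamma}$ are strictly weaker than arbitrary normal generators: the image of $Y$ in a simple module $A$ has size $|A|/|A^{\Gamma}|$, not $|A|$, so one must prove the sharper bound
\[
m_{\ad}^{\calC}(n,\Gamma,G_{\O,\infty}(k)^{\calC},A)\;\leq\;\frac{(n+1)\bigl(\dim_{\F_\ell}A-\dim_{\F_\ell}A^{\Gamma}\bigr)}{h_{G_{\O,\infty}(k)^{\calC}\rtimes\Gamma}(A)},
\]
which requires the admissible-presentation correction of Corollary~\ref{cor:ad-mult-compute} (subtracting $n\dim A^{\Gamma}/h$), and then conclude via the positive-probability counting argument of \cite[Proposition~4.3]{LWZB} that the $Y$-values of $n+1$ random elements normally generate the relation module. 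Your accounting of "a surplus of $1$ per orbit" never produces the $\dim A^{\Gamma}$ correction, and without it the threshold for $n+1$ elements of the special form to suffice is not met. Finally, in the function field case the $+1$ arises from killing the Frobenius at a place over $\infty$ (Lemma~\ref{lem:mult-varpi}), a step absent from your outline.
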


	Let $G_{\O, \infty}(k)$ denote the Galois group of the maximal unramified extension of $k$ that is completely split at every place above $\infty$, and note that with the assumptions in Theorem~\ref{thm:main} one have $\Gal(k^{\#}/k)^{\calC}=G_{\O, \infty}(k)^{\calC}$. The method we develop in this paper in fact works for $G_S(k)^{\calC}$ for any finite set $S$ of primes of $k$ and any global base field $Q$, so it can be used to study the presentation of Galois groups with restricted ramification. 
	In the case that $k$ is a function field and $\Gamma=1$ (so $k=Q$), building on the theorem of Lubotzky \cite{Lubotzky}, Shusterman \cite{Shusterman} showed that $G_{\O}(k)$ admits a finite presentation in which the number of relations is exactly the same as the number of generators (such a presentation is called a balanced presentation). Note that, in \cite{Shusterman}, the fact that $G_{\O}(k)$ is finitely generated follows by the Grothendieck's result of the geometric fundamental group of a smooth projective curve defined over a finite field, but when $k$ is a number field, whether $G_{\O}(k)$ is finitely generated or not is unknown. We prove an analogous result regarding the number field case.

	\begin{theorem}\label{thm:fin-pres}
		Let $k$ be a number field and $S$ a finite set of places of $k$. If $G_S(k)$ is topologically generated by $n$ elements, then it admits a finite presentation on $n$ generators and $[k:\Q]+n$ relations.  
	\end{theorem}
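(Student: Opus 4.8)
The plan is to bound the minimal number of relations in a presentation of $G_S(k)$ by cohomological means, following the strategy laid out in Sections~\ref{sect:pre-Gamma} and \ref{sect:compute-delta} with $\Gamma = 1$. Recall that for a presentation $F_n \twoheadrightarrow G_S(k)$ by a free profinite group on $n$ generators, the minimal number of relations (in the relevant variety, or after pro-$\ell$ completion for each $\ell$) is governed by the multiplicities of the finite simple $G_S(k)$-modules appearing in the relation module, and by Lemma~\ref{lem:d-cohom} (with $\Gamma=1$) each such multiplicity for a simple module $A$ of characteristic $\ell$ is controlled by $\dim_{\F_\ell} H^2(G_S(k), A) - \dim_{\F_\ell} H^1(G_S(k), A) + (\text{a term involving } \dim H^0 \text{ and } n)$. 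So the whole theorem reduces to the estimate
\[
	\delta_{k/\Q, S}(A) = \dim_{\F_\ell} H^2(G_S(k), A) - \dim_{\F_\ell} H^1(G_S(k), A) \le [k:\Q]
\]
for every prime $\ell$ and every finite simple $G_S(k)$-module $A$ of exponent $\ell$, uniformly in $A$, together with the bookkeeping that turns this uniform bound on all the $\delta$'s into the single bound $[k:\Q] + n$ on the total number of relations.

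First I would reduce to the case where $S$ contains all archimedean places and all places above $\ell$: enlarging $S$ to such an $S' \supseteq S$ only changes $G_S(k)$ to a quotient $G_{S'}(k)$, and a presentation of $G_{S'}(k)$ together with the extra relations killing the larger group gives a presentation of $G_S(k)$; but more efficiently, one runs the argument of Section~\ref{sect:RussianB} to pass between $S$ and $S'$ while keeping track of the local contributions, which only ever improve the bound since removing a nonarchimedean prime from $S$ can only decrease $\dim H^2 - \dim H^1$ (the group $\B_S(k,A)$ and the local Euler characteristic terms at such primes are nonnegative). So it suffices to treat $S \supseteq S_\infty \cup S_\ell$. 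In that case I invoke the generalized global Euler--Poincar\'e characteristic formula, Theorem~\ref{thm:EPChar} (with $\Gamma = 1$, where it specializes to the classical formula of Tate), which computes $\dim H^0 - \dim H^1 + \dim H^2$ as $-\sum_{v \mid \infty} \dim H^0(k_v, A) + \sum_{v \mid \ell}$ (local terms), and combine it with the fact that $H^2(G_S(k), A)$ is, up to the Shafarevich group $\Sha^2_S(k,A)$ and a dual $H^0$ term, controlled by local $H^0$'s at the archimedean places via Poitou--Tate duality.

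The estimate then comes down to: the contribution of the finite places above $\ell$ to $\dim H^1 - \dim H^2$ is exactly $\sum_{v \mid \ell} [k_v : \Q_\ell] \dim_{\F_\ell} A$ by the local Euler characteristic formula, while the archimedean places contribute at most $\sum_{v \text{ real}} \dim A = r_1 \dim A \le [k:\Q]\dim A$ worth of $H^1$, and the $H^2$ term is bounded below by $0$; carefully combining these — and cancelling the $\sum_{v\mid \ell}[k_v:\Q_\ell]\dim A = [k:\Q]\dim A$ that appears on both sides — leaves $\delta_{k/\Q,S}(A) \le r_1 \dim_{\F_\ell} A \le [k:\Q] \dim_{\F_\ell} A$; dividing by $\dim_{\F_\ell} A$ (since a simple module of characteristic $\ell$ contributes its multiplicity weighted by $\dim A$ to the count of relations, or rather, the relevant per-module bound is $\delta(A)/\dim A \le [k:\Q]$ after accounting for endomorphisms — this is the point where one must be slightly careful about $\End_{G_S(k)}(A)$, but for the number-field case with $\Gamma=1$ one can afford the crude bound) gives the per-module bound $[k:\Q]$, and feeding this into the presentation-counting setup of Proposition~\ref{prop:min-gen} (specialized to $\Gamma = 1$) yields at most $[k:\Q] + n$ relations.

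The main obstacle I anticipate is the same one flagged in the introduction for $\B_S(k,A)$: the Hasse principle can fail, so $\Sha^1(k,A)$ and $\Sha^2_S(k,A)$ need not vanish and the clean duality statements used in Koch's original treatment of the trivial module do not transfer verbatim. The work of Section~\ref{sect:RussianB} is designed precisely to control $\B_S(k,A)$ in the presence of this failure, and the delicate point is to check that the potentially nonzero $\Sha$ groups do not push $\dim H^2 - \dim H^1$ above $[k:\Q]$ — concretely, that the embedding-or-not of $\Sha^2_S(k,A)$ into $\B_S(k,A)$ (Proposition~\ref{prop:ShainB} and Remark~\ref{rmk:ShainB}) still yields a bound of the right shape. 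Once the cohomological estimate $\delta_{k/\Q,S}(A) \le [k:\Q]$ is in hand uniformly over all simple $A$ and all $\ell \nmid \Char Q$ (which is automatic here), the finite presentability itself follows from the finiteness results of Section~\ref{sect:height} (Theorem~\ref{thm:finite-GC} and Proposition~\ref{prop:C-finite-h}) applied to the relevant varieties, so that the total relation count is genuinely finite and bounded by $[k:\Q] + n$.
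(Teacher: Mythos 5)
Your proposal follows essentially the same route as the paper: with $\Gamma=1$, reduce via Proposition~\ref{prop:min-gen} to the uniform bound $\delta_{k/k,S}(A)\le [k:\Q]\dim_{\F_\ell}A$ for every finite simple $G_S(k)$-module $A$, prove it by Theorem~\ref{thm:EPChar} (i.e.\ Proposition~\ref{prop:EPChar-delta}) when $S\supseteq S_\infty\cup S_\ell$, and otherwise pass to $T=S\cup S_\infty\cup S_\ell$ using the $\B_S$ machinery, which is exactly Proposition~\ref{prop:nf-delta}.

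Two points in your accounting are misstated, though they do not affect the conclusion. First, dropping a nonarchimedean prime from $S$ does \emph{not} always decrease $\dim H^2-\dim H^1$: for primes above $\ell$ the local Euler characteristic is nontrivial, and this is precisely the source of the $[k:\Q]$ in the theorem, via the term $\epsilon_{k/k,S}(A)=\sum_{v\in I}[k_v:\Q_\ell]\dim_{\F_\ell}A\le [k:\Q]\dim_{\F_\ell}A$ of Proposition~\ref{prop:nf-delta}; the archimedean places contribute only through $\log_\ell\chi_{k/k,T}(A)\le 0$. Second, your intermediate claim $\delta_{k/k,S}(A)\le r_1\dim_{\F_\ell}A$ is false in general (for $k$ imaginary quadratic, $S=\O$ and $A=\mu_\ell$ one can have $\delta$ as large as $2\dim_{\F_\ell}A$ while $r_1=0$); only the weaker bound $[k:\Q]\dim_{\F_\ell}A$, which is what you actually feed into Proposition~\ref{prop:min-gen}, is correct. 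One must also check, as the paper does, that the middle term $\dim_{\F_\ell}(A')^{G_T(k)}-\dim_{\F_\ell}A^{G_S(k)}$ in Proposition~\ref{prop:nf-delta} is nonpositive, which your sketch leaves implicit.
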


	We also apply our methods to the situations that are not considered in Theorem~\ref{thm:main}. We study the presentation of the pro-$\ell$ completion of $G_{\O, \infty}(k)$ for a Galois $\Gamma$-extension $k/Q$ in two exceptional cases: 
	\begin{enumerate}[label=(\roman*)]
		\item\label{item:exp-1} $Q$ is a number field not containing the $\ell$-th roots of unity and we do not make any assumptions on the ramification of $\infty$ in $k$ (Section~\ref{ss:other-sgn}); and 
		\item\label{item:exp-2} $Q$ is a global field containing the $\ell$-th roots of unity (Section~\ref{ss:rootsof1}). 
	\end{enumerate}
	When considering the $\ell$-parts of class groups, it is known for a long time that the Cohen--Lenstra heuristics need to be corrected in these two cases (see \cite{Cohen-Martinet, Malle2010}). 
	In each of these two cases, we use our method to compute an upper bound for the deficiency of $G_{\O, \infty}(k)$ at the pro-$\ell$ level, and then show why the conjecture of Liu–Wood–Zureick-Brown doesn’t work in these two exceptional cases. This computation of deficiencies also provides insights of how the random group model should be modified in these two cases.

\subsection{Method of the proof}

	The bulk of this paper is devoted to establishing the techniques for proving Theorem~\ref{thm:main}. Motivated by the work of Lubotzky \cite{Lubotzky}, we first translate the question to understanding the Galois cohomology groups. In Section~\ref{sect:pre-Gamma}, we construct the \emph{free profinite $\Gamma$-group $F_n(\Gamma)$ on $n$ generators}, and, for a finitely generated profinite $\Gamma$-group $G$, we study the minimal number of relations of a presentation defined by a $\Gamma$-equivariant surjection $\pi: F_n(\Gamma) \twoheadrightarrow G$. The minimal number of relations is closely related to the multiplicities of the finite irreducible $G\rtimes \Gamma$-modules appearing as quotients of $\ker(\pi)$ (Definition~\ref{def:mult-general}). In Lemma~\ref{lem:d-cohom}, we show that for such a module $A$ with $\gcd(|A|, |\Gamma|)=1$, the multiplicity of $A$ can be computed by a formula involving $\dim H^2(G \rtimes \Gamma, A)-\dim H^1(G\rtimes \Gamma, A)$. So when restricted to the category of profinite $\Gamma$-groups whose order is prime to $|\Gamma|$, by using these multiplicites, we obtain formulas for the minimal number of relations of the presentation $F_n'(\Gamma)\twoheadrightarrow G'$, where $F_n'(\Gamma)$ and $G'$ are the maximal pro-prime-to-$|\Gamma|$ quotients of $F_n(\Gamma)$ and $G$ respectively (Propositions~\ref{prop:d'-cohom} and \ref{prop:min-gen}). In particular, the formulas provide an upper bound for the minimal number of relations of this presentation using $\dim H^2(G, A)^{\Gamma}-\dim H^1(G, A)^{\Gamma}$, where $\Gamma$ acts on the cohomolgy groups by conjugation. These upper bound formulas set up the strategy of the proof of Theorem~\ref{thm:main}. Building upon it, we explore the multiplicities of \emph{admissible presentations} $\calF_n(\Gamma) \twoheadrightarrow G$ in Section~\ref{sect:pres-ad} and the multiplicities of \emph{pro-$\calC$ presentations} in Section~\ref{sect:pro-C}, where we obtain formulas that will be directly applied to the proof of Theorem~\ref{thm:main}. Then in Section~\ref{sect:height}, we define \emph{the height of a group} and show in Proposition~\ref{prop:C-finite-h} that there is an upper bound for the heights of pro-$\calC$ groups (not necessarily finitely generated) when $\calC$ is a finite set. Then Theorem~\ref{thm:finite-GC} proves the finiteness of $G_S(k)^{\calC}$ when $S$ is a finite set of primes of $k$ and $\calC$ is a finite set of finite groups, which confirms the phenomenon \eqref{item:phen-1}.

	Therefore, in order to prove Theorem~\ref{thm:main}, we need to deal with the Galois cohomology groups. In a more general setting, assuming that $Q$ is an arbitrary global field, that $k/Q$ is a Galois extension with $\Gal(k/Q)\simeq \Gamma$, and that $S$ is a finite set of primes of $k$, we want to understand 
	\begin{equation}\label{eq:deficient-cohom}
		\delta_{k/Q,S}(A):=\dim_{\F_\ell} H^2(G_S(k), A)^{\Gamma} - \dim_{\F_\ell} H^1(G_S(k), A)^{\Gamma},
	\end{equation}
	for all prime integers $\ell$ relatively prime to $|\Gamma|$ and $\Char(Q)$, and for all finite simple $\F_{\ell}[\Gal(k_S/Q)]$-modules $A$.
	In \eqref{eq:deficient-cohom}, the set $S$ needs to contain enough primes to ensure that $k_S/Q$ is Galois (see the definition of the \emph{$k/Q$-closed} sets in Section~\ref{sect:notation}), and the $\Gamma$ action on the cohomology groups is defined via the conjugation by $\Gal(k/Q)$. In Section~\ref{sect:Euler-Poincare}, we prove a generalized version of the Global Euler-Poincar\'{e} Characteristic formula (Theorem~\ref{thm:EPChar}), from which we can compute $\delta_{k/Q,S}$ when $S$ is nonempty and contains the primes above $\infty$ and $\ell$ if $Q$ is a number field. The proof basically follows the original proof of the Global Euler-Poincar\'{e} Characteristic formula, but taking the $\Gamma$ actions into account creates many technical difficulties. 

	In the work of Koch, when dealing with the case that $A=\F_\ell$ and $S$ does not satisfy the assumptions in Theorem~\ref{thm:EPChar}, the abelian group $\B_S(k)$ plays an important role in the computation of $\dim_{\F_\ell} H^i(G_S(k), \F_{\ell})$ for $i=1,2$, and is defined to be the Pontryagin dual of the Kummer group
	\[
		V_S(k) = \ker \left( k^{\times}/k^{\times \ell} \longrightarrow \prod_{\frakp \in S} k_{\frakp}^{\times} /k_{\frakp}^{\times \ell} \times  \prod_{\frakp \not \in S} k_{\frakp}^{\times} / U_{\frakp} k_{\frakp}^{\times \ell} \right),
	\]
	where $k_{\frakp}$ is the completion of $k$ at $\frakp$ and $U_{\frakp}$ is the group of units of $k_{\frakp}$. In Definition~\ref{def:RB}, we define a group $\B_S(k, A)$ in a cohomological way as 
	\[
		\coker\left( \prod_{\frakp \in S} H^1(k_{\frakp}, A) \times \prod_{\frakp \not \in S} H_{nr}^1 (k_{\frakp}, A) \to H^1(k, A')^{\vee}\right),
	\]
	in order to generalize Koch's work to compute $\delta_{k/Q, S}(A)$ by replacing the trivial module $\F_\ell$ with an arbitrary finite simple module $A$. The definition of $\B_S(k,A)$ agrees with the one of $\B_S(k)$ when $A=\F_\ell$ (Proposition~\ref{prop:RB-trivial-mod}). However, Koch's argument does not directly apply to $\B_S(k,A)$, because the Hasse principle does not always hold for an arbitrary Galois module $A$ (that is, the Shafarevich group $\Sha^1(k,A)$ might be nontrivial). 
	In Section~\ref{sect:RussianB}, we modify Koch's work to overcome this obstacle, and show that most properties of $\B_S(k)$ also hold for $\B_S(k,A)$. In particular, one example, clearly showing that the failure of the Hasse principle makes a difference, is that there is a natural embedding $\Sha^2_S(k, A) \hookrightarrow \B_S(k,A)$ for $A=\F_\ell$ but not for arbitrary $A$ (Proposition~\ref{prop:ShainB} and Remark~\ref{rmk:ShainB}). In Section~\ref{sect:compute-delta}, we explicitly compute $\delta_{k/Q, S}(A)$ for all $S$ by applying the results from Sections~\ref{sect:Euler-Poincare} and \ref{sect:RussianB}, and then we prove Theorem~\ref{thm:fin-pres}. In Section~\ref{sect:proof-main}, we give the proof of Theorem~\ref{thm:main}.
	Finally, in Section~\ref{sect:exceptional}, we apply our methods to the exceptional cases \ref{item:exp-1} and \ref{item:exp-2} of Theorem~\ref{thm:main}.
	
\subsection{Previous works}
	
		For an odd prime $\ell$, the Cohen--Lenstra heuristics \cite{Cohen-Lenstra} give predictions of the distribution of $\ell$-primary parts of the class groups $\Cl(k)$ as $k$ varies over quadratic number fields. Later, Friedman and Washington \cite{Friedman-Washington} formulated an analogous conjecture for global function fields. The probability measure used for the conjectural distributions in the Cohen--Lenstra heuristics matches the one defined by the random abelian group
	\begin{equation}\label{eq:abelian-random}
		\lim_{n\to \infty} \Z_{\ell}^{\oplus n} / n+u \text{ random relations}
	\end{equation}
	where the random relations are taken with respect to the Haar measure, and $u$ is chosen to be $0$ and $1$ respectively when $k$ varies among imaginary quadratic fields and real quadratic fields. Ellenberg and Venkatesh \cite{Ellenberg-Venkatesh-ICM} theoretically explained the random group model \eqref{eq:abelian-random} and the value of $u$, by viewing $\Cl(k)$ as the cokernel of the map sending the $S$-units of $k$ to the group of fractional ideals of $k$ generated by $S$ with $S$ running along an ascending sequence of finite sets of primes of $k$. Boston, Bush and Hajir \cite{BBH-imaginary, BBH-real} extended the Cohen--Lenstra heuristics to a non-abelian setting considering the distribution of $\ell$-class tower groups (for odd $\ell$). In their work, the probability measure in the heuristics is defined by a random pro-$\ell$ group generalizing \eqref{eq:abelian-random}, and the value of $u$ (which is the deficiency in this setting) is obtained by applying Koch's argument. Notably, the moment versions of the function field analogs of the Cohen--Lenstra heuristics and the Boston--Bush--Hajir heuristics are both proven, see \cite{EVW, Boston-Wood}. In the work \cite{LWZB} of Wood, Zureick-Brown and the author, we constructed the random $\Gamma$-group
	\begin{equation}\label{eq:LWZB-rgm}
		\lim_{n\to \infty} \faktor{\calF_n(\Gamma)}{[r^{-1} \gamma(r)]_{r\in X, \gamma\in \Gamma}},
	\end{equation}
	where $X$ is a set of $n+u$ random elements of $\calF_n(\Gamma)$. 
	We showed that the moment proven in the function field case matches the moment of the probability measure defined by \eqref{eq:LWZB-rgm} exactly when $u=1$. With this evidence, we conjectured that the random group \eqref{eq:LWZB-rgm} gives the distribution of $\Gal(k^{\#}/k)$ in both the function field case and the number field case. Theorem~\ref{thm:main} explains the theoretical reason behind $u=1$ in the Liu--Wood--Zureick-Brown conjecture. 
	
		Regarding the exceptional case \ref{item:exp-1}, Cohen and Martinet \cite{Cohen-Martinet} provided a modification for the case that $Q=\Q$ and $k/Q$ varies among $\Gamma$-extensions with a fixed signature. Wang and Wood \cite{Wang-Wood19} proved some results about the probability measures described in the Cohen--Martinet heuristics. From these works, one can see that the decomposition subgroup $\Gamma_\infty$ at $\infty$ of $k/\Q$ crucially affects the probability measures. In Lemma~\ref{lem:other-sgn-mult}, we explicitly compute the upper bounds of multiplicities in a pro-$\ell$ admissible $\Gamma$-presentation of $G_{\O}(k)(\ell)$, which shows how the multiplicities are determined by $\Gamma_\infty$. Then in Corollary~\ref{cor:imag-quad} and Remark~\ref{rmk:imag-quad}, we prove that, when $k/\Q$ is an imaginary quadratic field, $G_{\O}(k)(\ell)$ can be achieved by a random group model which defines a probability measure agreeing with the Boston--Bush--Hajir heuristics. 
		
	For the exceptional case \ref{item:exp-2}, when the base field $Q$ contains the $\ell$-th roots of unity, we give upper bounds for multiplicities in Lemma~\ref{lem:rootsof1} and Corollary~\ref{cor:mult-roots}, which suggests that the distributions of $G_{\O, \infty}(k)(\ell)$ should be different between the function field case and the number field case (Remark~\ref{rmk:roots}\eqref{item:roots-2}). This difference is not surprising, as Malle observed in \cite{Malle2010} that his conjecture regarding the class groups of number fields does not easily match the result for function fields. So the upper bounds obtained in Corollary~\ref{cor:mult-roots} supports Malle's observation. The phenomenon related to the presence of the roots of unity has been numerically computed in \cite{Malle2008, Malle2010}, and the random matrices in this setting and their relation with function field counting has been studied in \cite{Katz-Sarnak1999, Achter2006, Achter2008, Garton, Adam-Malle}. A correction for roots of unity, provided with empirical evidence, is presented in \cite{Wood-nonab}.

\subsection{Other applications and further questions}

	We expect that the techniques established in this paper will have many interesting and important applications. For example, the author applies the results in this paper to the following work. In \cite{Liu-rou}, the author studies the exceptional case \ref{item:exp-2}, where the moment conjecture in the number field case is inspired by the computation of $\delta_{k/Q, \O}(A)$ similar to Section~\ref{ss:rootsof1}.
	In \cite{Liu-pgp}, the author uses the abelian group $\B_S(k,A)$ to study the embedding problems with restricted ramification, which will be crucial for the forthcoming work on the generalized Cohen--Lenstra--Martinet--Gerth conjectures.

	There are many further questions we would like to understand. First, the techniques in this paper work for any finite set $S$ of primes. So we would like to ask whether the random group models (in the abelian, pro-$\ell$ and pro-$\calC$ versions) can also be applied to predict the distributions of $G_S(k)$ as $k/Q$ varies among certain families of $\Gamma$-extensions.
	Secondly, the group $\B_S(k,A)$, which is the generalization of $\B_S(k)$ that we construct in Section~\ref{sect:RussianB}, has its own interest, because it could be applied to extend our knowledge of $G_S(k)$ from the pro-$\ell$ completion to the whole group, and moreover, it bounds the Shafarevich group via (see Proposition~\ref{prop:ShainB})
	\begin{equation}\label{eq:ShaB}
		\#\Sha^2_S(k,A) \leq \# \B_S(k,A).
	\end{equation}
	We emphasis here that understanding when $\#\Sha_{\O}^2(k,A)=\#\B_{\O}(k)$ holds can help us determine whether our upper bound of multiplicities is sharp or not (see how the inequality \eqref{eq:ShaB} is used in the proof of Proposition~\ref{prop:nf-delta}). 
	Last but not least, the techniques established in Sections~\ref{sect:pre-Gamma}, \ref{sect:pres-ad} and \ref{sect:pro-C}, which use group cohomology to understand the presentation of a $\Gamma$-group, are purely group theoretical and independent of the number theory background, so we hope that they could have other interesting applications.
	
	In this paper, we only study the maximal prime-to-$|\Gamma|$ quotient of $G_{\O,\infty}(k)$ for a Galois $\Gamma$-extension $k/Q$, and one can see that this ``prime-to-$|\Gamma|$'' requirement is necessary in almost every crucial step. We would like to ask if the ideas of this paper can be generalized to the $|\Gamma|$-part of $G_{\O,\infty}(k)$ too.

\subsection*{Acknowledgements} 
I would like to thank Nigel Boston and Melanie Matchett Wood for helpful conversations and encouragement which inspire the author to work on the questions studied in this paper. I also thank Nigel Boston, Yufan Luo, Mark Shusterman, Preston Wake and Melanie Matchett Wood for comments on and corrections to an early draft of the paper. I am partially supported by NSF Grant DMS-2200541.


\section{Notation and Preliminary}\label{sect:notation}

\subsection{Profinite groups and modules}

	In this paper, groups are always profinite groups and subgroups are always closed subgroups. For a group $G$, a \emph{$G$-group} is a group with a continuous $G$ action.
	If $x_1, \cdots$ are elements of a $G$-group $H$, we write $[x_1, \cdots]$ for the closed normal $G$-subgroup of $H$ topologically generated by $x_1, \cdots$. If $H$ is a $G$-group, then we write $H\rtimes G$ for the semidirect product induced by the $G$ action on $H$, and its multiplication rule is given by $(h_1, g_1)(h_2,g_2)=(h_1 g_1(h_2),g_1g_2)$ for $h_1, h_2 \in H$ and $g_1, g_2 \in G$.
	\emph{Morphisms of $G$-groups} are $G$-equivariant group homomorphisms. 
	We write $\simeq_G$ to represent isomorphism of $G$-groups, write $\Hom_G$ to represent the set of $G$-equivariant homomophisms, and define $G$-subgroup and $G$-quotient accordingly. 
	We say that a set of elements of a $G$-group $H$ \emph{$G$-generates} $H$ if $H$ is the smallest closed $G$-subgroup containing this set.
	We say that $H$ is an \emph{irreducible} $G$-group if it is a nontrivial $G$-group and has no proper, nontrivial $G$-subgroups. For a positive integer $n$, a \emph{pro-$n'$ group} is a group such that every finite quotient has order relatively prime to $n$. The \emph{pro-$n'$ completion of $G$} is the inverse limit of all pro-$n'$ quotients of $G$. 
	For a prime $\ell$, we denote the pro-$\ell$ and the pro-$\ell'$ completions of $G$ by $G(\ell)$ and $G(\ell')$ respectively.

	For a group $G$ and a commutative ring $R$, we denote by $R[G]$ the completed $R$-group ring of $G$.
	We use the following notation of $G$-modules
	\begin{eqnarray*}
		\Mod(G)&=&\text{ the category of isomorphism classes of finite $G$-modules,}\\
		\Mod(R[G])&=&\text{ the category of isomorphism classes of finite $R[G]$-modules, and}\\
		\Mod_{n} (G) &=&\text{ the category of isomorphism classes of finite $\Z/n\Z[G]$-modules.}
	\end{eqnarray*}
	For a prime integer $\ell$ and a finite $\F_{\ell}[G]$-module $A$, we define $h_G(A)$ to be the $\F_{\ell}$-dimension of $\Hom_G(A,A)$.
	We consider the \emph{Grothendieck group $K'_0(R[G])$}, which is the abelian group generated by the set $\{[A]\mid A\in \Mod(R[G])\}$ and the relations
	\[
		[A]-[B]+[C]=0
	\]
	arising from each exact sequence $0\to A\to B\to C \to 0$ of modules in $\Mod(R[G])$. For $A, B\in \Mod(R[G])$, the tensor product $A\otimes_{R} B$ endowed with the diagonal action of $G$ is an element of $\Mod(R[G])$. Then $K'_0(R[G])$ becomes a ring by linear extensions of the product $[A][B]=[A\otimes_{R} B]$. If $H$ is a subgroup of $G$, then the action of taking induced modules $\Ind_G^H$ defines a map from $K'_0(R[H])$ to $K'_0(R[G])$, which we will also denote by $\Ind_G^H$.
	
	Let $\ell$ denote a prime integer. If $H$ is a pro-$\ell'$ subgroup of $G$, then it follows by the Schur--Zassenhaus theorem that $H^1(H, A)=0$ for any $A\in \Mod_{\ell}(G)$, and hence taking the $H$-invariants is an exact functor on $\Mod_{\ell}(G)$. Moreover, when $G$ is a pro-$\ell'$ group, $\Mod_\ell(G)$ is the free abelian group generated by the isomorphism classes of finite simple $\F_{\ell}[G]$-modules, and elements $[A]$ and $[B]$ of $K_0'(\F_{\ell}[G])$ are equal if and only if $A$ and $B$ are isomorphic as $\F_\ell[G]$-modules.
	For an abelian group $A$, we let $A^{\vee}$ denote the Pontryagin dual of $A$.

\subsection{Galois groups and Galois cohomology}

For a field $k$, we write $\overline{k}$ for a fixed choice of separable closure of $k$, and write $G_k$ for the absolute Galois group $\Gal(\overline{k}/k)$. For a finite $G_k$-module $A$, we denote $A'=\Hom(A, \overline{k}^{\times})$. Let $k/Q$ be a finite Galois extension of global fields. When $v$ is a prime of the field $Q$, we define $S_{v}(k)$ to be the set of all primes of $k$ lying above $v$. Note that the function field $\F_q(t)$ has an infinite place defined by the valuation $|\cdot|_{\infty}:=q^{\deg(\cdot)}$, but this $\infty$ place is nonarchemedean. 
We define $S_{\infty}(k)$ to be the set of all archemedean places of $k$, so it is the empty set if $k$ is a function field. For a number field $k$, we let $S_{\R}(k)$ and $S_{\C}(k)$ denote the set of all real archimedean places and the set of all imaginary archimedean places of $k$ respectively.
We let $G_{\O, \infty}(k)$ denote the Galois group of the maximal unramfied extension of $k$ that is split completely at every prime above $\infty$.
So if $k$ is a number field, then $G_{\O, \infty}(k)$ is $G_{\O}(k)$. If $k$ is a funtion field, then $G_{\O, \infty}(k)$ is the quotient of $G_{\O}(k)$ by the decomposition subgroups of $k$ at primes above $\infty$.

Let $S$ be a set of places of $k$. We let $k_S$ denote the maximal extension of $k$ that is unramified outside $S$, and denote $\Gal(k_S/k)$ by $G_S(k)$ or just $G_S$ when the choice of $k$ is clear.
The set $S$ is called \emph{$k/Q$-closed} if $S_{v}(k)$ either is contained in $S$ or intersects emptily with $S$ for any prime $v$ of $Q$. When $S$ is $k/Q$-closed, it is not hard to check by Galois theory that $k_S$ is Galois over $Q$, and hence each element of $\Gal(k/Q)$ defines an outer automorphism of $G_S(k)$.
We denote
\[
	\NN(S)=\{ n \in \NN \mid n \in \calO_{k, S}^{\times}\},
\]
where $\calO_{k,S}^{\times}$ is the ring of $S$-integers of $k$. Explicitly, if $k$ is a number field, then $\NN(S)$ consists of the natural numbers such that $\ord_\frakp(n)=0$ for all $\frakp \not\in S$; and if $k$ is a function field, then $\NN(S)$ is the set of all natural numbers prime to $\Char(k)$.
For a group $G$, we define
	\[
		\Mod_S(G)=\text{the category of finite $G$-modules whose order is in $\NN(S)$}.
	\]
	In particular, if $Q$ is a function field, then $\Mod_S(G)$ consists of modules of order prime to $\Char(Q)$.
	
	Let $k$ be a global field, and $\frakp$ a prime of $k$. The completion of $k$ at $\frakp$ is denoted by $k_{\frakp}$, and the absolute Galois group and its inertia subgroup of $k_{\frakp}$ are denoted by $\calG_{\frakp}(k)$ and $\calT_{\frakp}(k)$ respectively. When the choice of $k$ is clear, we denote $\calG_{\frakp}(k)$ and $\calT_{\frakp}(k)$ by $\calG_{\frakp}$ and $\calT_{\frakp}$. Let $k/Q$ be a Galois extension of global fields. For a prime $v$ of $Q$ and a prime $\frakp \in S_v(k)$, the Galois group of $k_{\frakp}/Q_v$, denoted by $\Gal_\frakp(k/Q)$, is the decomposition subgroup of $\Gal(k/Q)$ at $\frakp$. The subgroups $\Gal_{\frakp}(k/Q)$ are conjugate to each other in $\Gal(k/Q)$ for all $\frakp \in S_v(k)$, so we write $\Gal_v(k/Q)$ for a chosen representative of this conjugacy class. 
	For a group $G$ and an $A\in\Mod(G)$, we write $H^i(G,A)$ and $\widehat{H}^i(G, A)$ for the group cohomology and the Tate cohomology respectively. For a field $k$, we denote $H^i(k, A):=H^i(G_k, A)$ and $\widehat{H}^i(k,A):=\widehat{H}^i(G_k, A)$. Let $A$ be a module in $\Mod(G_Q)$. The Galois group $\Gal(k/Q)$ acts on $H^i(k, A)$ by conjugation.
	The conjugation map commutes with inflations, restrictions, cup products and connecting homomorphisms in a long exact sequence, and hence it is naturally compatible with spectral sequences and duality theorems used in the paper. For a prime $v$ of $Q$, we consider the $\Gal(k/Q)$ action on $\oplus_{\frakp \in S_v(k)} H^i(k_{\frakp}, A)$ defined by the action on  $\oplus_{\frakp \in S_v(k)} H^i(k_{\frakp}, \Res^{G_Q}_{\calG_v(Q)}A)$. In other words, $\Gal(k/Q)$ acts on $\oplus_{\frakp \in S_v(k)} H^i(k_{\frakp}, A)$ by the permutation action on $S_v(k)$ and by the $\Gal_{\frakp}(k/Q)$-conjugation on each summand. We similarly define the $\Gal(k/Q)$ action on the product when each of the local summand is $H^i(\calT_{\frakp}, A)$ or the unramified cohomology group $H^i_{nr}(k_{\frakp}, A):=\ker \left( H^i(k_{\frakp}, A) \overset{\text{res}}{\to} H^i(\calT_{\frakp}, A)\right)$.
	In particular, the product of restriction maps for $v$
	\[
		H^i(k, A) \to \bigoplus_{\frakp \in S_v(k)} H^i(k_{\frakp}, A)
	\]
	respects the $\Gal(k/Q)$ actions. Moreover, one can check that 
	\[
		\bigoplus_{\frakp \in S_v(k)}H^i(k_{\frakp}, A) \cong \Ind_{\Gal(k/Q)}^{\Gal_{\frakq}(k/Q)} H^i(k_{\frakq}, A)
	\]
	as $\Gal(k/Q)$-modules for any $\frakq\in S_v(k)$. The same statement holds for the Tate cohomology groups. For a set $S$ of places of $k$, we use the following notation for Shafarevich groups
	\begin{eqnarray*}
		\Sha^i(k, A) &=& \ker \left(H^i(k,A) \to \prod_{\frakp \text{ all places}} H^i(k_\frakp, A)\right) \text{  and}\\
		\Sha^i_S(k,A) &=& \ker \left(H^i(G_S(k), A) \to \prod_{\frakp \in S} H^i(k_{\frakp}, A)\right),
	\end{eqnarray*}
	and we denote
	\[
		\prod'_{\frakp \in S} H^1(k_{\frakp}, A):= \left\{ (f_{\frakp})_{\frakp \in S} \in \prod_{\frakp \in S} H^1(k_{\frakp}, A)\, \bigg{|}\, f_{\frakp} \text{ is unramified for all but finitely many primes in }S\right\}.
	\]

\section{Presentations of finitely generated profinite $\Gamma$-groups}\label{sect:pre-Gamma}

	A \emph{free profinite $\Gamma$-group on $n$ generators}, denoted by $F_n(\Gamma)$, is defined to be the free profinite group on $\{x_{i,\gamma} \mid i=1, \cdots, n \text{ and } \gamma\in \Gamma\}$, where $\sigma\in \Gamma$ acts on $F_n(\Gamma)$ by
	\[
		\sigma(x_{i,\gamma})=x_{i, \sigma \gamma}.
	\]
	In other words, $F_n(\Gamma)$ is the largest $\Gamma$-group that can be topologically generated by $n$ elements under the $\Gamma$ action. When the choice of $\Gamma$ is clear, we will denote $F_n(\Gamma)$ simply by $F_n$.  
	Let $G$ be a finitely generated $\Gamma$-group. Then when $n$ is sufficiently large, there exists a short exact sequence 
	\begin{equation}\label{eq:G-pre}
		1 \longrightarrow N \longrightarrow F_n\rtimes \Gamma \overset{\pi}{\longrightarrow} G \rtimes \Gamma \longrightarrow 1
	\end{equation}
	where $\pi$ is defined by mapping $\Gamma$ identically to $\Gamma$, and $\{x_{i,1_{\Gamma}}\}_{i=1}^n$ to a set of $n$ elements of $G$ that generates $G$ under the $\Gamma$ action.
	Note that \eqref{eq:G-pre} can be viewed as a presentation of the group $G$ that is compatible with $\Gamma$ actions, and we will call it a \emph{$\Gamma$-presentation of $G$}.
	The minimal number of relations in the presentation \eqref{eq:G-pre}, which is one of the main objects studied in this paper, is related to the multiplicities of the irreducible $F_n\rtimes \Gamma$-quotients of $N$. We define the multiplicity as follows, and one can find that this quantity is similarly defined in \cite{Lubotzky, LW2017, LWZB}.
	
	\begin{definition}\label{def:mult-general}
	     Given a short exact sequence $1 \to N \to F \overset{\omega}{\to} H \to 1$ of $\Gamma$-groups, we let $M$ be the intersection of all maximal proper $F\rtimes \Gamma$-normal subgroups of $N$, and denote $\overline{N}=N/M$ and $\overline{F}=F/M$. Then $\overline{N}$ is a direct sum of finite irreducible $\overline{F}\rtimes \Gamma$-groups. For any finite irreducible $\overline{F}\rtimes \Gamma$-group $A$, we define $m(\omega, \Gamma, H, A)$ to be the multiplicity of $A$ appearing in $\overline{N}$. When $\omega$ refers to the surjection $F\rtimes \Gamma \to H\rtimes \Gamma$ induced by the $\Gamma$-equivariant quotient $F\to H$, we use the notation $m(\omega, \Gamma, H, A)$ instead of $m(\omega|_F, \Gamma, H, A)$ for convenience sake.
	\end{definition}
		
	Consider the short exact sequence \eqref{eq:G-pre}. As in Definition~\ref{def:mult-general}, we let $M$ be the intersection of all maximal proper $F_n\rtimes \Gamma$-normal subgroups of $N$, and define $R=N/M$ and $F=F_n/M$. 
	Then we obtain a short exact sequence
	\[
	    1 \longrightarrow R \longrightarrow F\rtimes \Gamma \longrightarrow G \rtimes \Gamma \longrightarrow 1.
	\]
    Note that $F\rtimes \Gamma$ acts on $R$ by conjugation, and maps the factor $A^{m(\pi, \Gamma, G, A)}$ of $R$ to itself. When $A$ is abelian, then the conjugation action on $A$ by elements in $R$ is trivial, so the $F \rtimes \Gamma$ action on $A$ factors through $G\rtimes \Gamma$, and hence $A$ is a finite simple $G\rtimes\Gamma$-module.

	\begin{lemma}\label{lem:d-cohom}
		Using the notation above, if $A$ is a finite simple $G\rtimes \Gamma$-module such that $\gcd(|\Gamma|, |A|)=1$, then we have
		\[
			m(\pi, \Gamma, G, A)= \frac{n\dim_{\F_\ell} A -\xi(A) +\dim_{\F_{\ell}} H^2(G\rtimes \Gamma, A) - \dim_{\F_{\ell}} H^1(G\rtimes \Gamma, A)}{h_{G\rtimes \Gamma}(A)},
		\]
		where $\ell$ is the exponent of $A$ and $\xi(A):=\dim_{\F_\ell} A^{\Gamma} / A^{G\rtimes \Gamma}$.
	\end{lemma}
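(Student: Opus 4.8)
The plan is to reduce the computation of $m(\pi,\Gamma,G,A)$ to a counting of $G\rtimes\Gamma$-equivariant maps and then to group cohomology, following the strategy of Lubotzky. First I would observe that, since $A$ is abelian, the multiplicity $m(\pi,\Gamma,G,A)$ equals the multiplicity of $A$ in the semisimplification of $N/[N,N]N^p$ (with $p=\ell$) as an $\F_\ell[G\rtimes\Gamma]$-module, and by Schur's lemma this multiplicity, weighted by $h_{G\rtimes\Gamma}(A)=\dim_{\F_\ell}\Hom_{G\rtimes\Gamma}(A,A)$, is recovered as $\dim_{\F_\ell}\Hom_{G\rtimes\Gamma}(N^{ab}\otimes\F_\ell,\, A)$. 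Concretely,
\begin{equation*}
    m(\pi,\Gamma,G,A)\cdot h_{G\rtimes\Gamma}(A)=\dim_{\F_\ell}\Hom_{F_n\rtimes\Gamma}(N,A),
\end{equation*}
where $A$ is regarded as an $F_n\rtimes\Gamma$-module via $\pi$. So the whole problem becomes the computation of $\dim_{\F_\ell}\Hom_{F_n\rtimes\Gamma}(N,A)$.

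Next I would compute that Hom-dimension from the five-term (inflation-restriction) exact sequence attached to $1\to N\to F_n\rtimes\Gamma\to G\rtimes\Gamma\to 1$ with coefficients in $A$. Because $F_n\rtimes\Gamma$ has cohomological dimension behaviour governed by the free profinite group $F_n$ — in particular $H^2(F_n\rtimes\Gamma,A)$ is controlled since $\gcd(|\Gamma|,|A|)=1$ forces $H^i(\Gamma,-)$ to vanish on $\ell$-torsion modules and the Hochschild–Serre spectral sequence for $1\to F_n\to F_n\rtimes\Gamma\to\Gamma\to 1$ collapses — one gets the exact sequence
\begin{equation*}
    0\to H^1(G\rtimes\Gamma,A)\to H^1(F_n\rtimes\Gamma,A)\to \Hom_{F_n\rtimes\Gamma}(N,A)\to H^2(G\rtimes\Gamma,A)\to H^2(F_n\rtimes\Gamma,A).
\end{equation*}
For this to give the stated formula I need two inputs: first, $H^2(F_n\rtimes\Gamma,A)=0$ for $A\in\Mod_\ell$ (the prime-to-$|\Gamma|$ hypothesis plus freeness of $F_n$), so the last map above has trivial target; second, an explicit value for $\dim_{\F_\ell}H^1(F_n\rtimes\Gamma,A)$. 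The latter I would obtain again by Hochschild–Serre: $H^1(F_n\rtimes\Gamma,A)=H^1(F_n,A)^{\Gamma}$, and $H^1(F_n,A)=\Hom(F_n,A)$ has $\F_\ell$-dimension $n|\Gamma|\dim_{\F_\ell}A$ (one generator $x_{i,\gamma}$ for each $i$ and $\gamma$), with $\Gamma$ permuting the $\gamma$-index; taking $\Gamma$-invariants of $\bigoplus_{i=1}^n\Hom(\F_\ell[\Gamma],A)\cong\bigoplus_{i=1}^n\mathrm{CoInd}_1^\Gamma A$ and using Shapiro gives $\dim_{\F_\ell}H^1(F_n\rtimes\Gamma,A)=n\dim_{\F_\ell}A$. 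Assembling the exact sequence then yields
\begin{equation*}
    \dim_{\F_\ell}\Hom_{F_n\rtimes\Gamma}(N,A)=n\dim_{\F_\ell}A-\dim_{\F_\ell}H^1(G\rtimes\Gamma,A)+\dim_{\F_\ell}H^2(G\rtimes\Gamma,A),
\end{equation*}
which is almost the numerator in the claim — the remaining discrepancy is the correction term $\xi(A)$.

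The term $\xi(A)=\dim_{\F_\ell}A^\Gamma/A^{G\rtimes\Gamma}$ is, I expect, the main subtlety, and it arises because the presentation \eqref{eq:G-pre} is not an honest presentation of the \emph{$\Gamma$-group} $G$ by a free $\Gamma$-group in the naive sense: the map $\pi$ sends $x_{i,\gamma}\mapsto\gamma(g_i)$, and when the $g_i$ are chosen to $\Gamma$-generate $G$ there can be relations among the images forced purely by the $\Gamma$-structure rather than by $G$. More precisely, $F_n$ surjects onto $G$ but the $\Gamma$-equivariant map $F_n\to G$ typically has kernel already containing elements like $x_{i,\gamma}\mapsto\gamma(g_i)$ being non-independent, and the count of genuine relations must subtract the "diagonal" contributions measured by $A^\Gamma/A^{G\rtimes\Gamma}$. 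I would pin this down by comparing the coinduced module $\mathrm{CoInd}_1^\Gamma A$ more carefully: the $n$ free $\Gamma$-orbits of generators give $\Hom_{F_n\rtimes\Gamma}(F_n,A)\cong A^n$ (not $(A^{|\Gamma|})^{n,\Gamma}$ matched against all of $N$), and the honest minimal number of $\Gamma$-generators of $G$ versus $n$ introduces exactly $\xi(A)$ as the defect in $H^0$-level terms — i.e. $\xi(A)$ is $\dim$ of the cokernel of $A^{G\rtimes\Gamma}\hookrightarrow A^\Gamma$ appearing when one runs the $0$-th row of Hochschild–Serre for $1\to N\to F_n\rtimes\Gamma\to G\rtimes\Gamma\to1$ and tracks the discrepancy between $H^0(F_n\rtimes\Gamma,A)=A^{G\rtimes\Gamma}$ (since $\pi$ is surjective) and the "expected" $A^\Gamma$ coming from the $\Gamma$-generator count. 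So the careful bookkeeping of invariants at the bottom of the spectral sequence, reconciling $A^{G\rtimes\Gamma}$, $A^\Gamma$, and $A$, is where I would spend the most effort; once that is done, dividing through by $h_{G\rtimes\Gamma}(A)$ gives the stated formula.
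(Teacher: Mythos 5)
Your overall skeleton matches the paper's: reduce $m(\pi,\Gamma,G,A)\cdot h_{G\rtimes\Gamma}(A)$ to $\dim_{\F_\ell}\Hom_{F_n\rtimes\Gamma}(N,A)$, run the five-term inflation--restriction sequence for $1\to N\to F_n\rtimes\Gamma\to G\rtimes\Gamma\to 1$, and kill $H^2(F_n\rtimes\Gamma,A)\cong H^2(F_n,A)^\Gamma=0$ using $\gcd(|A|,|\Gamma|)=1$ and freeness of $F_n$. All of that is fine. The gap is in your computation of $\dim_{\F_\ell}H^1(F_n\rtimes\Gamma,A)$. You assert $H^1(F_n,A)=\Hom(F_n,A)$, but $F_n$ acts on $A$ through $\pi$ and $G\rtimes\Gamma$, and this action is in general nontrivial; so $H^1(F_n,A)=Z^1(F_n,A)/B^1(F_n,A)$ with $B^1(F_n,A)\cong A/A^{F_n}$ nonzero. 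Your Shapiro argument correctly computes $Z^1(F_n,A)^\Gamma\cong A^n$ (restriction of cocycles to the free generators identifies $Z^1(F_n,A)$ with $\bigoplus_{i=1}^n\CoInd_1^\Gamma A$ as a $\Gamma$-module), but you must still quotient by $B^1(F_n,A)^\Gamma\cong A^\Gamma/A^{F_n\rtimes\Gamma}=A^\Gamma/A^{G\rtimes\Gamma}$ (exactness of $\Gamma$-invariants in this coprime situation). That quotient is exactly where $\xi(A)$ enters: the correct value is $\dim_{\F_\ell}H^1(F_n\rtimes\Gamma,A)=n\dim_{\F_\ell}A-\xi(A)$, not $n\dim_{\F_\ell}A$.

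Your third paragraph senses that something of size $\xi(A)$ is missing, but the explanation you offer — relations forced by the $\Gamma$-structure of the presentation, a defect in the number of $\Gamma$-generators, a discrepancy in the $H^0$ row of Hochschild--Serre — misattributes the source and does not amount to an argument; nothing there produces the subtraction of $\dim_{\F_\ell}A^\Gamma/A^{G\rtimes\Gamma}$ from the $H^1$ term. The fix is purely local: either carry out the cocycle/coboundary bookkeeping above, or do what the paper does and count $A$-conjugacy classes of sections of $A\rtimes(F_n\rtimes\Gamma)\to F_n\rtimes\Gamma$ directly — by Schur--Zassenhaus every section can be normalized on $\Gamma$, the normalized sections are freely parametrized by $A^n$, and the residual conjugation action is by $A^\Gamma/A^{G\rtimes\Gamma}$, giving the same answer.
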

	
	\begin{remark}\label{rmk:Lub}
		When $\Gamma$ is the trivial group, the lemma is the result in \cite{Lubotzky}
	\end{remark}
	
	\begin{proof}
		 Applying the inflation-restriction exact sequence to \eqref{eq:G-pre}, we obtain
		\begin{eqnarray}
			0&\longrightarrow& H^1(G\rtimes \Gamma, A^{N}) \longrightarrow H^1(F_n\rtimes \Gamma, A) \longrightarrow H^1(N, A)^{G\rtimes \Gamma} \nonumber \\
			&\longrightarrow& H^2(G\rtimes \Gamma, A^{N}) \longrightarrow H^2 (F_n\rtimes \Gamma, A). \label{eq:inf-res}
		\end{eqnarray}
		Also by $\gcd(|A|,|\Gamma|)=1$, the Hochschild-Serre spectural sequence $E^{ij}= H^i(\Gamma, H^j(F_n,A)) \Longrightarrow H^{i+j}(F_n\rtimes \Gamma, A)$ degenerates, so we have that
		\[
			H^2(F_n\rtimes \Gamma, A)\cong H^2(F_n, A)^{\Gamma}, 
		\]
		which is trivial because $F_n$ is a free profinite group.
		Note that $N$ acts trivially on $A$, so
		\[
			H^1(N, A)^{G\rtimes \Gamma}  = \Hom_{F_n\rtimes \Gamma}( N, A)=\Hom_{G\rtimes \Gamma}(A^{m(\pi,\Gamma,G,A)}, A)
		\]
		 because $A$ is a simple $\F_{\ell}[ G\rtimes \Gamma]$-module and $m(\pi,\Gamma,G,A)$ is the maximal integer such that $A^{m(\pi,\Gamma,G,A)}$ is an $F_n\rtimes \Gamma$-equivariant quotient of $N$. Then it follows that 
		\[
			\dim_{\F_{\ell}} H^1(N, A)^{G\rtimes \Gamma}= m(\pi, \Gamma,G,A) \dim_{\F_{\ell}} \Hom_{G\rtimes \Gamma} (A, A).
		\]
		Thus, by \eqref{eq:inf-res} it suffices to show that $\dim_{\F_{\ell}} H^1(F_n \rtimes \Gamma, A) = n \dim_{\F_{\ell}} A -\xi(A)$.
	
		Elements of $H^1(F_n\rtimes \Gamma, A)$ correspond to the $A$-conjugacy classes of homomorphic sections of $A\rtimes (F_n\rtimes \Gamma) \overset{\rho}{\to}  F_n\rtimes \Gamma$. We write every element of $F_n\rtimes \Gamma$ in the form of $(x, \gamma)$ for $x\in F_n$ and $\gamma\in \Gamma$, and similarly, write elements of $A \rtimes (F_n\rtimes \Gamma)$ as $(a; x, \gamma)$ for $a\in A$, $x\in F_n$ and $\gamma \in \Gamma$. Then $\rho$ maps $(a; x, \gamma)$ to $(x, \gamma)$ for any $a$, $x$ and $\gamma$. Note that a section of $\rho$ is completely determined by the images of $(x_{i, 1_{\Gamma}}, 1)$ and $(1, \gamma)$ for $i=1, \cdots, n$ and $\gamma\in\Gamma$, where $x_{i, 1_{\Gamma}}$'s are the $\Gamma$-generators of $F_n$ defined at the beginning of this section. Since $\gcd(|A|, |\Gamma|)=1$, we have $H^1(\Gamma, A)=0$ by the Schur-Zassenhaus theorem, which implies that the restrictions of all the sections of $\rho$ to the subgroup $\Gamma$ are conjugate to each other by $A$. So we only need to study the $A$-conjugacy classes of sections of $\rho$ which map $(1, \gamma)$ to $(1; 1, \gamma)$ for any $\gamma\in \Gamma$, and such sections are totally determined by the images of $(x_{i, 1_{\Gamma}}, 1)$ for $i=1,\cdots, n$. Let $s_1$ and $s_2$ be two distinct sections of this type.
		Under the multiplication rule of semidirect product, the conjugation of $(a; x, \gamma)$ by an element $\alpha \in A$ is 
		\begin{eqnarray*}
			(\alpha^{-1};1,1) (a; x, \gamma) (\alpha; 1,1) &=& (\alpha^{-1} \cdot a \cdot (x, \gamma)(\alpha); x, \gamma) \\
			&=& (\alpha^{-1} \cdot (x, \gamma)(\alpha); 1,1)(a; x, \gamma),
		\end{eqnarray*}
		where the last equality is because $A$ is abelian. Therefore, because of the assumption that $s_1(1, \gamma)=s_2(1, \gamma)=(1; 1 , \gamma)$ for any $\gamma\in \Gamma$, we see that $s_1$ and $s_2$ are $A$-conjugate if and only if there exists $\alpha\in A^{\Gamma} /A^{G\rtimes \Gamma}$ such that $s_2(x, \gamma)=(\alpha^{-1}\cdot (x,r)(\alpha); 1,1 )s_1(x, \gamma)$ for any $ x$, $\gamma$. So
		\begin{eqnarray*}
			\#\{\text{$A$-conjugacy classes of sections of $\rho$}\} &=& |A^{\Gamma}/ A^{G\rtimes\Gamma}|^{-1}\prod_{i=1}^n \# \rho^{-1}(x_{i, 1_{\Gamma}}, 1)\\
		&=& |A^{\Gamma} / A^{G\rtimes\Gamma}|^{-1} |A|^n,
		\end{eqnarray*}
		which proves that $\dim_{\F_{\ell}} H^1(F_n\rtimes \Gamma, A) = n\dim_{\F_{\ell}}A - \dim_{\F_{\ell}} (A^{\Gamma}/ A^{G\rtimes\Gamma})$.
	\end{proof}

	In this paper, instead of the $\Gamma$-presentations in the form of \eqref{eq:G-pre}, we want to study the presentations of pro-$|\Gamma|'$ completions of $\Gamma$-groups. We denote the pro-$|\Gamma|'$ completions of $F_n(\Gamma)$ and $G$ by $F'_n(\Gamma)$ and $G'$ respectively, and write $F'_n$ for $F'_n(\Gamma)$ when the choice of $\Gamma$ is clear. Then $F_n'$ and $G'$ naturally obtain $\Gamma$ actions from $F_n$ and $G$, and we have a short exact sequence
	\begin{equation}\label{eq:G-pre'}
	    1 \longrightarrow N' \longrightarrow F'_n \rtimes \Gamma \overset{\pi'}{\longrightarrow}  G'\rtimes \Gamma \longrightarrow 1,
	\end{equation}
	induced by \eqref{eq:G-pre}, which will be called \emph{a $|\Gamma|'$-$\Gamma$-presentation of $G'$}.

	\begin{proposition}\label{prop:d'-cohom}
		Use the notation above. Let $A$ be a finite simple $G' \rtimes \Gamma$-module, and denote the exponent of $A$ by $\ell$. If $\ell$ divides $|\Gamma|$, then $m(\pi', \Gamma, G', A)=0$. Otherwise, we have 
		\begin{eqnarray}
			 && m(\pi', \Gamma, G', A) \nonumber\\
			 &=& \frac{n \dim_{\F_{\ell}} A -\xi(A)  +\dim_{\F_{\ell}} H^2(G, A)^{\Gamma} - \dim_{\F_{\ell}} H^1(G, A)^{\Gamma}}{h_{G\rtimes \Gamma} (A)}. \label{eq:multi'}
		\end{eqnarray}
	\end{proposition}
	
	\begin{remark}\label{rem:multi-ind}
		We see from \eqref{eq:multi'} that the multiplicity $m(\pi', \Gamma, G', A)$ depends on $n, \Gamma, G$ and $A$, but not on the choice of the quotient map $\pi'$. 
	\end{remark}
	
	\begin{proof}
		It is clear that if $\ell$ divides $|\Gamma|$, then $m(\pi', \Gamma, G', A)=0$. For the rest of the proof, assume $\ell \nmid |\Gamma|$.
		We consider the following commutative diagram
		\[\begin{tikzcd}
			F_n\rtimes \Gamma \arrow["\rho",two heads]{d}  \arrow["\varpi", two heads]{rd} \arrow["\pi", two heads]{r} & G\rtimes \Gamma \arrow["\rho_G", two heads]{d} \\
			F_n' \rtimes \Gamma \arrow["\pi'", two heads]{r} & G'\rtimes \Gamma,
		\end{tikzcd}\]
		where each of the vertical maps is taking the $|\Gamma|'$-completion of the first component in semidirect product. If $U$ is a maximal proper $F'_n \rtimes \Gamma$-normal subgroup of $\ker \pi'$ such that $\ker\pi'/U \simeq_{G'\rtimes\Gamma} A$, then its full preimage $\rho^{-1}(U)$ in $F_n \rtimes \Gamma$ is a maximal proper $F_n \rtimes \Gamma$-normal subgroup of $\ker \varpi$ with $\ker \varpi/\rho^{-1}(U) \simeq_{G'\rtimes \Gamma} A$. So by definition of multiplicities, we have that $m(\pi', \Gamma, G, A) \leq m(\varpi, \Gamma, G, A)$. On the other hand, because of $\gcd(|A|, |\Gamma|)=1$, if $V$ is a maximal proper $F_n \rtimes \Gamma$-normal subgroup of $\ker \varpi$ with $\ker\varpi/V \simeq_{G'\rtimes \Gamma} A$, then $F_n \rtimes \Gamma \twoheadrightarrow (F_n/V)\rtimes \Gamma$ factors through $\rho$, and hence we showed that $m(\pi', \Gamma, G, A)=m(\varpi, \Gamma, G, A)$. Because $\varpi$ defines a $\Gamma$-presentation of $G'$, by Lemma~\ref{lem:d-cohom} we obtain the equality 
		\[
		m(\pi', \Gamma, G', A) 			 = \frac{n\dim_{\F_{\ell}} A  -\xi(A)+\dim_{\F_{\ell}} H^2(G'\rtimes \Gamma, A) - \dim_{\F_{\ell}} H^1(G'\rtimes \Gamma, A)}{ h_{G'\rtimes \Gamma} (A)}.
		\]

		Let $W$ denote $\ker \rho_G=\ker(G \to G')$.  
		Because $G'$ is the pro-$|\Gamma|'$ completion of $G$ and $\ell \nmid |\Gamma|$, the pro-$\ell$ completion of $W$ is trivial. So as $W$ acts trivially on $A$, and by \cite[Prop.~(1.6.2)]{NSW} we have
		$H^i(W, A)=0$ for $i\geq 1$.
		Then by considering the Hochschild-Serre spectral sequence associated to
		\[
			1 \longrightarrow W \longrightarrow G\rtimes \Gamma \longrightarrow G'\rtimes \Gamma \longrightarrow 1,
		\]
		we see that 
		\[
		    H^1(G'\rtimes \Gamma, A) \cong H^1(G\rtimes \Gamma, A) \quad \text{and} \quad H^2(G'\rtimes \Gamma, A) \cong H^2(G\rtimes \Gamma, A).
		\]
		
		Finally, since $\gcd(|A|, |\Gamma|)=1$, we have that $H^i(\Gamma, A)=0$ for any $i\geq 1$, and hence by the Hochschild-Serre spectral sequence of 
		\[
			1 \longrightarrow G \longrightarrow G\rtimes \Gamma \longrightarrow \Gamma \longrightarrow 1
		\]
		we have that $H^i(G\rtimes \Gamma, A)\cong H^i(G, A)^{\Gamma}$ for any $i$. Therefore, we have 
		\[
		    \dim_{\F_\ell} H^1(G'\rtimes \Gamma, A)=\dim_{\F_{\ell}} H^1(G, A)^{\Gamma}\quad \text{and} \quad \dim_{\F_{\ell}} H^2(G'\rtimes \Gamma) = \dim_{\F_{\ell}} H^2(G, A)^{\Gamma}.
		\]
	\end{proof}
	
	By Remark~\ref{rem:multi-ind}, we can define the multiplicities as follows.

	\begin{definition}\label{def:multi}
		Let $\Gamma$ be a finite group, $G'$ a finitely generated pro-$|\Gamma|'$ $\Gamma$-group. Let $A$ be a finite simple $G'\rtimes \Gamma$-module and $n$ a sufficiently large integer such that there exists a $\Gamma$-equivariant surjection $\pi': F'_n\to G'$, we define $m(n, \Gamma, G', A)$ to be $m(\pi',\Gamma, G', A)$. In partucular, $m(n, \Gamma, G', A)$ can be computed by formula \eqref{eq:multi'}.
	\end{definition}
	
	\begin{proposition}\label{prop:min-gen}
		Use the notation in Proposition~\ref{prop:d'-cohom}. Then the minimal number of generators of $\ker\pi'$ as a closed normal $\Gamma$-subgroup of $F'_n$ is 
		\begin{equation}\label{eq:relator-rank'}
		    \sup_{\ell\, \nmid\,  |\Gamma|} \sup_{\substack{A: \text{ finite simple} \\ \F_{\ell}[ G'\rtimes \Gamma]\text{-modules}}} \left\lceil \frac{\dim_{\F_{\ell}} H^2(G, A)^{\Gamma}-\dim_{\F_\ell} H^1(G, A)^{\Gamma} -\xi(A)}{\dim_{\F_\ell} A}\right\rceil+n
		\end{equation}
	\end{proposition}
	
	\begin{proof}
		We let $M$ be the intersection of all maximal proper $F'_n\rtimes \Gamma$-normal subgroups of $\ker\pi'$, and denote $R=\ker \pi'/M$ and $F=F'_n/M$. Then $R$ is isomorphic to a direct product of finite irreducible $F\rtimes \Gamma$-groups whose orders are coprime to $|\Gamma|$. A set of elements of $\ker \pi'$ generates $\ker \pi'$ as a normal subgroup of $F'_n\rtimes \Gamma$ if and only if their images generate $R$ as a normal subgroup of $F\rtimes \Gamma$.
		
		By \cite[Cors~5.9, 5.10]{LW2017}, if $m$ is a positive integer and $A$ is a finite irreducible $F\rtimes \Gamma$-group, then the minimal number of elements of $A^m$ that can generate $A^m$ as an $F\rtimes \Gamma$-group is 
		\[
			\text{is}\quad \begin{cases}
				1 &\text{if $A$ is non-abelian}\\
				\left\lceil \frac{m h_{F\rtimes \Gamma}(A)}{\dim_{\F_\ell} A} \right\rceil & \text{if $A$ is abelian, where $\ell$ is the exponent of $A$}.
			\end{cases}
		\]
		Recall that if $A$ is an abelian simple factor appearing in $R$, then the $F\rtimes \Gamma$ action on $A$ factors through $G'\rtimes \Gamma$, since the conjugation action of $R$ on $A$ is trivial. Therefore, by the argument above and \cite[Cor.~5.7]{LW2017}, the minimal number of generators of $R$ as an $F\rtimes \Gamma$-group is 
		\[
			\sup_{\ell\, \nmid\,|\Gamma|} \sup_{\substack{A: \text{ finite simple} \\ \F_{\ell}[G'\rtimes \Gamma]\text{-modules}}} \left\lceil \frac{m(n,\Gamma,G',A)  h_{G'\rtimes \Gamma}(A)}{\dim_{\F_\ell} A}\right\rceil.
		\]
		Then the proposition follows by Proposition~\ref{prop:d'-cohom} and the fact that $h_{G \rtimes \Gamma}(A)=h_{G'\rtimes \Gamma}(A)$.
	\end{proof}
	
	We give a lemma at the end of this section that will be used later.
	
	\begin{lemma}\label{lem:m-sum}
		Let $E, F$ and $G$ be $\Gamma$-groups such that there exist $\Gamma$-equivariant surjections $\alpha:E \to F$, $\beta: F\to G$ and a $\Gamma$-equivariant section $s: F \to E$ of $\alpha$. Denote $\pi=\beta \circ \alpha$. Let $A$ be a finite simple $G\rtimes \Gamma$-module.
		\[\begin{tikzcd}
			E \arrow[two heads, swap, "\alpha"]{r} \arrow[two heads, swap, "\pi", bend right=30]{rr} & F \arrow[two heads, "\beta"]{r} \arrow[hook, "s", swap, bend right]{l} & G.
		\end{tikzcd}\]
	\begin{enumerate}
		\item \label{item:m-sum-1} We have $m(\pi, \Gamma, G, A) \leq m(\alpha, \Gamma, F, A) + m(\beta, \Gamma, G, A).$
		\item \label{item:m-sum-2} Moreover, if every $\Gamma$-group extension of $F$ by $A$ splits, then $m(\pi, \Gamma, G, A) = m(\alpha, \Gamma, F, A) + m(\beta, \Gamma, G, A)$.
	\end{enumerate}
	\end{lemma}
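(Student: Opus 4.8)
The plan is to reduce both assertions to a comparison of dimensions of $\Hom$-spaces and then run the inflation--restriction sequence. Write $K=\ker\alpha$, $L=\ker\beta$ and $N=\ker\pi=\alpha^{-1}(L)$, so that $K\subseteq N$ and there is a short exact sequence of $\Gamma$-groups $1\to K\to N\xrightarrow{\alpha} L\to 1$; restricting $s$ gives a $\Gamma$-equivariant section of $\alpha|_N$, hence a $\Gamma$-equivariant product decomposition $N\cong K\rtimes s(L)$. Regard $A$ as a module for $E\rtimes\Gamma$, $F\rtimes\Gamma$ and $G\rtimes\Gamma$ through the surjections onto $G\rtimes\Gamma$; then $K$, $L$ and $N$ act trivially on $A$, so $\Hom(-,A)=H^1(-,A)$ on these subgroups, and all three semidirect products act on $A$ through $G\rtimes\Gamma$, so $\Hom_{E\rtimes\Gamma}(A,A)=\Hom_{F\rtimes\Gamma}(A,A)=\Hom_{G\rtimes\Gamma}(A,A)$ has $\F_\ell$-dimension $h_{G\rtimes\Gamma}(A)$, where $\ell$ is the exponent of $A$. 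Since $A$ is abelian and simple, the semisimplicity of $\ker\omega/M$ in Definition~\ref{def:mult-general} together with Schur's lemma gives, exactly as in the proof of Lemma~\ref{lem:d-cohom},
\[
	m(\pi,\Gamma,G,A)\,h_{G\rtimes\Gamma}(A)=\dim_{\F_\ell}\Hom_{E\rtimes\Gamma}(N,A),\quad m(\alpha,\Gamma,F,A)\,h_{G\rtimes\Gamma}(A)=\dim_{\F_\ell}\Hom_{E\rtimes\Gamma}(K,A),
\]
and $m(\beta,\Gamma,G,A)\,h_{G\rtimes\Gamma}(A)=\dim_{\F_\ell}\Hom_{F\rtimes\Gamma}(L,A)$. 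So it suffices to compare these $\Hom$-dimensions.

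For part \eqref{item:m-sum-1} I would apply the five-term inflation--restriction sequence to $1\to K\to N\to L\to 1$:
\[
	0\to H^1(L,A)\to H^1(N,A)\xrightarrow{\ \res\ } H^1(K,A)^L\to H^2(L,A).
\]
Because $N$, $K$ and $L$ act trivially on the relevant cohomology, this is an exact sequence of $G\rtimes\Gamma$-modules. Taking $G\rtimes\Gamma$-invariants is left exact, and one identifies $H^1(L,A)^{G\rtimes\Gamma}=\Hom_{F\rtimes\Gamma}(L,A)$, $H^1(N,A)^{G\rtimes\Gamma}=\Hom_{E\rtimes\Gamma}(N,A)$ and $(H^1(K,A)^L)^{G\rtimes\Gamma}=\Hom_{E\rtimes\Gamma}(K,A)$. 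The resulting left-exact sequence yields $\dim_{\F_\ell}\Hom_{E\rtimes\Gamma}(N,A)\le\dim_{\F_\ell}\Hom_{F\rtimes\Gamma}(L,A)+\dim_{\F_\ell}\Hom_{E\rtimes\Gamma}(K,A)$, which after dividing by $h_{G\rtimes\Gamma}(A)$ is the asserted inequality.

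For part \eqref{item:m-sum-2} I would first use the decomposition $N\cong K\rtimes s(L)$, together with the fact that $L$ acts trivially on $A$, to extend any $L$-invariant homomorphism $K\to A$ to $N$ by letting it be trivial on $s(L)$; hence the restriction map $H^1(N,A)\to H^1(K,A)^L$ is surjective and the five-term sequence shortens to a short exact sequence of $G\rtimes\Gamma$-modules $0\to H^1(L,A)\to H^1(N,A)\to H^1(K,A)^L\to 0$. It then remains to check that taking $G\rtimes\Gamma$-invariants keeps it exact on the right, i.e.\ that $\Hom_{E\rtimes\Gamma}(N,A)\to\Hom_{E\rtimes\Gamma}(K,A)$ is onto, or equivalently that the connecting map $\Hom_{E\rtimes\Gamma}(K,A)\to H^1(G\rtimes\Gamma,H^1(L,A))$ vanishes. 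This is where the splitting hypothesis enters: given $\phi\in\Hom_{E\rtimes\Gamma}(K,A)$ (equivalently an $F\rtimes\Gamma$-equivariant homomorphism for the conjugation action on $K$), one pushes out $1\to K\to E\to F\to 1$ along $\phi$ to obtain a $\Gamma$-group extension $1\to A\to E_\phi\to F\to 1$ inducing the given module structure on $A$, and the hypothesis provides a $\Gamma$-equivariant section $\sigma\colon F\to E_\phi$. Restricting $\sigma$ over $L$ and combining it with $N\cong K\rtimes s(L)$ identifies the pushout of $N$ along $\phi$ with $A\times\sigma(L)$; composing $N\to A\times\sigma(L)$ with the projection to $A$ gives a homomorphism $\widetilde\phi\colon N\to A$ restricting to $\phi$ on $K$, which one checks is $E\rtimes\Gamma$-equivariant. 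This proves the surjectivity, hence the equality.

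I expect the main obstacle to be the equivariance verification in part \eqref{item:m-sum-2}. The section $\sigma$ produced by the hypothesis is a priori only $\Gamma$-equivariant, so one must check by hand that the resulting extension $\widetilde\phi$ of $\phi$ is equivariant for the full group $E\rtimes\Gamma$ (equivalently $G\rtimes\Gamma$, since $N$ acts trivially on $A$) -- that the inner automorphisms coming from $E$ do not spoil the construction. This is precisely the step in which the homomorphic section $s$ of $\alpha$, and hence the rigid product decomposition $N\cong K\rtimes s(L)$, is used in an essential way; carrying the $G\rtimes\Gamma$-action through the pushout is the one genuinely computational point of the argument.
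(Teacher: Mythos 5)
Your argument is correct, but it takes a genuinely different route from the paper's. The paper works directly with Definition~\ref{def:mult-general}: it partitions the set $\calS$ of maximal proper $E\rtimes\Gamma$-normal subgroups $U\le\ker\pi$ with $\ker\pi/U\simeq A$ according to whether $\ker\alpha\subseteq U$, and exhibits explicit bijections of the two pieces with the corresponding sets for $\beta$ and for $\alpha$ (the section $s$ enters via $V\mapsto s(\ker\beta)V$); part~(2) is then a short stacking argument. You instead convert each multiplicity into $\dim_{\F_\ell}\Hom_{\bullet\rtimes\Gamma}(-,A)/h_{G\rtimes\Gamma}(A)$ — a reformulation already implicit in the proof of Lemma~\ref{lem:d-cohom} — and run inflation--restriction for $1\to K\to N\to L\to 1$ followed by $G\rtimes\Gamma$-invariants. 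This is a legitimate trade: your version makes the defect in~(1) visible as a connecting map into $H^1(G\rtimes\Gamma,\Hom(L,A))$ and shows exactly where the splitting hypothesis kills it, at the cost of being less elementary. The one step you leave as ``one checks'' does go through, and it is worth recording why: in $E_\phi$ the subgroup $A$ centralizes $N_\phi$ (for $a\in A$ and $\sigma(l)\in\sigma(L)$ one has $a\sigma(l)a^{-1}\sigma(l)^{-1}=a\cdot l(a^{-1})=1$ because $L\le\ker\beta$ acts trivially on $A$), so conjugation by any $a\,\sigma(f)\in E_\phi$ preserves the decomposition $N_\phi=A\times\sigma(L)$ and acts on the $A$-coordinate through $\alpha(e)\mapsto\beta(\alpha(e))\in G$; hence $\widetilde\phi=\mathrm{pr}_A\circ\iota$ is $E\rtimes\Gamma$-equivariant as claimed. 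One caveat common to both proofs: for~(2) you need the splitting of the $\Gamma$-extension $E_\phi$ of $F$ to be realized by a $\Gamma$-equivariant section, which is how the hypothesis is used in the paper's application (where $F=\calF_n$ is free pro-$|\Gamma|'$ and $\gcd(|A|,|\Gamma|)=1$), so this is consistent with the intended reading of ``splits.''
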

	
	\begin{proof}
		Let $\calS$ be the set of all maximal proper $E\rtimes \Gamma$-normal subgroups $U$ of $\ker \pi$ with $\ker \pi/U \simeq_{G\rtimes \Gamma} A$. So by definition we have $\ker\pi/\left( \cap_{U\in \calS} U \right) \simeq A^{m(\pi, \Gamma, G, A)}$ as $G\rtimes \Gamma$-modules. Define
		\[
			\calS_1=\{ U \in \calS \mid \ker \alpha \subset U \} \quad \text{and} \quad 
			\calS_2=\{ U \in \calS \mid \ker \alpha \not\subset U  \}.
		\]
		One can easily check that there is a natural bijection 
		\begin{eqnarray*}
			\calS_1 & \longleftrightarrow & \left\{ V \,\Bigg|\,  \begin{aligned} \text{$V$ is a maximal proper $F\rtimes \Gamma$-normal subgroup  } \\ \text{of $\ker \beta$ such that $\ker \beta/V\simeq A$ as $G\rtimes \Gamma$-modules}  \end{aligned}\right\}  \\
			U &\longmapsto & \alpha(U),
		\end{eqnarray*}
		and it follows that $\ker\pi/\left( \cap_{U\in \calS_1} U \right) \simeq A^{m(\beta, \Gamma, G, A)}$. Similarly for the set $\calS_2$, there is a bijection
		\begin{eqnarray}
			\calS_2 & \longleftrightarrow & \left\{ V \,\Bigg|\,  \begin{aligned} \text{$V$ is a maximal proper $E\rtimes \Gamma$-normal subgroup  } \\ \text{of $\ker \alpha$ such that $\ker \alpha/V\simeq A$ as $F\rtimes \Gamma$-modules}  \end{aligned}\right\}  \label{eq:bij}\\
			U &\longmapsto & U \cap \ker \alpha \nonumber \\
			s(\ker \beta) V &\longmapsfrom&  V,\nonumber
		\end{eqnarray}
		Let's justify that \eqref{eq:bij} is a bijection. If $U \in \calS_2$, then $U\ker \alpha$ is an $E\rtimes \Gamma$-normal subgroup of $\ker \pi$ that properly contains $U$, and therefore we have $U\ker \alpha = \ker \pi$. So $U \cap \ker \alpha$ satisfies $\ker \alpha /( U \cap \ker \alpha)=(U\ker \alpha)/U = \ker \pi/U \simeq A$ as $F \rtimes \Gamma$-modules, and hence belongs to the the right-hand set in \eqref{eq:bij}. On the other hand, if $V$ is an element in the right-hand set of \eqref{eq:bij}, then we have $s(\ker \beta) V \in \calS_2$ because 
		\begin{eqnarray*}
			\ker \pi /(s(\ker \beta) V) &=& (s(\ker \beta) \ker \alpha) / (s(\ker \beta) V) \\
			&=& \ker \alpha / (s(\ker \beta) V \cap \ker \alpha) \\
			&=& \ker \alpha / V  \simeq_{F\rtimes \Gamma} A,
		\end{eqnarray*}
		where the equalities above use $s(\ker \beta) \ker \alpha = \ker \pi$ and $s(\ker \beta) \cap \ker \alpha =1$. It's not hard to check that the composition of the maps in two directions is the identity map. So we see that $\ker \pi/(\cap_{U\in \calS_2} U) \simeq A^{m(\alpha, \Gamma, F, A)}$. Then $\ker \pi/(\cap_{U\in \calS} U) \simeq A^{m(\pi, \Gamma, G, A)}$ is a submodule of $\ker \pi/(\cap_{U\in \calS_1} U) \times \ker \pi/(\cap_{U\in \calS_2} U)$, which implies \eqref{item:m-sum-1}.
	
	If any $\Gamma$-group extension of $F$ by $A$ splits, then $A^{m(\alpha, \Gamma, F, A) }\rtimes F$ is a $\Gamma$-quotient of $E$. Because the $\Gamma$-equivariant surjection $\beta$ factors through an extensions of $G$ by $A^{m(\beta, \Gamma, G, A)}$, we see that $m(\pi, \Gamma, G, A) \geq m(\alpha, \Gamma, F, A) + m(\beta, \Gamma, G, A)$. So we proved \eqref{item:m-sum-2}.
	\end{proof}

\section{Presentations of finitely generated profinite admissible $\Gamma$-groups}\label{sect:pres-ad}
	
	We first recall the definition of the admissible $\Gamma$-groups and the free admissible $\Gamma$-groups in \cite{LWZB}.
	
	\begin{definition}\label{def:admissible}
		A profinite $\Gamma$-group $G$ is called \emph{admissible} if it is $\Gamma$-generated by  elements $\{g^{-1}\gamma(g) \mid g\in G, \gamma\in \Gamma\}$ and is of order prime to $|\Gamma|$.
	\end{definition}
	
	Recall that for each positive integer $n$, we defined $F'_n$ to be the pro-$|\Gamma|'$ completion of $F_n$. We let $y_{i, \gamma}$ to be the image in $F'_n$ of the generators $x_{i,\gamma}$ of $F_n$, and therefore $F'_n$ is the free pro-$|\Gamma|'$ group on $\{y_{i, \gamma} \mid i=1, \cdots, n \text{ and } \gamma\in \Gamma\}$ where $\sigma\in \Gamma$ acts on $F'_n$ by $\sigma(y_{i,\gamma})=y_{i, \sigma \gamma}$. We fix a generating set $\{\gamma_1, \cdots, \gamma_d\}$ of the finite group $\Gamma$ throughout the paper. We denote $y_i:= y_{i, \id_{\Gamma}}$ and define $\calF_n(\Gamma)$ to be the closed $\Gamma$-subgroup of $F'_n$ that is generated as a $\Gamma$-subgroup by the elements 
	\[
		\{ y_i^{-1} \gamma_j(y_i) \mid i=1, \cdots, n \text{ and } j=1, \cdots, d\}.
	\]
	We will denote $\calF_n(\Gamma)$ by $\calF_n$ when the choice of $\Gamma$ is clear. The following is a list of properties of $\calF_n(\Gamma)$ proven in \cite[Lem.~3.1, Cor. 3.8 and Lem.~3.9]{LWZB}:
	\begin{enumerate}
		\item \label{item:prop-calF-1}$\calF_n$ is an admissible $\Gamma$-group and it does not depend on the choice of the generating set $\{\gamma_1, \cdots, \gamma_d\}$.
		\item \label{item:prop-calF-2} There is a $\Gamma$-equivariant quotient map $\rho_n: F'_n \to \calF_n$ such that the composition of the inclusion $\calF_n \subset F'_n$ with $\rho_n$ is the identity map on $\calF_n$. 
		\item \label{item:prop-calF-3} Define a map of sets for any $\Gamma$-group $G$
		\begin{eqnarray*}
		    Y: G &\longrightarrow& G^{d} \\
		    g &\longmapsto& (g^{-1}\gamma_1(g), g^{-1} \gamma_2(g), \cdots, g^{-1}\gamma_d(g)). 
		\end{eqnarray*}
		Then the map 
		\[
		    Y(G)^n \longrightarrow \Hom_{\Gamma}(\calF_n, G) 
		\]
		taking $(Y(g_1), \cdots, Y(g_n))$ to the restriction of the map $F'_n \to G$ with $y_i \mapsto g_i$ is a bijection.
	\end{enumerate}

	Let $G$ be an admissible $\Gamma$-group with a $\Gamma$-presentation defined by $F_n \rtimes \Gamma \overset{\pi}{\twoheadrightarrow} G \rtimes \Gamma$ such that the reduced map $F'_n \rtimes \Gamma \overset{\pi'}{\twoheadrightarrow} G \rtimes \Gamma$ satisfies that 
	\begin{equation}\label{eq:presentation-type}
		\text{$G$ is $\Gamma$-generated by coordinates of $Y(y_i)$, $i=1, \cdots, n$}.
	\end{equation}
	In this section, we are interested in the $\Gamma$-presentations of this type, and under the condition \eqref{eq:presentation-type}, the restriction of $\pi'$ to the admissible subgroup $\calF_n$ of $F'_n$ is surjective. In other words, following by the property \eqref{item:prop-calF-2} of $\calF_n$ listed above, we want to study the $|\Gamma|'$-$\Gamma$-presentation $\pi'$ that factors through the quotient map $\rho_n: F'_n \to \calF_n$. We denote $\pi_{\ad}=\pi'|_{\calF_n\rtimes \Gamma}$ and obtain a short exact sequence
	\begin{equation}\label{eq:admiss-presentation}
		1 \longrightarrow N \longrightarrow \calF_n \rtimes \Gamma \overset{\pi_{\ad}}{\longrightarrow} G \rtimes \Gamma \longrightarrow 1,
	\end{equation}
	and we call it an \emph{admissible $\Gamma$-presentation of $G$}.
	
	Similarly to the previous section, we are interested in the multiplicities of each simple factors appearing as the quotients of $N$.
	
	\begin{lemma}\label{lem:ad-mult-indep}
		Let $G$ be an admissible $\Gamma$-group with an admissible $\Gamma$-presentation \eqref{eq:admiss-presentation} and $A$ a finite simple $G\rtimes \Gamma$-module with $\gcd(|A|,|\Gamma|)=1$. Then we have 
		\[
			m(\pi_{\ad}, \Gamma, G, A)=m(n, \Gamma, G, A)-m(n,\Gamma, \calF_n, A).
		\]
	\end{lemma}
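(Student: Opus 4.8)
The plan is to apply Lemma~\ref{lem:m-sum} to the composition $F'_n \xrightarrow{\rho_n} \calF_n \xrightarrow{\pi_{\ad}} G$. By property~\eqref{item:prop-calF-2} of $\calF_n$, the quotient map $\rho_n : F'_n \to \calF_n$ has a $\Gamma$-equivariant section, namely the inclusion $\calF_n \hookrightarrow F'_n$. Extending everything by $\Gamma$, we obtain the diagram of $\Gamma$-groups
\[
\begin{tikzcd}
F'_n\rtimes\Gamma \arrow[two heads, swap, "\rho_n\rtimes\Gamma"]{r} \arrow[two heads, swap, "\pi'", bend right=30]{rr} & \calF_n\rtimes\Gamma \arrow[two heads, "\pi_{\ad}"]{r} \arrow[hook, bend right]{l} & G\rtimes\Gamma,
\end{tikzcd}
\]
with $\pi' = \pi_{\ad}\circ(\rho_n\rtimes\Gamma)$, which is exactly the shape to which Lemma~\ref{lem:m-sum} applies (with $E = F'_n\rtimes\Gamma$, $F = \calF_n\rtimes\Gamma$, and the $G$ there being $G\rtimes\Gamma$; the section $s$ being the inclusion). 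First I would check that part~\eqref{item:m-sum-2} of that lemma applies, i.e. that every $\Gamma$-group extension of $\calF_n$ by $A$ splits.

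**The splitting hypothesis.** The key point is that $A$-extensions of $\calF_n$ split. Since $\gcd(|A|,|\Gamma|)=1$, classifying $\Gamma$-equivariant extensions of $\calF_n$ by the abelian group $A$ amounts (via the Hochschild--Serre / inflation-restriction machinery already used in the proof of Lemma~\ref{lem:d-cohom}) to computing $H^2(\calF_n\rtimes\Gamma, A) \cong H^2(\calF_n, A)^{\Gamma}$, and $H^2(\calF_n,A)$ vanishes because $\calF_n$, being a closed subgroup of the free pro-$|\Gamma|'$ group $F'_n$, is itself free pro-$|\Gamma|'$ and $A$ is an $\ell$-group with $\ell\nmid|\Gamma|$, so it has cohomological dimension $\le 1$ with $\F_\ell$-coefficients. (Alternatively, one can invoke property~\eqref{item:prop-calF-3}: a $\Gamma$-equivariant homomorphism from $\calF_n$ lifting $\pi_{\ad}$ composed with the projection $A\rtimes\calF_n \to \calF_n$ is prescribed by choosing lifts of $Y(y_i)$ coordinate-wise into $A\rtimes\calF_n$, which can always be done since $A$ is a normal subgroup.) Either way, the splitting hypothesis of Lemma~\ref{lem:m-sum}\eqref{item:m-sum-2} holds.

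**Identifying the three multiplicities.** With the splitting verified, Lemma~\ref{lem:m-sum}\eqref{item:m-sum-2} gives
\[
m(\pi', \Gamma, G, A) = m(\rho_n\rtimes\Gamma, \Gamma, \calF_n, A) + m(\pi_{\ad}, \Gamma, G, A).
\]
Now $\pi'$ is a $|\Gamma|'$-$\Gamma$-presentation of $G$ (note $G$ is already pro-$|\Gamma|'$ since it is admissible, so $G' = G$), so by Remark~\ref{rem:multi-ind} and Definition~\ref{def:multi} we have $m(\pi',\Gamma,G,A) = m(n,\Gamma,G,A)$. It remains to identify $m(\rho_n\rtimes\Gamma, \Gamma, \calF_n, A)$ with $m(n,\Gamma,\calF_n,A)$: here $\rho_n : F'_n \to \calF_n$ is precisely a $\Gamma$-equivariant surjection from $F'_n$ to the finitely generated pro-$|\Gamma|'$ $\Gamma$-group $\calF_n$, so again by Definition~\ref{def:multi} (using Remark~\ref{rem:multi-ind} to see the value is independent of the chosen surjection) this multiplicity equals $m(n,\Gamma,\calF_n,A)$. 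Rearranging yields $m(\pi_{\ad},\Gamma,G,A) = m(n,\Gamma,G,A) - m(n,\Gamma,\calF_n,A)$, as claimed.

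**Main obstacle.** The substantive step is verifying the splitting hypothesis for extensions of $\calF_n$ by $A$; everything else is bookkeeping with the definitions of multiplicity from Section~\ref{sect:pre-Gamma}. I expect the cleanest route is the cohomological one, citing that $\calF_n \le F'_n$ is free pro-$|\Gamma|'$ (hence of $\ell$-cohomological dimension $\le 1$ for all $\ell\nmid|\Gamma|$) together with $\gcd(|A|,|\Gamma|)=1$, which forces $H^2(\calF_n\rtimes\Gamma,A)=0$; one should make sure the paper has established (or it is standard) that a closed subgroup of a free pro-$\Sigma$ group is free pro-$\Sigma$, which is the pro-$\Sigma$ analogue of the Nielsen--Schreier theorem.
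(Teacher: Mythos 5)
Your proposal is correct and follows essentially the same route as the paper: the paper also applies Lemma~\ref{lem:m-sum}\eqref{item:m-sum-2} to the factorization $F'_n \xrightarrow{\rho_n} \calF_n \xrightarrow{\pi_{\ad}} G$ with the inclusion $\iota:\calF_n\hookrightarrow F'_n$ as the section, and justifies the splitting hypothesis by the freeness of $\calF_n$ as a pro-$|\Gamma|'$ group (your cohomological phrasing via $H^2(\calF_n\rtimes\Gamma,A)\cong H^2(\calF_n,A)^\Gamma=0$ is a slightly more careful version of the same point, correctly handling $\Gamma$-equivariance of the splitting). The remaining identifications with $m(n,\Gamma,G,A)$ and $m(n,\Gamma,\calF_n,A)$ via Definition~\ref{def:multi} are exactly what the paper leaves implicit.
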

	
	\begin{proof}
		We let $\rho_n: F'_n \to \calF_n$ be the quotient map described in the property \eqref{item:prop-calF-2}. 
		Let $\varpi$ be the composition of the following $\Gamma$-equivariant surjections
		and then $\varpi$ defines a $|\Gamma|'$-$\Gamma$-presentation of $G$. Let $\iota: \calF_n \to F'_n$ be the natural embedding. Then we have the following diagram
		\[\begin{tikzcd}
			F'_n \arrow[two heads, swap, "\rho_n"]{r} \arrow[two heads, swap, bend right, "\varpi"]{rr} & \calF_n \arrow[two heads, "\pi_{\ad}|_{\calF_n}"]{r} \arrow[hook, swap, "\iota", bend right]{l} & G,
		\end{tikzcd}\]
		Also note that $\calF_n$ is free as a $|\Gamma|'$-group, so any group extension of $\calF_n$ by $A$ splits. Therefore the lemma follows immediately from Lemma~\ref{lem:m-sum}.
	\end{proof}

	\begin{definition}
		Let $G$ be a $\Gamma$-group with an admissible $\Gamma$-presentation \eqref{eq:admiss-presentation}. For a finite simple $G\rtimes \Gamma$-module $A$ with $\gcd(|A|,|\Gamma|)=1$, we define $m_{\ad}(n, \Gamma, G, A)$ to be $m(\pi_{\ad}, \Gamma, G, A)$. By Lemma~\ref{lem:ad-mult-indep}, $m_{\ad}(n,\Gamma, G,A)=m(n,\Gamma, G,A)-m(n,\Gamma, \calF_n, A)$ does not depend on the choice of $\pi_{\ad}$.
	\end{definition}

	\begin{lemma}\label{lem:H1-calF}
		Let $A$ be a finite simple $\calF_n\rtimes \Gamma$-module such that $\gcd(|A|, |\Gamma|)=1$. Then we have
		\[
			\dim_{\F_{\ell}} H^1(\calF_n \rtimes\Gamma, A) = n \dim_{\F_{\ell}}(A/A^{\Gamma})-\xi(A).
		\]
	\end{lemma}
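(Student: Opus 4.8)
The plan is to run the argument of Lemma~\ref{lem:d-cohom} verbatim, with property~\eqref{item:prop-calF-3} of $\calF_n$ taking over the role that freeness of $F_n$ played there. As in that proof, $H^1(\calF_n\rtimes\Gamma,A)$ is identified with the set of $A$-conjugacy classes of homomorphic sections of $A\rtimes(\calF_n\rtimes\Gamma)\to\calF_n\rtimes\Gamma$; since $\gcd(|A|,|\Gamma|)=1$ forces $H^1(\Gamma,A)=0$, every section is $A$-conjugate to one restricting to $\gamma\mapsto(1;1,\gamma)$ on the subgroup $\Gamma$, and two such normalized sections are $A$-conjugate only through elements of $A^\Gamma$. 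A normalized section is the same datum as a $\Gamma$-equivariant homomorphism $\phi\colon\calF_n\to A\rtimes\calF_n$ splitting the projection $p\colon A\rtimes\calF_n\to\calF_n$, where $A\rtimes\calF_n$ carries the diagonal $\Gamma$-action. So the first step is simply to record that $H^1(\calF_n\rtimes\Gamma,A)$ is the set of such $\phi$ modulo conjugation by $A^\Gamma$.

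The second step is to parametrize these $\phi$ using properties \eqref{item:prop-calF-2} and \eqref{item:prop-calF-3}. Set $u_i:=\rho_n(y_i)\in\calF_n$; applying $\rho_n$ to $y_i^{-1}\gamma_j(y_i)\in\calF_n$ and using $\rho_n|_{\calF_n}=\id$ gives the identity $u_i^{-1}\gamma_j(u_i)=y_i^{-1}\gamma_j(y_i)$ for all $i,j$. By property~\eqref{item:prop-calF-3}, $\Gamma$-equivariant homomorphisms $\calF_n\to A\rtimes\calF_n$ are in bijection with $n$-tuples $(g_1,\dots,g_n)\in(A\rtimes\calF_n)^n$ modulo the coordinatewise left-translation action of $(A^\Gamma\times\calF_n^\Gamma)^n=\bigl((A\rtimes\calF_n)^\Gamma\bigr)^n$, with $\phi$ the restriction of the map $y_i\mapsto g_i$; a direct check — again via property~\eqref{item:prop-calF-3}, now applied to endomorphisms of $\calF_n$ and comparing with $\rho_n$ — shows that $\phi$ splits $p$ precisely when $p(g_i)\in\calF_n^\Gamma u_i$ for every $i$. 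Normalizing each $g_i$ to $(a_i;u_i)$ with $a_i\in A$, the residual ambiguity being $a_i\mapsto a_i+h_i$ with $h_i\in A^\Gamma$, identifies the set of $\Gamma$-equivariant $p$-splittings with $(A/A^\Gamma)^n$, and one computes that conjugation by $\alpha\in A^\Gamma$ sends $(\overline{a_i})_i$ to $(\overline{a_i}-\overline{u_i\alpha})_i$. Hence $H^1(\calF_n\rtimes\Gamma,A)\cong(A/A^\Gamma)^n/\im(\psi)$, where $\psi\colon A^\Gamma\to(A/A^\Gamma)^n$ is the map $\alpha\mapsto(\overline{u_i\alpha})_i$.

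The conclusion then reduces to computing $\dim_{\F_\ell}\im(\psi)=\dim_{\F_\ell}A^\Gamma-\dim_{\F_\ell}\ker(\psi)$, and I expect the identification $\ker(\psi)=A^{\calF_n\rtimes\Gamma}$ to be the one genuinely non-formal point. The inclusion $A^{\calF_n\rtimes\Gamma}\subseteq\ker(\psi)$ is immediate. For the reverse, suppose $\alpha\in A^\Gamma$ with $u_i\alpha\in A^\Gamma$ for every $i$; then for all $\gamma\in\Gamma$ one gets $\gamma(u_i)\cdot\alpha=\gamma(u_i\alpha)=u_i\alpha$, so every $\Gamma$-conjugate of $u_i$ acts on $\alpha$ in the same way. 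Combined with the identity $u_i^{-1}\gamma_j(u_i)=y_i^{-1}\gamma_j(y_i)$, this forces each topological generator $\gamma\bigl(y_i^{-1}\gamma_j(y_i)\bigr)=\gamma(u_i)^{-1}(\gamma\gamma_j)(u_i)$ of $\calF_n$ to fix $\alpha$, whence $\alpha\in A^{\calF_n}\cap A^\Gamma=A^{\calF_n\rtimes\Gamma}$. Since $\xi(A)=\dim_{\F_\ell}A^\Gamma-\dim_{\F_\ell}A^{\calF_n\rtimes\Gamma}$, this gives $\dim_{\F_\ell}\im(\psi)=\xi(A)$ and therefore $\dim_{\F_\ell}H^1(\calF_n\rtimes\Gamma,A)=n\dim_{\F_\ell}(A/A^\Gamma)-\xi(A)$, as claimed.
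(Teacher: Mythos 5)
Your proof is correct and follows essentially the same route as the paper's: both reduce $H^1(\calF_n\rtimes\Gamma,A)$, via Schur--Zassenhaus, to $A^\Gamma$-conjugacy classes of $\Gamma$-equivariant splittings of $A\rtimes\calF_n\to\calF_n$, and both use property~\eqref{item:prop-calF-3} of $\calF_n$ to parametrize those splittings by $(A/A^\Gamma)^n$. The one place you go beyond the paper is in proving that the translation action of $A^\Gamma/A^{\calF_n\rtimes\Gamma}$ on this parameter set is free (your identification $\ker\psi=A^{\calF_n\rtimes\Gamma}$ using $u_i^{-1}\gamma_j(u_i)=y_i^{-1}\gamma_j(y_i)$); the paper simply divides by $|A^\Gamma/A^{\calF_n\rtimes\Gamma}|$, quoting the conjugacy criterion from Lemma~\ref{lem:d-cohom}, so your argument makes explicit a point that is left implicit there.
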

	
	\begin{proof}
		We use the idea in the proof of Lemma~\ref{lem:d-cohom}. Elements of $H^1(\calF_n\rtimes \Gamma, A)$ correspond to the $A$-conjugacy classes of homomorphic sections of $A\rtimes (\calF_n\rtimes \Gamma) \overset{\rho}{\to} \calF_n\rtimes \Gamma$. We use $(g,\gamma)$ to represent elements of $\calF_n\rtimes \Gamma$, and $(a; g, \gamma)$ to represent elements of $A\rtimes(\calF_n \rtimes \Gamma)$. Again, by the Schur-Zassenhaus theorem, we only need to count the $A$-conjugacy classes of sections of $\rho$ that maps $(1;1,\gamma)$ to $(1,\gamma)$. In other words, we only need to study the $A$-conjugacy classes of $\Gamma$-equivariant sections of $A\rtimes \calF_n \to \calF_n$. 
		
		By the property \eqref{item:prop-calF-3} of $\calF_n$, there is a bijection $Y(A\rtimes \calF_n)^n \to \Hom_{\Gamma}(\calF_n,  A\rtimes \calF_n)$ taking $(Y(g_1), \cdots, Y(g_n))$ to the restriction of the map $F'_n \to A\rtimes \calF_n$ with $y_i \mapsto g_i$. 
		For a $\Gamma$-equivariant section $s$ of $A\rtimes \calF_n \to \calF_n$, the elements $s(y_i^{-1}\gamma_j(y_i))$ in $A\rtimes \calF_n$ must map to $y_i^{-1}\gamma_j(y_i) \in \calF_n$ for each $i=1, \cdots, n$ and $j=1, \cdots, d$.
		Therefore, the $\Gamma$-equivariant sections of $A\rtimes \calF_n \to \calF_n$ are in one-to-one correspondence with elements in $Y(A\rtimes \calF_n)^n$ which map to $(Y(y_1), \cdots, Y(y_n))\in Y(\calF_n)^n$ under the natural quotient map $A\rtimes \calF_n \to \calF_n$ on each component.
		
		Let's consider $Y(y_i)$ and its preimages in $Y(A\rtimes \calF_n)$. Note that there is also a natural embedding $Y(\calF_n) \hookrightarrow Y(A\rtimes \calF_n)$ defined by the obvious section of split extension $A\rtimes \calF_n \twoheadrightarrow \calF_n$. So we can fix a $g\in A\rtimes \calF_n$ such that $Y(g)$ is the image of $Y(y_{i})$ under this embedding, and then $Y(g)$ is a preimage of $Y(y_i)$ under $\varphi$, where $\varphi$ is the quotient map $(A\rtimes \calF_n)^d \to \calF_n^d$. The self-bijection
		\begin{eqnarray*}
			(A\rtimes \calF_n)^d &\to& (A\rtimes \calF_n)^d \\
			(a_1, \cdots, a_d) &\mapsto& (ga_1 \gamma_1(g)^{-1}, \cdots, ga_d \gamma_d(g)^{-1} )
		\end{eqnarray*}
		maps $Y(A\rtimes \calF_n)$ to itself and $\varphi^{-1}(Y(y_i))$ to $A^d$. Thus,
		\[
			\#Y(A\rtimes \calF_n) \cap \varphi^{-1}(Y(y_i))=\#Y(A\rtimes \calF_n) \cap A^d = \# Y(A) = | A/A^\Gamma|,
		\]
		where the second equality above uses \cite[Lem.~3.4]{LWZB} and the last one uses \cite[Lem.~3.5]{LWZB}. So we've shown that there are $|A/A^{\Gamma}|$ elements in $Y(A\rtimes \calF_n)$ mapping to $Y(y_i)$, and it follows that the number of $\Gamma$-equivariant sections of $A\rtimes \calF_n \to \calF_n$ is $|A/A^{\Gamma}|^n$.
		
		Finally, recall that two sections $s_1, s_2$ of $A\rtimes (\calF_n \rtimes \Gamma) \to \calF_n \rtimes \Gamma$ are $A$-conjugate if and only if $s_1(g, \gamma) = (\alpha^{-1} \cdot (g, \gamma)(\alpha); 1,1) s_1(g, \gamma)$ for some $\alpha \in A^{\Gamma}/ A^{\calF_n \rtimes \Gamma}$ ,by the computation in the proof of Lemma~\ref{lem:d-cohom}. Therefore, we have $\# H^1(\calF_n \rtimes \Gamma, A) = \frac{|A/A^{\Gamma}|^n}{|A^{\Gamma} / A^{\calF_n \rtimes \Gamma}|}$, and hence we proved the lemma.
	\end{proof}
	
	\begin{corollary}\label{cor:ad-mult-compute}
		Under the assumptions in Lemma~\ref{lem:ad-mult-indep}, we have
		 \[
		 	m_{\ad}(n, \Gamma, G, A) = m(n, \Gamma, G, A) - \dfrac{n \dim_{\F_\ell}A^{\Gamma}}{ h_{G\rtimes \Gamma} (A)}.
		\]
	\end{corollary}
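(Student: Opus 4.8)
The plan is to combine the two multiplicity computations already in hand. By Lemma~\ref{lem:ad-mult-indep} we have the identity
\[
    m_{\ad}(n, \Gamma, G, A) = m(n, \Gamma, G, A) - m(n, \Gamma, \calF_n, A),
\]
so the corollary will follow once we show that
\[
    m(n, \Gamma, \calF_n, A) = \frac{n \dim_{\F_\ell} A^{\Gamma}}{\dim_{\F_\ell} h_{\calF_n\rtimes\Gamma}(A)}
\]
and observe that, for a simple module $A$ on which the $F'_n\rtimes\Gamma$-action factors through $G\rtimes\Gamma$, the endomorphism algebras $h_{\calF_n\rtimes\Gamma}(A)$ and $h_{G\rtimes\Gamma}(A)$ coincide (the $\calF_n\rtimes\Gamma$-action on such an $A$ is the one pulled back along $\calF_n\rtimes\Gamma\twoheadrightarrow G\rtimes\Gamma$, by Remark~\ref{rem:multi-ind} applied with $G'=\calF_n$ — though here I should be slightly careful, since I want $A$ to be a $\calF_n\rtimes\Gamma$-module, so strictly I am invoking Definition~\ref{def:multi} for the group $\calF_n$ and its module $A$).

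First I would apply the formula \eqref{eq:multi'} from Proposition~\ref{prop:d'-cohom}, or rather Definition~\ref{def:multi}, to the group $G' = \calF_n$ itself. Since $\calF_n$ is a free pro-$|\Gamma|'$ group, $H^2(\calF_n, A) = 0$ and $H^1(\calF_n, A) = \Hom(\calF_n, A)$; after taking $\Gamma$-invariants via the Hochschild--Serre degeneration (using $\gcd(|A|,|\Gamma|)=1$), this gives $H^2(\calF_n\rtimes\Gamma, A) = 0$ and $\dim_{\F_\ell} H^1(\calF_n\rtimes\Gamma, A) = \dim_{\F_\ell} H^1(\calF_n\rtimes\Gamma, A)$. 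Now I bring in Lemma~\ref{lem:H1-calF}, which computes this last dimension as $n\dim_{\F_\ell}(A/A^\Gamma) - \xi(A)$. Substituting into \eqref{eq:multi} (with $G' = \calF_n$, $H^2$-term zero), the numerator becomes
\[
    n\dim_{\F_\ell} A - \xi(A) + 0 - \bigl(n\dim_{\F_\ell}(A/A^\Gamma) - \xi(A)\bigr) = n\dim_{\F_\ell} A - n\dim_{\F_\ell}(A/A^\Gamma) = n\dim_{\F_\ell} A^\Gamma,
\]
so $m(n,\Gamma,\calF_n,A) = n\dim_{\F_\ell} A^\Gamma / \dim_{\F_\ell} h_{\calF_n\rtimes\Gamma}(A)$, as desired. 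The $\xi(A)$ terms cancel cleanly, which is the one arithmetic point worth checking.

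Then I would substitute this into the Lemma~\ref{lem:ad-mult-indep} identity, using that $h_{\calF_n\rtimes\Gamma}(A) = h_{G\rtimes\Gamma}(A)$ because a simple abelian $A$ occurring in the relevant presentations has its $\calF_n\rtimes\Gamma$-action factoring through $G\rtimes\Gamma$ (the conjugation action of the relation module on $A$ is trivial, exactly as noted after Definition~\ref{def:multi} and in the proof of Proposition~\ref{prop:min-gen}). This yields the stated formula. The only genuine subtlety — and the step I would be most careful about — is the bookkeeping of which group a given module "lives over": $A$ is assumed to be a finite simple $G\rtimes\Gamma$-module, and I must make sure that when I feed it into the $\calF_n$-version of the multiplicity formula I am using the module structure obtained by inflation along $\pi_{\ad}: \calF_n\rtimes\Gamma\to G\rtimes\Gamma$ (equivalently along $\varpi$), so that Lemma~\ref{lem:H1-calF} and the endomorphism-algebra identification both apply. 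Everything else is a direct substitution, so the proof is short.
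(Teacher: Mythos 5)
Your proof is correct and follows essentially the same route as the paper: apply Lemma~\ref{lem:ad-mult-indep}, compute $m(n,\Gamma,\calF_n,A)$ by feeding $H^2(\calF_n\rtimes\Gamma,A)=0$ (freeness of $\calF_n$ as a pro-$|\Gamma|'$ group) and the value of $\dim H^1(\calF_n\rtimes\Gamma,A)$ from Lemma~\ref{lem:H1-calF} into the formula of Proposition~\ref{prop:d'-cohom}, and observe the cancellation of the $\xi(A)$ terms. Your extra care about the identification $h_{\calF_n\rtimes\Gamma}(A)=h_{G\rtimes\Gamma}(A)$ is a point the paper passes over silently but is justified exactly as you say, since the $\calF_n\rtimes\Gamma$-action on $A$ is pulled back along $\pi_{\ad}$.
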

	
	\begin{proof}
		By Proposition~\ref{prop:d'-cohom} and Lemma~\ref{lem:H1-calF}, we have
		\[
			m(n, \Gamma, \calF_n , A)= \frac{n\dim_{\F_\ell} A^{\Gamma}  + \dim_{\F_\ell} H^2(\calF_n, A)^\Gamma }{ h_{G\rtimes \Gamma}(A)}.
		\]
		Because $\calF_n$ is a free $|\Gamma|'$-group, we see that $H^2(\calF_n, A)=0$, and then the corollary follows immediately by Lemma~\ref{lem:ad-mult-indep}.
	\end{proof}

	We point out in the next lemma that $A^{\Gamma}$ is strictly smaller than $A$ when $A$ is a nontrivial module.
	
		\begin{lemma}\label{lem:adm-Gamma-fix}
		If $G$ is an admissible $\Gamma$-group and $A$ is a $G\rtimes \Gamma$-group such that $\Gamma$ acts trivially on $A$, then $G\rtimes\Gamma$ acts trivially on $A$. 
	\end{lemma}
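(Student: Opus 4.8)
The plan is to push everything back onto the defining $\Gamma$-generating set of $G$. Write the action of $G\rtimes\Gamma$ on $A$ as a (continuous) homomorphism $\phi\colon G\rtimes\Gamma\to\Aut(A)$; the hypothesis that $\Gamma$ acts trivially on $A$ says precisely that $\Gamma\subseteq\ker\phi$. Since $\Gamma\rtimes$-decomposes $G\rtimes\Gamma$ and $\ker\phi$ already contains $\Gamma$, it suffices to show that $G\subseteq\ker\phi$, i.e. that $G$ acts trivially on $A$.

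First I would record that $\ker\phi\cap G$ is a closed $\Gamma$-subgroup of $G$. It is a closed normal subgroup of $G$ because $\ker\phi$ is a closed normal subgroup of $G\rtimes\Gamma$; and it is $\Gamma$-stable because for $n\in\ker\phi\cap G$ and $\gamma\in\Gamma$ the element $\gamma(n)=\gamma n\gamma^{-1}$ lies in $G$ (as $G$ is a normal $\Gamma$-subgroup of $G\rtimes\Gamma$) and in $\ker\phi$ (as $\ker\phi$ is normal in $G\rtimes\Gamma$).

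Next comes the one-line computation that does the work: for any $g\in G$ and $\gamma\in\Gamma$ we have, inside $G\rtimes\Gamma$, the identity $\gamma(g)=\gamma g\gamma^{-1}$, so
\[
\phi\bigl(\gamma(g)\bigr)=\phi(\gamma)\phi(g)\phi(\gamma)^{-1}=\phi(g),
\]
using $\phi(\gamma)=\id$. Hence $\phi\bigl(g^{-1}\gamma(g)\bigr)=\phi(g)^{-1}\phi\bigl(\gamma(g)\bigr)=\id$, so every element of the form $g^{-1}\gamma(g)$ lies in $\ker\phi\cap G$. Since $G$ is admissible it is $\Gamma$-generated by exactly the elements $\{g^{-1}\gamma(g)\mid g\in G,\ \gamma\in\Gamma\}$, and $\ker\phi\cap G$ is a closed $\Gamma$-subgroup of $G$ containing all of them; therefore $\ker\phi\cap G=G$. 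Combined with $\Gamma\subseteq\ker\phi$ this gives $\ker\phi=G\rtimes\Gamma$, i.e. $G\rtimes\Gamma$ acts trivially on $A$.

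There is essentially no obstacle here; the only point that needs a moment's care is verifying that $\ker\phi\cap G$ is genuinely a $\Gamma$-subgroup of $G$, so that the $\Gamma$-generation clause in the definition of admissibility can be applied. It is worth remarking that the "order prime to $|\Gamma|$'' half of admissibility plays no role — only the $\Gamma$-generation by the elements $g^{-1}\gamma(g)$ is used.
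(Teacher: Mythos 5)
Your proof is correct and is essentially the argument in the paper: the paper shows that any proper normal subgroup of $G\rtimes\Gamma$ containing $\Gamma$ would yield a nontrivial $\Gamma$-quotient of $G$ with trivial $\Gamma$-action, contradicting admissibility, while you run the same computation on the kernel side, checking that $\ker\phi\cap G$ is a closed $\Gamma$-subgroup containing all the elements $g^{-1}\gamma(g)$. The two are the quotient and kernel formulations of one and the same argument, so nothing further is needed.
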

	
	\begin{proof}
		The $G\rtimes\Gamma$ action on $A$ induces a group homomorphism $G\rtimes \Gamma \to \Aut(A)$. So it suffices to show that $\Gamma$ is not contained in any proper normal subgroup of $G\rtimes \Gamma$.
		Suppose $M$ is a proper normal subgroup containing $\Gamma$. Then $B:=(G\rtimes\Gamma)/M$ is a $\Gamma$-quotient of $G$ and $\Gamma$ acts trivially on $B$. However, $G$ is admissible, so is generated by elements $g^{-1} \gamma(g)$ for $g\in G$ and $\gamma \in \Gamma$. Then the images of all $g^{-1}\gamma(g)$ in the $\Gamma$-quotient $B$ generate $B$ but each of these images is $1$, and hence we obtain the contradiction.
	\end{proof}

\section{Presentations of finitely generated profinite $\Gamma$-groups of level $\calC$}\label{sect:pro-C}

	Let $\calC$ be a set of isomorphism classes of finite $\Gamma$-groups. The \emph{variety of $\Gamma$-groups generated by $\calC$} is defined to be the smallest set $\overline{\calC}$ of isomorphism classes of $\Gamma$-groups containing $\calC$ that is closed under taking finite direct products, $\Gamma$-quotients and $\Gamma$-subgroups. For a given $\Gamma$-group $G$, we define the pro-$\calC$ completion of $G$ to be
	\[
		G^{\calC} = \varprojlim_{M} G/M,
	\]
	where the inverse limit runs over all closed normal $\Gamma$-subgroups $M$ of $G$ such that the $\Gamma$-group $G/M$ is contained in $\overline{\calC}$. We call a $\Gamma$-group $G$ \emph{level $\calC$} if $G^{\calC}=G$.
	
	We want to emphasis that we do not require $\overline{\calC}$ to be closed under taking group extensions, and it is different to many works in the literature about completions of groups. For example, if we let $\calC$ to be the set containing only the group $\Z/\ell\Z$ with the trivial $\Gamma$ action, then $G^{\calC}$ is the maximal quotient of $G$ that is isomorphic to a direct product of $\Z/\ell\Z$ on which $\Gamma$ acts trivially. If we want $G^{\calC}$ to give us the pro-$\ell$ completion of $G$, then we need to let $\calC$ contain all the finite $\Gamma$-groups of order a power of $\ell$.

	\begin{lemma}\label{lem:pro-C-completion}
		Let $F, G$ be $\Gamma$-groups and $\omega: F \to G$ a $\Gamma$-equivariant surjection. Let $\calC$ be a set of isomorphism classes of finite $\Gamma$-groups, and $\varphi$ the pro-$\calC$ completion map $F\to F^{\calC}$. Then we have the following commutative diagram of $\Gamma$-equivariant surjections
		\[\begin{tikzcd}
			F \arrow["\omega", two heads]{r}\arrow["\varphi", two heads]{d} & G \arrow[two heads]{d} \\
			F^{\calC} \arrow["\omega^\calC", two heads]{r} & G^{\calC},
		\end{tikzcd}\]
		where $\omega^{\calC}$ is the map taking quotient of $F^\calC$ by $\varphi(\ker \omega)$. 
	\end{lemma}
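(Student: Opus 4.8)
The plan is to produce the lower map $\omega^{\calC}$ and the right vertical map directly from the universal property of the pro-$\calC$ completion, and then verify commutativity and the explicit description of $\ker(\omega^{\calC})$. First I would recall that $F^{\calC} = \varprojlim_M F/M$, where $M$ ranges over the closed normal $\Gamma$-subgroups of $F$ with $F/M \in \overline{\calC}$, so that $\varphi\colon F\to F^{\calC}$ is initial among $\Gamma$-equivariant maps from $F$ to a level-$\calC$ $\Gamma$-group. Applying this to the composite $F \overset{\omega}{\to} G \to G^{\calC}$ (the second map being the pro-$\calC$ completion of $G$), which lands in the level-$\calC$ group $G^{\calC}$, yields a unique $\Gamma$-equivariant map $\omega^{\calC}\colon F^{\calC} \to G^{\calC}$ making the square commute. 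Surjectivity of $\omega^{\calC}$ is automatic: its image is a closed $\Gamma$-subgroup of $G^{\calC}$ containing the image of $G$ (which is dense, being the pro-$\calC$ completion map composed with the surjection $\omega$), hence all of $G^{\calC}$.

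Next I would identify $\ker(\omega^{\calC})$ with $\overline{\varphi(\ker\omega)}$, the closure of $\varphi(\ker\omega)$ in $F^{\calC}$ (which is what the statement means by ``$\varphi(\ker\omega)$'' since in the profinite setting subgroups are taken closed). The inclusion $\overline{\varphi(\ker\omega)}\subseteq \ker(\omega^{\calC})$ is clear from the commutative square. For the reverse inclusion, set $N = \overline{\varphi(\ker\omega)}$, a closed normal $\Gamma$-subgroup of $F^{\calC}$, and observe that $F^{\calC}/N$ is a level-$\calC$ $\Gamma$-group (a $\Gamma$-quotient of a level-$\calC$ group is level $\calC$, since $\overline{\calC}$ is closed under $\Gamma$-quotients). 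The surjection $\omega$ descends to a $\Gamma$-equivariant surjection $G \to F^{\calC}/N$ because $\ker\omega$ maps into $N$; by the universal property of $G^{\calC}$ this factors through $G^{\calC}$, giving a $\Gamma$-equivariant surjection $G^{\calC}\to F^{\calC}/N$ whose composite with $\omega^{\calC}$ is the quotient map $F^{\calC}\to F^{\calC}/N$ (one checks this on the dense image of $F$). Hence $\ker(\omega^{\calC})\subseteq N$, completing the identification.

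The only genuinely delicate point — and the step I expect to require the most care — is the bookkeeping of closures: one must be careful that $\varphi(\ker\omega)$ need not itself be closed in $F^{\calC}$, so every statement identifying $\ker(\omega^{\calC})$ must use the closure, and one must confirm that a quotient of $F^{\calC}$ by a closed normal $\Gamma$-subgroup genuinely lies in $\overline{\calC}$ rather than merely being an inverse limit of such — here the hypothesis that $\overline{\calC}$ is closed under $\Gamma$-quotients (and the fact that any profinite quotient is realized as such an inverse limit, each finite piece being a $\Gamma$-quotient of a finite piece of $F^{\calC}\in\overline{\calC}$) is exactly what is needed. Everything else is a formal diagram chase through universal properties.
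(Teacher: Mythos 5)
Your argument is correct and is essentially the paper's proof run in the opposite direction: the paper defines $\omega^{\calC}$ as the quotient of $F^{\calC}$ by $\varphi(\ker\omega)$ and then identifies its image with $G^{\calC}$ via the universal property of $G^{\calC}$, while you define $\omega^{\calC}$ via the universal property of $F^{\calC}$ and then identify its kernel with $\varphi(\ker\omega)$ --- both directions rest on the same two facts, namely that $\Gamma$-quotients of level-$\calC$ groups are level $\calC$ and that maps to level-$\calC$ groups factor through the pro-$\calC$ completion. One minor simplification: since all groups in the paper are profinite, $\ker\omega$ is closed hence compact and $\varphi$ is continuous, so $\varphi(\ker\omega)$ is already closed in $F^{\calC}$ and the closure bookkeeping you flag as the delicate point is not actually needed.
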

	
	\begin{proof}
		By the set-up, $\im \omega^{\calC}$ naturally fits into the right-lower position of this diagram, so it's enough to show that $\im \omega^{\calC} \simeq G^{\calC}$. First, $\im \omega^{\calC}$ is a quotient of $G$ and a quotient of $\calF^{\calC}$, so it is of level $\calC$ and hence is a quotient of $G^{\calC}$. On the other hand, we consider the natural pro-$\calC$ completion map $\alpha: G\to G^{\calC}$, and the composition $\alpha \circ \omega: F \to G^{\calC}$. Because $G^{\calC}$ is of level $\calC$, it follows that $\ker (\alpha \circ \omega) \supseteq \ker \varphi$. Also, because $\ker \omega \subseteq \ker (\alpha\circ \omega)$, we have that $\im(\alpha \circ \omega)=G^{\calC}$ is a quotient of $F/(\ker\omega \ker \varphi)= (F/\ker\varphi) / (\ker \omega / \ker \omega \cap \ker \varphi)= F^{\calC}/ \ker \omega^{\calC}=\im \omega^{\calC}$. So we proved that $\im \omega^{\calC} \simeq G^{\calC}$.
	\end{proof}
	
	\begin{definition}\label{def:pro-C-map}
		For any $\Gamma$-equivariant surjection $\omega: F \to G$, we define the pro-$\calC$ completion of $\omega$ to be $\omega^{\calC}: F^{\calC} \to G^{\calC}$ in Lemma~\ref{lem:pro-C-completion}.
	\end{definition}

	\begin{corollary}\label{cor:mult-ineq}
		Under the assumptions in Lemma~\ref{lem:pro-C-completion}, for any finite simple $G^{\calC}\rtimes \Gamma$-module $A$, we have $m(\omega^{\calC}, \Gamma, G^{\calC}, A) \leq m(\omega, \Gamma, G, A)$. 
	\end{corollary}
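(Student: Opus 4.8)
The plan is to identify each multiplicity with the $\F_\ell$-dimension of a $\Gamma$-equivariant Hom-space and then exploit the surjectivity of the pro-$\calC$ completion map. Write $N=\ker\omega$ and $N'=\ker\omega^{\calC}$, and let $\ell$ be the exponent of $A$, which is a prime since $A$ is a finite simple module. By Lemma~\ref{lem:pro-C-completion} the map $\omega^{\calC}$ is the quotient of $F^{\calC}$ by $\varphi(N)$, so $\varphi$ restricts to a $\Gamma$-equivariant surjection $\varphi|_N\colon N\twoheadrightarrow N'$, and this surjection intertwines the conjugation action of $F\rtimes\Gamma$ on $N$ with the conjugation action of $F^{\calC}\rtimes\Gamma$ on $N'$ via the natural surjection $F\rtimes\Gamma\to F^{\calC}\rtimes\Gamma$. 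Note also that $N$ and $N'$ act trivially on $A$, since $A$ is inflated from $G\rtimes\Gamma$ and from $G^{\calC}\rtimes\Gamma$ respectively.

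Next I would record the Hom-space description of the multiplicity. Exactly as in the computation in the proof of Lemma~\ref{lem:d-cohom}, every nonzero $F\rtimes\Gamma$-equivariant homomorphism $N\to A$ is surjective with kernel a maximal proper $F\rtimes\Gamma$-normal subgroup of $N$, hence contains the subgroup $M$ (the intersection of all maximal proper $F\rtimes\Gamma$-normal subgroups of $N$) from Definition~\ref{def:mult-general}. Therefore $\Hom_{F\rtimes\Gamma}(N,A)=\Hom_{F\rtimes\Gamma}(N/M,A)$, and since $N/M$ is a direct sum of finite irreducible $(F/M)\rtimes\Gamma$-groups in which $A$ occurs with multiplicity $m(\omega,\Gamma,G,A)$, Schur's lemma gives
\[
\dim_{\F_\ell}\Hom_{F\rtimes\Gamma}(N,A)=m(\omega,\Gamma,G,A)\cdot h_{G\rtimes\Gamma}(A),
\]
and likewise $\dim_{\F_\ell}\Hom_{F^{\calC}\rtimes\Gamma}(N',A)=m(\omega^{\calC},\Gamma,G^{\calC},A)\cdot h_{G^{\calC}\rtimes\Gamma}(A)$. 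Here one also uses that $h_{G\rtimes\Gamma}(A)=h_{G^{\calC}\rtimes\Gamma}(A)$: the $G\rtimes\Gamma$-action on $A$ factors through $G^{\calC}\rtimes\Gamma$, so the two rings of equivariant endomorphisms of $A$ coincide.

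The proof then finishes by pulling back along $\varphi|_N$. Precomposition $f\mapsto f\circ\varphi|_N$ defines an $\F_\ell$-linear map $\Hom_{F^{\calC}\rtimes\Gamma}(N',A)\to\Hom_{F\rtimes\Gamma}(N,A)$, where $A$ is regarded as an $F\rtimes\Gamma$-module through $F\rtimes\Gamma\to F^{\calC}\rtimes\Gamma\to G^{\calC}\rtimes\Gamma$; it is injective precisely because $\varphi|_N$ is surjective. Hence $\dim_{\F_\ell}\Hom_{F^{\calC}\rtimes\Gamma}(N',A)\le\dim_{\F_\ell}\Hom_{F\rtimes\Gamma}(N,A)$, and dividing both sides by the common positive integer $h_{G\rtimes\Gamma}(A)$ yields $m(\omega^{\calC},\Gamma,G^{\calC},A)\le m(\omega,\Gamma,G,A)$ (with the usual convention if either multiplicity is infinite).

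I do not expect a genuine obstacle; the step needing the most care is the bookkeeping of group actions — verifying that $\varphi|_N$ is equivariant after transport along $F\rtimes\Gamma\to F^{\calC}\rtimes\Gamma$, that the $F\rtimes\Gamma$-module structure on $A$ used on the target of the pullback is the correct inflated one, and that the three occurrences of the endomorphism dimension $h_{G\rtimes\Gamma}(A)=h_{G^{\calC}\rtimes\Gamma}(A)=h_{(F/M)\rtimes\Gamma}(A)$ really coincide. A more hands-on alternative, which I would fall back on if the Hom-space formula caused trouble, is to prove directly that $\varphi$ carries $M$ into the corresponding subgroup $M'$ for $N'$ — the preimage in $N$ of a maximal proper $F^{\calC}\rtimes\Gamma$-normal subgroup of $N'$ is again maximal proper as an $F\rtimes\Gamma$-normal subgroup of $N$ — which produces an $F\rtimes\Gamma$-equivariant surjection $N/M\twoheadrightarrow N'/M'$; comparing $A$-isotypic components then gives the inequality, at the cost of having to discuss separately the non-abelian and prime-to-$\ell$ irreducible summands of $N/M$.
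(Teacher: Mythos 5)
Your proof is correct. The paper's own argument is the one-sentence version of what you offer as a fallback: since $\ker \omega^{\calC}=\varphi(\ker\omega)$ by Lemma~\ref{lem:pro-C-completion}, the preimage under $\varphi$ of any maximal proper $F^{\calC}\rtimes\Gamma$-normal subgroup of $\ker\omega^{\calC}$ with quotient $A$ is a maximal proper $F\rtimes\Gamma$-normal subgroup of $\ker\omega$ with quotient $A$, and this assignment is injective, so the multiplicity can only drop under completion. Your main route --- identifying $m(\omega,\Gamma,G,A)\cdot h_{G\rtimes\Gamma}(A)$ with $\dim_{\F_\ell}\Hom_{F\rtimes\Gamma}(\ker\omega,A)$ and then using injectivity of pullback along the surjection $\ker\omega\twoheadrightarrow\ker\omega^{\calC}$ --- is the dual formulation of the same fact, and the identification you use is exactly the one the paper already establishes inside the proof of Lemma~\ref{lem:d-cohom}. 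The Hom-space version costs some Schur-lemma bookkeeping (that non-$A$ irreducible summands of $N/M$ contribute nothing, that nonzero equivariant maps to the irreducible $A$ are surjective with kernel containing $M$, and that $h_{G\rtimes\Gamma}(A)=h_{G^{\calC}\rtimes\Gamma}(A)$ because the action factors through $G^{\calC}\rtimes\Gamma$); all of this is correct as you wrote it, but it is avoided entirely by the subgroup-preimage argument. Both versions handle a possibly infinite multiplicity equally well.
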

	
	\begin{proof}
		By definition of $\omega^{\calC}$,  $\ker \omega^{\calC}$ is the quotient of $\ker \varphi$ by $\ker \omega \cap \ker \varphi$, and we denote this quotient map by $\phi: \ker \omega \to \ker \omega^{\calC}$.
		If $N$ is a maximal proper $F^{\calC} \rtimes \Gamma$-normal subgroup of $\ker \omega^{\calC}$ such that $\ker \omega^{\calC}/N \simeq A$ as $G^{\calC}\rtimes \Gamma$-modules, then its preimage $\phi^{-1}(N)$ in $F$ is a maximal proper $F \rtimes \Gamma$-normal subgroup of $\ker \omega$ with $\ker \omega/\phi^{-1}(N) \simeq A$. Then the corollary follows by the definition of the multiplicity.
	\end{proof}

	\begin{proposition}\label{prop:C-mult-compute}
		Let $G$ be an admissible $\Gamma$-group, $\calC$ a set of isomorphism classes of finite $\Gamma$-groups and $A$ a finite simple $G^{\calC} \rtimes \Gamma$-module with $\gcd(|A|, |\Gamma|)=1$. Then, for a fixed positive integer $n$ such that there exists an admissible $\Gamma$-presentation of $G$ as \eqref{eq:admiss-presentation}, the multiplicity $m(\pi_{\ad}^{\calC}, \Gamma, G^{\calC}, A)$ does not depend on the choice of $\pi_{\ad}$, and so we denote $m(\pi_{\ad}^{\calC}, \Gamma, G^{\calC}, A)$ by $m_{\ad}^{\calC}(n, \Gamma, G, A)$. Then, we have 
		\[
			m_{\ad}^{\calC}(n, \Gamma, G, A) \leq m_{\ad}(n, \Gamma, G, A).
		\]
		Moreover, if $m_{\ad}(n, \Gamma, G, A)$ is finite, then the equality holds for sufficiently large $\calC$.
	\end{proposition}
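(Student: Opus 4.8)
The plan is to reduce the whole statement to the inflation--restriction computation of Lemma~\ref{lem:d-cohom}, applied to the pro-$\calC$ completion of the admissible presentation, and then to isolate the single term on the right-hand side that could \emph{a priori} depend on the choice of $\pi_{\ad}$.

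First I would fix an admissible $\Gamma$-presentation \eqref{eq:admiss-presentation}, put $\omega:=\pi_{\ad}|_{\calF_n}\colon\calF_n\to G$, and let $\omega^{\calC}\colon\calF_n^{\calC}\to G^{\calC}$ be its pro-$\calC$ completion as in Lemma~\ref{lem:pro-C-completion}; by that lemma the kernel $N^{\calC}$ of $\pi_{\ad}^{\calC}$ equals $\varphi(\ker\omega)$, where $\varphi\colon\calF_n\to\calF_n^{\calC}$. Since $A$ is inflated from $G^{\calC}\rtimes\Gamma$, the group $N^{\calC}$ acts trivially on $A$ and $H^1(N^{\calC},A)^{G^{\calC}\rtimes\Gamma}=\Hom_{\calF_n^{\calC}\rtimes\Gamma}(N^{\calC},A)$ has $\F_\ell$-dimension $m(\pi_{\ad}^{\calC},\Gamma,G^{\calC},A)\cdot h_{G^{\calC}\rtimes\Gamma}(A)$, just as in the proof of Lemma~\ref{lem:d-cohom}. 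Feeding this into the inflation--restriction sequence of $1\to N^{\calC}\to\calF_n^{\calC}\rtimes\Gamma\to G^{\calC}\rtimes\Gamma\to1$ yields $m(\pi_{\ad}^{\calC},\Gamma,G^{\calC},A)\cdot h_{G^{\calC}\rtimes\Gamma}(A)=\dim_{\F_\ell}H^1(\calF_n^{\calC}\rtimes\Gamma,A)-\dim_{\F_\ell}H^1(G^{\calC}\rtimes\Gamma,A)+\dim_{\F_\ell}K$, where $K:=\ker\bigl(H^2(G^{\calC}\rtimes\Gamma,A)\to H^2(\calF_n^{\calC}\rtimes\Gamma,A)\bigr)$ is the kernel of inflation along $\pi_{\ad}^{\calC}$. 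The first two terms on the right depend only on $n,\Gamma,G,A$ and $\calC$, so independence of $\pi_{\ad}$ comes down to that of $K$.

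The step I expect to be the main obstacle is the identification of $K$. A class $c\in H^2(G^{\calC}\rtimes\Gamma,A)$ lies in $K$ iff $\calF_n^{\calC}\rtimes\Gamma\to G^{\calC}\rtimes\Gamma$ lifts to the extension attached to $c$. As $\calF_n$ is a free pro-$|\Gamma|'$ group and $\gcd(|A|,|\Gamma|)=1$, we have $H^2(\calF_n\rtimes\Gamma,A)=0$, so $\calF_n\rtimes\Gamma\to G^{\calC}\rtimes\Gamma$ always lifts; after normalising such a lift on the $\Gamma$-part (again using $\gcd(|A|,|\Gamma|)=1$) and invoking simplicity of $A$, the image of the lift is either a complement to $A$ (so $c=0$) or the whole extension, and in the latter case the lift factors through $\varphi$ exactly when the $\Gamma$-group extension of $G^{\calC}$ by $A$ attached to $c$ is of level $\calC$. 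Hence $K$ is the span of the classes of the level-$\calC$ extensions of $G^{\calC}$ by $A$, which depends only on $G$, $A$ and $\calC$; this legitimises the notation $m_{\ad}^{\calC}(n,\Gamma,G,A)$. Disentangling the $\rtimes\Gamma$-bookkeeping (passing from the extension to its $\Gamma$-group part over $G^{\calC}$, and checking $\Gamma$-equivariance of the lifts) and making the simplicity dichotomy precise is where the real work lies.

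The inequality $m_{\ad}^{\calC}(n,\Gamma,G,A)\le m_{\ad}(n,\Gamma,G,A)$ is then immediate from Corollary~\ref{cor:mult-ineq} applied to $\omega$, regarding $A$ as a $G\rtimes\Gamma$-module via $G\to G^{\calC}$, because $m(\omega,\Gamma,G,A)=m(\pi_{\ad},\Gamma,G,A)=m_{\ad}(n,\Gamma,G,A)$. For the last assertion, set $m:=m_{\ad}(n,\Gamma,G,A)<\infty$ (independent of $\pi_{\ad}$ by Lemma~\ref{lem:ad-mult-indep}); inside $\ker\omega$ let $M'$ be the intersection of the maximal proper $\calF_n\rtimes\Gamma$-normal subgroups $U$ with $\ker\omega/U\simeq_{G\rtimes\Gamma}A$, so $\ker\omega/M'\cong A^m$ and $\widetilde G:=\calF_n/M'$ is a $\Gamma$-group fitting into $1\to A^m\to\widetilde G\to G\to1$ through which $\omega$ factors. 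Enlarging $\calC$ to contain the original set together with all finite $\Gamma$-quotients of $\widetilde G$ makes $\widetilde G$ of level $\calC$ and forces $G^{\calC}=G$, while $A$ remains a simple $G^{\calC}\rtimes\Gamma$-module; then $\calF_n\twoheadrightarrow\widetilde G$ factors through $\calF_n^{\calC}$, and by functoriality of the pro-$\calC$ completion $\pi_{\ad}^{\calC}$ is the composite $\calF_n^{\calC}\twoheadrightarrow\widetilde G\twoheadrightarrow G^{\calC}$. Therefore $N^{\calC}=\ker\pi_{\ad}^{\calC}$ maps $\calF_n^{\calC}\rtimes\Gamma$-equivariantly onto $\ker(\widetilde G\to G)=A^m$, so $A$ occurs with multiplicity at least $m$ in the largest semisimple quotient of $N^{\calC}$, i.e. $m_{\ad}^{\calC}(n,\Gamma,G,A)\ge m$; together with the inequality this gives equality for all sufficiently large $\calC$.
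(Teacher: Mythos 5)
Your inequality step and your ``sufficiently large $\calC$'' step are both sound: the former is exactly the paper's use of Corollary~\ref{cor:mult-ineq}, and the latter is a more explicit version of the paper's one-line argument (you build the extension $1\to A^m\to\widetilde G\to G\to1$ and enlarge $\calC$ so that $\widetilde G$ becomes level $\calC$; the paper instead just observes $m_{\ad}=\sup_{\calD}m_{\ad}^{\calD}$ and uses monotonicity). The problem is in the independence argument, which is also where your route diverges most from the paper's.

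After writing $m(\pi_{\ad}^{\calC},\Gamma,G^{\calC},A)\cdot h_{G^{\calC}\rtimes\Gamma}(A)=\dim_{\F_\ell}H^1(\calF_n^{\calC}\rtimes\Gamma,A)-\dim_{\F_\ell}H^1(G^{\calC}\rtimes\Gamma,A)+\dim_{\F_\ell}K$, you assert that the first term ``depends only on $n,\Gamma,G,A$ and $\calC$.'' But the $\calF_n^{\calC}\rtimes\Gamma$-module structure on $A$ is the pullback of the $G^{\calC}\rtimes\Gamma$-structure along $\pi_{\ad}^{\calC}$ itself, so $H^1(\calF_n^{\calC}\rtimes\Gamma,A)$ is \emph{a priori} a function of the chosen presentation. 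For $F_n\rtimes\Gamma$ and $\calF_n\rtimes\Gamma$ this issue disappears because the universal property of the (relatively) free object yields the closed formulas $n\dim_{\F_\ell}A-\xi(A)$ and $n\dim_{\F_\ell}(A/A^{\Gamma})-\xi(A)$ of Lemmas~\ref{lem:d-cohom} and~\ref{lem:H1-calF}; but $\calF_n^{\calC}$ is not projective in the relevant sense (that is precisely why your correction term $K$ can be nonzero), and no such formula is available. If you try to compute $\dim H^1(\calF_n^{\calC}\rtimes\Gamma,A)$ by counting $\Gamma$-equivariant sections of $A\rtimes\calF_n^{\calC}\to\calF_n^{\calC}$, you find that the count is $|A/A^{\Gamma}|^n$ only when $A\rtimes\calF_n^{\calC}$ is itself of level $\calC$ — a condition on the module structure, hence on $\pi_{\ad}^{\calC}$ — and otherwise you are counting complement-type lifts, which brings you back to the quantity you are trying to control. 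So your reduction ``independence comes down to that of $K$'' is not justified; even granting your (plausible) identification of $K$ with the span of classes of level-$\calC$ extensions, the equation only shows that $m$ is presentation-independent if and only if $\dim H^1(\calF_n^{\calC}\rtimes\Gamma,A)$ is. The paper avoids this entirely: it writes $\calC=\bigcup_i\calC_i$ with $\calC_i$ finite, uses Corollary~\ref{cor:mult-ineq} to get $m(\pi_{\ad}^{\calC},\Gamma,G^{\calC},A)=\lim_i m(\pi_{\ad}^{\calC_i},\Gamma,G^{\calC_i},A)$, and invokes \cite[Remark~4.9]{LWZB} for the finite-$\calC_i$ case, where a Gasch\"utz-type statement shows that any two admissible presentations of $G^{\calC_i}$ differ by an automorphism of $\calF_n^{\calC_i}$. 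Some input of that kind is what your argument is missing.
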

	
	\begin{proof}		
	    Since $A$ is finite, we can find a finite set $\calC_1 \subset \calC$ of isomorphism classes of finite $\Gamma$-groups such that the map $G^{\calC}\rtimes \Gamma\to \Aut(A)$ induced by the $G^{\calC}\rtimes \Gamma$ action on $A$ factors through $G^{\calC_1} \rtimes \Gamma$, and hence $A$ is a simple $G^{\calC_1}\rtimes \Gamma$-module. Let $\calC_1 \subset \calC_2 \subset \cdots$ be an ascending sequence of finite sets of isomorphism classes of finite $\Gamma$-groups with $\cup \calC_i =\calC$. For each $i\leq j$, we have that $m(\pi_{\ad}^{\calC_i}, \Gamma, G^{\calC_i}, A) \leq m(\pi_{\ad}^{\calC_j}, \Gamma, G^{\calC_j}, A) \leq m(\pi_{\ad}^{\calC}, \Gamma, G^{\calC}, A)$ by Corollary~\ref{cor:mult-ineq}, and hence 
	    \[
	        m(\pi_{\ad}^{\calC}, \Gamma, G^{\calC}, A) = \lim_{i\to \infty} m(\pi_{\ad}^{\calC_i}, \Gamma, G^{\calC_i}, A).
	   \]
	   Since $\calC_i$ is a finite set of $\Gamma$-groups, \cite[Rmk.~4.9]{LWZB} shows that the multiplicity $m(\pi_{\ad}^{\calC_i}, \Gamma, G^{\calC_i}, A)$ does not depend on the choice of $\pi_{\ad}$. So we obtained that $m(\pi_{\ad}^{\calC}, \Gamma, G^{\calC}, A)$ also does not depend on the choice of $\pi_{\ad}$. The inequality in the proposition follows by $m(\pi_{\ad}^{\calC}, \Gamma, G^{\calC}, A) \leq m(\pi_{\ad}, \Gamma, G, A)$.
	   
	   The last statement in the proposition then automatically follows because 
		\[
			m_{\ad}(n, \Gamma, G, A) = \sup_{\substack{\calD: \text{ finite set} \\ \text{of $\Gamma$-groups}}} m^{\calD}_{\ad}(n,\Gamma, G, A).
		\]
	\end{proof}

\section{The heights of pro-$\calC$ groups}\label{sect:height}
	
	\begin{definition}
	     For a finite group $H$, we define $\frakh(H)$ to be the smallest integer $n$ such that there exists a length-$n$ sequence of normal subgroups of $H$
	     \[
	        1=H_0 \lhd H_1 \lhd \cdots \lhd H_n=H,
	     \]
	     where $H_{i+1}/H_i$ is isomorphic to a direct product of minimal normal subgroups of $H/H_i$.
	     We define the height of $H$ to be
	     \[
	        \widehat{\frakh}(H)=\max\{\frakh(U) \mid \text{$U$ is a subquotient of $H$}\}.
	     \]
	     For a profinite group $H$, the height is defined as 
	     \[
	        \widehat{\frakh}(H)=\sup_{\substack{\text{$U$: finite}\\ \text{quotient of $H$}}} \widehat{\frakh}(U).
	     \]
	\end{definition}
	
	\begin{lemma}\label{lem:h-directproduct}
	    Let $G$ and $H$ be two finite groups. Then $\widehat{\frakh}(G\times H) \leq \max\{\widehat{\frakh}(G), \widehat{\frakh}(H)\}$.
	\end{lemma}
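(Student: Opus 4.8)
The plan is to prove the bound $\widehat{\frakh}(G\times H)\le\max\{\widehat{\frakh}(G),\widehat{\frakh}(H)\}$ by reducing, via the definition of height, to a statement about the invariant $\frakh$ of an arbitrary \emph{subquotient} of $G\times H$, and then analyzing the structure of such a subquotient. The first observation I would record is that a subquotient of $G\times H$ need not itself be a direct product of a subquotient of $G$ and a subquotient of $H$ (subgroups of $G\times H$ can be ``diagonal''), so one cannot simply multiply the two chains coordinatewise; this is where the argument has content. To handle it, given a subquotient $U$ of $G\times H$, I would fix a subgroup $K\le G\times H$ and a normal subgroup $L\lhd K$ with $U=K/L$, and let $G_0,H_0$ be the images of $K$ under the two projections, $G_1,H_1\lhd G_0,H_0$ the images of $L$. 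Then $\bar G:=G_0/G_1$ is a subquotient of $G$ and $\bar H:=H_0/H_1$ a subquotient of $H$, and $U$ embeds as a subdirect product into $\bar G\times\bar H$ with \emph{surjective} projections onto both factors; equivalently (Goursat's lemma) $U$ sits in a short exact sequence with both $U\twoheadrightarrow\bar G$ and $U\twoheadrightarrow\bar H$.

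Next I would build a normal series of $U$ witnessing $\frakh(U)\le\max\{\frakh(\bar G),\frakh(\bar H)\}\le\max\{\widehat{\frakh}(G),\widehat{\frakh}(H)\}$. Write $m=\frakh(\bar G)$ and $n=\frakh(\bar H)$ and fix the defining series $1=\bar G_0\lhd\cdots\lhd\bar G_m=\bar G$ and $1=\bar H_0\lhd\cdots\lhd\bar H_n=\bar H$, each successive quotient a product of minimal normal subgroups of the corresponding quotient. Pulling the $\bar G_i$ back along $U\twoheadrightarrow\bar G$ and the $\bar H_j$ back along $U\twoheadrightarrow\bar H$ gives two chains of normal subgroups of $U$, from $\ker(U\to\bar G)$ down to $1$ and from $\ker(U\to\bar H)$ down to $1$ respectively; the key point is that $\ker(U\to\bar G)$ and $\ker(U\to\bar H)$ are normal subgroups of $U$ whose product is all of $U$ (because both projections are onto) and whose intersection is trivial (a subdirect product with trivial kernel on one side cannot have the other kernel nontrivial without contradicting surjectivity — more precisely $\ker(U\to\bar G)\cap\ker(U\to\bar H)=1$ since an element killed by both projections is the identity in $\bar G\times\bar H$ and $U\hookrightarrow\bar G\times\bar H$). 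Hence $U\cong\ker(U\to\bar G)\times\ker(U\to\bar H)$ as groups, with $\ker(U\to\bar G)\cong\bar H$ and $\ker(U\to\bar H)\cong\bar G$. So $U$ really is a genuine direct product $\bar G\times\bar H$, and I would then interleave the two given series for the factors to produce a series for $U$ of length $\max\{m,n\}$ whose successive quotients are products of minimal normal subgroups: at step $k$ take $(\bar G_k\cap\bar G)\times(\bar H_k\cap\bar H)$ (with the convention $\bar G_k=\bar G$ for $k>m$, etc.), using that a minimal normal subgroup of $X\times Y$ is a minimal normal subgroup of one factor (when the factors have coprime-ish structure, or in general since a minimal normal subgroup is characteristically simple and projects to a normal subgroup of each factor), and that a product of minimal normals of $X$ times a product of minimal normals of $Y$ is a product of minimal normals of $X\times Y$.

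The step I expect to be the main obstacle is the structural claim that a subquotient $U$ of $G\times H$ is forced to be an honest direct product $\bar G\times\bar H$ — i.e. that there is no ``diagonal obstruction'' — and, relatedly, the bookkeeping that minimal normal subgroups of a direct product decompose along the factors (so that a chain with product-of-minimal-normal quotients in each factor splices to one with the same property in the product). The cleanest route is Goursat's lemma together with the remark that $U\hookrightarrow\bar G\times\bar H$ with both projections surjective, combined with the fact that $\ker(U\to\bar G)\cap\ker(U\to\bar H)=1$; once $U\cong\bar G\times\bar H$ is established the rest is the routine interleaving. I would also note that the profinite case is immediate from the finite case since $\widehat{\frakh}$ of a profinite group is the supremum of $\widehat{\frakh}$ over finite quotients, and a finite quotient of $G\times H$ is a quotient of $(\text{finite quotient of }G)\times(\text{finite quotient of }H)$; but since the lemma as stated is for finite $G,H$, this remark is not strictly needed here.
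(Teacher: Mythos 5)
There is a genuine gap, and it sits exactly at the step you flagged as the main obstacle. From the facts that $U\hookrightarrow\bar G\times\bar H$ has both projections surjective and that $\ker(U\to\bar G)\cap\ker(U\to\bar H)=1$, you conclude that the product of the two kernels is all of $U$ and hence that $U\cong\bar G\times\bar H$. Surjectivity of the projections does \emph{not} give $\ker(U\to\bar G)\cdot\ker(U\to\bar H)=U$: the diagonal subgroup $\Delta=\{(g,g)\}\le G\times G$ is a subdirect product with both projections onto and both kernels trivial, yet $\Delta\cong G$, not $G\times G$. Goursat's lemma, which you invoke, says precisely the opposite of what you use it for: a subdirect product $U\le\bar G\times\bar H$ corresponds to an isomorphism $\bar G/N_2\cong\bar H/N_1\cong U/(N_1N_2)$ (with $N_i$ the images of the kernels), and $U$ is the full direct product only when this common quotient is trivial. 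So the ``no diagonal obstruction'' claim is false, and the interleaving argument that follows has nothing to stand on; for a genuinely diagonal $U$ one would instead have to control $\frakh$ of a nontrivial extension of the Goursat quotient by $N_1N_2$, which your argument does not do.

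The paper avoids this entirely. It first reduces to subgroups $V\le G\times H$ (using $\frakh(\text{quotient})\le\frakh(\text{group})$), embeds $V$ as a subdirect product of $V_G\times V_H$, and then intersects $V$ with the termwise product chain $V_{G,i}\times V_{H,i}$ — never claiming that $V$ splits. The substantive point is local: if $A$ is a minimal normal subgroup of $V_G$, then $(A\times 1)\cap V$ is normal in $V$, projects isomorphically to a normal subgroup of $V_G$ contained in $A$, hence is either trivial or equal to $A\times 1$, and in the latter case is minimal normal in $V$. That is the argument you need in place of the false splitting; with it, the chain $V_i=V\cap(V_{G,i}\times V_{H,i})$ has successive quotients that are products of minimal normal subgroups of $V/V_i$, which gives $\frakh(V)\le\max\{\widehat{\frakh}(G),\widehat{\frakh}(H)\}$ by induction.
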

	
	\begin{proof}
	    It suffices to show that $\frakh(U) \leq \max\{\widehat{\frakh}(G), \widehat{\frakh}(H)\}$ for any subquotient $U$ of $G\times H$. Each subquotient $U$ of $G\times H$ is a quotient of a subgroup $V$ of $G\times H$. Then because a sequence of normal subgroups of $V$ induces a sequence of normal subgroups of $U$, and a minimal normal subgroup is mapped to a product of minimal normal subgroups or the trivial subgroup under any quotient map.
	    We see that
	    $\frakh(U) \leq \frakh(V)$,  so we only need to show that $\frakh(V) \leq \max\{\widehat{\frakh}(G), \widehat{\frakh}(H)\}$ for any subgroup $V\subset G\times H$.
	    
	    We let $\Proj_{G}$ and $\Proj_{H}$ be the projections mapping $G\times H$ to $G$ and $H$ respectively, and denote $V_G=\Proj_G(V)$, $V_H=\Proj_H(V)$ and $\overline{V}=V/(\ker (\Proj_G) \ker (\Proj_H))$. Then $\Proj_G\times \Proj_H$ maps $V$ injectively into $V_{G}\times V_H$. Let $n$ denote $\max\{\widehat{\frakh}(G), \widehat{\frakh}(H)\}$, and then there exists a sequence 
	    \[
	        1\lhd V_{G,1}\times V_{H,1} \lhd V_{G,2}\times V_{H,2} \lhd \cdots \lhd V_{G,n}\times V_{H,n}=V_G \times V_H.
	    \]
	    of normal subgroups of $V_G\times V_H$ of length $n$, where $\{V_{*,i}\}$ for $*=G$ or $H$ is a sequence of normal subgroups of $V_{*}$ such that $V_{*,i+1}/V_{*,i}$ is a direct product of minimal normal subgroups of $V_*/V_{*,i}$. Assume that $A$ is a minimal normal subgroup of $V_{G}$ contained in $V_{G,1}$. Since $V$ is a subgroup of $V_G \times V_H$, we have that $A\cap V$ is normal in $V$. Then under the surjection $V\to V_G$, $A\cap V$ maps to a normal subgroup of $V_G$ that is contained in $A$. We see that $A\cap V$ is either $A$ or 1, because $A$ is minimal normal in $V_G$. In particular, if $A\cap V=A$, then it is a minimal normal subgroup of $V$, because otherwise $\Proj_G$ would map a minimal normal subgroup of $V$ contained in $A\cap V$ to a normal subgroup of $V_G$ that is properly contained in $A$ which contradicts to the assumption that $A$ is minimal normal. Thus, we showed that $V\cap \left(V_{G,1} \times V_{H,1}\right)$ is a direct product of minimal normal subgroups of $V$. Then by induction on $i$, we see that $\{V_i:=V\cap (V_{G,i} \times V_{H,i})\}_{i=1}^n$ forms a sequence of normal subgroups of $V$ such that $V_{i+1}/V_i$ is a direct product of minimal normal subgroups of $V/V_i$, and hence $\frakh(V)\leq \max\{\widehat{\frakh}(G), \widehat{\frakh}(H)\}$. 
	\end{proof}

    \begin{proposition}\label{prop:C-finite-h}
        Let $\Gamma$ be a finite group and $\calC$ a finite set of isomorphism classes of finite $\Gamma$-groups. For any $\Gamma$-group $G$, we have that $\widehat{\frakh}(G^{\calC})$ is at most 
        \[
            \widehat{\frakh}_{\calC}:=\max\{\widehat{\frakh}(H) \mid H\in \calC\}.
        \]
    \end{proposition}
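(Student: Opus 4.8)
The plan is to split the argument into a purely group-theoretic statement about the variety $\overline{\calC}$ and a soft profinite-topology argument. The group-theoretic core is the claim that every finite $\Gamma$-group $H$ lying in $\overline{\calC}$ satisfies $\widehat{\frakh}(H)\le\widehat{\frakh}_{\calC}$; granting this, the proposition follows because every finite quotient of $G^{\calC}$ is a quotient of one of the groups $G/M$ appearing in the defining inverse limit, and $\widehat{\frakh}$ cannot grow under passing to quotients.

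To make this precise I would first record the basic monotonicity of the height: if $V$ is a subquotient of a finite group $H$, then $\widehat{\frakh}(V)\le\widehat{\frakh}(H)$, simply because any subquotient of $V$ is again a subquotient of $H$, so the maximum defining $\widehat{\frakh}(V)$ is taken over a subset of the subquotients of $H$. (Note the height depends only on the underlying group, so the $\Gamma$-action is irrelevant at this point.) Next, let $\calB$ denote the class of isomorphism classes of finite $\Gamma$-groups $H$ with $\widehat{\frakh}(H)\le\widehat{\frakh}_{\calC}$. By the definition of $\widehat{\frakh}_{\calC}$ we have $\calC\subseteq\calB$; by the monotonicity just noted $\calB$ is closed under $\Gamma$-subgroups and $\Gamma$-quotients; and by Lemma~\ref{lem:h-directproduct}, applied inductively to the number of factors, $\calB$ is closed under finite direct products. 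Since $\overline{\calC}$ is by construction the smallest such class containing $\calC$, we obtain $\overline{\calC}\subseteq\calB$, which is the group-theoretic claim.

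Finally, for an arbitrary profinite $\Gamma$-group $G$ write $G^{\calC}=\varprojlim_{M}G/M$, the limit being taken over the open normal $\Gamma$-subgroups $M$ of $G$ with $G/M\in\overline{\calC}$; this system is cofiltered (for two such $M_1,M_2$ the intersection $M_1\cap M_2$ works, since $G/(M_1\cap M_2)$ embeds in $G/M_1\times G/M_2\in\overline{\calC}$) with surjective transition maps, so each projection $G^{\calC}\twoheadrightarrow G/M$ is surjective and the kernels of these projections form a neighbourhood basis of the identity in $G^{\calC}$. Hence, given any finite quotient $U=G^{\calC}/N$, the open subgroup $N$ contains $\ker(G^{\calC}\twoheadrightarrow G/M)$ for some $M$, so $U$ is a quotient of $G/M$ and by the two previous paragraphs $\widehat{\frakh}(U)\le\widehat{\frakh}(G/M)\le\widehat{\frakh}_{\calC}$. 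Taking the supremum over all finite quotients $U$ of $G^{\calC}$ gives $\widehat{\frakh}(G^{\calC})\le\widehat{\frakh}_{\calC}$. I do not expect a serious obstacle: the only points needing care are that Lemma~\ref{lem:h-directproduct} must be iterated to reach arbitrary finite products, and the standard compactness argument identifying each finite quotient of $G^{\calC}$ with a quotient of some $G/M$.
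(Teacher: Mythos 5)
Your proposal is correct and follows essentially the same route as the paper: reduce to showing $\widehat{\frakh}(H)\le\widehat{\frakh}_{\calC}$ for every $H\in\overline{\calC}$ via closure of the bounded-height class under $\Gamma$-subgroups, $\Gamma$-quotients, and (by Lemma~\ref{lem:h-directproduct}) finite direct products. The only difference is that you spell out the standard compactness step identifying each finite quotient of $G^{\calC}$ with a quotient of some $G/M\in\overline{\calC}$, which the paper leaves implicit.
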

    
	\begin{proof}
		By definition of $\widehat{\frakh}(G^{\calC})$, it suffices to prove $\widehat{\frakh}(G)\leq \widehat{\frakh}_{\calC}$ for any $G\in \overline{\calC}$.
		So we just need to show that the three actions, 1) taking $\Gamma$-quotients, 2) taking $\Gamma$-subgroups, and 3) taking finite direct products, do not produce groups with larger value of $\widehat{\frakh}$. For the first two actions, it is obvious that if $H$ is a $\Gamma$-quotient or a $\Gamma$-subgroup of $G$, then it is a quotient or a subgroup of $G$ by forgetting the $\Gamma$ actions, and hence $\widehat{\frakh}(H) \leq \widehat{\frakh}(G)$ by definition of heights. The last one follows by Lemma~\ref{lem:h-directproduct}.
	\end{proof}

	We finish this section by applying Proposition~\ref{prop:C-finite-h} to prove the following number theory theorem.

	\begin{theorem}\label{thm:finite-GC}
		Let $k/Q$ be a Galois global field extension with $\Gal(k/Q)\simeq \Gamma$ and $S$ a finite $k/Q$-closed set of places of $k$. Let $\calC$ be a finite set of isomorphism classes of finite $\Gamma$-groups. Then $G_S(k)^{\calC}$ is a finite group.
	\end{theorem}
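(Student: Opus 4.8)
The plan is to reduce the finiteness of $G_S(k)^{\calC}$ to a bound on its height together with a bound on the number of generators of each successive layer, both of which come from the machinery already developed. Concretely, I would argue as follows. Since $S$ is a finite $k/Q$-closed set, it is a standard fact (via class field theory / restricted ramification) that $G_S(k)$ is topologically finitely generated, say by $m$ elements, and it is naturally a $\Gamma$-group since $S$ is $k/Q$-closed. Then $G_S(k)^{\calC}$ is a finitely generated level-$\calC$ $\Gamma$-group. By Proposition~\ref{prop:C-finite-h}, its height satisfies $\widehat{\frakh}(G_S(k)^{\calC}) \leq \widehat{\frakh}_{\calC}$, which is a finite number depending only on $\calC$. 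So it suffices to show: a finitely generated level-$\calC$ $\Gamma$-group of bounded height is finite.

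The hard part, and the main obstacle, is to control the sizes of the layers $H_{i+1}/H_i$ appearing in a height-filtration of a finite $\Gamma$-quotient of $G_S(k)^{\calC}$. Each such layer is a direct product of minimal normal $\Gamma$-subgroups, hence a direct product of simple $\Gamma$-modules (in the abelian case) or products of nonabelian simple groups; and crucially, since every group in $\calC$ has order prime to $\ldots$ well, in the generality of this theorem no such coprimality is assumed, so one must instead use that the isomorphism types of the simple pieces that can occur are \emph{bounded}: each simple subquotient of a level-$\calC$ group must embed into (a subquotient of) some $H\in\calC$, and $\calC$ is finite, so there are only finitely many isomorphism classes of possible composition factors, each of bounded order. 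The number of copies of a given simple factor $A$ appearing in the $(i+1)$-st layer over $G_S(k)^{\calC}/H_i$ is controlled by a multiplicity of a presentation of that finite $\Gamma$-quotient, which by the results of Section~\ref{sect:pre-Gamma} (and their pro-$\calC$ analogues, Corollary~\ref{cor:mult-ineq} and Proposition~\ref{prop:C-mult-compute}) is bounded in terms of $\dim H^2$ of a Galois cohomology group; here one invokes that for $S$ finite the relevant $H^2(G_S(k), A)$ are finite-dimensional (Poitou--Tate / finiteness of Galois cohomology with finite coefficients over a global field with restricted ramification). Combining: finitely many possible simple factors, each of bounded order, each occurring with bounded multiplicity, across a filtration of bounded length $\widehat{\frakh}_{\calC}$ — this caps $|U|$ uniformly over all finite $\Gamma$-quotients $U$ of $G_S(k)^{\calC}$.

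Therefore I would organize the proof into three steps. First, record that $G_S(k)$ is a finitely generated $\Gamma$-group (finite $S$, $k/Q$-closed), so $G_S(k)^{\calC}$ is finitely generated of level $\calC$. Second, apply Proposition~\ref{prop:C-finite-h} to bound its height by $\widehat{\frakh}_{\calC}<\infty$. Third, show that along any height-filtration of a finite quotient, each layer has bounded order: the simple pieces lie among the finitely many composition factors of groups in $\calC$, and the multiplicity with which each occurs is bounded using the cohomological formulas of Sections~\ref{sect:pre-Gamma}--\ref{sect:pro-C} together with the finiteness of $H^2(G_S(k),A)$ for finite modules $A$ and finite $S$. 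Since both the filtration length and the layer orders are bounded independently of the quotient, $G_S(k)^{\calC}$ has bounded finite quotients, hence is finite. I expect the genuinely delicate point to be making the multiplicity bound uniform over all the intermediate quotients $G_S(k)^{\calC}/H_i$ simultaneously — one wants a single bound on $\dim_{\F_\ell} H^2$ that does not degrade as one moves up the tower — which should follow from the fact that all these are quotients of the fixed group $G_S(k)$ and from the finiteness of Galois cohomology with restricted ramification, but deserves to be spelled out carefully.
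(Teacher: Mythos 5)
Your first two steps match the paper exactly: bound the height by $\widehat{\frakh}_{\calC}$ via Proposition~\ref{prop:C-finite-h}, and observe that each layer of the resulting filtration is a direct product of finite simple groups drawn from the finitely many composition factors of groups in $\overline{\calC}$. The third step, however, contains two genuine gaps.

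First, your opening claim that ``it is a standard fact that $G_S(k)$ is topologically finitely generated'' is not a fact: for number fields this is a well-known open problem, which the paper itself emphasizes (the introduction notes that finite generation of $G_{\O}(k)$ is unknown for number fields, and Theorem~\ref{thm:fin-pres} is deliberately stated conditionally on it). Since the entire presentation/multiplicity machinery of Sections~\ref{sect:pre-Gamma}--\ref{sect:pro-C} presupposes a surjection from some $F_n(\Gamma)$, you cannot invoke it here.

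Second, even setting that aside, the cohomological multiplicity bound does not control the layers in the required generality. The formula of Lemma~\ref{lem:d-cohom} applies only to \emph{abelian} simple factors $A$ with $\gcd(|A|,|\Gamma|)=1$; the theorem makes no coprimality assumption on $\calC$, and, more seriously, for a \emph{nonabelian} finite simple group $E$ there is no bound on the number of copies of $E$ in a quotient $E^m$ of a profinite group in terms of any $H^2$ --- without finite generation such a multiplicity can be infinite (consider $E^{\mathbb{N}}$). The paper's essential replacement for both missing inputs is number-theoretic: the local Galois groups $\calG_{\frakP}(K_i)$ are finitely generated, so discriminants of extensions with Galois group inside a fixed simple $E$ are bounded at the places of $S_i$, and the Hermite--Minkowski theorem then gives that each intermediate field $K_{i+1}$ has only \emph{finitely many} extensions with Galois group $E$ unramified outside $S_{i+1}$. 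This is applied by induction from the top of the filtration downward, so each $K_{i+1}$ is already known to be a finite extension of $k$ before Hermite--Minkowski is applied to it; in particular the uniformity issue you flag at the end does not arise --- one only needs finiteness at each of the at most $\widehat{\frakh}_{\calC}$ stages, not a single uniform bound. Without this arithmetic finiteness input your argument does not close.
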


	\begin{proof}
	By Proposition~\ref{prop:C-finite-h}, we have that 
	\[
		h:=\widehat{\frakh}(G_{S}(k)^{\calC}) \leq \widehat{\frakh}_{\calC}
	\]
	is finite. So there exists a sequence of normal subgroups of $G_{S}(k)^{\calC}$,
	\[
		1 = H_0 \lhd H_1 \lhd \cdots \lhd H_h = G_{S}(k)^{\calC},
	\]
	such that $H_{i+1}/H_i$ is isomorphic to a direct product of minimal normal subgroups of $H_h/H_i$. Note that each of the minimal normal subgroups is a (not necessarily finite) direct product of isomorphic finite simple groups. So, for each $i$, $H_{i+1}/H_i$ as a group is a direct product of finite simple groups. On the other hand, $G_{S}(k)^{\calC}$ is a quotient of $G_{S}(k)$, so is the Galois group of an extension of $k$ that is unramified outside $S$. Therefore, $H_{i+1}/H_i$ is the Galois group of an extension $K_{i}/K_{i+1}$ of some intermediate global fields between $k_{S}$ and $k$. We denote by $S_i$ the set of primes of $K_i$ lying above $S$.
	
	For a prime $\frakP$ of $K_i$, the local absolute Galois group $\calG_{\frakP}(K_i)$ is finitely generated, so there are finitely many Galois extensions of $K_{\frakP}$ of a fixed Galois group.
	Then for a simple group $E$, there exists an integer $N_{E,\frakP}(K_i)$ for each $\frakP\in S_i$, such that any Galois extension of $K_i$ whose Galois group is a subgroup of $E$ has discriminant at most $N_{E, \frakP}(K_i)$. Let $N_{E,S}(K_i)$ denote the product $\prod_{\frakP\in S_i}N_{E, \frakP}(K_i)$.
	By the Hermite-Minkowski theorem (see \cite[Thm.~8.23.5(3)]{Goss} for the function field version of this theorem), for each finite simple group $E$, there are only finitely many extensions of $K_i$ that are of Galois group $E$ and of discriminant at most $N_{E,S}(K_i)$. Therefore, there are finitely many extensions of $K_i$ that are of Galois group $E$ and unramified outside $S_i$.
	
	Since $\calC$ is finite, there are only finitely many simple groups that appear as composition factors of groups in $\overline{\calC}$ (see \cite[Cor.~6.12]{LW2017}). Now we consider the tower of extensions $K_i$'s. Note that $K_h=k$ and $\Gal(K_{h-1}/K_h)\simeq H_{h}/H_{h-1}$. By the above argument, we conclude that $H_h/H_{h-1}$ is a direct product of finite simple groups, that there are finitely many choices of these finite simple groups, and that for each of them there are finitely many copies of this simple group appearing in $H_h/H_{h-1}$. So we obtain that $H_h/H_{h-1}$ is finite, and hence $K_{h-1}$ is a finite extension of $k$. By induction, we see that $H_{i+1}/H_i$ is finite for each $i=h-1, \cdots, 0$, and it follows that $G_{S}(k)^{\calC}$ is finite.	
	\end{proof}


\section{A Generalized Version of Global Euler-Poincar\'{e} Characteristic Formula}\label{sect:Euler-Poincare}

	Throughout this section, we let $k/Q$ be a finite Galois extension of global fields, and $S$ a \emph{finite nonempty} $k/Q$-closed set of primes of $k$ such that $S_{\infty}(k)\subseteq S$. For each $A\in \Mod(\Gal(k_S/Q))$, we define
	\[
		\chi_{k/Q,S}(A) = \frac{ \#H^2(G_S(k), A)^{\Gal(k/Q)} \#H^0(G_S(k), A)^{\Gal(k/Q)}}{\#H^1(G_S(k), A)^{\Gal(k/Q)}},
	\]
	where $\Gal(k/Q)$ acts on $H^i(G_S(k), A)$ by conjugation. We will prove the following theorem.

\begin{theorem}\label{thm:EPChar}
	Use the assumption at the beginning of this section. If $A\in \Mod_{S}(\Gal(k_S/Q))$ has order prime to $[k:Q]$, then we have
	\[
		\chi_{k/Q,S}(A) = \#\left(\bigoplus_{v \in S_{\infty}(Q)} \hH^0(Q_{v}, A')\right) \Bigg{\slash} \#\left(\bigoplus_{v \in S_{\infty}(Q)} H^0(Q_v, A')\right)
	\]
\end{theorem}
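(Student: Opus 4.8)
The plan is to adapt the classical proof of the Global Euler–Poincaré characteristic formula (as in Milne's \emph{Arithmetic Duality Theorems} or Neukirch–Schmidt–Wingberg), keeping track of the $\Gal(k/Q)$-action throughout and taking $\Gal(k/Q)$-invariants at the end. The key simplification is the hypothesis $\gcd(|A|,[k:Q])=1$: since $\Gamma:=\Gal(k/Q)$ has order prime to $|A|$, taking $\Gamma$-invariants is exact on $\F_\ell[\Gamma]$-modules for every prime $\ell\mid|A|$, so $\chi_{k/Q,S}(A)$ is the ``$\Gamma$-invariant part'' of the usual Euler characteristic and behaves multiplicatively in short exact sequences of $\Gal(k_S/Q)$-modules. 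Thus I would first reduce, via dévissage, to the case where $A$ is a simple $\F_\ell[\Gal(k_S/Q)]$-module; both sides of the claimed identity are multiplicative in $A$ (the left by the exactness just mentioned together with the long exact cohomology sequence for $G_S(k)$, the right because $A\mapsto A'$ and $A\mapsto\widehat H^0(Q_v,A')$, $H^0(Q_v,A')$ are all compatible with short exact sequences in the relevant sense for $v$ archimedean where $\widehat H$ is periodic).

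Next I would set up the Poitou–Tate nine-term exact sequence for $G_S(k)$ and the module $A$, which is $\Gal(k/Q)$-equivariant because all the maps in it (restrictions to $H^i(k_\frakp,A)$, cup products, the global duality pairing $H^1(G_S(k),A)\times H^1(G_S(k),A')\to\Q/\Z$) commute with conjugation by $\Gal(k/Q)$; the permutation-and-conjugation description of $\bigoplus_{\frakp\in S_v(k)}H^i(k_\frakp,A)$ in Section~\ref{sect:notation} identifies this direct sum with $\Ind_{\Gal(k/Q)}^{\Gal_v(k/Q)}H^i(k_\frakp,A)$, so Shapiro's lemma lets me compute its $\Gal(k/Q)$-invariants as $H^i(k_\frakp,A)^{\Gal_v(k/Q)}$, which by the spectral sequence for $1\to G_{k_\frakp}\to \calG_v(Q)\to\Gal_v(k/Q)\to1$ (again degenerate since $\gcd(|A|,|\Gal_v(k/Q)|)=1$) equals $H^i(Q_v,A)$. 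Applying the exact functor $(-)^{\Gal(k/Q)}$ to the whole Poitou–Tate sequence then produces an exact sequence relating the $\Gamma$-invariants of the global cohomology of $G_S(k)$ to the $H^i(Q_v,A)$ and to $H^i(G_S(k),A')^{\Gal(k/Q)}$. Taking alternating products of orders along this sequence, and using the local Euler characteristic formula at each finite place (where the local terms cancel between the $S$ and non-$S$ contributions exactly as in the classical proof, the Herbrand-quotient-type local factor being trivial away from $\ell$ and contributing the standard $\#H^0\cdot\#H^2/\#H^1 = \|\#A\|_v$ at places over $\ell$), I expect everything to telescope down to the archimedean contribution $\prod_{v\in S_\infty(Q)}\big(\#\widehat H^0(Q_v,A')\big/\#H^0(Q_v,A')\big)$, with the $\ell$-adic absolute value factors $\prod_{v\nmid\infty}\|\#A\|_v^{\pm1}=1$ by the product formula.

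The main obstacle I anticipate is \emph{bookkeeping the $\Gal(k/Q)$-action through the Poitou–Tate machinery}, in particular verifying that the global duality isomorphism $H^2(G_S(k),A)\cong H^0(G_S(k),A')^\vee$ (or more precisely the map $H^1(G_S(k),A')^\vee\to\bigoplus H^1\to\cdots$) is genuinely $\Gal(k/Q)$-equivariant for the conjugation action on the source and the induced action on the Pontryagin dual — this requires checking that the fundamental class / invariant map and the cup-product pairings behave well under conjugation, which is where ``taking the $\Gamma$ actions into account creates many technical difficulties'' (as the introduction warns). A secondary subtlety is handling the $S$-versus-non-$S$ local terms $H^i(\calT_\frakp,A)$ and the unramified cohomology $H^i_{nr}(k_\frakp,A)$ $\Gamma$-equivariantly so that the ``defining term'' in Tate's formula, $\prod_{v\in S}\#H^0(Q_v,A)/\#A^{[Q_v:\text{?}]}$-type expressions, collapse correctly after invariants; but once the equivariance of the nine-term sequence and the local formulas is in hand, the counting argument is the same alternating-product computation as in the trivial-$\Gamma$ case, and the nonempty-$S$ plus $S\supseteq S_\infty(k)$ hypotheses are used exactly as classically (to kill the extra $\Sha$-type terms and to make the local sum finite).
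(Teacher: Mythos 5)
There is a genuine gap at the heart of your plan. The Poitou--Tate nine-term exact sequence, after taking alternating products of orders, yields the \emph{symmetric} relation
\[
	\chi_{k/Q,S}(A)\cdot\chi_{k/Q,S}(A') \;=\; \prod_{v}(\text{local Euler factors}),
\]
because the sequence interleaves $H^i(G_S(k),A)$ with the duals $H^{2-i}(G_S(k),A')^{\vee}$. This identity is invariant under swapping $A\leftrightarrow A'$ and therefore cannot, by itself, determine $\chi_{k/Q,S}(A)$; your hoped-for ``telescoping down to the archimedean contribution'' would at best prove the product of the asserted formula for $A$ with the asserted formula for $A'$. Every classical proof needs a second, independent input to break this symmetry, and your proposal supplies none. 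Relatedly, your d\'evissage only reduces to \emph{simple} $\F_\ell[\Gal(k_S/Q)]$-modules, for which there is still no direct way to compute $\chi$.

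The paper's proof follows the NSW route instead: after establishing additivity of the map $\varphi_{G,S}$ on short exact sequences (Lemma~\ref{lem:Phi-property-1}, which is where the archimedean $\widehat H^0$ versus $H^0$ correction and local duality at real places enter) and compatibility with induction (Corollary~\ref{cor:Phi-property-2}), it invokes the fact that $K_0'(\F_\ell[\Gal(E/Q)])\otimes_{\Z}\Q$ is generated by classes induced from cyclic subgroups of order prime to $\ell$, reducing to modules $A$ with $k(A)/Q$ cyclic of order prime to $\ell$. The symmetry is then broken by a \emph{direct} computation of $\chi(\mu_\ell)$ over $K=k(A,\mu_\ell)$ in terms of $S$-units and $S$-class groups (Lemma~\ref{lem:EPChar-mu}), followed by twisting via the cup product with $\Hom(A',\F_\ell)$ to get $\chi(A)=[A'^{\vee}]\chi(\mu_\ell)$, and finally taking $\Gamma$-invariants. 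Your equivariance bookkeeping for the local terms and Shapiro's lemma is on target and matches the paper's Lemmas~\ref{lem:Shapiro-mod} and \ref{lem:H0}, but without the reduction to induced-from-cyclic classes and the explicit class-field-theoretic evaluation at $\mu_\ell$ (or some equivalent symmetry-breaking device), the argument does not close.
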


\begin{remark}
	\begin{enumerate}
		\item If $k$ is a function field, then the theorem says that $\chi_{k/Q,S}(A)=1$ since $S_{\infty}(k)=\O$.
		\item When $k=Q$, the theorem is exactly the Global Euler-Poincar\'e Characteristic Formula (\cite[Thm. (8.7.4)]{NSW}). 
	\end{enumerate}
\end{remark}

\subsection{Preparation for the proof}

\begin{lemma}\label{lem:Shapiro-mod}
    Let $G$ be a profinite group and $U$ an open normal subgroup of $G$. Let $H$ be an open subgroup of $G$ and $V$ denote $U\cap H$. Then $H/V$ is naturally a subgroup of $G/U$, and for an $H$-module $A$ we have 
    \[
        H^i(U, \Ind_G^H A) \cong \Ind_{G/U}^{H/V}H^i(V, A) 
    \]
    as $G/U$-modules for each $i\geq 0$.
\end{lemma}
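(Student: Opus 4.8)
The plan is to combine the Shapiro lemma with the functoriality of induction in both the group variable and the module variable. First I would set up the notation carefully: since $U\lhd G$ is open normal and $H\le G$ is open with $V=U\cap H$, the inclusion $H\hookrightarrow G$ descends to an injection $H/V\hookrightarrow G/U$ because $H\cap U=V$, so $H/V$ is identified with the subgroup $HU/U$ of $G/U$. The module $\Ind_G^H A$ is, concretely, the set of functions $f\colon G\to A$ with $f(hg)=h\cdot f(g)$ for $h\in H$, with $G$ acting by right translation; I would fix a set of representatives for $H\backslash G$ to make the coordinates explicit if needed.

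The key step is to restrict $\Ind_G^H A$ to $U$ and recognize it, as a $U$-module, as a direct sum of induced modules. Choosing coset representatives $g_1,\dots,g_m$ for $HU\backslash G$ and then, within each $HU$-coset, using that $HU/U\cong H/V$, one decomposes $G=\bigsqcup_j H g_j' U$-type pieces and obtains a $U$-module isomorphism $\Res^G_U \Ind_G^H A \cong \bigoplus_j \Ind_U^{U\cap {}^{g_j}H}({}^{g_j}A)$, i.e.\ the usual Mackey decomposition; but because $U$ is \emph{normal} in $G$, all the subgroups $U\cap {}^{g}H$ are conjugate to $V$ inside $U$ (indeed $U\cap {}^{g}H = {}^{g}(U\cap H)={}^{g}V$ since conjugation by $g$ preserves $U$), which is exactly the statement that $\Res^G_U\Ind_G^H A$ is the induction from $V$ to $U$ of $A$ ``spread over'' the cosets $(HU)\backslash G$, and this collection of cosets is a torsor for the quotient group $G/U$ acting compatibly with its action on everything in sight. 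I would then take cohomology: $H^i(U,\Ind_G^H A)\cong \bigoplus_{j} H^i({}^{g_j}V, {}^{g_j}A)$, which by Shapiro applied on the level of $V\le U$ — or rather directly by the compatibility of $H^i(V,-)$ with conjugation — is $\bigoplus_j {}^{g_j}H^i(V,A)$, and this direct sum with its permutation-plus-twist action of $G/U$ is by definition $\Ind_{G/U}^{H/V}H^i(V,A)$.

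Concretely the cleanest route is probably: (1) prove the $U$-module isomorphism $\Res^G_U\Ind_G^H A\cong \Ind_V^U\big(\Res^G_V\Ind_G^H A\big)$? — no; rather (1) observe $\Ind_G^H A\cong \Ind_{HU}^G\Ind_H^{HU}A$ by transitivity of induction; (2) note $\Res^G_U\Ind_{HU}^G(-)\cong \Ind_{U}^{?}$... the truly clean statement is $\Res^G_U\circ\Ind_{HU}^G \cong \Ind_U^U$ twisted by $G/U=G/(HU)\cdot(HU/U)$; so I would instead argue: since $U$ acts on the $G/U$-indexed decomposition simply by its action inside each factor, and $G/HU$ permutes the factors while $HU/U=H/V$ acts within, one reads off $H^i(U,\Ind_G^H A)$ as a $G/U$-module to be induced from $H/V$ of $H^i(V,A)$, using Shapiro's lemma $H^i(H,\Ind_H^{HU}? )$... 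Let me instead just say: apply Shapiro's lemma to the pair $V\le U$ after first using the natural $U$-isomorphism $\Res^G_U\Ind_G^H A\cong \mathrm{Coind}$-style decomposition, and track the residual $G/U$-action throughout, which is automatic because every map in Shapiro's lemma is natural.

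The main obstacle will be bookkeeping: making sure the $G/U$-action that survives on $H^i(U,\Ind_G^H A)$ — which a priori comes from the outer action of $G/U$ on the normal subgroup $U$ combined with its action on the coefficient module — matches, on the nose, the action on $\Ind_{G/U}^{H/V}H^i(V,A)$ coming from right translation in $G/U$. I expect no genuine difficulty here beyond diagram-chasing, since induction, restriction, and the conjugation action on cohomology are all functorial and these compatibilities are standard (this is essentially the ``relative Shapiro lemma'' or Shapiro's lemma in a family); the only care needed is that $U$ is normal, so that the Mackey double cosets degenerate and every intersection $U\cap{}^gH$ really is a conjugate of $V$ within $U$, which is what makes the answer an honest induced module rather than a more complicated Mackey sum. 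I would therefore organize the write-up as: reduce to the module statement $\Res^G_U\Ind_G^H A\cong$ a $G/U$-equivariant sum indexed by $H/V\backslash G/U$-cosets... actually indexed so that Shapiro applies, then invoke naturality of Shapiro's lemma and of the conjugation action to conclude.
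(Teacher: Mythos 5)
Your proposal is correct and is essentially the paper's argument: the paper packages your Mackey step as transitivity of induction through the intermediate group $UH$ followed by the decomposition $\Ind_G^{UH}(-)\cong\bigoplus_{\sigma\in G/UH}\sigma(-)$ of the restriction to the normal subgroup $U$, which is exactly your observation that the double cosets $H\backslash G/U$ degenerate to cosets of $HU$ because $U\lhd G$, and then both arguments finish by applying Shapiro's lemma to each summand (using $\Ind_{UH}^{H}A=\Ind_U^V A$ as $U$-modules) and tracking the permutation-plus-conjugation action of $G/U$. The only cosmetic difference is organizational; the bookkeeping of the $G/U$-action that you flag as the main obstacle is handled in the paper exactly as you predict, by naturality of conjugation on cohomology and by noting that representatives of $G/UH$ map to representatives of $(G/U)/(H/V)$.
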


\begin{proof}
    Under the quotient map $G \twoheadrightarrow G/U$, $H/V$ is the image of $H$, so it is a subgroup of $G/U$. Then we have
    \[
        \Ind_G^H A = \Ind_G^{UH} \Ind_{UH}^H A = \bigoplus_{\sigma\in G/UH} \sigma (\Ind_{UH}^H A),
    \]
    where we denote by $\sigma(\Ind^H_{UH} A)$ the $\sigma UH \sigma^{-1}$-module, whose underlying group is $\Ind^H_{UH} A$ and the action of $\tau\in \sigma UH \sigma^{-1}$ is given by $a \mapsto \sigma^{-1} \tau \sigma a$. So
    \begin{eqnarray}
        H^i(U, \Ind_G^H A) &=& \bigoplus_{\sigma\in G/UH} H^i(U, \sigma(\Ind_{UH}^H A)) \nonumber\\
        &=& \bigoplus_{\sigma\in G/UH} \sigma_* H^i(U, \Ind_{UH}^H A) \nonumber\\
        &=& \Ind_{G/U}^{H/V} H^i(U, \Ind_{UH}^H A), \label{eq:st1}
    \end{eqnarray}
    where the second equality follows by $U\unlhd G$ and the definition of the conjugation action $\sigma_*$ on cohomology groups,
    and the last equality is because the quotient map $G\to G/U$ maps a set of representatives of $G/UH$ to a set of representatives of $(G/U)/(H/V)$.
    Since $A$ is an $H$-module, $UH$ acts on $\Ind_U^V A$, and moreover, it follows by $V=H\cap U$ that $\Ind_U^V A = \Ind_{UH}^H A$ as $UH$-modules. 
    So we have the following identity of $H/V$-modules
    \begin{equation}\label{eq:st2}
    	H^i(U, \Ind_{UH}^H A) = H^i(U, \Ind_U^V A) \cong H^i(V, A),
    \end{equation}
    where the last isomorphism follows by the Shapiro's lemma. 
    Then we proved the lemma by \eqref{eq:st1} and \eqref{eq:st2}.
\end{proof}
    
    For the rest of this section, we assume $S$ is a nonempty $k/Q$-closed set of primes of $k$ containing $S_{\infty}$ and denote $G=\Gal(k_S/Q)$ and $U=G_S(k)$. For each open subgroup $H$ of $\Gal(k_S/Q)$ we let $V=U\cap H$ and $K$ be the fixed field of $V$, and define a map
    \begin{eqnarray*}
        \varphi_{H,S} : \Mod(H) &\to& K'_0(\Z[H/V]) \\
        A &\mapsto& [H^0(V,A)] - [H^1(V,A)] + [H^2(V,A)] \\
        && - \left[\bigoplus_{\frakP\in S_{\infty}(K)} \widehat{H}^0(K_{\frakP}, A')\right]^{\vee}+ \left[\bigoplus_{\frakP\in S_{\infty}(K)} H^0(K_{\frakP}, A')\right]^{\vee},
    \end{eqnarray*}
    where $H/V$ acts on $\oplus_{\frakP \in S_{\infty}(K)} H^0(K_{\frakP}, A')$ (similarly on Tate cohomology) by its permutation action on $S_{\infty}(K)$ and by the $\Gal_{\frakP}(K/Q) \cap H$ on each summand, and the Pontryagin dual is taking on the classes of $K_0'(\Z[H/V])$.

\begin{lemma}\label{lem:H0}
	Using the notation above, we have the following isomorphisms of $G/U$-modules for any $A\in\Mod(H)$
	\begin{equation}\label{eq:H0-eq}
		\bigoplus_{\frakp\in S_{\infty}(k)} H^0(k_{\frakp}, \Ind_G^H A) \cong \Ind_{G/U}^{H/V} \bigoplus_{\frakP \in S_{\infty}(K)} H^0(K_{\frakP}, A),
	\end{equation}
	\begin{equation}\label{eq:hH0-eq}
		\bigoplus_{\frakp\in S_{\infty}(k)} \hH^0(k_{\frakp}, \Ind_G^H A) \cong \Ind_{G/U}^{H/V} \bigoplus_{\frakP \in S_{\infty}(K)} \hH^0(K_{\frakP}, A).
	\end{equation}
\end{lemma}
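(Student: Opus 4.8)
The statement is a ``relative Shapiro'' for the degree‑zero (Tate and ordinary) cohomology of the archimedean completions, and the natural approach is to run the argument of Lemma~\ref{lem:Shapiro-mod} verbatim, replacing the functor $H^i(U,-)$ by $\bigoplus_{\frakp\in S_\infty(k)}H^0(k_\frakp,-)$ (resp.\ $\bigoplus_{\frakp\in S_\infty(k)}\hH^0(k_\frakp,-)$), and then isolating a purely local computation at the end. Throughout one exploits that $H$ is open in $G$, so $[G:UH]\le[G:H]<\infty$ and $[U:V]=[K:k]<\infty$ (hence $\Ind^V_U=\CoInd^V_U$ on the finite modules in sight), and that $S_\infty(k)$ and $S_\infty(K)$ are finite sets, so every direct sum occurring is finite and there are no convergence issues; if $k$ is a function field both sides are $0$ and there is nothing to prove.

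\textbf{Step 1 (reduction, mirroring Lemma~\ref{lem:Shapiro-mod}).} Decompose $\Ind_G^H A=\Ind_G^{UH}\Ind_{UH}^H A=\bigoplus_{\sigma\in G/UH}\sigma(\Ind_{UH}^H A)$. For $\sigma\in G$ and a $U$-module $M$ one checks, writing $D_\frakp\subseteq U$ for the decomposition group of a fixed place of $k_S$ over $\frakp$, that $H^0(k_\frakp,\sigma M)=M^{\sigma D_\frakp\sigma^{-1}}=H^0(k_{\sigma\frakp},M)$, where $\sigma\frakp$ denotes the image of $\frakp$ under $\sigma|_k$; thus $\bigoplus_{\frakp\in S_\infty(k)}H^0(k_\frakp,-)$ transforms under conjugation exactly as $H^i(U,-)$ does in Lemma~\ref{lem:Shapiro-mod}. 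Since it also commutes with finite direct sums and $G\to G/U$ carries a transversal of $G/UH$ to a transversal of $(G/U)/(H/V)$, we obtain an isomorphism of $G/U$-modules
\[
\bigoplus_{\frakp\in S_\infty(k)}H^0(k_\frakp,\Ind_G^H A)\ \cong\ \Ind_{G/U}^{H/V}\Bigl(\bigoplus_{\frakp\in S_\infty(k)}H^0\bigl(k_\frakp,\Ind_{UH}^H A\bigr)\Bigr),
\]
and, exactly as in Lemma~\ref{lem:Shapiro-mod}, $\Ind_{UH}^H A\cong\Ind_U^V A$ as $UH$-modules (because $V=U\cap H$), hence after restriction to $U$ inside the displayed sum.

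\textbf{Step 2 (local Shapiro for archimedean places).} It remains to produce an isomorphism of $H/V$-modules
\[
\bigoplus_{\frakp\in S_\infty(k)}H^0\bigl(k_\frakp,\Ind_U^V A\bigr)\ \cong\ \bigoplus_{\frakP\in S_\infty(K)}H^0(K_\frakP,A),\qquad K=(k_S)^V .
\]
For $\frakp\in S_\infty(k)$ the places of $k_S$ over $\frakp$ form the $U$-set $U/D_\frakp$, the places of $K$ over $\frakp$ are the double cosets in $V\backslash U/D_\frakp$, and the place $\frakP$ attached to $VgD_\frakp$ has decomposition group $V\cap gD_\frakp g^{-1}$ in $V=\Gal(k_S/K)$, so $H^0(K_\frakP,A)=A^{\,V\cap gD_\frakp g^{-1}}$. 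On the other hand Mackey's double coset formula gives $H^0(k_\frakp,\Ind_U^V A)=(\Ind_U^V A)^{D_\frakp}\cong\bigoplus_{V\backslash U/D_\frakp}A^{\,V\cap gD_\frakp g^{-1}}$. Summing over $\frakp\in S_\infty(k)$ sweeps out $S_\infty(K)$ exactly, yielding the desired isomorphism; its $H/V$-equivariance is the naturality of Mackey's isomorphism combined with the fact that $H$ (which normalizes $V$) acts compatibly on $A$, on $K=(k_S)^V$, and on the two direct sums. Combining Steps 1 and 2 gives \eqref{eq:H0-eq}. The Tate version \eqref{eq:hH0-eq} follows by the identical three-step argument, since degree-zero Tate cohomology likewise commutes with finite direct sums, transforms the same way under conjugation, and satisfies Shapiro's lemma and Mackey's formula — and for an archimedean completion it is simply the Tate cohomology of a group of order at most $2$.

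\textbf{Expected main obstacle.} All the cohomological inputs (Shapiro, Mackey, the conjugation identities, the transversal bookkeeping) are standard; the only genuinely delicate point is tracking the $G/U$- and $H/V$-module structures in Step 2 — in particular verifying that the bijection between $\bigsqcup_{\frakp\in S_\infty(k)}\bigl(V\backslash U/D_\frakp\bigr)$ and $S_\infty(K)$, together with the identification of the local decomposition groups, is genuinely equivariant for the relevant Galois actions, even though neither $K/k$ nor $K/Q$ need be Galois. Finiteness of $S_\infty$ and of $[K:k]$ is what keeps this bookkeeping elementary and avoids any $\Ind$-versus-$\CoInd$ subtlety.
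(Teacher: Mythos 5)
Your argument is correct, but it is organized differently from the paper's. The paper fixes a place $v\in S_\infty(Q)$, applies the Mackey decomposition to $\Res^G_{\calG_v(Q)}\Ind_G^H A$ over the double cosets $\calG_v\backslash G/H$, identifies those double cosets with the places of $L=(k_S)^H$ above $v$, and then splits into the two cases where $v$ splits completely or ramifies in $k/Q$ (using that an archimedean decomposition group is $1$ or $\Z/2\Z$), the ramified Tate case being trivial because both sides vanish. You instead factor the computation through the normal subgroup $U$: first you rerun Lemma~\ref{lem:Shapiro-mod} for the functor $\bigoplus_{\frakp\in S_\infty(k)}H^0(k_\frakp,-)$ to peel off an $\Ind_{G/U}^{H/V}$, and then you apply Mackey to $V\backslash U/D_\frakp$ to match the remaining sum with $S_\infty(K)$, $K=(k_S)^V$. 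Your route buys uniformity — no case split on the splitting behaviour of $v$ in $k/Q$, and the ordinary and Tate versions are handled by one argument — at the cost that the genuinely nontrivial content (the $G/U$-, resp.\ $H/V$-, equivariance of the dictionary between double cosets and places together with the matching of decomposition groups) is delegated to ``naturality of Mackey's isomorphism''; that is exactly the bookkeeping the paper carries out explicitly, so you should be prepared to write it down if pressed. Two small points: with the paper's convention $(\sigma M)(\tau)=\sigma^{-1}\tau\sigma$ one gets $H^0(k_\frakp,\sigma M)=M^{\sigma^{-1}D_\frakp\sigma}=H^0(k_{\sigma^{-1}\frakp},M)$, i.e.\ your conjugation is inverted (harmless, since you sum over all $\frakp$); and in Step~1 you must use that the summands $\sigma(\Ind_{UH}^H A)$ are only $U$-stable, so the $G/U$-structure on $\bigoplus_\frakp H^0(k_\frakp,\cdot)$ really does permute the $\sigma$-blocks with stabilizer $UH/U\cong H/V$ — this is where the normality of $U$ enters and it deserves a sentence.
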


\begin{proof}
	It suffices to fix a $v\in S_{\infty}(Q)$ and prove \eqref{eq:H0-eq} and \eqref{eq:hH0-eq} for places above $v$.
	For each $\frakp \in S_{v}(k)$, $\Ind_G^H A$ as a $\calG_{v}(Q)$-module has the following canonical decomposition (see \cite[\S~1.5, Ex.~5]{NSW})
	\begin{equation}\label{eq:1.5}
		\Res_{\calG_{v}}^G \Ind_G^H A = \bigoplus_{\sigma \in \calG_{v} \backslash G / H} \Ind_{\calG_{v}}^{\calG_{v} \cap \sigma H \sigma^{-1}} \sigma \Res^H_{\sigma^{-1} \calG_{v} \sigma \cap H} A.
	\end{equation}
		
	If $v$ splits completely in $k/Q$, then $\Gal_v(k/Q)=1$ and $\calG_\frakp(k)=\calG_v(Q)$.
	So we have the following identities of $\Gal_v(k/Q)$-modules
	\begin{eqnarray}
		H^0(k_{\frakp}, \Ind_G^H A) &=& \bigoplus_{\sigma \in \calG_{v} \backslash G/ H} H^0(\calG_{v} \cap \sigma H \sigma^{-1}, \sigma \Res^H_{\sigma^{-1} \calG_{v} \sigma \cap H} A) \nonumber\\ 
		&=& \bigoplus_{\sigma \in \calG_{v} \backslash G / H} \sigma_{*} H^0(\sigma\calG_{v}\sigma^{-1} \cap H, \Res^H_{\sigma^{-1} \calG_{v} \sigma \cap H} A), \label{eq:H0-decomposition}
	\end{eqnarray}
	where the first equality uses \eqref{eq:1.5} and the Shapiro's lemma, and the second follows by definition of the conjugation action on cohomology groups. We let $L$ denote the fixed field of $H$.
	Then, the set $\{ \sigma \calG_{\frakp} \sigma^{-1} \cap H \mid \sigma \in \calG_{\frakp} \backslash G \slash H\}$ is exactly the set $\{\calG_{w}(L) \mid w \in S_v(L)\}$.
	Therefore, we have the identity of abelian groups (hence of $\Gal_v(k/Q)$-modules since $\Gal_v(k/Q)=1$)
	\[
		H^0(k_{\frakp}, \Ind_G^H A) = \bigoplus_{w \in S_{v}(L)} H^0(L_w, A),
	\]
	and hence
	\begin{equation}\label{eq:LHS-real}
		\bigoplus_{\frakp \in S_{v}(k)} H^0(k_{\frakp}, \Ind_G^H A) = \Ind_{G/U}^1 \left( \bigoplus_{w\in S_{v}(L)} H^0(L_w, A)  \right)
	\end{equation}
	because the $\Gal(k/Q)$-action on this direct sum is determined by its permutation action on places above $v$.
	On the other hand, because of  $K=k\cap L$, the assumption that $v$ splits completely in $k/Q$ implies that $w$ splits completely in $K$ for any $w\in S_v(L)$ and then we obtain
	\begin{equation}\label{eq:RHS-real}
		\bigoplus_{\frakP\in S_{v}(K)} H^0(K_{\frakP}, A) = \Ind_{H/V}^1 \left( \bigoplus_{w \in S_{v}(L)} H^0(L_w, A)  \right).
	\end{equation}
	Thus, \eqref{eq:LHS-real} and \eqref{eq:RHS-real} prove \eqref{eq:H0-eq} in this case. The isomorphism in \eqref{eq:hH0-eq} can be proven using the exactly same argument.
	
	Otherwise, $v$ is ramified in $k/Q$, so $\Gal_v(k/Q)\simeq \Z/2\Z$, $\calG_{\frakp}(k)=1$ and $\calG_{\frakP}(K)=1$ for each $\frakp \in S_v(k)$ and $\frakP \in S_v(K)$. Then \eqref{eq:hH0-eq} automatically follows because of $\hH^0(k_{\frakp}, \Ind_G^H A)= \hH^0(K_{\frakP}, A) =0$.
	The set of right cosets $G/H$ naturally acts on $S_v(L)$, and moreover, for any $w\in S_v(L)$ and $\sigma_1, \sigma_2 \in G/H$, $\sigma_1^{-1} \sigma_2$ is contained in $\calG_w(L) \subset \Gal(K/L)$ if and only if $\sigma_1(w)=\sigma_2(w)$.
	So by \eqref{eq:1.5}, we have the following identities of $\Gal_{\frakp}(k/Q)$-modules
	\[
		H^0(k_{\frakp}, \Ind_G^H A)= \Res^G_{\calG_v(Q)}\Ind_G^H A = \bigoplus_{w \in S_{\R}(L)} A_w \oplus \bigoplus_{w \in S_{\C}(L)} (A_w \oplus \tau A_w),
	\]
	where $A_w:=\Res_{\calG_w(L)}^H A$ and $\tau$ denotes the nontrivial element in $\Gal_v(k/Q)$. So we have the following identity of $\Gal(k/Q)$-modules 
	\begin{equation}\label{eq:LHS-imaginary}
		\bigoplus_{\frakp \in S_v(k)} H^0(k_{\frakp}, \Ind_G^H A) = \bigoplus_{w\in S_{\R}(L) } \Ind_{\Gal(k/Q)}^{\Gal_v(k/Q)}  A_w \oplus \bigoplus_{w\in S_{\C}(L)} \Ind^1_{\Gal(k/Q)} A_w.
	\end{equation}
	Finally, because $w \in S_v(L)$ is imaginary if and only if $\Gal_w(K/L)=1$, we have
	\begin{eqnarray}
		&&\Ind_{G/U}^{H/V} \bigoplus_{\frakP \in S_{v}(K)} H^0(K_\frakP, A) \nonumber\\
		&=& \Ind_{G/U}^{H/V} \left(\bigoplus_{w \in S_v(L)} \bigoplus_{\frakP \in S_w(K)} A_w\right)\\
		&=& \Ind_{G/U}^{H/V} \left(\bigoplus_{w \in S_{\R}(L) } \Ind^{\Gal_w(K/L)}_{\Gal(K/L)} A_w \oplus \bigoplus_{w\in S_{\C}(L)} \Ind^1_{\Gal(K/L)} A_w \right)\nonumber \\
		&=& \bigoplus_{w\in S_{\R}(L)} \Ind_{\Gal(k/Q)}^{\Gal_v(k/Q)}  A_w \oplus \bigoplus_{w\in S_{\C}(L)} \Ind^1_{\Gal(k/Q)} A_w. \label{eq:RHS-imaginary}
	\end{eqnarray}
	Thus, \eqref{eq:H0-eq} follows by \eqref{eq:LHS-imaginary} and \eqref{eq:RHS-imaginary}.
\end{proof}
    
	The corollary below immediately follows by Lemmas~\ref{lem:Shapiro-mod}, \ref{lem:H0} and the fact $\Ind_G^H A'=(\Ind_G^H A)'$.
  
\begin{cor} \label{cor:Phi-property-2}
	For any open subgroup $H$ of $G$ and $A\in \Mod(H)$, we have 
	\[
		\varphi_{G,S}(\Ind_G^H A)\simeq \Ind_{G/U}^{H/V} \varphi_{H,S}(A).
	\]
\end{cor}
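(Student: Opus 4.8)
The plan is to establish the claimed isomorphism class by class inside $K_0'(\Z[G/U])$. By definition $\varphi_{G,S}(\Ind_G^H A)$ is the alternating sum of the three classes $[H^i(U,\Ind_G^H A)]$ for $i=0,1,2$ together with $-\bigl[\bigoplus_{\frakp\in S_{\infty}(k)}\widehat{H}^0(k_{\frakp},(\Ind_G^H A)')\bigr]^{\vee}$ and $\bigl[\bigoplus_{\frakp\in S_{\infty}(k)}H^0(k_{\frakp},(\Ind_G^H A)')\bigr]^{\vee}$, and $\varphi_{H,S}(A)$ is the analogous alternating sum formed with $V$, $K$ and $A$. Since $\Ind_{G/U}^{H/V}\colon K_0'(\Z[H/V])\to K_0'(\Z[G/U])$ is additive, it suffices to match each of the five classes attached to $(G,\Ind_G^H A)$ with $\Ind_{G/U}^{H/V}$ applied to the corresponding class attached to $(H,A)$.

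For the cohomological terms this is immediate from Lemma~\ref{lem:Shapiro-mod}, applied to the open normal subgroup $U\lhd G$ and the open subgroup $H$ (so that $V=U\cap H$): it gives $H^i(U,\Ind_G^H A)\cong \Ind_{G/U}^{H/V}H^i(V,A)$ as $G/U$-modules for all $i\ge 0$, which handles the first three summands at once.

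For the two archimedean terms I would first record the standard fact that, because $H$ has finite index in $G$, one has $\Ind_G^H=\CoInd_G^H$, whence the Tate dual commutes with induction and $(\Ind_G^H A)'\cong \Ind_G^H(A')$ as $G$-modules. Feeding the $H$-module $A'$ into Lemma~\ref{lem:H0} then yields
\[
	\bigoplus_{\frakp\in S_{\infty}(k)}H^0(k_{\frakp},(\Ind_G^H A)')\simeq \Ind_{G/U}^{H/V}\bigoplus_{\frakP\in S_{\infty}(K)}H^0(K_{\frakP},A')
\]
from \eqref{eq:H0-eq}, and likewise
\[
	\bigoplus_{\frakp\in S_{\infty}(k)}\widehat{H}^0(k_{\frakp},(\Ind_G^H A)')\simeq \Ind_{G/U}^{H/V}\bigoplus_{\frakP\in S_{\infty}(K)}\widehat{H}^0(K_{\frakP},A')
\]
from \eqref{eq:hH0-eq}, where one checks that the permutation-plus-conjugation $G/U$-action supplied by Lemma~\ref{lem:H0} is exactly the one used in the definition of $\varphi_{G,S}$, and, after inducing, of $\varphi_{H,S}$. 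Finally, taking Pontryagin duals inside the Grothendieck ring and using that $G/U$ is finite, that $H/V$ is a subgroup of it, and that the Pontryagin dual interchanges $\Ind$ and $\CoInd$ (which agree for this finite-index inclusion), one obtains $[\Ind_{G/U}^{H/V}M]^{\vee}=\Ind_{G/U}^{H/V}[M^{\vee}]$ for every finite $\Z[H/V]$-module $M$; applying this converts the two displayed classes into $\Ind_{G/U}^{H/V}$ of the dualized archimedean classes appearing in $\varphi_{H,S}(A)$. Summing the five matched contributions and invoking additivity of $\Ind_{G/U}^{H/V}$ on $K_0'$ gives $\varphi_{G,S}(\Ind_G^H A)\simeq \Ind_{G/U}^{H/V}\varphi_{H,S}(A)$.

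All the cohomological content is already packaged in Lemmas~\ref{lem:Shapiro-mod} and~\ref{lem:H0}, so the only point I would be careful about is the bookkeeping of $G/U$-module structures: that $(\Ind_G^H A)'\cong\Ind_G^H(A')$ is genuinely $G$-equivariant (via $\Ind=\CoInd$ together with the projection formula accounting for the coefficient module $\overline{k}^{\times}$), and that after the Shapiro and induction isomorphisms the archimedean local action attached to $k$ really is $\Ind_{G/U}^{H/V}$ of the one attached to $K$. None of this is substantive, which is precisely why the corollary can be stated as an immediate consequence.
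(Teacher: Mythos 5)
Your proposal is correct and follows exactly the route the paper intends: the paper gives no argument beyond asserting that the corollary "immediately follows" from Lemmas~\ref{lem:Shapiro-mod} and~\ref{lem:H0}, and your term-by-term matching (Shapiro for the three cohomological classes, Lemma~\ref{lem:H0} applied to $A'$ for the archimedean classes, plus the standard compatibilities $(\Ind_G^H A)'\cong\Ind_G^H(A')$ and $[\Ind M]^{\vee}=\Ind[M^{\vee}]$ for finite-index induction) is precisely the bookkeeping being left implicit.
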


\begin{lemma}\label{lem:Phi-property-1}
	 The map $\varphi_{G,S}$ is additive on short exact sequences of modules in $\Mod_S(G)$.
%
\end{lemma}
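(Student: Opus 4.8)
The plan is to run the long exact cohomology sequence of $U=G_S(k)$ attached to a short exact sequence $0\to A\to B\to C\to 0$ of modules in $\Mod_S(G)$ and to show that the only failure of additivity of the naive alternating sum $\sum_{i=0}^{2}(-1)^i[H^i(U,-)]$ is concentrated at the archimedean places of $k$, where it is cancelled exactly by the two archimedean correction terms built into $\varphi_{G,S}$. Those correction terms are what make the statement nontrivial: when $k$ has a real place and $2$ divides $|A|$ one has $\scd_2(U)=\infty$, so $[H^0(U,-)]-[H^1(U,-)]+[H^2(U,-)]$ is genuinely non-additive by itself.

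The first ingredient is structural. Since $S$ is nonempty with $S\supseteq S_{\infty}(k)$ and the orders of $A,B,C$ lie in $\NN(S)$ (so that $S$ contains every prime of $k$ above every prime dividing those orders), the tail of Poitou--Tate duality provides, for every $i\ge 3$, an isomorphism $H^i(U,A)\cong\bigoplus_{\frakp\in S_{\infty}(k)}H^i(k_\frakp,A)=\bigoplus_{\frakp\in S_{\infty}(k)}\widehat H^i(k_\frakp,A)$ which is induced by restriction, functorial in $A$, and $\Gal(k/Q)$-equivariant; the functoriality and equivariance follow from the compatibilities recorded in Section~\ref{sect:notation} (conjugation commutes with restriction, connecting maps, cup products and the duality pairings) and from the induced-module description of $\bigoplus_{\frakp\in S_v(k)}H^i(k_\frakp,-)$. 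Writing $Z=\ker\!\bigl(H^3(U,A)\to H^3(U,B)\bigr)$, the exact sequence $0\to H^0(U,A)\to\cdots\to H^2(U,C)\to Z\to 0$ shows that the main terms contribute
\[
\sum_{i=0}^{2}(-1)^i\bigl([H^i(U,A)]-[H^i(U,B)]+[H^i(U,C)]\bigr)=[Z]
\]
to $\varphi_{G,S}(A)-\varphi_{G,S}(B)+\varphi_{G,S}(C)$ in $K'_0(\Z[\Gal(k/Q)])$, and combining the displayed isomorphism in degree $3$, the local Tate long exact sequence of $0\to A\to B\to C\to 0$, and the $2$-periodicity of the Tate cohomology of $\Gal(\C/\R)$ gives $Z\cong\bigoplus_{\frakp\in S_{\infty}(k)}\im\!\bigl(\partial_\frakp\colon\widehat H^0(k_\frakp,C)\to\widehat H^1(k_\frakp,A)\bigr)$.

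It remains to compute the contribution of the two correction terms. For each $\frakp\in S_{\infty}(k)$ the four-term norm sequence identifies $[H^0(k_\frakp,X')]-[\widehat H^0(k_\frakp,X')]$ with the class of the norm image $N_\frakp X'\subseteq H^0(k_\frakp,X')$, so those terms contribute $\bigoplus_{\frakp\in S_{\infty}(k)}\bigl([N_\frakp A']-[N_\frakp B']+[N_\frakp C']\bigr)^{\vee}$; a short diagram chase comparing invariants and coinvariants of the dualized sequence $0\to C'\to B'\to A'\to 0$ then evaluates $[N_\frakp A']-[N_\frakp B']+[N_\frakp C']=-[\im\delta_\frakp]$, where $\delta_\frakp\colon\widehat H^{-1}(k_\frakp,A')\to\widehat H^{0}(k_\frakp,C')$ is the Tate connecting map of that sequence (connecting maps read off via $2$-periodicity). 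Archimedean local Tate duality $\widehat H^i(k_\frakp,-)^{\vee}\cong\widehat H^{2-i}(k_\frakp,(-)')$ is compatible with connecting homomorphisms and acts trivially on the coefficient group $\Z/2$, so $\delta_\frakp^{\vee}=\pm\partial_\frakp$, hence $(\im\delta_\frakp)^{\vee}\cong\im\partial_\frakp$; together with biduality $(A')'\cong A$ this shows the correction terms contribute $-[Z]$. Adding the two contributions, $\varphi_{G,S}(A)-\varphi_{G,S}(B)+\varphi_{G,S}(C)=[Z]-[Z]=0$, which is the assertion.

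The step I expect to be the main obstacle is carrying the $\Gal(k/Q)$-equivariance through every ingredient: the Poitou--Tate tail, archimedean local Tate duality, the norm sequence, and biduality must each be checked to intertwine the conjugation actions, and the archimedean primes of $k$ over a fixed prime of $Q$ must be organized uniformly via the induced-module description so that the permutation part of the action is handled once and for all. (When $k$ is a function field, $S_{\infty}(k)=\emptyset$, the correction terms vanish, $Z=0$ because $\cd(U)\le 2$, and additivity is just the statement that the long exact sequence terminates in degree $2$.)
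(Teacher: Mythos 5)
Your proof is correct and follows essentially the same route as the paper's: both reduce the failure of additivity of the alternating sum $\sum_{i=0}^{2}(-1)^i[H^i(G_S,-)]$ to the archimedean tail via the isomorphism $H^i(G_S,A)\cong\bigoplus_{\frakp \text{ real}}H^i(k_\frakp,A)$ for $i\geq 3$ (NSW, Theorem (8.6.10)(ii)), and then use $2$-periodicity of archimedean Tate cohomology together with local duality (compatible with connecting maps) to cancel that defect against the two correction terms of $\varphi_{G,S}$. The only difference is bookkeeping: you truncate the long exact sequence at degree $2$ and package the correction via the norm images $N_\frakp X'=\ker\bigl(H^0(k_\frakp,X')\to\widehat{H}^0(k_\frakp,X')\bigr)$, whereas the paper truncates at degree $4$ and manipulates the classes $[\widehat{H}^{-1}]$, $[\widehat{H}^{0}]$, $[\widehat{H}^{1}]$ directly, which does not change the substance.
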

\begin{proof}
    Denote $G_S(k)$ by $G_S$. Let $0\to A_1 \to A_2 \to A_3 \to 0$ be an exact sequence of finite modules in $\Mod_{S}(G)$. By considering the associated long exact sequence of group cohomology, we have the following identity of elements in $K'_0(\Z[\Gal(k/Q)])$
    \begin{equation}\label{eq:equal-1}
        \sum_{i=0}^{2}\sum_{j=1}^{3} (-1)^{i+j+1}[H^i(G_S, A_j)] = \sum_{i=3}^{4}\sum_{j=1}^{3} (-1)^{i+j}[H^i(G_S, A_j)]+[\delta H^4(G_S, A_3)],
    \end{equation}
    where $\delta$ denotes the connecting map $H^i\to H^{i+1}$ (or $\widehat{H}^i \to \widehat{H}^{i+1}$ for Tate cohomology groups) in the long exact sequence.
    By \cite[Thm. (8.6.10)(ii)]{NSW}, for $i\geq 3$ and any $j$, the restriction map $H^i(G_S,A_j)\to \oplus_{\frakp\in S_{\R}(k)} H^i(k_{\frakp}, A_j)$ is an isomorphism. Note that for $\frakp\in S_{\R}(k)$, we have $\calG_{\frakp}(k)=\Z/2\Z$, so by \cite[Prop.~(1.7.1) and (1.7.2)]{NSW} we have
    \begin{eqnarray*}
    	\sum_{i=3}^4 \sum_{j=1}^3 (-1)^{i+j} [H^i(G_S, A_j)]&=&\sum_{i=3}^4 \sum_{j=1}^3 (-1)^{i+j}\left[ \bigoplus_{\frakp \in S_{\R}(k)} H^i(k_{\frakp}, A_j)\right] \\
	&=&\sum_{\frakp \in S_{\R}(k)}  \sum_{i={-1}}^0 \sum_{j=1}^3 (-1)^{i+j} \left[ \widehat{H}^i(k_{\frakp}, A_j)\right]\\
	&=& 0.
    \end{eqnarray*}


    
    So \eqref{eq:equal-1} gives
    \begin{eqnarray}
        \sum_{i=0}^{2}\sum_{j=1}^{3} (-1)^{i+j+1}\left[H^i(G_S, A_j)\right] &=& \left[\delta H^4(G_S, A_3)\right] \nonumber \\
        &=& \left[\bigoplus_{\frakp\in S_{\R}(k)} \delta H^4(k_{\frakp}, A_3)\right] \nonumber \\
        &=& \left[\bigoplus_{\frakp\in S_{\R}(k)} \delta \widehat{H}^0(k_{\frakp}, A_3)\right] \nonumber \\
        &=& \left[\bigoplus_{\frakp\in S_{\R}(k)} \ker \left(\widehat{H}^1(k_{\frakp}, A_1) \to \widehat{H}^1(k_{\frakp}, A_2)\right)\right] \nonumber\\
        &=& \left[\bigoplus_{\frakp\in S_{\R}(k)} \coker \left(\widehat{H}^1(k_{\frakp}, A'_2) \to \widehat{H}^1(k_{\frakp}, A'_1)\right)\right]^{\vee} \nonumber \\
        &=& \left[ \bigoplus_{\frakp \in S_{\R}(k)} \delta \widehat{H}^1(k_{\frakp}, A'_1) \right]^{\vee} \label{eq:star1}
    \end{eqnarray}
    where the fourth equality and the last one uses the long exact sequence of Tate cohomology groups, the fifth one uses the local duality theorem \cite[Thm. (7.2.17)]{NSW}. On the other hand, again by \cite[Prop.~(1.7.1) and (1.7.2)]{NSW}, the long exact sequence induced by 
    \begin{equation}\label{eq:ses'}
        0\to A'_3 \to A'_2 \to A'_1 \to 0
    \end{equation} 
    implies
    \begin{eqnarray}
        && \sum_{j=1}^3 (-1)^{j+1} \left[\bigoplus_{\frakp \in S_{\R}(k)} \hH^0(k_{\frakp},A'_j)\right]  \nonumber\\
        &=& \sum_{j=1}^3 (-1)^{j+1}\left[\bigoplus_{\frakp \in S_{\R}(k)} \hH^1(k_{\frakp},A'_j)\right]  \nonumber \\
        &=& \sum_{j=1}^3 (-1)^{j+1} \left[\bigoplus_{\frakp \in S_{\R}(k)} H^0(k_{\frakp},A'_j)\right]  + \left[\bigoplus_{\frakp \in S_{\R}(k)} \delta H^1(k_{\frakp},A'_1)\right] \label{eq:star2}
    \end{eqnarray}
    where the last equality follows by the long exact sequence  of group cohomology induced by \eqref{eq:ses'}. Therefore, combining \eqref{eq:star1} and \eqref{eq:star2}, we obtain 
	\[
    		\varphi_{G,S}(A_1) - \varphi_{G,S}(A_2) +\varphi_{G,S}(A_3)=0.
  	\]
\end{proof}

\
\begin{lemma}\label{lem:EPChar-mu}
	If $\ell \in \NN(S)$ is a prime, then we have the following identities of elements in $K'_0(\F_{\ell}[\Gal(K/Q)])$ for any Galois extension $K$ of $Q$ with $k(\mu_\ell) \subset K \subset k_S$
	\begin{eqnarray*}
        [H^0(\Gal(k_S/K), \mu_{\ell})] &=& [\mu_{\ell}] \\
        {}[H^1(\Gal(k_S/K), \mu_{\ell})] &=&  [\calO^{\times}_{K,S}/\ell] + [\Cl_S(K)[\ell]] \\
        {}[H^2(\Gal(k_S/K),\mu_{\ell})] &=& [\Cl_S(K)/\ell] - [\F_{\ell}] + \left[\bigoplus_{\frakP \in S\backslash S_{\infty}(K)} \F_{\ell}\right] + \left[\bigoplus_{\frakP \in S_{\infty}(K)} \hH^0(\calG_\frakP, \F_{\ell})\right],
    \end{eqnarray*}
    where $\Cl_S(K)$ is the $S$-class group of $K$, $\Cl_S(K)[\ell]$ is the $\ell$-torsion subgroup of $\Cl_S(K)$, and $\calO_{K,S}^{\times}/\ell$ and $\Cl_S(K)/\ell$ denote the maximal exponent-$\ell$ quotients of $\calO_{K,S}^{\times}$ and $\Cl_S(K)$ respectively.
\end{lemma}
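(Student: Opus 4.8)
The plan is to obtain all three identities from the Kummer sequence $0\to\mu_\ell\to\G_m\xrightarrow{\ell}\G_m\to 0$, which is exact since $\ell\in\NN(S)$ is invertible on $X:=\Spec\calO_{K,S}$ (for a number field, working on the Artin--Verdier-type site that includes the archimedean places of $K$), together with the identification of the \'etale cohomology of $X$ with locally constant coefficients with the Galois cohomology of $\Gal(k_S/K)$; the hypotheses $S_\infty(k)\subseteq S$ and $\ell\in\NN(S)$ are exactly what make that comparison hold, and I would cite \cite[\S~8.3]{NSW} for it. The $H^0$ statement is then immediate from $\mu_\ell\subseteq k(\mu_\ell)\subseteq K$. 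Using the standard computations $H^0(X,\G_m)=\calO_{K,S}^\times$, $H^1(X,\G_m)=\Pic(\calO_{K,S})=\Cl_S(K)$ and $H^2(X,\G_m)=\Br(\calO_{K,S})$, the long exact sequence of the Kummer sequence breaks into the short exact sequences
\[
	0\to\calO_{K,S}^\times/\ell\to H^1(\Gal(k_S/K),\mu_\ell)\to\Cl_S(K)[\ell]\to 0
\]
and
\[
	0\to\Cl_S(K)/\ell\to H^2(\Gal(k_S/K),\mu_\ell)\to\Br(\calO_{K,S})[\ell]\to 0 .
\]
Every arrow here is functorial in $X$, hence $\Gal(K/Q)$-equivariant for the conjugation action, so it is enough to verify the claims as identities of finite abelian groups; the first sequence already gives the formula for $[H^1]$ in $K'_0(\F_\ell[\Gal(K/Q)])$, and the $H^2$ formula is reduced to computing $[\Br(\calO_{K,S})[\ell]]$.

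For the latter I would invoke the fundamental exact sequence of global class field theory in its $S$-integral form,
\[
	0\to\Br(\calO_{K,S})\to\bigoplus_{\frakP\in S}\Br(K_\frakP)\xrightarrow{\ \sum_\frakP\inv_\frakP\ }\Q/\Z\to 0
\]
(see \cite{NSW}). Since $\ell\in\NN(S)$ is prime, $S$ necessarily contains a nonarchimedean place --- the places above $\ell$ when $K$ is a number field, and any place when $K$ is a function field --- and at such a place $\inv_\frakP$ restricts to an isomorphism $\Br(K_\frakP)[\ell]\xrightarrow{\sim}(\Q/\Z)[\ell]$; combined with divisibility of $\Q/\Z$, this shows the sequence stays exact after taking $\ell$-torsion, so
\[
	[\Br(\calO_{K,S})[\ell]]=\sum_{\frakP\in S}[\Br(K_\frakP)[\ell]]-[\F_\ell]
\]
in $K'_0(\F_\ell[\Gal(K/Q)])$, with $\F_\ell=(\Q/\Z)[\ell]$ carrying the trivial conjugation action. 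By Hilbert~90 the Kummer sequence over $K_\frakP$ gives $H^2(K_\frakP,\mu_\ell)\cong\Br(K_\frakP)[\ell]$, which is $\cong\F_\ell$ for nonarchimedean $\frakP$ and $\cong\hH^0(\calG_\frakP,\F_\ell)$ for archimedean $\frakP$ (here a direct check suffices: $\mu_\ell\subseteq K_\frakP$ forces $\ell=2$ when $\frakP$ is real, and then both sides equal $\F_2$, while both vanish otherwise). The conjugation action of $\Gal_\frakP(K/Q)$ on $\Br(K_\frakP)$ is trivial, since $\inv_\frakP$ identifies it $\Gal(K_\frakP/Q_v)$-equivariantly with a subgroup of $\Q/\Z$ on which the action is trivial; hence $\bigoplus_{\frakP\mid v}H^2(K_\frakP,\mu_\ell)$ is simply the permutation module on $S_v(K)$, which is exactly what the sums $\bigoplus_{\frakP\in S\setminus S_\infty(K)}\F_\ell$ and $\bigoplus_{\frakP\in S_\infty(K)}\hH^0(\calG_\frakP,\F_\ell)$ in the statement denote. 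Assembling the two displays yields the asserted class of $H^2(\Gal(k_S/K),\mu_\ell)$.

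The main obstacle I anticipate is not a single hard step but the careful bookkeeping in the $H^2$ computation: pinning down the archimedean correction terms precisely (this is exactly where $S_\infty(k)\subseteq S$ and $\ell\in\NN(S)$ are used) and verifying that the $\Gal(K/Q)$-conjugation action on the local Brauer contributions is trivial, so that they really do assemble into the permutation modules written in the statement rather than some cyclotomically twisted variant. The reason no twist appears is that $\mu_\ell\subseteq K_\frakP$ makes $\mu_\ell$ a trivial $\calG_\frakP$-module, and the $\Gal(K/Q)$-equivariant embedding $H^2(K_\frakP,\mu_\ell)\hookrightarrow\Br(K_\frakP)$ lands in the conjugation-trivial Brauer group. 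A secondary point requiring care is citing the exact form of the \'etale-to-Galois cohomology comparison over $\Spec\calO_{K,S}$, since that, too, is a place where the archimedean behaviour enters and the hypothesis on $S$ matters.
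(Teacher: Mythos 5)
Your proposal is correct and is essentially the computation the paper's proof defers to (namely the proof of \cite[Theorem~8.7.4]{NSW}): Kummer theory yields the two short exact sequences expressing $H^1$ and $H^2$ in terms of $\calO_{K,S}^\times$, $\Cl_S(K)$ and the $S$-Brauer group, and the latter's $\ell$-torsion is evaluated via the local invariant maps exactly as you do, with your observations that $S$ contains a nonarchimedean place and that the local Brauer summands carry the trivial conjugation action being the right points to check. The only cosmetic difference is that you route the argument through the \'etale cohomology of $\Spec\calO_{K,S}$, which introduces the \'etale-versus-Galois comparison (harmless in degrees $\leq 2$ under the standing hypotheses $S\supseteq S_\infty(k)$, $\ell\in\NN(S)$, but worth a precise citation) as an extra step that the direct Galois-cohomological treatment in \cite[\S~8.3]{NSW} avoids.
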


\begin{proof}
	The lemma follows directly from the claims (i)-(iii) in the proof of \cite[Thm.~8.7.4]{NSW}.  
	Though the proof of those claims only shows these identities when each terms are treated as Grothendieck group elements of $\Gal(K/k)$-modules, one can check that the ideas there work generally for the base field $Q$ instead of $k$.
\end{proof}

\subsection{Proof of Theorem~\ref{thm:EPChar}}

	For any $G$-module $A$ and $v\in S_{\infty}(Q)$,
	\[
		\bigoplus_{\frakp \in S_{v}(k)} H^0(k_{\frakp}, A') \cong \Ind^{\Gal_\frakp(k/Q)}_{\Gal(k/Q)} H^0(k_{\frakp}, A')
	\]
	as $\Gal(k/Q)$-modules, where $\frakp$ on the right-hand side is an arbitrarily chosen place in $S_{v}(k)$. So by the Shapiro's lemma, we have
	\begin{equation}\label{eq:H-fix}
		\left(\bigoplus_{\frakp \in S_{v}(k)} H^0(k_{\frakp}, A') \right)^{\Gal(k/Q)}\cong H^0(k_{\frakp}, A')^{\Gal_{\frakp}(k/Q)} = H^0(Q_v, A').
	\end{equation}
	If $\Gal_{\frakp}(k/Q) = \Z/2\Z$, then $\hH^0(k_{\frakp}, A') = \hH^0(Q_v, A')=0$ because $|A'|$ has to be odd as $\gcd(|A|, [k:Q])=1$. If $\Gal_{\frakp}(k/Q)=1$, then $\widehat{H}^0(k_{\frakp}, A')=\widehat{H}^0(Q_v, A')$. So we have
	\begin{equation}\label{eq:hH-fix}
		\left(\bigoplus_{\frakp \in S_{v}(k)} \hH^0(k_{\frakp}, A') \right)^{\Gal(k/Q)}\cong \hH^0(k_{\frakp}, A')^{\Gal_{\frakp}(k/Q)} = \hH^0(Q_v, A'),
	\end{equation}
	Note that for any $M \in \Mod(\Gal(k/Q))$, we have $(M^{\vee})^{\Gal(k/Q)}=\Hom_{\Gal(k/Q)}(M, \Q/\Z)\simeq M_{\Gal(k/Q)}$. When $M$ has order prime to $[k:\Q]$, $M^{\Gal(k/\Q)}$ and $M_{\Gal(k/\Q)}$ are isomorphic.	So the $\Gal(k/Q)$-invariants of 
	\[
		\left(\bigoplus_{\frakp \in S_{v}(k)} H^0(k_{\frakp}, A') \right)^\vee \quad \text{and} \quad \left(\bigoplus_{\frakp \in S_{v}(k)} \hH^0(k_{\frakp}, A') \right)^\vee
	\]
	are $H^0(Q_v, A')$ and $\hH^0(Q_v, A')$ respectively.

	We let $R$ denote the ring $\prod_{p\nmid [k:Q]} \Z_p$.
	Let $\Theta: K_0'(R[\Gal(k/Q)]) \to \Z$ be the map defined by sending the class $[A]$ to the size of $A^{\Gal(k/Q)}$, which is a group homomorphism because taking $\Gal(k/Q)$-invariants is an exact functor in the category of $R[\Gal(k/Q)]$-modules. So we want to show that $\Theta\circ\varphi_{G, S}$ is the zero map when restricted to modules in $\Mod_{S}(\Gal(k_S/Q))$ with order prime to $[k:Q]$.
	By Lemma \ref{lem:Phi-property-1} we just need to show
    \begin{equation}\label{eq:stat}
    	\Theta \circ \varphi_{G,S}(K'_0(\F_{\ell}[\Gal(E/Q)]))=0,
    \end{equation}
    for any prime integer $\ell\in \NN(S)$ with $\ell \nmid [k:Q]$ and any finite extension $E$ of $k$ that is Galois over $Q$.
    Because the codomain of the map $\Theta$ is free, \eqref{eq:stat} is equivalent to the vanishing of $\Theta\circ \varphi_{G,S}$ on the torsion-free part of $K'_0(\F_{\ell}[\Gal(E/Q)])$. Note that, by \cite[Lem.~(7.3.4)]{NSW}, the $\Q$-linear space $K'_0(\F_{\ell}[\Gal(E/Q)]) \otimes_{\Z} \Q$ is generated by classes in the form of $\Ind^{\overline{C}}_{\Gal(E/Q)} A$, where $\overline{C}$ runs over all cyclic subgroups of $\Gal(E/Q)$ of order prime to $\ell$ and $A$ runs over classes of $K_0'(\F_{\ell}[\overline{C}])$. 
    For such $\overline{C}$ and $A$, we denote $C$ the full preimage of $\overline{C}$ in $G=\Gal(k_S/Q)$, and then by Corollary~\ref{cor:Phi-property-2} and $\Ind^{\overline{C}}_{\Gal(E/Q)} A = \Ind^C_G A$, we have that 
    $\Theta \circ \varphi_{G,S}(\Ind^C_{G} A)=0$ if and only if $\Theta \circ \varphi_{C,S}(A)=0$. By setting $G$ to be $C$, $U$ to be $C\cap U$, $Q$ to be $(k_S)^C$ and $k$ to be $(k_S)^{C\cap U}$, we finally reduce the problem to the statement that we will prove in the rest of this section:
    \begin{gather}
    	\Theta\circ\varphi_{G,S}(A)=0 \text{ for all $A\in \Mod_{\ell}(G)$ such that $k(A)/Q$ is} \label{eq:statement} \\
	\text{a cyclic extension of $Q$ of order relatively prime to $\ell$.} \nonumber
    \end{gather}
        
    We let $K=k(A, \mu_{\ell})$. So under the assumption in \eqref{eq:statement}, we have that $\Gal(K/Q)$ is an abelian group of order relatively prime to $\ell$, in which case the Hochschild-Serre spectral sequence for the group extension
    \[
    	1\to \Gal(k_S/K) \to \Gal(k_S/k) \to \Gal(K/k) \to 1
    \]   
    and the module $A$ degenerates, and then for each $i\geq 0$ we have that
    \begin{equation}\label{eq:HS-isom}
        H^i(\Gal(k_S/k), A) \cong H^i(\Gal(k_S/K), A)^{\Gal(K/k)}.
    \end{equation}

    We first consider the module $A=\mu_{\ell}$, then $K=k(\mu_{\ell})$ and we let $\overline{G}=\Gal(K/Q)$. As $\ell \nmid \Gal(K/Q)$, in both the number field case (by \cite[Cor.~(8.7.3)]{NSW}) and the function field case (by a standard argument using the divisor group), we have that
    \[
        [\calO^{\times}_{K,S}/\ell] = \left[\bigoplus_{\frakP \in S(K)} \F_\ell\right]+[\mu_{\ell}] - [\F_{\ell}]
    \]
    in $K'_0(\F_\ell[\overline{G}])$.
    Then since $[\Cl_S(K)[\ell]]=[\Cl_S(K)/\ell]$ as they are the kernel and the cokernel of the map $\Cl_S(K) \overset{\times \ell}{\longrightarrow} \Cl_S(K)$, by Lemma~\ref{lem:EPChar-mu} we have
    \begin{equation}\label{eq:chieq}
        \sum_{i=0}^2 (-1)^i [H^i(\Gal(k_S/K), \mu_\ell)]\\
        = \left[\bigoplus_{\frakP \in S_{\infty}(K)} \hH^0(K_\frakP, \F_{\ell})\right]-\left[\bigoplus_{\frakP \in S_{\infty}(K)} H^0(K_\frakP, \F_{\ell})\right],
    \end{equation}
    and hence $\varphi_{G, S}(\mu_{\ell})=0$ follows easily by \eqref{eq:HS-isom} and by the arguments in the first paragraph of this subsection. Thus, we have $\Theta\circ \varphi_{G, S}(\mu_{\ell})=0$.
    
    For a general finite module $A\in \Mod_{\ell}(G)$, we again denote $K=k(A, \mu_{\ell})$ and $\overline{G}=\Gal(K/Q)$. 
    We define 
    \begin{eqnarray*}
        \chi: \Mod_{\ell}(\overline{G}) &\to& K'_0(\F_{\ell}[\overline{G}]) \\
        {}M &\mapsto& \sum_{i=0}^2 (-1)^i[H^i(\Gal(k_S/K), M)].
    \end{eqnarray*}
    Because $A$ and $\mu_\ell$ are both trivial $\Gal(k_S/K)$-modules, the pairing
    \begin{eqnarray*}
        \mu_\ell \times \Hom(A', \F_\ell) &\to& \Hom(A', \mu_{\ell})=A \\
        (\zeta, f) &\mapsto& (x\mapsto \zeta^{f(x)})
    \end{eqnarray*}
    defines $\overline{G}$-isomorphisms via the cup product
    \[
        H^i(\Gal(k_S/K), \mu_{\ell}) \otimes_{\Z} \Hom(A', \F_\ell) \overset{\sim}{\longrightarrow} H^i(\Gal(k_S/K), A).
    \]
    So we have $\chi(A)=[A'^\vee]\chi(\mu_\ell)$,
    and hence by \eqref{eq:chieq} we have 
    \[
        \chi(A)=[A'^{\vee}] \left(\left[\bigoplus_{\frakP\in S_{\infty}(K)} \hH^0(K_\frakP, \F_{\ell})\right] -\left[\bigoplus_{\frakP\in S_{\infty}(K)} H^0(K_\frakP, \F_{\ell})\right] \right).
    \]
    If $Q$ is a function field, then \eqref{eq:statement} follows immediately after taking the $\overline{G}$-invariants on both sides above. 
    
    For the rest of the proof we consider the number field case. 
    Let $S^{-}_{\infty}(Q)$ be the set of archimedean places of $Q$ lying below the imaginary places of $K$ if $\ell=2$, and be the set $S_{\infty}(Q)$ if $\ell$ is odd.
    One can check by definition of $\widehat{H}^0$ that for any module $M \in \Mod_{\ell}(\overline{G})$ (for example, $M=A'$ and $M=\F_{\ell}$), we have
    \[
    	 \left[\bigoplus_{\frakP\in S_{\infty}(K)} \hH^0(K_\frakP, M)\right] - \left[\bigoplus_{\frakP\in S_{\infty}(K)} H^0(K_\frakP, M)\right] 
	= \sum_{v \in S^{-}_{\infty}(Q)} -\left[\Ind_{\overline{G}}^{\overline{G}_{v}} M \right],
    \]
    where the group $\overline{G}_{v}$ is the decomposition subgroup $G_v(K/Q)$. Also, note that $(\Ind_{\overline{G}}^{\overline{G}_{v}} \F_\ell) \otimes_{\Z} M \cong \Ind_{\overline{G}}^{\overline{G}_{v}} M$ for any $M\in\Mod_{\ell}(\overline{G})$ and that
    \[
    	(\Ind_{\overline{G}}^{\overline{G}_{v}} M)^{\overline{G}}= H^0(\overline{G}, \Ind_{\overline{G}}^{\overline{G}_{v}} M)= H^0(\overline{G}_{v}, M)=M^{\overline{G}_{v}}.
    \]
    So we have 
 	\begin{eqnarray*}
    		\Theta\circ \varphi_{G,\ell}(A) &=& \# \left( \sum_{i=0}^2 (-1)^i [H^i(G_S(k), A)] -\left[ \bigoplus_{\frakp \in S_{\infty}(k)} \widehat{H}^0(k_{\frakp}, A')\right]^{\vee} + \left[ \bigoplus_{\frakp \in S_{\infty}(k)} H^0(k_{\frakp}, A')\right]^{\vee} \right)^{\Gal(k/Q)} \\
			&=&\#\left( \chi(A)  -  \left[\bigoplus_{\frakP\in S_{\infty}(K)} \hH^0(K_\frakP, A')\right]^{\vee} +\left[\bigoplus_{\frakP\in S_{\infty}(K)} H^0(K_\frakP, A')\right]^{\vee} \right)^{\overline{G}} \\
		&=&\sum_{v\in S_{\infty}^-(Q)} \#\left(-[A'^{\vee}] \left[ \Ind_{\overline{G}}^{\overline{G}_v} \F_{\ell} \right] + \left[ \Ind_{\overline{G}}^{\overline{G}_v} A' \right]^{\vee}\right)^{\overline{G}}\\
		&=& \sum_{v \in S_{\infty}^{-}(Q)} \# \left( -\left[\Ind_{\overline{G}}^{\overline{G}_v} A'^{\vee} \right] + \left[ \Ind_{\overline{G}}^{\overline{G}_v} A' \right]^{\vee}  \right)^{\overline{G}}   \\
		&=& 0.
	\end{eqnarray*}
	Then we finish the proof of Theorem~\ref{thm:EPChar}.

\section{Definition and properties of $\B_S(k, A)$}\label{sect:RussianB}

Throughout this section, we assume that $k/Q$ is a finite Galois extension of global fields, and that $S$ is a $k/Q$-closed set of primes of $k$ (not necessarily nonempty or containing $S_{\infty}$).

Let $\frakp$ be a prime of the global field $k$. We denote $\calG_{\frakp}=\calG_{\frakp}(k)$ and $\calT_{\frakp}=\calT_{\frakp}(k)$. Recall that for a $\calG_{\frakp}$-module $A$ of order not divisible by $\Char(k)$, the unramified cohomology group is defined to be
\[
	H_{nr}^i(k_{\frakp}, A) = \im \left( H^i(\calG_{\frakp}/\calT_{\frakp}, A) \to H^i(k_{\frakp}, A) \right),
\]
where the map is the inflation map. Then we consider the following homomorphism of cohomology groups
\begin{equation}\label{eq:RB-def}
	\prod_{\frakp\in S} H^1(k_\frakp, A) \times \prod_{\frakp \not\in S} H^1_{nr}(k_\frakp, A) \hookrightarrow \prod_{\frakp} H^1(k_\frakp, A) \overset{\sim}{\to} \prod_{\frakp} H^1(k_\frakp, A')^{\vee} \to H^1(k, A')^{\vee}.
\end{equation}
The first map is the natural embedding of cohomology groups. The second arrow is an isomorphism because of the local Tate duality theorem \cite[Thms~7.2.6 and 7.2.17]{NSW}. The last map is defined by the Pontryagin dual of the product of restriction map $H^1(k, A') \to H^1(k_{\frakp}, A')$ for each prime $\frakp$ of $k$. In particular, the restriction of the composition of the last two maps in \eqref{eq:RB-def} to the restricted product is the map
	\[
		\prod'_{\frakp} H^1(k_{\frakp}, A) \to H^1(k, A')^{\vee}
	\]
	used in the long exact sequence of Poitou-Tate \cite[(8.6.10)(i)]{NSW}.

\begin{definition}\label{def:RB}
	 For a global field $k$, a set $S$ of primes of $k$, and $A\in \Mod(G_k)$ of order not divisible by $\Char(k)$ , we define
	\[
		\B_S(k, A)= \coker\left( \prod_{\frakp \in S} H^1(k_\frakp, A) \times \prod_{\frakp \not\in S} H^1_{nr}(k_{\frakp}, A) \to H^1(k, A')^{\vee}\right),
	\]
	where the map is the composition of maps in \eqref{eq:RB-def}.
\end{definition}

\begin{remark}
	\begin{enumerate}
		\item When $A$ is a finite $G_{Q}$-module and $S$ is $k/Q$-closed, the maps in \eqref{eq:RB-def} are compatible with the conjugation action of $\Gal(k/Q)$ on cohomology groups, so $\B_S(k,A)$ is naturally a $\Gal(k/Q)$-module. 
		\item Using the language of the Selmer groups, $\B_S(k,A)$ is the Pontryagin dual of the Selmer group of the Galois module $A'$ consisting of elements of $H^1(k, A')$ that have images inside the subgroup
		\[
			\prod_{\frakp \in S} 1 \times \prod_{\frakp \not\in S} \ker \left( H^1(k_{\frakp}, A') \to  H_{nr}^1(k_{\frakp}, A)^{\vee} \right) \subset \prod_{\frakp} H^1(k_{\frakp}, A').
		\]
		under the product of local restriction maps.	\end{enumerate}
\end{remark}

\begin{proposition}\label{prop:RB-trivial-mod}
	If $A=\F_{\ell}$ is the trivial $G_k$-module with $\ell\neq \Char(k)$, then $\B_S(k, \F_{\ell})$ is the Pontryagin dual of the Kummer group
	\[
		V_S(k, \ell)=\ker\left(k^{\times}/ k^{\times \ell} \to \prod_{\frakp \in S} k_{\frakp}^{\times}/ k_{\frakp}^{\times \ell} \times \prod_{\frakp \not\in S}k_{\frakp}^{\times} / U_{\frakp}k_{\frakp}^{\times \ell}\right).
	\]
\end{proposition}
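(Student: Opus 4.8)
The plan is to unwind both sides of the identity and identify $\B_S(k,\F_\ell)$ with the Pontryagin dual of the Kummer group $V_S(k,\ell)$ by a direct duality argument, using Kummer theory to identify $H^1(k,\mu_\ell)$ with $k^\times/k^{\times\ell}$ and likewise locally. First I would recall that since $A=\F_\ell$ with $\ell\neq\Char(k)$, the dual module is $A'=\Hom(\F_\ell,\kbar^\times)=\mu_\ell$. By Kummer theory, $H^1(k,A')=H^1(k,\mu_\ell)\cong k^\times/k^{\times\ell}$, and similarly $H^1(k_\frakp,\mu_\ell)\cong k_\frakp^\times/k_\frakp^{\times\ell}$ for every prime $\frakp$. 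Under these identifications, the product of restriction maps $H^1(k,\mu_\ell)\to\prod_\frakp H^1(k_\frakp,\mu_\ell)$ becomes the natural diagonal map $k^\times/k^{\times\ell}\to\prod_\frakp k_\frakp^\times/k_\frakp^{\times\ell}$.

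Next I would use local Tate duality to compute the image of the source of the map defining $\B_S(k,\F_\ell)$. The key point is to identify, for each $\frakp$, the annihilator of $H^1(k_\frakp,\F_\ell)$ (resp.\ $H^1_{nr}(k_\frakp,\F_\ell)$) inside $H^1(k_\frakp,\mu_\ell)\cong k_\frakp^\times/k_\frakp^{\times\ell}$ under the local cup-product pairing. The annihilator of all of $H^1(k_\frakp,\F_\ell)$ is trivial (the pairing is perfect), so at primes $\frakp\in S$ the source surjects onto $H^1(k_\frakp,\mu_\ell)^\vee$ entirely, contributing the full factor $k_\frakp^\times/k_\frakp^{\times\ell}$ to the quotient being "killed"; equivalently, on the Selmer side, the local condition at $\frakp\in S$ is the zero subgroup. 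At primes $\frakp\notin S$, I would invoke the standard fact that under local Tate duality the unramified subgroup $H^1_{nr}(k_\frakp,\F_\ell)$ is the exact annihilator of $H^1_{nr}(k_\frakp,\mu_\ell)$, and that $H^1_{nr}(k_\frakp,\mu_\ell)$ corresponds to $U_\frakp k_\frakp^{\times\ell}/k_\frakp^{\times\ell}\subseteq k_\frakp^\times/k_\frakp^{\times\ell}$ (the classes of units). Hence the local condition cut out at $\frakp\notin S$ on $H^1(k_\frakp,\mu_\ell)$ is precisely $U_\frakp k_\frakp^{\times\ell}/k_\frakp^{\times\ell}$.

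Assembling these facts, $\B_S(k,\F_\ell)$ is by definition the cokernel of
\[
	\prod_{\frakp\in S}H^1(k_\frakp,\F_\ell)\times\prod_{\frakp\notin S}H^1_{nr}(k_\frakp,\F_\ell)\longrightarrow H^1(k,\mu_\ell)^\vee,
\]
and dualizing (Pontryagin duality turns cokernels of the composite with a restriction dual into kernels of the corresponding restriction) identifies $\B_S(k,\F_\ell)^\vee$ with the subgroup of $H^1(k,\mu_\ell)\cong k^\times/k^{\times\ell}$ consisting of classes whose image in $\prod_\frakp k_\frakp^\times/k_\frakp^{\times\ell}$ lies in $\prod_{\frakp\in S}1\times\prod_{\frakp\notin S}U_\frakp k_\frakp^{\times\ell}/k_\frakp^{\times\ell}$. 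That subgroup is exactly $V_S(k,\ell)$, so $\B_S(k,\F_\ell)\cong V_S(k,\ell)^\vee$ as claimed. (When $A=\F_\ell$ is a $G_Q$-module and $S$ is $k/Q$-closed, all identifications are $\Gal(k/Q)$-equivariant, so the isomorphism respects the $\Gal(k/Q)$-action, though for the bare statement this is not needed.)

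I expect the main obstacle to be bookkeeping the local computations cleanly — specifically, verifying that $H^1_{nr}(k_\frakp,\F_\ell)$ and $H^1_{nr}(k_\frakp,\mu_\ell)$ are exact annihilators of one another under local Tate duality (this is where one uses that $\ell\neq\Char(k)$, so both modules are unramified and the duality restricts correctly to the inertia quotient), and matching $H^1_{nr}(k_\frakp,\mu_\ell)$ with the unit classes $U_\frakp k_\frakp^{\times\ell}/k_\frakp^{\times\ell}$ via the valuation sequence $1\to U_\frakp\to k_\frakp^\times\to\Z\to 1$. Once these local identifications are in place, the global statement follows formally from Pontryagin duality and the definition of $\B_S$.
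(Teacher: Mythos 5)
Your proposal follows the same route as the paper's (very terse) proof: identify $H^1(k,\mu_\ell)$ and $H^1(k_\frakp,\mu_\ell)$ with $k^\times/k^{\times\ell}$ and $k_\frakp^\times/k_\frakp^{\times\ell}$ via Kummer theory, determine the local conditions cut out by the source of the map in Definition~\ref{def:RB}, and dualize the cokernel into the Selmer description of $V_S(k,\ell)$. The conclusion and the overall structure are right.

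One step is mis-justified, though. At a prime $\frakp\mid\ell$ of a number field --- and such $\frakp$ can lie outside $S$, which is exactly the regime $\B_S$ is built for --- the ``standard fact'' that $H^1_{nr}(k_\frakp,\F_\ell)$ and $H^1_{nr}(k_\frakp,\mu_\ell)$ are exact annihilators of one another requires $\ell$ to be prime to the \emph{residue} characteristic of $k_\frakp$, not merely $\ell\neq\Char(k)$; it fails when $\frakp\mid\ell$ (compare orders: $H^1_{nr}(k_\frakp,\mu_\ell)$ has order at most $\ell$, far too small to be the annihilator of the $\ell$-element group $H^1_{nr}(k_\frakp,\F_\ell)$ inside $k_\frakp^\times/k_\frakp^{\times\ell}$, whose order grows with $[k_\frakp:\Q_\ell]$). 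Likewise $H^1_{nr}(k_\frakp,\mu_\ell)$ is in general a proper subgroup of $U_\frakp k_\frakp^{\times\ell}/k_\frakp^{\times\ell}$ at such $\frakp$, since $k_\frakp(u^{1/\ell})$ is usually ramified for a unit $u$. Fortunately the composite statement you actually need --- that the exact annihilator of $H^1_{nr}(k_\frakp,\F_\ell)$ in $H^1(k_\frakp,\mu_\ell)\cong k_\frakp^\times/k_\frakp^{\times\ell}$ is $U_\frakp k_\frakp^{\times\ell}/k_\frakp^{\times\ell}$ --- holds at every finite prime: the Tate pairing is $(\chi,a)\mapsto\chi(\mathrm{rec}_{k_\frakp}(a))$, the unramified characters are precisely those factoring through $k_\frakp^\times/U_\frakp$, and their common kernel is $U_\frakp k_\frakp^{\times\ell}$. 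This direct local class field theory computation (equivalently, the identification $H^1_{nr}(k_\frakp,\F_\ell)^\vee\cong k_\frakp^\times/U_\frakp k_\frakp^{\times\ell}$ that the paper simply asserts) is what should replace the appeal to unramified orthogonality; with that substitution your argument is complete.
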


\begin{proof}
	By the class field theory, we have 
	\[
		H^1(k, \mu_\ell)\cong k^{\times}/k^{\times \ell},\quad H^1(k_{\frakp}, \mu_{\ell})\cong k_{\frakp}^{\times}/k_{\frakp}^{\times \ell},\quad \text{and} \quad H_{nr}^1(k_{\frakp}, \F_{\ell})^{\vee} \cong k_{\frakp}^{\times}/U_{\frakp} k_{\frakp}^{\times \ell}.
	\]
	Then the proposition follows directly from Definition~\ref{def:RB}.
\end{proof}

\begin{lemma}\label{lem:les-B}
	Let $k/Q$ be a finite Galois extension of global fields, $T \supseteq S$ be $k/Q$-closed sets of primes of $k$, and $A\in \Mod(\Gal(k_S/Q))$ be of order not divisible by $\Char(k)$. Then we have the following exact sequence that is compatible with the conjugation by $\Gal(k/Q)$
	\[
		H^1(G_S(k), A) \hookrightarrow H^1(G_T(k), A) \to \bigoplus_{\frakp \in T\backslash S} H^1(\calT_\frakp(k), A)^{\calG_\frakp(k)} \to \B_S(k, A) \twoheadrightarrow \B_T(k, A).
	\]
\end{lemma}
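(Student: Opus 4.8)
The plan is to work directly with the defining cokernels. Write $P_S=\prod_{\frakp\in S}H^1(k_\frakp,A)\times\prod_{\frakp\notin S}H^1_{nr}(k_\frakp,A)$ and $P_T$ for the analogous product attached to $T$, both regarded as subgroups of the restricted product $\calP:=\prod_\frakp{}' H^1(k_\frakp,A)$ (restricted with respect to the unramified subgroups), and let $\lambda\colon\calP\to H^1(k,A')^{\vee}$ be the Poitou--Tate map appearing in \eqref{eq:RB-def}, so that $\B_S(k,A)=\coker(\lambda|_{P_S})$ and $\B_T(k,A)=\coker(\lambda|_{P_T})$. Since $S\subseteq T$ we have $P_S\subseteq P_T$, with quotient $P_T/P_S\cong\bigoplus_{\frakp\in T\setminus S}H^1(k_\frakp,A)/H^1_{nr}(k_\frakp,A)$. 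First I would record the canonical isomorphism $H^1(k_\frakp,A)/H^1_{nr}(k_\frakp,A)\cong H^1(\calT_\frakp,A)^{\calG_\frakp}$ for $\frakp\notin S$: it drops out of the inflation--restriction sequence for $1\to\calT_\frakp\to\calG_\frakp\to\calG_\frakp/\calT_\frakp\to1$, because $\cd(\calG_\frakp/\calT_\frakp)=\cd(\hat{\Z})=1$ kills the obstruction term $H^2(\calG_\frakp/\calT_\frakp,A)$ (here $\gcd(|A|,\Char k)=1$ and $\calT_\frakp$ acts trivially on $A$ for $\frakp\notin S$, as $A$ is a $\Gal(k_S/Q)$-module and $k_S/k$ is unramified at such $\frakp$). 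Under this identification $P_T/P_S\cong\bigoplus_{\frakp\in T\setminus S}H^1(\calT_\frakp,A)^{\calG_\frakp}$.

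Next I would define the five maps. The map $H^1(G_S(k),A)\to H^1(G_T(k),A)$ is inflation along $G_T(k)\twoheadrightarrow G_S(k)$; it is injective, and its image equals the kernel of $H^1(G_T(k),A)\to H^1(\Gal(k_T/k_S),A)^{G_S(k)}$, by the inflation--restriction sequence for $1\to\Gal(k_T/k_S)\to G_T(k)\to G_S(k)\to1$ (note $\Gal(k_T/k_S)$ acts trivially on $A$, so this last group is $\Hom(\Gal(k_T/k_S),A)^{G_S(k)}$). The map $H^1(G_T(k),A)\to\prod_{\frakp\in T\setminus S}H^1(\calT_\frakp,A)^{\calG_\frakp}$ is, in each component, restriction to a decomposition group followed by restriction to inertia. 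The map $\prod_{\frakp\in T\setminus S}H^1(\calT_\frakp,A)^{\calG_\frakp}\to\B_S(k,A)$ is: identify the source with $P_T/P_S$ as above, lift to $P_T$, apply $\lambda$, and reduce modulo $\lambda(P_S)$; this is well defined in $\B_S(k,A)$. Finally $\B_S(k,A)\twoheadrightarrow\B_T(k,A)$ is the evident surjection of cokernels induced by $P_S\subseteq P_T$. All of these are built from inflation, restriction, local duality and the Poitou--Tate pairing, hence are compatible with $\Gal(k/Q)$-conjugation by the standing remarks in Section~\ref{sect:notation}.

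It then remains to check exactness at the three interior terms. At $H^1(G_T(k),A)$: a class $c$ dies in every $H^1(\calT_\frakp,A)$, $\frakp\in T\setminus S$, iff its restriction to $\Gal(k_T/k_S)$ (a homomorphism, since the action is trivial) vanishes on all inertia subgroups at primes over $T\setminus S$; since $k_S$ is the fixed field of the closed normal subgroup of $\Gal(k_T/k)$ generated by those inertia subgroups, and since elements of $\Gal(k_T/k_S)$ act trivially on $A$ (which upgrades vanishing at one prime above $\frakp$ to vanishing at all its conjugates, via the cocycle identity $c(\sigma t\sigma^{-1})=(1-\sigma t\sigma^{-1})\cdot c(\sigma)=0$), this holds iff $c$ is inflated from $G_S(k)$ --- which is the description of the image just obtained. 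At $\B_S(k,A)$: the kernel of $\B_S(k,A)\to\B_T(k,A)$ is the image of $P_T$ in $H^1(k,A')^{\vee}/\lambda(P_S)$, and since $P_T=P_S+(\text{lifts of elements supported on }T\setminus S)$ this is exactly the image of the third map. At $\prod_{\frakp\in T\setminus S}H^1(\calT_\frakp,A)^{\calG_\frakp}$ --- the heart of the proof --- I would invoke the Poitou--Tate nine-term sequence for the full set of places, which gives $\lambda\circ\operatorname{loc}=0$ and $\ker\lambda=\im(\operatorname{loc}\colon H^1(k,A)\to\calP)$. Vanishing of the relevant composite follows because the image in $P_T$ of $c\in H^1(G_T(k),A)$ agrees, modulo $P_S$, with $\operatorname{loc}(\tilde c)$ for $\tilde c\in H^1(k,A)$ the inflation of $c$, and $\lambda(\operatorname{loc}(\tilde c))=0$. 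Conversely, if $(x_\frakp)_\frakp$ maps to $0$ in $\B_S(k,A)$, a lift $y\in P_T$ satisfies $\lambda(y)\in\lambda(P_S)$, so after subtracting an element of $P_S$ we get $y\in\ker\lambda=\im(\operatorname{loc})$, say $y=\operatorname{loc}(a)$; as $y$ is unramified outside $T$, the global class $a$ is unramified outside $T$ and hence is the inflation of some $c\in H^1(G_T(k),A)$ (again because the inertia subgroups at primes outside $T$ act trivially on $A$), and $c$ maps to $(x_\frakp)_\frakp$ by construction.

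The main obstacle is precisely this exactness at the middle term: it is where global duality must enter, and one must take care that $A$ is an arbitrary Galois module --- the Hasse principle may fail, i.e. $\Sha^1(k,A)$ need not vanish --- so no shortcut through a Hasse-principle statement is available. What rescues the argument is that only the unconditional Poitou--Tate sequence over \emph{all} places is used, not any property of $\Sha$; the remaining care is the bookkeeping identifying $H^1(\calT_\frakp,A)^{\calG_\frakp}$ with $H^1(k_\frakp,A)/H^1_{nr}(k_\frakp,A)$ and with the graded pieces of $P_T/P_S$, and keeping the restricted-product conventions straight throughout.
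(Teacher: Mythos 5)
Your argument is correct and is essentially the paper's proof with the snake lemma unrolled by hand: the same inputs appear in the same roles (the Poitou--Tate sequence over all places giving $\ker\lambda=\im(\mathrm{loc})$, the surjectivity of $H^1(k_\frakp,A)\twoheadrightarrow H^1(\calT_\frakp,A)^{\calG_\frakp}$ from $\cd(\widehat{\Z})=1$, and normal generation of $\Gal(k_T/k_S)$ and $G_{k_T}$ by inertia), the only organizational difference being that you verify exactness at each interior term directly instead of first extracting the auxiliary five-term sequence involving $\Sha^1(k,A)$ and then applying the snake lemma. One cosmetic slip: since $T\setminus S$ need not be finite, $P_T/P_S$ is the full product $\prod_{\frakp\in T\setminus S}H^1(k_\frakp,A)/H^1_{nr}(k_\frakp,A)$ rather than a direct sum, but nothing in your argument uses finite support, so this is harmless.
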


\begin{proof}
	We consider the following commutative diagram
	\[\begin{tikzcd}
		& \Sha^1(k, A) \arrow[hook]{d} \arrow[dashed, hook]{dl} & \\
		H^1(G_S, A) \arrow[hook]{r} & H^1(k ,A) \arrow{r} \arrow{d}	& H^1(G_{k_S}, A)^{G_S} \arrow[hook]{d} \\
		\prod'\limits_{\frakp \in S} H^1(k_\frakp, A) \times \prod\limits_{\frakp \not\in S} H^1_{nr}(k_\frakp, A) \arrow[hook]{r} \arrow{d} & \prod'\limits_{\frakp} H^1(k_\frakp, A) \arrow[two heads]{r}\arrow{d} & \bigoplus\limits_{\frakp\not\in S} H^1(\calT_\frakp, A)^{\calG_\frakp} \\
		H^1(k, A')^{\vee} \arrow[two heads]{d} & H^1(k, A')^{\vee} \arrow[equal]{l} \arrow[two heads]{d} & \\
		\B_S(k, A) & \Sha^2(k, A') &
	\end{tikzcd}\]
	The exactnesses of the second row and the third row follow from the Hochschild-Serre spectral sequence, and last arrow in the third row is surjective because of the fact that $H^2_{nr}(\calG_\frakp,A)=0$ as $\calG_{\frakp}/\calT_{\frakp} \simeq \widehat{\Z}$ when $\frakp$ is nonarchimedean and $1$ when $\frakp$ is archimedean. The exact sequence of the first column follows from the definition of $\B_S(k,A)$, and the second column follows from the long exact sequence of Poitou-Tate \cite[(8.6.10)]{NSW}. The right vertical map is injective since $G_{k_S}$ is generated by the inertia groups of primes outside $S$. 
	
	We consider the map $H^1(k, A) \to \oplus_{\frakp \not \in S} H^1(\calT_{\frakp}, A)^{\calG_{\frakp}}$ in the diagonal of the square diagram on the right. Since $H^1(G_S, A)$ is the kernel of this map and $\Sha^1(k,A)$ is contained in this kernel, the top dashed arrow exists and is injective.
	Then by diagram chasing, we have an exact sequence
	\begin{equation}\label{eq:10.7.4(ii)}
		\Sha^1(k, A) \hookrightarrow H^1(G_S, A) \to \prod'_{\frakp \in S} H^1(k_\frakp, A) \times \prod_{\frakp \not\in S} H^1_{nr}(k_\frakp, A) \to H^1(k, A')^{\vee} \twoheadrightarrow \B_S(k, A).
	\end{equation}
	We apply the snake lemma to the following diagram
	\[\begin{tikzcd}
		\prod'\limits_{\frakp\in S} H^1(k_\frakp, A) \times \prod\limits_{\frakp\not\in S}H^1_{nr}(k_\frakp, A) \arrow[hook]{d} \arrow{r} & H^1(k, A')^{\vee}\arrow[equal]{d}\\
		\prod'\limits_{\frakp\in T} H^1(k_\frakp, A) \times \prod\limits_{\frakp\not\in T}H^1_{nr}(k_\frakp, A) \arrow[two heads]{d} \arrow{r} & H^1(k, A')^{\vee} \\
		\bigoplus\limits_{\frakp\in T \backslash S} H^1(\calT_\frakp, A)^{\calG_\frakp} & 
	\end{tikzcd}\]
	where the horizontal map above is from \eqref{eq:10.7.4(ii)}, and we obtain the following exact sequence
	\[
		\frac{H^1(G_S, A)}{\Sha^1(k,A)} \hookrightarrow \frac{H^1(G_T, A)}{\Sha^1(k, A)} \to \bigoplus_{\frakp\in T\backslash S} H^1(\calT_\frakp, A)^{\calG_\frakp} \to \B_S(k, A) \twoheadrightarrow \B_T(k, A).
	\]
	Note that the inflation map $H^1(G_S, A)\hookrightarrow H^1(G_T, A)$ maps the submodule $\Sha^1(k, A)$ to itself, because $\Sha^1(k, A)$ is the kernel of $H^1(G_*, A) \to \prod_{\frakp} H^1(k_{\frakp}, A)$ for $*=S, T$. Therefore we proved the exact sequence in the lemma, and it is naturally compatible with the conjugation action by $\Gal(k/Q)$.
\end{proof}

\begin{proposition} \label{prop:ShainB}
	Let $k/Q$ be a finite Galois extension of global fields and $S$ a $k/Q$-closed set of primes of $k$. Then for any $A\in \Mod(\Gal(k_S/Q))$ of order not divisible by $\Char(k)$, we have the following inequality of elements in $K'_0(\Gal(k/Q))$
	\[
		[\Sha_S^2(k, A)] \leq [\B_S(k, A)].
	\]
\end{proposition}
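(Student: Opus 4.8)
The plan is to read the inequality off Poitou--Tate duality, using the exact sequences already assembled in the proof of Lemma~\ref{lem:les-B}. One may assume $S$ is finite, since for an infinite $k/Q$-closed $S$ both $\B_S(k,A)$ and $\Sha^2_S(k,A)$ already agree with their values on a sufficiently large finite $k/Q$-closed subset of $S$. I would first treat the case in which $S$ also contains $S_\infty(k)$, the primes dividing $|A|$, and the primes where $A$ ramifies, so that $A$ and $A'$ are both $G_S(k)$-modules and the nine-term Poitou--Tate sequence is available for $G_S(k)$. Its tail
\[
	\bigoplus_{\frakp\in S} H^1(k_\frakp, A) \longrightarrow H^1(G_S(k), A')^{\vee} \longrightarrow H^2(G_S(k), A) \longrightarrow \bigoplus_{\frakp\in S} H^2(k_\frakp, A)
\]
presents $\Sha^2_S(k,A)=\ker\bigl(H^2(G_S(k),A)\to\bigoplus_{\frakp\in S}H^2(k_\frakp,A)\bigr)$ as the cokernel of the first map, and local Tate duality identifies that first map with the Pontryagin dual of the localization $H^1(G_S(k),A')\to\bigoplus_{\frakp\in S}H^1(k_\frakp,A')$; hence $\Sha^2_S(k,A)$ is the Pontryagin dual of the kernel of this localization. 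On the other hand, since $S$ now contains the ramification of $A'$, unwinding Definition~\ref{def:RB} --- using $H^1_{nr}(k_\frakp,A)^{\perp}=H^1_{nr}(k_\frakp,A')$ for $\frakp\notin S$ --- identifies $\B_S(k,A)^{\vee}$ with the group of classes in $H^1(k,A')$ that are unramified outside $S$ and locally trivial at every prime of $S$, that is, with exactly that kernel. Thus $\B_S(k,A)\simeq \Sha^2_S(k,A)$ as $\Gal(k/Q)$-modules for such $S$, which is stronger than what is asserted.

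For a general (finite) $k/Q$-closed $S$ with $A\in\Mod(\Gal(k_S/Q))$, I would choose a $k/Q$-closed $T\supseteq S$ containing the extra primes above and feed the pair $S\subseteq T$ into Lemma~\ref{lem:les-B}. Its five-term exact sequence
\[
	H^1(G_S(k), A)\hookrightarrow H^1(G_T(k),A)\to \prod_{\frakp\in T\setminus S}H^1(\calT_\frakp(k),A)^{\calG_\frakp(k)}\to \B_S(k,A)\twoheadrightarrow \B_T(k,A)
\]
expresses $[\B_S(k,A)]$ as $[\B_T(k,A)]$ plus the class of the image of $\prod_{\frakp\in T\setminus S}H^1(\calT_\frakp,A)^{\calG_\frakp}$ in $\B_S(k,A)$. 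I would then produce the parallel exact sequence on the $\Sha^2$-side --- either by dualizing Lemma~\ref{lem:les-B} through the identifications of the previous step, or by repeating its Hochschild--Serre diagram chase with the Poitou--Tate sequences of $G_S(k)$ and $G_T(k)$ in place of that of $G_k$ --- obtaining $[\Sha^2_S(k,A)]-[\Sha^2_T(k,A)]$ as the class of a quotient of the same local group $\prod_{\frakp\in T\setminus S}H^1(\calT_\frakp,A)^{\calG_\frakp}$ (the identity $H^2_{nr}(k_\frakp,A)=0$ again controlling the local contributions, exactly as in the proof of Lemma~\ref{lem:les-B}). Comparing the two d\'evissage sequences term by term, together with the equality established for $T$, then yields $[\Sha^2_S(k,A)]\leq[\B_S(k,A)]$.

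The main obstacle I expect is precisely this descent from the large set $T$ down to $S$ when $S$ fails to contain $S_\infty(k)$ or the primes dividing $|A|$: there the two d\'evissage sequences no longer match on the nose, differing by a Shafarevich-type group measuring the failure of the Hasse principle for $A$. When $A=\F_\ell$ the Hasse principle forces the relevant $\Sha^1(k,\F_\ell)$ to vanish, and one can splice the identifications above into an honest embedding $\Sha^2_S(k,\F_\ell)\hookrightarrow \B_S(k,\F_\ell)$ (the content of Remark~\ref{rmk:ShainB}); for a general module this $\Sha^1$ can be nonzero, the comparison maps are no longer injective, and one must argue entirely at the level of classes in $K'_0(\Gal(k/Q))$, checking that every correction term that appears contributes with the sign that preserves the inequality. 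Throughout, $\Gal(k/Q)$-equivariance of all maps is automatic, since conjugation commutes with inflation, restriction, cup products, and the local and global duality isomorphisms, as recorded in Section~\ref{sect:notation}.
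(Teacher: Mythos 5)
Your route is genuinely different from the paper's, and as written it stops short at exactly the step you flag as ``the main obstacle.'' The paper never introduces a finite auxiliary set $T$: it runs the inflation--restriction sequence of $1\to G_{k_S}\to G_k\to G_S\to 1$ to get a five-term sequence ending in $\Sha^2_S(k,A)\twoheadrightarrow\beta(\Sha^2_S(k,A))$ (where $\beta$ is the inflation $H^2(G_S,A)\to H^2(k,A)$), and compares it with Lemma~\ref{lem:les-B} taken with $T=\{\text{all primes}\}$, using $\B_{\{\text{all primes}\}}(k,A)=\Sha^1(k,A')^{\vee}\cong\Sha^2(k,A)$. The injections $\ker\beta\hookrightarrow\ker(\B_S\to\B_{\{\text{all primes}\}})$ and $\beta(\Sha^2_S(k,A))\hookrightarrow\Sha^2(k,A)$ on the two ends of the short exact sequences then give the inequality of classes directly. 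Your first step --- the identification $[\Sha^2_T(k,A)]=[\B_T(k,A)]$ for $T\supseteq S_\infty(k)\cup S_{|A|}(k)$ via the nine-term Poitou--Tate sequence and Lemma~\ref{lem:B&Sha} --- is correct, and is indeed what the paper uses in the proof of Proposition~\ref{prop:nf-delta}.

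The gap is the descent from $T$ to $S$. Your intermediate claim that $[\Sha^2_S(k,A)]-[\Sha^2_T(k,A)]$ \emph{equals} the class of a quotient of $\prod_{\frakp\in T\setminus S}H^1(\calT_\frakp,A)^{\calG_\frakp}$ cannot be right as stated: combined with the equality at $T$ and the $\B$-d\'evissage it would yield $[\Sha^2_S(k,A)]=[\B_S(k,A)]$, contradicting Remark~\ref{rmk:ShainB}. What actually closes the argument is the following accounting, which you assert but do not carry out. Inflation induces a map $\Sha^2_S(k,A)\to\Sha^2_T(k,A)$ (classes from $H^2(G_S,A)$ are unramified, hence trivial, locally at $\frakp\in T\setminus S$), whose kernel is $\ker(H^2(G_S,A)\to H^2(G_T,A))$, i.e.\ the image of the transgression in the five-term sequence of $1\to\Gal(k_T/k_S)\to G_T\to G_S\to 1$. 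That kernel is the quotient of the \emph{subgroup} $H^1(\Gal(k_T/k_S),A)^{G_S}\hookrightarrow\prod_{\frakp\in T\setminus S}H^1(\calT_\frakp,A)^{\calG_\frakp}$ by the image of $H^1(G_T,A)$, and hence injects into $\ker(\B_S(k,A)\to\B_T(k,A))$, which by Lemma~\ref{lem:les-B} is the quotient of the full local product by the same image of $H^1(G_T,A)$. Since one only needs $[\im(\Sha^2_S\to\Sha^2_T)]\leq[\Sha^2_T]=[\B_T]$ (surjectivity is not required), adding the two estimates gives $[\Sha^2_S(k,A)]\leq[\B_S(k,A)]$. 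Without this verification, ``checking that every correction term appears with the right sign'' is the content of the proposition rather than a routine step; also note that your opening reduction to finite $S$ is neither justified nor needed, since none of the ingredients above require $S$ finite.
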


\begin{proof}
	We consider the commutative diagram
	\begin{equation}\label{eq:diag}
	\begin{tikzcd}
		H^1(G_S, A) \hookrightarrow H^1(k, A) \to H^1({k_S}, A)^{G_S} \arrow["\alpha"]{r} & H^2(G_S, A) \arrow["\beta"]{r} \arrow["\rho_S"]{d} & H^2(k, A) \arrow["\rho"]{d} \\
		 & \prod\limits_{\frakp\in S} H^2(k_\frakp, A) \arrow[hook]{r} & \prod\limits_{\frakp} H^2(k_\frakp, A)
	\end{tikzcd}
	\end{equation}
	where the first row is the Hochschild-Serre long exact sequence of $1\to G_{k_S} \to G_k \to G_S \to 1$. Because $\im \alpha =\ker \beta \subseteq \ker \rho\circ \beta =\ker \rho_S=\Sha_S^2(k, A)$, we have an exact sequence 
	\[
		H^1(G_S, A) \hookrightarrow H^1(k, A) \to H^1(G_{k_S}, A)^{G_S} \to \Sha_S^2(k, A) \twoheadrightarrow \beta(\Sha_S^2(k, A)).
	\]
	Comparing this exact sequence to Lemma~\ref{lem:les-B} using $T=\left\{ \text{all primes} \right\}$, we have
	\[\begin{tikzcd}
		H^1(k, A) \arrow{r}\arrow[equal]{d} & H^1({k_S}, A)^{G_S} \arrow{r}\arrow[hook]{d} & \Sha_S^2(k, A) \arrow[two heads]{r} & \beta(\Sha_S^2(k, A)) \\
		H^1(k, A) \arrow{r} & \bigoplus\limits_{\frakp\not\in S} H^1(\calT_\frakp, A)^{\calG_\frakp} \arrow{r} & \B_S(k, A) \arrow[two heads]{r} & \B_{\{\text{all primes}\}}(k, A).
	\end{tikzcd}\]
	So by the vertical injection above, we have 
	$\ker \beta \hookrightarrow N:=\ker(\B_S(k,A) \to \B_{\{\text{all primes}\}}(k,A))$. By the diagram in \eqref{eq:diag}, we have $\beta(\ker \rho_S) \subseteq \ker \rho$, which means $\beta(\Sha_S^2(k, A)) \subseteq \Sha^2(k, A)$. Also, note that by Definition~\ref{def:RB} and the Poitou-Tate duality we have $\B_{\{\text{all primes}\}}(k,A) = \Sha^1(k, A')^{\vee} \cong \Sha^2(k,A)$.  Then we consider the two short exact sequence
	\[
		0 \longrightarrow \ker \beta \longrightarrow \Sha_S^2(k, A) \longrightarrow \beta(\Sha_S^2(k,A)) \longrightarrow 0,
	\]
	\[
		0 \longrightarrow N \longrightarrow \B_S(k, A) \longrightarrow \B_{\{\text{all primes}\}}(k, A) \longrightarrow 0,
	\]
	Because $\ker \beta \hookrightarrow N$, $\beta(\Sha_S^2(k, A)) \hookrightarrow \B_{\{\text{all primes}\}}(k, A)$
	and every map respect the conjugation action by $\Gal(k/Q)$, we have the desired inequality $[\Sha_S^2(k, A)] \leq [\B_S(k,A)]$.
\end{proof}

\begin{remark}\label{rmk:ShainB}
	When $A=\F_{\ell}$ is the trivial module, then $\B_{\{\text{all primes}\}}(k,\F_{\ell})$ vanishes \cite[Prop.~9.1.12(ii)]{NSW}, so there is an embedding $\Sha_S^2(k, \F_\ell) \hookrightarrow \B_S(k, \F_\ell)$. However, for an arbitrary $A$, Proposition~\ref{prop:ShainB} does not give such an embedding.
\end{remark}

\begin{lemma}\label{lem:B&Sha}
	Let $k$ be a global field and $S$ a set of primes of $k$ containing $S_{\infty}(k)$. Then for any $A\in \Mod_S(G_S(k))$ of order not divisible by $\Char(k)$, we have $\Sha_S^1(k, A') \cong \B_S(k, A)^{\vee}$.
\end{lemma}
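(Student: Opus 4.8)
The plan is to compute the Pontryagin dual of $\B_S(k,A)$ directly from the cokernel presentation in Definition~\ref{def:RB} and to recognize it as a Selmer subgroup of $H^1(k,A')$. Dualizing the surjection $H^1(k,A')^{\vee}\twoheadrightarrow\B_S(k,A)$ exhibits $\B_S(k,A)^{\vee}$ as the kernel of the map $H^1(k,A')\to\bigl(\prod_{\frakp\in S}H^1(k_\frakp,A)\times\prod_{\frakp\notin S}H^1_{nr}(k_\frakp,A)\bigr)^{\vee}$ dual to the composite of \eqref{eq:RB-def}. Pontryagin duality sends a product to the direct sum of the duals, and local Tate duality identifies $H^1(k_\frakp,A)^{\vee}\cong H^1(k_\frakp,A')$ as well as $H^1_{nr}(k_\frakp,A)^{\vee}\cong H^1(k_\frakp,A')/H^1_{nr}(k_\frakp,A')$ --- the latter because the two unramified subgroups are exact annihilators of one another under the local pairing, which applies since every $\frakp\notin S$ has residue characteristic prime to $|A|$ (as $|A|\in\NN(S)$). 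Unwinding, the dual map is just localization, so
\[
\B_S(k,A)^{\vee}=\{\,x\in H^1(k,A'):\res_{\frakp}x=0\ \text{for}\ \frakp\in S,\ \res_{\frakp}x\in H^1_{nr}(k_\frakp,A')\ \text{for}\ \frakp\notin S\,\}.
\]
(The same description is the Selmer-group reformulation noted after Definition~\ref{def:RB}, and also drops out of dualizing the exact sequence~\eqref{eq:10.7.4(ii)} found in the proof of Lemma~\ref{lem:les-B}.)

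It then remains to identify this Selmer group with $\Sha^1_S(k,A')$. Since $A\in\Mod_S(G_S(k))$, both $A$ and $A'$ are unramified at every $\frakp\notin S$; for such a prime --- necessarily finite, because $S\supseteq S_\infty(k)$ --- the condition $\res_{\frakp}x\in H^1_{nr}(k_\frakp,A')$ says exactly that $x$ is unramified at $\frakp$. So $\B_S(k,A)^{\vee}$ is the subgroup of $H^1(k,A')$ of classes that are unramified at all $\frakp\notin S$ and locally trivial at all $\frakp\in S$. Finally, the classes unramified outside $S$ form precisely the image of the inflation $H^1(G_S(k),A')\hookrightarrow H^1(k,A')$: this is the same mechanism used in the proof of Lemma~\ref{lem:les-B} --- in the inflation--restriction sequence for $1\to G_{k_S}\to G_k\to G_S(k)\to 1$, a class of $H^1(k,A')$ lies in the image iff its restriction to $H^1(G_{k_S},A')^{G_S(k)}$ vanishes, and $G_{k_S}$ is topologically generated by the inertia subgroups at primes above primes of $k$ outside $S$, which (as $k_S/k$ is unramified outside $S$) coincide with the inertia subgroups of $G_k$ at those primes. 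Combining the two steps, $\B_S(k,A)^{\vee}=\{x\in H^1(G_S(k),A'):\res_{\frakp}x=0\ \text{for all}\ \frakp\in S\}=\Sha^1_S(k,A')$.

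The difficulty here is essentially bookkeeping rather than conceptual. One must invoke local Tate duality correctly at the archimedean places --- harmless, since $S\supseteq S_\infty(k)$ puts them in $S$, where no unramified condition is imposed --- and, more to the point, keep straight the precise meaning of $\Sha^1_S(k,A')$: it is the Shafarevich group computed inside $G_S(k)$-cohomology, equivalently the subgroup of $H^1(k,A')$ of classes simultaneously unramified outside $S$ and locally split along $S$, matching the usage in Proposition~\ref{prop:ShainB} and Lemma~\ref{lem:les-B} and the computation of Proposition~\ref{prop:RB-trivial-mod}. Once these conventions are fixed the argument is the routine dualization above.
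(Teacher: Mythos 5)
Your proof is correct and follows essentially the same route as the paper: dualize the defining cokernel presentation using local Tate duality together with the fact that $H^1_{nr}(k_\frakp,A)^{\vee}\cong H^1(k_\frakp,A')/H^1_{nr}(k_\frakp,A')\cong H^1(\calT_\frakp,A')^{\calG_\frakp}$ at the (finite) primes outside $S$, and then identify the resulting Selmer group with $\Sha^1_S(k,A')$ computed in $G_S(k)$-cohomology via the inflation--restriction sequence and the fact that $G_{k_S}$ is generated by inertia outside $S$. The paper packages this last identification as a snake-lemma diagram, but the content is the same, and your explicit remark about which ambient group $\Sha^1_S$ is computed in matches the paper's actual usage.
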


\begin{proof}
	We consider the following commutative diagram
	\[\begin{tikzcd}
		\prod\limits_{\frakp} H^1(k_\frakp, A') \arrow{r} \arrow["\sim"]{d} & \prod\limits_{\frakp \in S} H^1(k_\frakp, A') \times \prod\limits_{\frakp\not\in S} H^1(\calT_\frakp, A')^{\calG_\frakp} \arrow["\sim"]{d} \\
		\prod\limits_{\frakp} H^1(k_\frakp, A)^{\vee} \arrow{r} & \prod\limits_{\frakp\in S} H^1(k_\frakp, A)^{\vee} \times \prod\limits_{\frakp\not\in S} H_{nr}^1(k_{\frakp}, A)^{\vee},
	\end{tikzcd}\]
	where the two vertical arrows are isomorphisms by the Tate local duality theorem and its consequence that $H^1(\calT_\frakp, A')^{\calG_\frakp} \overset{\sim}{\longrightarrow} H^1_{nr} (k_\frakp, A)^{\vee}$ when $A$ is unramified at $\frakp$ and $\#\tor(A)$ is prime to the characteristic of the residue field of $k_{\frakp}$ (see the proof of \cite[Thm.~7.2.15]{NSW}). Then by definition, we have
	\begin{eqnarray*}
		\B_S(k, A)^{\vee} &=& \ker\left(H^1(k, A') \to \prod_{\frakp\in S} H^1(k_{\frakp}, A)^{\vee} \times \prod_{\frakp\not\in S} H^1_{nr}(k_\frakp, A)^{\vee}\right) \\
		&=& \ker \left(H^1(k, A') \to \prod_{\frakp\in S} H^1(k_\frakp, A') \times \prod_{\frakp\not\in S} H^1(\calT_\frakp, A')^{\calG_p} \right).
	\end{eqnarray*}
	So by applying the snake lemma to the following commutative diagram
	\[\begin{tikzcd}
		\Sha_S^1(k, A') \arrow[hook]{r} & H^1(G_S, A') \arrow{r} \arrow[hook]{d} & \prod\limits_{\frakp \in S} H^1(k_\frakp, A') \arrow[hook]{d} \\
		\B_S(k, A)^{\vee} \arrow[hook]{r} & H^1(k, A') \arrow{d} \arrow{r} &\prod\limits_{\frakp \in S} H^1(k_\frakp, A') \times \prod\limits_{\frakp \not \in S} H^1(\calT_\frakp, A')^{\calG_\frakp} \arrow[two heads]{d}\\
		& H^1({k_S}, A')^{G_S} \arrow[hook]{r} & \prod\limits_{\frakp\not\in S} H^1(\calT_\frakp, A')^{\calG_\frakp},
	\end{tikzcd}\]
	we obtain the desired isomorphism $\Sha_S^1(k, A') \overset{\sim}{\longrightarrow} \B_S(k, A)^{\vee}$. 
\end{proof}

\begin{corollary}\label{cor:Bisfinite}
	For any set $S$ of primes of a global field $k$ and any $A\in \Mod(G_S(k))$ of order not divisible by $\Char(k)$, we have that $\B_S(k,A)$ is finite.\end{corollary}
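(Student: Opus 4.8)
The plan is to deduce the corollary from the finiteness of $\B_S(k,A)$: taking $Q=k$ (so $\Gamma$ is trivial and $K'_0(\Gal(k/Q))=\Z$), Proposition~\ref{prop:ShainB} gives $\#\Sha_S^2(k,A)\le\#\B_S(k,A)$, so it suffices to prove that $\B_S(k,A)$, equivalently its Pontryagin dual $\B_S(k,A)^{\vee}$, is finite. By Definition~\ref{def:RB} and the remark following it, $\B_S(k,A)^{\vee}$ is the subgroup of $H^1(k,A')$ consisting of those classes $c$ with $\res_{\frakp}(c)=0$ for every $\frakp\in S$ and $\res_{\frakp}(c)\in\ker\bigl(H^1(k_{\frakp},A')\to H_{nr}^1(k_{\frakp},A)^{\vee}\bigr)$ for every $\frakp\notin S$.

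Next I would introduce the finite set $\Sigma$ of primes of $k$ consisting of $S_{\infty}(k)$, the primes dividing $\#A$, and the (finitely many) primes at which $A$ is ramified. Then $A'$ is unramified outside $\Sigma$, so $A'$ is a $G_{\Sigma}(k)$-module, and $\Sigma\supseteq S_{\infty}(k)\cup\{\frakp:\frakp\mid\#A'\}$; crucially $\Sigma$ depends only on $A$, not on $S$. For a finite prime $\frakp\notin\Sigma$ the module $A$ is unramified at $\frakp$ and $\#A$ is prime to the residue characteristic, so by local Tate duality $H_{nr}^1(k_{\frakp},A)$ and $H_{nr}^1(k_{\frakp},A')$ are exact annihilators of each other in the local pairing (the input already used in the proof of Lemma~\ref{lem:B&Sha}), whence $\ker\bigl(H^1(k_{\frakp},A')\to H_{nr}^1(k_{\frakp},A)^{\vee}\bigr)=H_{nr}^1(k_{\frakp},A')$.

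Now take any $c\in\B_S(k,A)^{\vee}$. For every $\frakp\notin\Sigma$ the restriction $\res_{\frakp}(c)$ is unramified: if $\frakp\in S$ because $\res_{\frakp}(c)=0$, and if $\frakp\notin S$ by the computation of the preceding paragraph. Thus $c$ is unramified outside $\Sigma$. Since $k_{\Sigma}$ has no nontrivial extension unramified outside $\Sigma$, the group $G_{k_{\Sigma}}$ is topologically normally generated by the inertia subgroups at the primes lying over places of $k$ outside $\Sigma$, and it acts trivially on $A'$; hence the homomorphism $c|_{G_{k_{\Sigma}}}\colon G_{k_{\Sigma}}\to A'$ is zero, and by the inflation--restriction exact sequence for $1\to G_{k_{\Sigma}}\to G_k\to G_{\Sigma}(k)\to 1$ the class $c$ lies in the image of the injective inflation $H^1(G_{\Sigma}(k),A')\hookrightarrow H^1(k,A')$. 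Therefore $\B_S(k,A)^{\vee}$ injects into $H^1(G_{\Sigma}(k),A')$, which is finite by the classical finiteness theorem for Galois cohomology with restricted ramification (see, e.g., \cite[Chapter~VIII]{NSW}; over a function field one uses its analog, where $S_{\infty}(k)=\emptyset$). Hence $\B_S(k,A)$ is finite, and the corollary follows.

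The step requiring care is this last reduction: one must verify that the Selmer conditions defining $\B_S(k,A)^{\vee}$ --- in particular the unramified condition imposed at primes outside $S$ --- force every class to be unramified at all but the finitely many primes in $\Sigma$, a set depending only on $A$. This is precisely what makes the bound $\#\B_S(k,A)^{\vee}\le\#H^1(G_{\Sigma}(k),A')$ uniform in $S$, so that the argument applies even when $S$ is infinite (for instance $S=\emptyset$ or $S$ the set of all primes). The sole technical ingredient is the local duality identity $H_{nr}^1(k_{\frakp},A)^{\perp}=H_{nr}^1(k_{\frakp},A')$ at the primes of good reduction, which is entirely standard; everything else is bookkeeping with the defining exact sequences and the inflation--restriction sequence.
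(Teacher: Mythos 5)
Your argument is correct, but it takes a genuinely different route from the paper's. Both proofs reduce, via Proposition~\ref{prop:ShainB}, to the finiteness of $\B_S(k,A)$. The paper then \emph{enlarges} $S$ to $T=S\cup S_{\infty}(k)\cup S_{|A|}(k)$ and uses the exact sequence of Lemma~\ref{lem:les-B} to compare $\B_S(k,A)$ with $\B_T(k,A)$: the latter is identified with $\Sha^1_T(k,A')$ by Lemma~\ref{lem:B&Sha} and is finite by \cite[Theorem~8.6.4]{NSW}, while the discrepancy is controlled by the finitely many local terms $H^1(\calT_\frakp,A)^{\calG_\frakp}$ for $\frakp\in T\setminus S$. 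You instead pass to a finite set $\Sigma$ that is \emph{independent of} $S$ and show directly that the dual Selmer group $\B_S(k,A)^{\vee}$ injects into $H^1(G_{\Sigma}(k),A')$; your key input, that $H^1_{nr}(k_\frakp,A)$ and $H^1_{nr}(k_\frakp,A')$ are exact annihilators at good primes, is the same local-duality fact underlying the proof of Lemma~\ref{lem:B&Sha}. Your version yields a bound $\#\B_S(k,A)\le \#H^1(G_{\Sigma}(k),A')$ uniform in $S$ and sidesteps Lemma~\ref{lem:les-B} entirely (in particular there is no need to worry about $T$ being infinite when $S$ is), at the cost of redoing the Selmer-group bookkeeping that the paper has already packaged into its two lemmas. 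One point worth making explicit: the vanishing of $c|_{G_{k_{\Sigma}}}$ on \emph{all} conjugates of a given inertia subgroup is because $c|_{G_{k_{\Sigma}}}$ lies in $H^1(G_{k_{\Sigma}},A')^{G_{\Sigma}(k)}$, so its kernel is normal in $G_k$; with that noted, the argument is complete.
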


\begin{proof}
	Define $T=S\cup S_{\infty}(k) \cup S_{|A|}(k)$. By applying Lemma~\ref{lem:les-B}, we have
	\begin{equation}\label{eq:es}
		\bigoplus_{\frakp\in T\backslash S} H^1(\calT_{\frakp}, A)^{\calG_\frakp} \to \B_S(k, A) \twoheadrightarrow \B_T(k, A).
	\end{equation}
	Since $A\in \Mod_T(G_T)$, by Lemma~\ref{lem:B&Sha} and \cite[Thm.~8.6.4]{NSW}, we have that $\B_T(k, A)^{\vee}\cong \Sha_T^1(k, A')$ is finite. Also note that $H^1(k_\frakp, A)$ is finite \cite[Thm.~7.1.8(iv)]{NSW} and $H^1(\calT_{\frakp}, A)^{\calG_\frakp}$ is a quotient of $H^1(k_{\frakp}, A)$. 
	Thus, the direct product $\prod_{\frakp\in T\backslash S} H^1(\calT_\frakp, A)^{\calG_\frakp}$ is finite, and hence the corollary follows by \eqref{eq:es}.
\end{proof}

\section{Determination of $\delta_{k/Q, S}(A)$}\label{sect:compute-delta}

	\begin{definition}\label{def:delta}
		Let $k/Q$ be a finite Galois extension of global fields, $S$ a finite $k/Q$-closed set of primes of $k$, $\ell\neq \Char(k)$ a prime integer not dividing $[k:Q]$, and $A \in \Mod_{\ell}(\Gal(k_S/Q))$. We define 
		\[
			\delta_{k/Q, S}(A)=\dim_{\F_\ell} H^2(G_S(k), A)^{\Gal(k/Q)} - \dim_{\F_\ell}H^1(G_S(k), A)^{\Gal(k/Q)}.
		\]
	\end{definition}
	
	We will use the notation and assumption in Definition~\ref{def:delta} throughout this section.
	When $\ell \in \N(S)$ and $S_{\infty}(k)\subset S$, by rewriting the formula, we have our first case for which $\delta_{k/Q, S}(A)$ can be determined.
	
	\begin{proposition}\label{prop:EPChar-delta}
		Assume $\ell \in \NN(S)$ and $S\supset S_{\infty}(k)$ is nonempty. Then 
		\[
			\delta_{k/Q, S}(A)=\log_{\ell}(\chi_{k/Q, S}(A))-\dim_{\F_\ell}A^{\Gal(k_S/Q)}.
		\]
	\end{proposition}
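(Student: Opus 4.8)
The plan is to directly unwind the definition of $\chi_{k/Q,S}(A)$ and combine it with Theorem~\ref{thm:EPChar}. By definition,
\[
	\chi_{k/Q,S}(A) = \frac{\#H^2(G_S(k),A)^{\Gal(k/Q)} \cdot \#H^0(G_S(k),A)^{\Gal(k/Q)}}{\#H^1(G_S(k),A)^{\Gal(k/Q)}},
\]
and since $A$ is an $\F_\ell$-module, every cohomology group $H^i(G_S(k),A)^{\Gal(k/Q)}$ is a finite-dimensional $\F_\ell$-vector space, so taking $\log_\ell$ turns this product into the alternating sum of dimensions. First I would observe that $H^0(G_S(k),A)^{\Gal(k/Q)} = A^{G_S(k)\rtimes\Gal(k/Q)}$, which (since $k_S/Q$ is Galois because $S$ is $k/Q$-closed, so $\Gal(k_S/Q) = G_S(k)\rtimes\Gal(k/Q)$ after a choice of splitting, or at least $H^0(G_S(k),A)^{\Gal(k/Q)} = H^0(\Gal(k_S/Q),A) = A^{\Gal(k_S/Q)}$) equals $A^{\Gal(k_S/Q)}$; this accounts for the term $\dim_{\F_\ell} A^{\Gal(k_S/Q)}$ in the statement. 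Hence
\[
	\log_\ell \chi_{k/Q,S}(A) = \dim_{\F_\ell} H^2(G_S(k),A)^{\Gal(k/Q)} - \dim_{\F_\ell} H^1(G_S(k),A)^{\Gal(k/Q)} + \dim_{\F_\ell} A^{\Gal(k_S/Q)},
\]
and the right-hand side is exactly $\delta_{k/Q,S}(A) + \dim_{\F_\ell} A^{\Gal(k_S/Q)}$ by Definition~\ref{def:delta}. Rearranging gives the claimed formula.

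The only thing to check carefully is that the hypotheses of Definition~\ref{def:delta} (namely $S$ finite $k/Q$-closed, $\ell \neq \Char(k)$, $\ell \nmid [k:Q]$, $A \in \Mod_\ell(\Gal(k_S/Q))$) together with the added assumptions $\ell \in \NN(S)$ and $S \supseteq S_\infty(k)$ nonempty are compatible with the standing assumptions at the start of Section~\ref{sect:Euler-Poincare}, so that Theorem~\ref{thm:EPChar} may be invoked. Theorem~\ref{thm:EPChar} requires $S$ to be a finite nonempty $k/Q$-closed set with $S_\infty(k) \subseteq S$, and $A \in \Mod_S(\Gal(k_S/Q))$ of order prime to $[k:Q]$; since $A$ has exponent $\ell$ and $\ell \in \NN(S)$ we have $A \in \Mod_S(\Gal(k_S/Q))$, and $\ell \nmid [k:Q]$ gives the order condition. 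So the hypotheses match, and Theorem~\ref{thm:EPChar} already gives the value of $\chi_{k/Q,S}(A)$, although here we only need that $\chi_{k/Q,S}(A)$ is a well-defined power of $\ell$ (which follows since all groups involved are $\F_\ell$-vector spaces) in order to take $\log_\ell$; the explicit value from Theorem~\ref{thm:EPChar} is what makes the proposition useful in later sections.

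The argument is essentially bookkeeping, so there is no serious obstacle; the one point requiring a word of care is the identification $H^0(G_S(k),A)^{\Gal(k/Q)} = A^{\Gal(k_S/Q)}$, which I would justify by noting that $H^0(G_S(k),A) = A^{G_S(k)}$ carries the conjugation action of $\Gal(k/Q) \simeq \Gal(k_S/Q)/G_S(k)$, and taking its $\Gal(k/Q)$-invariants gives $A^{G_S(k)}$ cut out further by the quotient action, i.e.\ $A^{\Gal(k_S/Q)}$; equivalently this is the inflation-restriction / Hochschild--Serre statement $H^0(\Gal(k_S/Q),A) = H^0(\Gal(k/Q), H^0(G_S(k),A))$. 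With that identification in hand, the proposition follows immediately from Theorem~\ref{thm:EPChar} and Definition~\ref{def:delta} by taking $\ell$-adic logarithms.
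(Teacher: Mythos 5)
Your proof is correct and matches the paper, which treats this proposition as an immediate consequence of the definition of $\chi_{k/Q,S}$ together with Theorem~\ref{thm:EPChar} (the paper offers no separate proof beyond "immediately following"). Your careful identification $H^0(G_S(k),A)^{\Gal(k/Q)}=A^{\Gal(k_S/Q)}$ and the hypothesis check are exactly the bookkeeping the paper leaves implicit.
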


	So in this section, we will consider the cases that are not covered by Proposition~\ref{prop:EPChar-delta}. In \S~\ref{subsect:functionfield}, we will deal with the case that $Q$ is a function field and $S=\O$, and obtain a formula for $\delta_{k/Q, \O}(A)$ (Proposition~\ref{prop:ff-delta}). Then in \S~\ref{subsect:numberfield}, we will give an upper bound of $\delta_{k/Q,S}(A)$ when $k$ is a number field with $S_{\ell}(k) \cup S_{\infty}(k) \not \subset S$ (Proposition~\ref{prop:nf-delta}). When $k=Q$, Theorem~\ref{thm:fin-pres} follows by Propositions~\ref{prop:EPChar-delta} and \ref{prop:nf-delta}.

	\begin{proof}[Proof of Theorem~\ref{thm:fin-pres}]
		We denote $G=G_S(k)$. Let $A$ be a finite simple $G$-module and $\ell$ denote the exponent of $A$. Since $\widehat{H}^0(k_{\frakp}, A')$ is naturally a quotient of $H^0(k_{\frakp},A')$ for each $\frakp \in S_{\infty}(k)$, we have $\log_{\ell}\chi_{k/k, T}(A)\leq 0$ for $T=S\cup S_{\ell}(k) \cup S_{\infty}(k)$. When $S\supset S_{\ell}(k) \cup S_{\infty}(k)$, Proposition~\ref{prop:EPChar-delta} shows that $\delta_{k/k,S}(A)\leq 0$.
		It follows by definition of $\epsilon_{k/k, S}(A)$ in Proposition~\ref{prop:nf-delta} that $\epsilon_{k/k,S}(A) \leq [k:\Q]\dim_{\F_\ell} A$. Also, note that, when $S \not \supset S_{\ell}(k) \cup S_\infty(k)$, we have $\dim_{\F_\ell}(A')^{G_T(k)}-\dim_{\F_\ell} A^{G_S(k)} \leq 0$ because $A$ cannot be $\mu_{\ell}$ if $\mu_\ell\not \subset k$.
		So Proposition~\ref{prop:nf-delta} shows that $\delta_{k/k,S}(A)\leq [k:\Q] \dim_{\F_\ell} A$, and hence the theorem follows by Proposition~\ref{prop:min-gen}.
	\end{proof}

\subsection{Function field case with $S=\O$}\label{subsect:functionfield}

	\begin{proposition}\label{prop:ff-delta}
		Assume $k$ and $Q$ are function fields. Let $g=g(k)$ be the genus of the curve corresponding to $k$. Then we have
		\begin{enumerate}
			\item\label{item:genus-0} If $g=0$, then $\delta_{k/Q, \O}(A)=-\dim_{\F_\ell}A_{\Gal(k_{\O}/Q)}$.
			
			\item\label{item:pos-genus} If $g>0$, then
			\[
				\delta_{k/Q, \O}(A)=\dim_{\F_\ell}(A')^{\Gal(k_{\O}/Q)} - \dim_{\F_\ell} A^{\Gal(k_{\O}/Q)}.
			\]
		
		\end{enumerate}
	\end{proposition}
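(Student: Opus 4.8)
The plan is to reduce the computation to the cohomology of $\Gal(k_{\O}/Q)$ and then exploit the geometry of the smooth proper curve $C$ with function field $k$ together with constant field extensions. Since $\ell$ (the exponent of $A$) does not divide $[k:Q]=|\Gamma|$, the Hochschild--Serre spectral sequence for $1\to G_{\O}(k)\to \Gal(k_{\O}/Q)\to \Gamma\to 1$ degenerates, so $H^i(G_{\O}(k),A)^{\Gamma}\cong H^i(\Gal(k_{\O}/Q),A)$ and hence $\delta_{k/Q,\O}(A)=\dim_{\F_\ell}H^2(\Gal(k_{\O}/Q),A)-\dim_{\F_\ell}H^1(\Gal(k_{\O}/Q),A)$; note also that in the function field case $\mu_\ell$ lies in the constant field, so $A'$ is unramified everywhere and is a $\Gal(k_{\O}/Q)$-module. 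For $g=0$ one first observes that a genus-zero function field over a finite field always has a rational place (a smooth conic over a finite field has a point), so $C\cong \PP^1$ over its constant field and therefore $G_{\O}(k)\cong\Zhat$, because $\PP^1$ over an algebraically closed field has trivial \'etale fundamental group. Then $H^2(G_{\O}(k),A)=0$ and $H^1(G_{\O}(k),A)=A_{G_{\O}(k)}$, and taking $\Gamma$-invariants --- using that invariants and coinvariants of an $\F_\ell[\Gamma]$-module have equal dimension when $\ell\nmid|\Gamma|$, together with transitivity of coinvariants along $\Gal(k_{\O}/Q)/G_{\O}(k)=\Gamma$ --- yields $\delta_{k/Q,\O}(A)=-\dim_{\F_\ell}A_{\Gal(k_{\O}/Q)}$, which is (1).

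For $g>0$ the heart of the matter is that the Euler characteristic $\chi(\Gal(k_{\O}/Q),A):=\sum_{i\ge 0}(-1)^i\dim_{\F_\ell}H^i(\Gal(k_{\O}/Q),A)$ vanishes. To see this, let $N=\Gal(k_{\O}/k\overline{\F}_q)$, which is the geometric fundamental group $\pi_1(C_{\overline{\F}_q})$, normal in $\Gal(k_{\O}/Q)$ with quotient $G_1:=\Gal(k\overline{\F}_q/Q)$ fitting in $1\to \Zhat\to G_1\to\Gamma\to 1$ where $\Zhat=\Gal(\overline{\F}_q/k_0)$ and $k_0$ is the constant field of $k$. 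The conjugation action of $\Gamma$ on this $\Zhat$ is trivial, because it factors through the conjugation action of the abelian group $\Gal(\overline{\F}_q/\F_q)$ on its subgroup $\Gal(\overline{\F}_q/k_0)$; hence for any finite $\F_\ell[G_1]$-module $M$ the exact sequence $0\to M^{\Zhat}\to M\xrightarrow{\sigma-1}M\to M_{\Zhat}\to 0$ (for $\sigma$ a topological generator of $\Zhat$) is a sequence of $\F_\ell[\Gamma]$-modules, so applying the exact functor $(-)^{\Gamma}$ gives $\dim(M^{\Zhat})^{\Gamma}=\dim(M_{\Zhat})^{\Gamma}$. Combined with the Hochschild--Serre sequence for $1\to\Zhat\to G_1\to\Gamma\to 1$ (degenerate since $\ell\nmid|\Gamma|$, and $\cd\Zhat=1$) this shows $\chi(G_1,M)=\dim(M^{\Zhat})^{\Gamma}-\dim(M_{\Zhat})^{\Gamma}=0$. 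Finally, feeding this into the Hochschild--Serre spectral sequence for $1\to N\to \Gal(k_{\O}/Q)\to G_1\to 1$ and using multiplicativity of Euler characteristics gives $\chi(\Gal(k_{\O}/Q),A)=\sum_q(-1)^q\chi(G_1,H^q(N,A))=0$, all cohomology vanishing above degree $3$ since $\cd N\le 2$ (here $g>0$ is used) and $\cd G_1\le 1$ for our coefficients.

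It then remains to evaluate the outer terms. In degree zero $H^0(\Gal(k_{\O}/Q),A)=A^{\Gal(k_{\O}/Q)}$. In top degree, $G_{\O}(k)=\pi_1(C)$ and Artin--Verdier (equivalently, Poincar\'e) duality for the proper smooth curve $C$ identifies $H^3(\Gal(k_{\O}/Q),A)$ with $\big((A')^{\Gal(k_{\O}/Q)}\big)^{\vee}$ --- here $\Gamma$-invariants are moved past the duality pairing using $\ell\nmid|\Gamma|$, and one uses that the cohomology of $N$ and of $G_1$ is concentrated in degrees $\le 2$ and $\le 1$ respectively --- so $\dim_{\F_\ell}H^3(\Gal(k_{\O}/Q),A)=\dim_{\F_\ell}(A')^{\Gal(k_{\O}/Q)}$. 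Putting these into $0=\chi(\Gal(k_{\O}/Q),A)=\dim A^{\Gal(k_{\O}/Q)}-\dim H^1+\dim H^2-\dim(A')^{\Gal(k_{\O}/Q)}$ and rearranging gives exactly the formula in (2). As an alternative organization, one could instead compare $G_{\O}(k)$ with $G_T(k)$ for a nonempty $k/Q$-closed $T$ (for which $\chi_{k/Q,T}(A)=1$ by Theorem~\ref{thm:EPChar} and $\delta_{k/Q,T}(A)=-\dim A^{\Gal(k_{\O}/Q)}$ by Proposition~\ref{prop:EPChar-delta}) using Lemma~\ref{lem:les-B} to control $H^1$ through $\B_{\O}(k,A)$; in that route the delicate point becomes the corresponding comparison of the $H^2$'s.

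The step I expect to be the main obstacle is the equivariance bookkeeping in the $g>0$ case: checking carefully that $\Gamma$ acts trivially on the constant-field $\Zhat$ so that the four-term sequence above is genuinely $\Gamma$-equivariant, and that forming $\Gamma$-invariants commutes with the curve-duality isomorphism --- both resting on $\ell\nmid|\Gamma|$ --- together with phrasing the Artin--Verdier duality input over the base $Q$ rather than over $k$, while keeping track of the $\Gal(k/Q)$-action throughout.
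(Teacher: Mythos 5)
Your proof is correct and follows essentially the same route as the paper's: both cases rest on $G_{\O}(k)\cong\widehat{\Z}$ for $g=0$, and for $g>0$ on the constant-field dévissage, the centrality of Frobenius (so that invariants and coinvariants of the geometric cohomology agree $\Gal(k/Q)$-equivariantly when $\ell\nmid[k:Q]$), and the fact that $G_{\O}(k)$ is a Poincaré group of dimension $3$ with dualizing module $\mu$. The only difference is bookkeeping: you package the computation as the vanishing of the full Euler characteristic of $\Gal(k_{\O}/Q)$ via multiplicativity over $1\to N\to\Gal(k_{\O}/Q)\to G_1\to 1$, whereas the paper takes $\Gal(k/Q)$-invariants of the two-term Hochschild--Serre exact sequences for $1\to G_{\O}(k\overline{\kappa})\to G_{\O}(k)\to C\to 1$ directly.
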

	
	\begin{proof}
		When $g=0$, we have $G_{\O}(k)\cong \widehat{\Z}$ by \cite[Cor.~10.1.3(i)]{NSW}. So $H^2(G_{\O}, A)=0$ as $\widehat{\Z}$ and $H^1(G_{\O},A)\cong A_{G_{\O}}$ by \cite[Prop.~1.7.7(i)]{NSW}. Then we see that
		\[
			\delta_{k/Q, \O}(A)=-\dim_{\F_\ell}(A_{G_{\O}})^{\Gal(k/Q)}=-\dim_{\F_\ell}(A_{G_{\O}})_{\Gal(k/Q)}=-\dim_{\F_\ell}A_{\Gal(k_{\O}/Q)},
		\]
		where the second equality uses $\ell \nmid [k:Q]$, so we proved \eqref{item:genus-0}.
	
		For the rest, we assume $g>0$. Let $\kappa$ be the finite field of constants of $k$ and $C=\Gal(\overline{\kappa}/\kappa)\cong \widehat{\Z}$. Then there exists an exact sequence for each $j$
		\begin{equation}\label{eq:frob-1}
		\begin{tikzcd}
			H^j(G_{\O}(k\overline{\kappa}), A)^C \hookrightarrow H^j(G_{\O}(k\overline{\kappa}), A) \arrow["\Frob-1"]{r} &H^j(G_{\O}(k\overline{\kappa}), A) \twoheadrightarrow H^j(G_{\O}(k\overline{\kappa}),A)_C,
		\end{tikzcd}
		\end{equation}
		where $\Frob$ is the Frobenious action on the cohomology groups defined by conjugation.			
		Note that $\Gal(k\overline{\kappa}/Q)$  acts on cohomology groups in \eqref{eq:frob-1}, and 
		\[
			1 \to C=\Gal(k\overline{\kappa}/k) \to \Gal(k\overline{\kappa}/Q) \to \Gal(k/Q) \to 1
		\]
		is a central group extension because $\Gal(k/Q)$ acts trivially on the generator $\Frob$ of $C$. So the map $\Frob-1$ in \eqref{eq:frob-1} respects the $\Gal(k\overline{\kappa}/Q)$ actions.
		It follows that $H^j(G_{\O}(k\overline{\kappa}),A)^C$ and $H^j(G_{\O}(k\overline{\kappa}), A)_C$ are in the same class in $K_0'(\F_{\ell}[\Gal(k\overline{\kappa}/Q)])$, and hence they are in the same class in $K'_0(\F_{\ell}[\Gal(k/Q)])$. Because $\ell \nmid [k:Q]$ implies $\F_{\ell}[\Gal(k/Q)]$ is semisimple, we have 
		\begin{equation}\label{eq:Co-inv}
			H^j(G_{\O}(k\overline{\kappa}), A)^C \simeq H^j(G_{\O}(k\overline{\kappa}), A)_C
		\end{equation}	
		as $\Gal(k/Q)$-modules.
		Therefore,
		\[
			H^1(C, H^j(G_{\O}(k\overline{\kappa}),A)) \cong H^j(G_{\O}(k\overline{\kappa}),A)_{C} \simeq H^0(C, H^j(G_{\O}(k\overline{\kappa}),A))
		\]
		as $\Gal(k/Q)$-modules.
		Then we consider the Hochschild-Serre spectral sequence
		\[
			E_2^{ij}=H^i(C, H^j(G_{\O}(k\overline{\kappa}),A)) \Rightarrow H^{i+j}(G_{\O}(k),A).
		\]
		As $C$ has cohomological dimension 1, $E_2^{ij}=0$ for each $i>1$, and hence by \cite[Lem.~2.1.3(ii)]{NSW} we have the following exact sequence for every $j\geq 1$
		\begin{equation}\label{eq:ses-spectral}
			H^1(C, H^{j-1}(G_{\O}(k\overline{\kappa}),A)) \hookrightarrow H^j(G_{\O}(k), A) \twoheadrightarrow H^0(C, H^j(G_{\O}(k\overline{\kappa}),A)).
		\end{equation}
		Note that $G_{\O}(k)$ has strict cohomological dimension 3 by \cite[Cor.~10.1.3(ii)]{NSW}. Then as $\ell \nmid [k:Q]$, taking $\Gal(k/Q)$-invariants is exact on \eqref{eq:ses-spectral}, and by computing the alternating sum of \eqref{eq:ses-spectral} for $j=1,2,3$, we have
		\begin{eqnarray*}
			\sum_{j=1}^3 (-1)^j\dim_{\F_\ell}H^j(G_{\O}(k), A)^{\Gal(k/Q)} &=& - \dim_{\F_\ell}H^1(C, H^0(G_{\O}(k\overline{\kappa}),A))^{\Gal(k/Q)}\\
			&=&- \dim_{\F_\ell}H^0(C, H^0(G_{\O}(k\overline{\kappa}),A))^{\Gal(k/Q)}\\
			&=&-\dim_{\F_\ell}H^0(\Gal(k_{\O}/Q), A).
		\end{eqnarray*}
		Also, \cite[Cor.~10.1.3(ii)]{NSW} shows that $G_{\O}(k)$ is a Poincar\'e group of dimension 3 with dualizing module $\mu$, so we have a functorial isomorphism $H^3(G_{\O}(k), A)\cong H^0(G_{\O}(k), A')^{\vee}$. Combining the above computations, we see that
		\begin{eqnarray*}
			\delta_{k/Q, \O}(A)&=& \dim_{\F_\ell}(H^0(G_{\O}(k),A')^{\vee})^{\Gal(k/Q)} - \dim_{\F_\ell}H^0(\Gal(k_{\O}/Q), A)\\
			&=& \dim_{\F_\ell}H^0(G_{\O}(k),A')^{\Gal(k/Q)} - \dim_{\F_\ell}H^0(\Gal(k_{\O}/Q), A)\\
			&=& \dim_{\F_\ell}(A')^{\Gal(k_{\O}/Q)} - \dim_{\F_\ell} A^{\Gal(k_{\O}/Q)},
		\end{eqnarray*}
		where the second equality is because the $\Gal(k/Q)$-invariants of $M$ and $M^{\vee}$ have the same dimension for any $M\in \Mod_{\ell}(\Gal(k/Q))$.
	\end{proof}

\subsection{Number field case with $S_{\ell}\cup S_{\infty}\not\subset S$}\label{subsect:numberfield}

	\begin{proposition}\label{prop:nf-delta}
		Assume $k$ and $Q$ are number fields. Let $T=S\cup S_{\ell}(k) \cup S_{\infty}(k)$. Then we have 
		\begin{equation*}
			\delta_{k/Q, S}(A)\leq  \log_{\ell} (\chi_{k/Q, T}(A)) +\dim_{\F_{\ell}}(A')^{\Gal(k_T/Q)}-\dim_{\F_{\ell}} A^{\Gal(k_S/Q)} + \epsilon_{k/Q, S}(A),
		\end{equation*}
		where $\epsilon_{k/Q,S}(A)=-\sum_{v \in I} \log_\ell \|\#A\|_v$ \footnote{$\|x\|_v=q^{-\ord_v(x)}$ where $q$ is the cardinality of the residue field of $v$ and $\ord_v$ is the additive valuation with value group $\Z$.} with 
	\[
		I= \{v\in S_{\ell}(Q) \text{ such that }S_v(k) \not\subset S\}.
	\]
	In particular, when $S=\O$, the equality holds if and only if $\Sha_{\O}^2(k, A)$ and $\B_{\O}(k,A)$ are in the same class of $K'_0(\F_{\ell}[\Gal(k/Q)])$.
	\end{proposition}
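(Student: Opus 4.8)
The strategy is to compare the cohomology of $G_S(k)$ with that of $G_T(k)$, where $T = S \cup S_\ell(k) \cup S_\infty(k)$, and then to apply Theorem~\ref{thm:EPChar} to $G_T(k)$ (which is legitimate since $T$ is nonempty, $k/Q$-closed, contains $S_\infty(k)$, and $\ell \in \NN(T)$ because $S_\ell(k)\subseteq T$). First I would invoke Lemma~\ref{lem:les-B} with the pair $S \subseteq T$ to obtain the exact sequence
\[
H^1(G_S(k),A) \hookrightarrow H^1(G_T(k),A) \to \prod_{\frakp\in T\backslash S} H^1(\calT_\frakp(k),A)^{\calG_\frakp(k)} \to \B_S(k,A) \twoheadrightarrow \B_T(k,A),
\]
and simultaneously the long exact sequence relating $H^2(G_S(k),A)$ and $H^2(G_T(k),A)$ coming from the inflation–restriction sequence for $1 \to G_{k_S} \to G_{k_T} \to \Gal(k_S/k_T)^{\vee}\cdots$ — more precisely the standard exact sequence (as in the proof of Proposition~\ref{prop:ShainB})
\[
H^1(G_S,A)\hookrightarrow H^1(G_T,A)\to \prod_{\frakp\in T\backslash S} H^1(\calT_\frakp,A)^{\calG_\frakp} \to H^2(G_S,A)\to H^2(G_T,A).
\]
All of these respect the conjugation action of $\Gal(k/Q)$, and since $\ell \nmid [k:Q]$, taking $\Gal(k/Q)$-invariants is exact; so I can pass to $\dim_{\F_\ell}(\,\cdot\,)^{\Gal(k/Q)}$ throughout and take alternating sums.

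Next I would assemble the bookkeeping. Writing $\delta_{k/Q,T}(A) = \log_\ell \chi_{k/Q,T}(A) - \dim_{\F_\ell} A^{\Gal(k_T/Q)}$ via Theorem~\ref{thm:EPChar} and Proposition~\ref{prop:EPChar-delta}, the alternating-sum comparison of the two exact sequences above yields
\[
\delta_{k/Q,S}(A) \le \delta_{k/Q,T}(A) + \dim_{\F_\ell}\Big(\prod_{\frakp\in T\backslash S} H^1(\calT_\frakp(k),A)^{\calG_\frakp(k)}\Big)^{\Gal(k/Q)} - \dim_{\F_\ell}\B_S(k,A)^{\Gal(k/Q)} + \dim_{\F_\ell}\B_T(k,A)^{\Gal(k/Q)},
\]
where the inequality (rather than equality) is exactly the failure of $H^2(G_S,A) \to H^2(G_T,A)$ to be surjective, i.e. the contribution of $\Sha$-type terms. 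By Lemma~\ref{lem:B&Sha}, $\B_T(k,A) \cong \Sha_T^1(k,A')^{\vee}$, and since $T \supseteq S_\ell \cup S_\infty$ one identifies $\dim_{\F_\ell}\B_T(k,A)^{\Gal(k/Q)}$ with $\dim_{\F_\ell}(A')^{\Gal(k_T/Q)}$ after absorbing it appropriately; combined with $\dim_{\F_\ell}A^{\Gal(k_T/Q)}$ versus $\dim_{\F_\ell}A^{\Gal(k_S/Q)}$ this produces the stated $\dim_{\F_\ell}(A')^{\Gal(k_T/Q)} - \dim_{\F_\ell}A^{\Gal(k_S/Q)}$ term. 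Then I would compute $\dim_{\F_\ell}\big(\prod_{\frakp\in T\backslash S} H^1(\calT_\frakp,A)^{\calG_\frakp}\big)^{\Gal(k/Q)}$ place-by-place over $Q$: for $v \in S_\ell(Q)$ with $S_v(k)\not\subset S$, by Shapiro's lemma the invariants reduce to $\dim_{\F_\ell} H^1(\calT_\frakp,A)^{\calG_\frakp}$ for one $\frakp \mid v$, and the local Euler-characteristic formula for $k_\frakp$ (together with $\#H^0 \cdot \#H^2 / \#H^1 = \|\#A\|_v^{-1}$ for the full local cohomology, and the fact that $H^2_{nr}=0$) evaluates this to $-\log_\ell\|\#A\|_v$ up to the terms already accounted for; for $v \in (S_\infty(Q)\setminus S_\ell)$ the corresponding local contribution is absorbed into $\log_\ell\chi_{k/Q,T}(A)$. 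Summing over $I = \{v \in S_\ell(Q): S_v(k)\not\subset S\}$ gives the error term $\epsilon_{k/Q,S}(A)$.

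Finally, for the equality clause when $S=\O$: here $\B_T(\O;\ldots)$-comparisons simplify and the only inequality introduced was in the step $H^2(G_\O,A)\to H^2(G_T,A)$, whose cokernel (after $\Gal(k/Q)$-invariants) is controlled by the discrepancy between $\beta(\Sha^2_\O(k,A))$ and the kernel $N$ of $\B_\O(k,A)\to\B_{\{\text{all primes}\}}(k,A)$ as in the diagram of Proposition~\ref{prop:ShainB}. Chasing that diagram, equality holds precisely when $[\Sha^2_\O(k,A)] = [\B_\O(k,A)]$ in $K'_0(\F_\ell[\Gal(k/Q)])$, since $\B_{\{\text{all primes}\}}(k,A)\cong\Sha^2(k,A)\supseteq\beta(\Sha^2_\O(k,A))$ and the inequality $[\Sha^2_\O]\le[\B_\O]$ of that proposition becomes an equality exactly in this situation.

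The main obstacle I anticipate is the careful place-by-place computation of the local term $\big(\prod_{\frakp\in T\backslash S}H^1(\calT_\frakp,A)^{\calG_\frakp}\big)^{\Gal(k/Q)}$ and its clean separation into the piece that merges with $\log_\ell\chi_{k/Q,T}(A)$ (the archimedean and $\ell$-adic local Euler factors at places already in $T\setminus S$ for "structural" reasons) versus the genuine correction $\epsilon_{k/Q,S}(A)$ supported on $I$; keeping the $\Gal(k/Q)$-equivariance straight through Shapiro's lemma and the local duality identifications $H^1(\calT_\frakp,A')^{\calG_\frakp}\cong H^1_{nr}(k_\frakp,A)^{\vee}$, while ensuring no double-counting of $H^0$-terms between $\chi_{k/Q,T}$ and the $\dim A^{\Gal}$ and $\dim(A')^{\Gal}$ contributions, is where the bulk of the technical work lies.
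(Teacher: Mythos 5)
Your overall shape is right (compare $S$ with $T$, apply Theorem~\ref{thm:EPChar} to $T$, compute local corrections at $T\setminus S$), but two steps in your bookkeeping do not hold up. First, the five-term sequence
$H^1(G_S,A)\hookrightarrow H^1(G_T,A)\to \prod_{\frakp\in T\setminus S}H^1(\calT_\frakp,A)^{\calG_\frakp}\to H^2(G_S,A)\to H^2(G_T,A)$
is not exact: the Hochschild--Serre sequence for $1\to\Gal(k_T/k_S)\to G_T\to G_S\to 1$ has middle term $H^1(\Gal(k_T/k_S),A)^{G_S}$, which only \emph{injects} into $\prod_{\frakp\in T\setminus S}H^1(\calT_\frakp,A)^{\calG_\frakp}$, and there is no natural map from that product into $H^2(G_S,A)$. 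One can still salvage an upper bound by enlarging the middle term, but the resulting inequality is strictly weaker than the one claimed in the proposition, because it misses the negative contribution $-\bigl[\bigoplus_{\frakp\in T\setminus S}H^2(k_\frakp,A)\bigr]$. Second, your identification of $\dim_{\F_\ell}\B_T(k,A)^{\Gal(k/Q)}$ with $\dim_{\F_\ell}(A')^{\Gal(k_T/Q)}$ is false: by Lemma~\ref{lem:B&Sha} and Poitou--Tate, $\B_T(k,A)\cong\Sha^2_T(k,A)$, a Shafarevich group, and ``absorbing it appropriately'' papers over the essential step. The $(A')^{\Gal(k_T/Q)}$ term actually arises as $H^0(G_T,A')^\vee$, the cokernel in the nine-term Poitou--Tate sequence $\Sha^2_T(k,A)\hookrightarrow H^2(G_T,A)\to\bigoplus_{\frakp\in T}H^2(k_\frakp,A)\twoheadrightarrow H^0(G_T,A')^\vee$, which yields $[\B_T(k,A)]=[H^2(G_T,A)]+[H^0(G_T,A')^\vee]-[\bigoplus_{\frakp\in T}H^2(k_\frakp,A)]$; this is the identity that simultaneously produces the $(A')^{\Gal(k_T/Q)}$ term and the local $H^2$ terms needed so that the place-by-place computation (inflation--restriction for $\calG_\frakp/\calT_\frakp$, Shapiro, and Tate's local Euler characteristic) comes out to exactly $\epsilon_{k/Q,S}(A)$, with the archimedean contributions canceling because $\dim H^1(Q_v,A)=\dim H^2(Q_v,A)$ for the cyclic group $\calG_v(Q)$.

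The correct global input replacing your first sequence is the chain $[H^2(G_S,A)]\le[\Sha^2_S(k,A)]+[\bigoplus_{\frakp\in S}H^2(k_\frakp,A)]\le[\B_S(k,A)]+[\bigoplus_{\frakp\in S}H^2(k_\frakp,A)]$ (the second inequality being Proposition~\ref{prop:ShainB}), combined with Lemma~\ref{lem:les-B} in the Grothendieck-group form $[\B_S(k,A)]-[H^1(G_S,A)]=[\B_T(k,A)]-[H^1(G_T,A)]+[\bigoplus_{\frakp\in T\setminus S}H^1(\calT_\frakp,A)^{\calG_\frakp}]$. This also explains the equality criterion for $S=\O$: there $\Sha^2_{\O}(k,A)=H^2(G_{\O},A)$, so the first inequality in the chain is an equality and the only slack is $[\Sha^2_{\O}(k,A)]\le[\B_{\O}(k,A)]$ --- not, as you suggest, the failure of $H^2(G_{\O},A)\to H^2(G_T,A)$ to be surjective. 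Your diagram-chase for the ``if and only if'' therefore needs to be re-anchored to this chain.
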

	
	\begin{proof}
		First of all, by definition of $\Sha^2_S$ and Proposition~\ref{prop:ShainB}, we have the following inequalities of elements in $K'_0(\F_{\ell}[\Gal(k/Q)])$
		\begin{equation}\label{eq:nf-1}
			[H^2(G_S, A)] \leq [\Sha_S^2(k,A)] + \left[\bigoplus_{\frakp \in S} H^2(k_{\frakp}, A) \right] 
			\leq [\B_S(k,A)] + \left[ \bigoplus_{\frakp \in S} H^2(k_{\frakp}, A) \right].
		\end{equation}
		By applying Lemma~\ref{lem:les-B}, we have
		\begin{equation}\label{eq:nf-2}
			[\B_S(k, A)] - [H^1(G_S, A)] = [\B_T(k, A)] - [H^1(G_T, A)] +\left[ \bigoplus_{\frakp \in T\backslash S} H^1(\calT_{\frakp}, A)^{\calG_{\frakp}} \right]. 
		\end{equation}
		Since $T$ contains $S_{\ell}(k) \cup S_{\infty}(k)$, it follows that $[\B_T(k,A)]= [\Sha_T^2(k, A)]$ by Lemma~\ref{lem:B&Sha} and the Poitou-Tate duality theorem. Also, note that the long exact sequence of Poitou-Tate \cite[(8.6.10)]{NSW} induces an exact sequence 
		\[
			\Sha_T^2(k, A) \hookrightarrow H^2(G_T, A) \to \bigoplus_{\frakp \in T} H^2(k_{\frakp}, A) \twoheadrightarrow H^0(G_T, A')^{\vee}.
		\]
		Therefore we have 
		\begin{equation}\label{eq:nf-3}
			[\B_T(k,A)]=[H^2(G_T, A)] + [H^0(G_T, A')^{\vee}] - \left[\bigoplus_{\frakp \in T} H^2(k_{\frakp}, A) \right].
		\end{equation}
	Combining \eqref{eq:nf-1}, \eqref{eq:nf-2} and \eqref{eq:nf-3}, we have
	\begin{eqnarray*}
		[H^2(G_S,A)] - [H^1(G_S, A)] &\leq & [H^2(G_T, A)] - [H^1(G_T, A)] + [H^0(G_T, A')^{\vee}] \\ 
		&& +\left[ \bigoplus_{\frakp \in T \backslash S} H^1(\calT_{\frakp}, A)^{\calG_\frakp} \right] - \left[ \bigoplus_{\frakp \in T\backslash S} H^2(k_{\frakp}, A)\right].
	\end{eqnarray*}
	The dimension of $\Gal(k/Q)$-invariant of the left-hand side above is $\delta_{k/Q, S}(A)$. On the right-hand side, the dimension of $\Gal(k/Q)$-invariant of $[H^2(G_T,A)]-[H^1(G_T,A)]$ is 
	\[
		\log_{\ell}(\chi_{k/Q, T}(A)) -\dim_{\F_\ell} H^0(G_T,A)^{\Gal(k/Q)} = \log_{\ell}(\chi_{k/Q, T}(A)) -\dim_{\F_\ell} A^{\Gal(k_S/Q)}
	\]
	by the definition of $\chi_{k/Q,T}$ and the assumption that $A$ is a $\Gal(k_S/Q)$-module. Also, 
	\[
		\dim_{\F_\ell} \left(H^0(G_T, A')^{\vee}\right)^{\Gal(k/Q)} = \dim_{\F_{\ell}} H^0(G_T, A')^{\Gal(k/Q)} = \dim_{\F_\ell}(A')^{\Gal(k_T/Q)}.
	\]
	So to prove the inequality in the proposition, it suffices to show 
	\begin{equation}\label{eq:nf-4}
		\epsilon_{k/Q,S}(A) = \dim_{\F_\ell}\left(\bigoplus_{\frakp \in T\backslash S} H^1(\calT_{\frakp}, A)^{\calG_{\frakp}} \right)^{\Gal(k/Q)} - \dim_{\F_\ell}\left(\bigoplus_{\frakp \in T\backslash S} H^2(k_{\frakp}, A) \right)^{\Gal(k/Q)}.
	\end{equation}
	
	We first consider $v\in S_{\infty}(Q)$ such that $S_v(k) \not \subset S$. Since $\calT_\frakp(k)=\calG_\frakp(k)$, we know that $H^1(\calT_\frakp, A)^{\calG_\frakp}=H^1(k_{\frakp}, A)$ for each $\frakp \in S_v(k)$. For $i=1, 2$, we have
	\[
		\left(\bigoplus_{\frakp \in S_v(k)} H^i(k_{\frakp}, A)\right)^{\Gal(k/Q)} = \left( \Ind_{\Gal(k/Q)}^{\Gal_{\frakp}(k/Q)}  H^i(k_{\frakp}, A)\right)^{\Gal(k/Q)} = H^i(k_\frakp, A)^{\Gal_\frakp(k/Q)} = H^i(Q_v, A),
	\]
	where the second equality uses the Shapiro's lemma and the last one follows by the assumption that $\ell \nmid [k:Q]$ and the same argument for \eqref{eq:ses-spectral}. Therefore, we have 
	\begin{eqnarray*}
		&& \dim_{\F_\ell} \left(\bigoplus_{\frakp \in S_v(k)} H^1(\calT_{\frakp}, A)^{\calG_{\frakp}} \right)^{\Gal(k/Q)} - \dim_{\F_\ell}\left( \bigoplus_{\frakp \in S_v(k)} H^2(k_{\frakp}, A)\right)^{\Gal(k/Q)} \\
		&=& \dim_{\F_\ell} H^1(Q_v, A) - \dim_{\F_\ell} H^2(Q_v, A),
	\end{eqnarray*}
	which always equals $0$ since $Q_v$ is a cyclic group (\cite[Prop.~1.7.6]{NSW}).

	Finally, we consider $v\in S_{\ell}(Q)$ such that $S_v(k)\not\subset S$. 
	Because $\calG_{\frakp}/\calT_{\frakp}$ is procyclic, we have that $H^1(\calG_{\frakp}/\calT_{\frakp}, A) \cong A_{\calG_{\frakp}/\calT_{\frakp}}$; and by the same argument from \eqref{eq:frob-1} to \eqref{eq:Co-inv}, we have an isomorphism $H^1(\calG_{\frakp} / \calT_{\frakp}, A) \simeq A^{\calG_\frakp/\calT_{\frakp}} = A^{\calG_{\frakp}}$ that is compatible with the conjugation action by $\Gal_\frakp(k/Q)$. So we see that
	\begin{eqnarray}
		\dim_{\F_\ell} \left( \bigoplus_{\frakp \in S_\frakp(k)} H^1(\calG_{\frakp}/ \calT_{\frakp}, A)\right)^{\Gal(k/Q)} 
		&=& \dim_{\F_\ell} \left( \Ind_{\Gal(k/Q)}^{\Gal_{\frakp}(k/Q)} H^1(\calG_{\frakp}/\calT_{\frakp}, A) \right)^{\Gal(k/Q)} \nonumber \\
		&=& \dim_{\F_\ell} H^1(\calG_{\frakp}/\calT_{\frakp}, A)^{\Gal_{\frakp}(k/Q)} \nonumber\\
		&=& \dim_{\F_{\ell}} A^{\calG_\frakp(Q)}. \label{eq:nf-6}
	\end{eqnarray}
	Therefore, we compute
	\begin{eqnarray*}
		&& \dim_{\F_\ell} \left(\bigoplus_{\frakp \in S_v(k)} H^1(\calT_{\frakp}, A)^{\calG_{\frakp}} \right)^{\Gal(k/Q)} - \dim_{\F_\ell} \left( \bigoplus_{\frakp \in S_v(k)} H^2(k_{\frakp}, A) \right)^{\Gal(k/Q)}\\
		&=& \dim_{\F_\ell} \left( \bigoplus_{\frakp \in S_v(k)} H^1(k_{\frakp}, A) \right)^{\Gal(k/Q)} -\dim_{\F_\ell} \left( \bigoplus_{\frakp \in S_v(k)} H^2(k_{\frakp}, A) \right)^{\Gal(k/Q)} - \dim_{\F_{\ell}} A^{\calG_v(Q)}\\
		&=& \dim_{\F_{\ell}} H^1(k_\frakp, A)^{\Gal_\frakp(k/Q)} - \dim_{\F_\ell} H^2(k_\frakp, A)^{\Gal_\frakp(k/Q)} - \dim_{\F_\ell} A^{\calG_v(Q)}\\
		&=& \dim_{\F_\ell} H^1(Q_v, A) - \dim_{\F_\ell} H^2(Q_v, A) - \dim_{\F_\ell} A^{\calG_v(Q)} \\
		&=& -\log_{\ell}\|\#A\|_v.
	\end{eqnarray*}
	The first equality above uses \eqref{eq:nf-6} and the exact sequence $H^1(\calG_{\frakp}/\calT_{\frakp}, A) \hookrightarrow H^1(k_{\frakp},A)\twoheadrightarrow H^1(\calT_{\frakp}, A)^{\calG_{\frakp}}$, the second one uses the Shapiro's lemma, the third one uses the assumption that $\ell \nmid [k:Q]$, and the last one uses the Tate's local Euler-Poincar\`e Characteristic formula \cite[Thm.~7.3.1]{NSW}. Then we proved \eqref{eq:nf-4}.
	
	When $S=\O$, we have $\Sha_{\O}^2(k, A) = H^2(G_{\O}, A)$, so the first inequality in \eqref{eq:nf-1} is an equality, and hence we have the last statement in the proposition.
	\end{proof}

\section{Proof of the main theorem}\label{sect:proof-main}

	In this section, we will prove Theorem~\ref{thm:main}. We assume that $\Gamma$ is a nontrivial finite group, $Q=\Q$ or $\F_q(t)$ with $\gcd(q,|\Gamma|)=1$, and let $k/Q$ be a Galois extension with $\Gal(k/Q)\simeq \Gamma$.
	By Theorem~\ref{thm:finite-GC}, $G_{\O}(k)^\calC$ is a finite $\Gamma$-group when $\calC$ is finite, so for a sufficiently large $n$ there is a $\Gamma$-presentation $F_n(\Gamma) \to G_{\O}(k)^{\calC}$. 
	In \S\ref{ss:const-G}, we construct a finitely generated $\Gamma$-quotient $G$ of $G_{\O}(k)$ such that $G^{\calC}\simeq G_{\O}(k)^{\calC}$ as $\Gamma$-groups. With the help of the group $G$, we employ the cohomology of $G_{\O}$ to compute the multiplicities in a pro-$\calC$ admissible $\Gamma$-presentation of $G_{\O}(k)^{\calC}$. In \S\ref{ss:determine-mult}, we compute the multiplicities $m_{\ad}^{\calC}(n, \Gamma, G_{\O}(k)^{\calC}, A)$, and then the compute multiplicities $m_{\ad}^{\calC}(n, \Gamma, G_{\O,\infty}(k)^{\calC}, A)$ for a finite simple $G_{\O,\infty}(k)^{\calC}\rtimes \Gamma$-module $A$. Using these multiplicities, finally in \S\ref{ss:presentation}, we show that the kernel of a pro-$\calC$ admissible $\Gamma$-presentation $\calF_n(\Gamma)^{\calC} \to G_{\O, \infty}(k)^{\calC}$ can be normally generated by elements $\{r^{-1}\gamma(r)\}_{r\in X, \gamma \in \Gamma}$ with $X$ a subset of $\calF_n(\Gamma)$ of cardinality $n+1$.

	Note that in Theorem~\ref{thm:main}, $k/Q$ is assumed to be split completely at $\infty$, and the $\Gamma$-groups in $\calC$ are of order prime to $|\mu(Q)|$, $|\Gamma|$ and $\Char (Q)$. However, in the proof, we do not use these assumptions until \S\ref{ss:determine-mult}.
	So right now, we only assume that $k/Q$ is a Galois field extension with $\Gal(k/Q) \simeq \Gamma$ and that $\calC$ is a finite set of isomorphism classes of finite $\Gamma$-groups of order prime to $|\Gamma|$.

\subsection{Construction of a specific finitely generated quotient of $G_{\O}(k)$}\label{ss:const-G}

	Because $G_{\O}(k)^{\calC}$ is finite, when $n$ is sufficiently large, there exists a $\Gamma$-equivariant surjection $\pi: F_n(\Gamma) \to G_{\O}(k)^{\calC}$, where $F_n(\Gamma)$ is the free profinite $\Gamma$-group defined in Section~\ref{sect:pre-Gamma}. Then $\pi$ factors through $\pi^{\calC}: F_n(\Gamma)^{\calC} \to G_{\O}(k)^{\calC}$ as defined in Definition~\ref{def:pro-C-map}.

	\begin{lemma}\label{lem:split-in-C}
		Use the notation above. If $A$ is a finite simple $G_{\O}(k)^{\calC}\rtimes \Gamma$-module with $m(\pi^{\calC}, \Gamma, G_{\O}(k)^{\calC}, A)>0$, then $A\rtimes G_{\O}(k)^{\calC} \in \overline{\calC}$.
	\end{lemma}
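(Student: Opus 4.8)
The plan is to unwind the definitions of the pro-$\calC$ completion and of the multiplicity. Recall that $m(\pi^{\calC}, \Gamma, G_{\O}(k)^{\calC}, A) > 0$ means that, in the notation of Definition~\ref{def:mult-general}, the module $A$ appears as a summand of $\overline{N} = N/M$ where $N = \ker \pi^{\calC}$ and $M$ is the intersection of all maximal proper $F_n(\Gamma)^{\calC} \rtimes \Gamma$-normal subgroups of $N$. In particular there is a maximal proper $F_n(\Gamma)^{\calC} \rtimes \Gamma$-normal subgroup $U$ of $N$ with $N/U \simeq_{G_{\O}(k)^{\calC} \rtimes \Gamma} A$. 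Since $A$ is abelian (being a finite simple module), the conjugation action of $N$ on $N/U$ is trivial, so $U$ is actually normal in $F_n(\Gamma)^{\calC} \rtimes \Gamma$, and forming the quotient $F_n(\Gamma)^{\calC} \rtimes \Gamma \,/\, U$ gives a group that is an extension of $G_{\O}(k)^{\calC} \rtimes \Gamma$ by $A$; more precisely, since the $\Gamma$-factor is a complement, the relevant object is the $\Gamma$-group $(F_n(\Gamma)^{\calC})/U$, which is an extension of $G_{\O}(k)^{\calC}$ by $A$ as $\Gamma$-groups, and $A \rtimes G_{\O}(k)^{\calC}$ need not be exactly this — but it is a $\Gamma$-quotient of it, namely the pushout along $G_{\O}(k)^{\calC} \to G_{\O}(k)^{\calC}$ splitting the extension. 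Actually the cleanest route: $A \rtimes G_{\O}(k)^{\calC}$ is the split extension, and I want to realize it as a $\Gamma$-quotient of something known to lie in $\overline{\calC}$.

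The key step is therefore to exhibit $A \rtimes G_{\O}(k)^{\calC}$ as a $\Gamma$-quotient (or $\Gamma$-subquotient) of $F_n(\Gamma)^{\calC} \rtimes \Gamma$, or better, directly as a $\Gamma$-quotient of $F_n(\Gamma)^{\calC}$ — because $F_n(\Gamma)^{\calC}$ is by construction of level $\calC$, hence lies in $\overline{\calC}$, and $\overline{\calC}$ is closed under $\Gamma$-quotients. Here is how I would produce the surjection $F_n(\Gamma)^{\calC} \twoheadrightarrow A \rtimes G_{\O}(k)^{\calC}$: the surjection $\pi^{\calC}: F_n(\Gamma)^{\calC} \twoheadrightarrow G_{\O}(k)^{\calC}$ has kernel $N$, and $N$ surjects $\Gamma$-equivariantly onto $N/U \simeq A$ (viewing $A$ via the $G_{\O}(k)^{\calC}$-action). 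Lifting generators, one builds a $\Gamma$-equivariant homomorphism $F_n(\Gamma)^{\calC} \to A \rtimes G_{\O}(k)^{\calC}$ whose composition with the projection to $G_{\O}(k)^{\calC}$ is $\pi^{\calC}$ and whose restriction to $N$ is the chosen surjection $N \twoheadrightarrow A$; since the image contains both $A$ (from $N$) and a set of $\Gamma$-generators mapping onto $G_{\O}(k)^{\calC}$, the map is surjective. The subtlety is that such a lift of $\pi^{\calC}$ to $A \rtimes G_{\O}(k)^{\calC}$ exists because the extension $A \rtimes G_{\O}(k)^{\calC} \to G_{\O}(k)^{\calC}$ is split, so any section can be composed with $\pi^{\calC}$ to get a homomorphism, and then one adjusts it on $N$; alternatively, one uses that $F_n(\Gamma)$ (and hence, after checking, $F_n(\Gamma)^{\calC}$ in the relevant quotient range) has the appropriate projectivity/freeness to lift, which is exactly the mechanism used implicitly in Definition~\ref{def:mult-general} and in Lubotzky's argument.

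I expect the main obstacle to be the bookkeeping around whether one works with $F_n(\Gamma)^{\calC}$ versus $F_n(\Gamma)^{\calC} \rtimes \Gamma$ and making the splitting/lifting argument precise at the level of $\Gamma$-groups rather than ordinary groups — in particular, verifying that the quotient $F_n(\Gamma)^{\calC}/U'$ for a suitable $\Gamma$-stable $U'$ is genuinely $A \rtimes G_{\O}(k)^{\calC}$ and not merely some other extension of $G_{\O}(k)^{\calC}$ by $A$. The resolution is that $U$ is chosen inside $N = \ker \pi^{\calC}$ and $N/U \cong A$, so $F_n(\Gamma)^{\calC}/U$ is an extension $1 \to A \to F_n(\Gamma)^{\calC}/U \to G_{\O}(k)^{\calC} \to 1$; this extension may be nonsplit, but then $A \rtimes G_{\O}(k)^{\calC}$ is obtained from it by a standard pullback–pushout construction, or one simply notes that the split extension $A \rtimes G_{\O}(k)^{\calC}$ is itself a $\Gamma$-quotient of $F_n(\Gamma)^{\calC} \times_{G_{\O}(k)^{\calC}} (\text{another copy})$, which still lies in $\overline{\calC}$ since $\overline{\calC}$ is closed under direct products, $\Gamma$-subgroups and $\Gamma$-quotients. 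Either way, once $A \rtimes G_{\O}(k)^{\calC}$ is displayed as a $\Gamma$-subquotient of finitely many copies of $F_n(\Gamma)^{\calC}$, closure of $\overline{\calC}$ under these three operations finishes the proof, so $A \rtimes G_{\O}(k)^{\calC} \in \overline{\calC}$, i.e.\ it is level $\calC$.
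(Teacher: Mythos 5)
Your proposal ultimately lands on the paper's argument, but only in its last few sentences; the main route you develop first does not work. The attempt to ``lift'' $\pi^{\calC}$ to a $\Gamma$-equivariant surjection $F_n(\Gamma)^{\calC}\to A\rtimes G_{\O}(k)^{\calC}$ is circular: such a homomorphism certainly exists out of the free profinite $\Gamma$-group $F_n(\Gamma)$, but it factors through the pro-$\calC$ completion $F_n(\Gamma)^{\calC}$ precisely when its image $A\rtimes G_{\O}(k)^{\calC}$ lies in $\overline{\calC}$ --- which is the statement being proved. The extension $F_n(\Gamma)^{\calC}/U\to G_{\O}(k)^{\calC}$ being possibly nonsplit is exactly the obstruction, and no amount of ``adjusting on $N$'' removes it. That first paragraph and a half should be discarded.

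What saves the proof is your closing observation: the split extension is a $\Gamma$-quotient of a fiber product over $G_{\O}(k)^{\calC}$. This is precisely the paper's proof. The paper takes $H=F_n(\Gamma)^{\calC}/U$, a finite $\Gamma$-quotient of $F_n(\Gamma)^{\calC}$ (hence in $\overline{\calC}$), forms $E=H\times_{G_{\O}(k)^{\calC}}H$, identifies $E\simeq A\rtimes H$ via $(a,h)\mapsto(ah,h)$, and then pushes the $H$-factor down to $G_{\O}(k)^{\calC}$; $E$ is a $\Gamma$-subgroup of $H\times H$ and $A\rtimes G_{\O}(k)^{\calC}$ is a $\Gamma$-quotient of $E$, so closure of $\overline{\calC}$ under subgroups, products and quotients finishes it. Your variant with two copies of $F_n(\Gamma)^{\calC}$ itself also works (the map $(x,y)\mapsto(q(xy^{-1}),\pi^{\calC}(y))$, with $q:N\to N/U\simeq A$, is a surjective $\Gamma$-equivariant homomorphism because $[N,N]\subseteq U$), but it requires knowing $F_n(\Gamma)^{\calC}\in\overline{\calC}$, i.e.\ that $F_n(\Gamma)^{\calC}$ is finite; this is true here because $\calC$ is finite, but passing to the finite quotient $H$ first, as the paper does, avoids having to invoke it.
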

	
	\begin{proof}
		We denote $G_{\O}(k)^{\calC}$ by $G_0$ for convenience purposes. If $m(\pi^{\calC}, \Gamma, G_0, A)>0$, then there is a $\Gamma$-group extension 
		\[
			1 \to A \to H \overset{\varpi}{\longrightarrow} G_0 \to 1,
		\]
		such that $H$ is a quotient of $F_n^{\calC}$, and so $H \in \overline{\calC}$. We let $E$ be the fiber product $H\times_{G_0} H$ defined by $\varpi$, i.e. 
		$E= \{(x,y) \in H\times H \mid \varpi(x)=\varpi(y)\}$.
		Note that $E$ is a subgroup of $H\times H$, so is in $\overline{\calC}$.
		There are a natural diagonal embedding $H\hookrightarrow E$ mapping $x$ to $(x,x)$, and a normal subgroup $\{(a,1) \mid a\in A\}$ of $E$ that is isomorphic to $A$. From this, one can check that $E\simeq A \rtimes H$, where the $H$ action on $A$ factors through $\varpi(H)=G_0$. So by taking the quotient map $\varpi$ on the subgroup $H$ of $E$, we obtain that $A\rtimes G_0$ is a quotient of $E$, and therefore we proved the lemma.	
	\end{proof}
	
	Now we fix a finite simple $G_{\O}(k)^{\calC}\rtimes \Gamma$-module $A$ with $m(\pi^{\calC}, \Gamma, G_{\O}(k)^{\calC}, A)>0$, and construct the desired quotient of $G_{\O}(k)$ for $A$. We let $\varphi_0$ denote the quotient map $G_{\O}(k) \to G_{\O}(k)^{\calC}$, and again let $G_0$ denote $G_{\O}(k)^{\calC}$. We define $G_1$ to be the quotient of $G_{\O}(k)$ satisfying the following $\Gamma$-group extension
	\begin{equation}\label{eq:ext-0}
		1 \to A^{m(\varphi_0, \Gamma, G_0, A)} \to G_1 \overset{\varpi_0}{\longrightarrow} G_0 \to 1.
	\end{equation}
	By definition of the multiplicities, $G_1$ is well-defined. Since $G_1$ is a quotient of $G_{\O}(k)$, we have that $G_1^{\calC}$ is exactly $G_0$. Then we claim that the extension \eqref{eq:ext-0} is ``completely nonsplit'' (that is, if a subgroup of $G_1$ maps surjectively onto $G_0$, then it has to be $G_1$ itself). Indeed, if it's not completely nonsplit, then $G_1$ has a $\Gamma$-quotient isomorphic to $A\rtimes G_0$, and hence by Lemma~\ref{lem:split-in-C} we have $A\rtimes G_0\in \overline{\calC}$, which contradicts to $G_1^{\calC}=G_0$.
	
	Similarly, we define $G_2, G_3, \cdots$ to be the $\Gamma$-quotients of $G_{\O}(k)$ inductively via 
	\[
		1\to A^{m(\varphi_i, \Gamma, G_i, A)} \to G_{i+1} \overset{\varpi_i}{\longrightarrow} G_i \to 1,
	\]
	where the map $\varphi_i$ is the quotient map $G_{\O}(k) \to G_i$. Using the argument in the previous paragraph, we see that each of these group extensions is completely nonsplit, and $G_i^{\calC}=G_0$ for each $i$. Then we take the inverse limit
	\[
		G:= \varprojlim_{i} G_i.
	\]
	Then the profinite group $G$ is the maximal extension of $G_0$ in $G_{\O}(k)$ that can be obtained via group extensions by $A$.
	
	\begin{lemma}\label{lem:G-prop}
		\begin{enumerate}
			\item \label{item:G-prop-0}	 A subset of $G$ is a generator set if and only if its image in $G_0$ generates $G_0$.
			\item \label{item:G-prop-1} The map $\pi: F_n(\Gamma) \to G_0$ defined at the beginning of this subsection factors through $G$.
			\item \label{item:G-prop-2} Let $\varphi$ be the natural quotient map $G_{\O}(k) \to G$ defined by inverse limit of $\varphi_i$. Then $\Hom_{G}\left((\ker \varphi)^{ab}, A\right)=0$.
		\end{enumerate}
	\end{lemma}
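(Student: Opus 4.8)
The plan is to run an induction along the defining tower $G_0\leftarrow G_1\leftarrow\cdots$, with $G=\varprojlim_i G_i$. Write $\varphi_i\colon G_{\O}(k)\to G_i$ for the natural map and $N^{(i)}=\ker\varphi_i$, so that $\ker\varphi=\bigcap_i N^{(i)}$, $N^{(i)}/N^{(i+1)}\simeq A^{m_i}$ with $m_i=m(\varphi_i,\Gamma,G_i,A)$, and recall that each extension $G_{i+1}\twoheadrightarrow G_i$ is completely nonsplit. For \eqref{item:G-prop-0}, one implication is immediate because $G_0$ is a $\Gamma$-quotient of $G$. For the converse, let $X\subseteq G$ have image $\Gamma$-generating $G_0$, let $H$ be the closed $\Gamma$-subgroup $\Gamma$-generated by $X$, and show by induction that the image $H_i$ of $H$ in $G_i$ is all of $G_i$: the base case is the hypothesis, and if $H_i=G_i$ then $H_{i+1}$ is a $\Gamma$-stable closed subgroup of $G_{i+1}$ surjecting onto $G_i$, so by complete nonsplitness $H_{i+1}=G_{i+1}$ (otherwise $H_{i+1}\cap A^{m_i}$ is a proper, $G_{i+1}$-normal, $\Gamma$-stable submodule of the kernel $A^{m_i}$, and $G_{i+1}/(H_{i+1}\cap A^{m_i})$ is a split $\Gamma$-extension of $G_i$ by a nonzero sum of copies of $A$, yielding $A\rtimes G_0$ as a $\Gamma$-quotient of $G_{i+1}$ after projecting and pushing down along $G_i\to G_0$ — contradicting $G_{i+1}^{\calC}=G_0$, since $A\rtimes G_0\in\overline{\calC}$ by Lemma~\ref{lem:split-in-C}). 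A closed subgroup of $\varprojlim_i G_i$ surjecting onto every $G_i$ equals $G$, so $H=G$.

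Part \eqref{item:G-prop-1} is then formal: lift each $\pi(x_{j,1_\Gamma})$ to some $\tilde g_j\in G$ and let $\tilde\pi\colon F_n(\Gamma)\to G$ be the unique continuous $\Gamma$-equivariant homomorphism sending $x_{j,1_\Gamma}\mapsto \tilde g_j$ (well defined as $F_n(\Gamma)$ is free). Its composite with $G\twoheadrightarrow G_0$ agrees with $\pi$ on the $\Gamma$-generators, hence equals $\pi$; and $\im\tilde\pi$ is a closed $\Gamma$-subgroup of $G$ whose image in $G_0$ is all of $G_0$, so $\im\tilde\pi=G$ by \eqref{item:G-prop-0}. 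Thus $\pi$ factors through $\tilde\pi$.

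For \eqref{item:G-prop-2}, suppose there is a nonzero $G$-equivariant map $f\colon(\ker\varphi)^{ab}\to A$. First promote $f$ to the $\Gamma$-equivariant setting: using $\gcd(|A|,|\Gamma|)=1$, the $\Gamma$-conjugates $(f^\gamma)_{\gamma\in\Gamma}$ assemble into a nonzero $G\rtimes\Gamma$-equivariant map $(\ker\varphi)^{ab}\to\Ind_G^{G\rtimes\Gamma}\Res_G A$, so some simple $G_0\rtimes\Gamma$-module $B$ occurs as a $G\rtimes\Gamma$-quotient of $(\ker\varphi)^{ab}$; that is, $\ker\varphi$ has a maximal proper $G_{\O}(k)\rtimes\Gamma$-normal subgroup $M_0$ with $\ker\varphi/M_0\simeq B$. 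The main step is to extend this $B$-quotient up the tower. Since $M_0$ is open in $\ker\varphi$, write $M_0=\ker\varphi\cap V$ with $V\lhd G_{\O}(k)$ open and $\Gamma$-stable; in the finite $\Gamma$-group $P=G_{\O}(k)/V$ the images of the $N^{(i)}$ form a decreasing sequence whose intersection is the image of $\ker\varphi$, hence stabilize, so for all large $i$ the given surjection $\ker\varphi\twoheadrightarrow B$ extends to a $G_{\O}(k)\rtimes\Gamma$-equivariant surjection $N^{(i)}\twoheadrightarrow B$ restricting to it on $\ker\varphi$. When $B\simeq A$, this makes $A$ a quotient of the full $A$-isotypic part of the maximal semisimple $G_{\O}(k)\rtimes\Gamma$-quotient of $N^{(i)}$, i.e.\ a quotient of $N^{(i)}/N^{(i+1)}$, so the kernel of $N^{(i)}\twoheadrightarrow A$ contains $N^{(i+1)}\supseteq\ker\varphi$; hence $M_0=\ker\varphi$, contradicting properness. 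The hard part is the promotion step — reconciling the merely $G$-linear hypothesis with the $\Gamma$-structure that the tower records, i.e.\ arranging that the simple module $B$ produced by $\Gamma$-averaging may be taken to be $A$ itself (equivalently, controlling the other $G_0\rtimes\Gamma$-constituents of $\Ind_G^{G\rtimes\Gamma}\Res_G A$); this is exactly where the coprimality $\gcd(|A|,|\Gamma|)=1$ and the precise description of which isotypic pieces are stripped off at each stage of the tower must be exploited. Once that is in place the extension-up-the-tower step is routine, and \eqref{item:G-prop-0}, \eqref{item:G-prop-1} follow as above.
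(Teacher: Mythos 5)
Parts \eqref{item:G-prop-0} and \eqref{item:G-prop-1} of your proposal are correct and follow the same route as the paper: complete nonsplitness of each $\varpi_i\colon G_{i+1}\twoheadrightarrow G_i$ (which the paper establishes during the construction, for arbitrary subgroups, not only $\Gamma$-stable ones) implies that any lift of a generating set of $G_i$ generates $G_{i+1}$, and one passes to the inverse limit. Your parenthetical re-derivation of complete nonsplitness is redundant and, as written, only treats $\Gamma$-stable subgroups $H_{i+1}$, whereas the statement concerns plain generation; since complete nonsplitness is already available you should simply invoke it.

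Part \eqref{item:G-prop-2} is where the proposal has a genuine gap, and you say so yourself: the ``promotion step'' is left open, and it is not a routine detail. What your completed sub-argument proves is the equivariant vanishing $\Hom_{G\rtimes\Gamma}\bigl((\ker\varphi)^{ab},A\bigr)=0$: an equivariant surjection $\ker\varphi\twoheadrightarrow A$ extends for large $i$ to a $G_{\O}(k)\rtimes\Gamma$-equivariant surjection $N^{(i)}\twoheadrightarrow A$, whose kernel then contains $N^{(i+1)}\supseteq\ker\varphi$ because $N^{(i)}/N^{(i+1)}$ is by construction the full $A$-isotypic part of the maximal semisimple equivariant quotient of $N^{(i)}$; this is a contradiction. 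But the lemma asserts the vanishing of $\Hom_G$, and Frobenius reciprocity converts a nonzero $G$-map only into a nonzero $G\rtimes\Gamma$-map to $\Ind_G^{G\rtimes\Gamma}\Res_G A$, whose simple constituents $B$ satisfy $\Hom_{G_0}(\Res B,\Res A)\neq 0$ but need not be isomorphic to $A$ (for instance twists of $A$ by characters of $\Gamma$ have the same restriction to $G_0$). The tower strips off only $A$-isotypic pieces, so a $B$-quotient of $\ker\varphi_0$ with $B\not\simeq A$ would survive to $\ker\varphi$ and give a nonzero element of $\Hom_G\bigl((\ker\varphi)^{ab},A\bigr)$; nothing in your argument excludes this, and you offer no mechanism for doing so beyond saying that coprimality ``must be exploited.'' For comparison, the paper's own proof of this part is a two-sentence assertion that a nonzero $G$-hom would yield an extension of $G$ by $A$ inside $G_{\O}(k)$, ``which $G$ does not have by definition''; it does not engage with the discrepancy between $G$-equivariance and the $\Gamma$-equivariant quotients that the tower actually controls, so you have correctly isolated the step that needs justification but have not supplied it. Note that the equivariant statement you did prove is all that is used downstream: in Lemma~\ref{lem:delta-upper} one takes $\Gamma$-invariants of the inflation--restriction sequence, so only $\bigl(H^1(M,A)^{G}\bigr)^{\Gamma}=\Hom_{G\rtimes\Gamma}(M^{ab},A)$ needs to vanish. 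To get the statement as literally written you would have to either enlarge the tower so that it kills every simple module Clifford-related to $A$, or argue separately that no such $B$ occurs as a quotient.
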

	
	\begin{proof}
		The group extension $\varpi_i: G_{i+1} \to G_i$ is completely nonsplit, so any lift of a generator set of $G_i$ is a generator set of $G_{i+1}$. So we have \eqref{item:G-prop-0} by taking inverse limit, and then \eqref{item:G-prop-1} follows. 
	
		Note that $G$ acts on the abelianization $(\ker\varphi)^{ab}$ of $\ker \varphi$ by conjugation. Suppose that $\Hom_G((\ker\varphi)^{ab}, A) \neq 0$. Then it means that $\varphi$ factors through a group extension $H$ of $G$ by a kernel $A$. However, $G$ does not have such a group extension in $G_{\O}(k)$ by definition. So we proved \eqref{item:G-prop-2}.
	\end{proof}

\subsection{Determination of the multiplicity of $A$}\label{ss:determine-mult}

We continue to use notation and assumptions given previously in this section. In particular, we remind the reader that $A$ is a fixed finite simple $G_{\O}(k)^{\calC}\rtimes \Gamma$-module where $\Gamma\simeq \Gal(k/Q)$, and $G$ is defined to be depending on $A$. 
	The goal of this subsection is to compute the multiplicity  of $A$ in an admissible $\Gamma$-presentation of $G_{\O,\infty}(k)^{\calC}$.
	The $\Gamma$-group $G$ plays a very important role in this computation.
	
	\begin{lemma}\label{lem:delta-upper}
		Let $\ell$ be the exponent of $A$ and assume that $\ell\neq \Char(Q)$ is prime to $|\Gamma|$. Then we have 
		\[
			\dim_{\F_{\ell}} H^2(G, A)^{\Gamma} - \dim_{\F_{\ell}} H^1(G, A)^{\Gamma} \leq \delta_{k/Q, \O}(A).
		\]
	\end{lemma}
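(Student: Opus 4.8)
The plan is to compare the $\Gamma$-equivariant cohomology of $G=\varprojlim_i G_i$ with that of $G_{\O}(k)$ by exploiting the defining property of $G$, namely that $\varphi\colon G_{\O}(k)\twoheadrightarrow G$ has kernel $W:=\ker\varphi$ with $\Hom_G(W^{ab},A)=0$ (Lemma~\ref{lem:G-prop}\eqref{item:G-prop-2}). First I would run the inflation–restriction (five-term) exact sequence for the extension $1\to W\to G_{\O}(k)\to G\to 1$ with coefficients in $A$:
\[
0\to H^1(G,A)\to H^1(G_{\O}(k),A)\to H^1(W,A)^{G}\to H^2(G,A)\to H^2(G_{\O}(k),A).
\]
Since $A$ is a finite simple module of exponent $\ell$ and $W$ acts trivially on it (the $G_{\O}(k)$-action on $A$ factors through $G_0=G^{\calC}$, hence through $G$), we have $H^1(W,A)^G=\Hom(W,A)^G=\Hom_G(W^{ab},A)=0$. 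Therefore the five-term sequence collapses to
\[
H^1(G,A)\xrightarrow{\ \sim\ }H^1(G_{\O}(k),A),\qquad H^2(G,A)\hookrightarrow H^2(G_{\O}(k),A),
\]
the second map being injective.

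Next I would take $\Gamma$-invariants. Because $\gcd(|A|,|\Gamma|)=1$ (as $\ell\nmid|\Gamma|$) and $\Gamma\simeq\Gal(k/Q)$ acts on everything by conjugation compatibly with the inflation–restriction sequence (this compatibility is recorded in Section~\ref{sect:notation}), taking $\Gamma$-invariants is exact on $\F_\ell[\Gamma]$-modules, so it preserves the isomorphism and the injection above. Hence
\[
\dim_{\F_\ell}H^1(G,A)^\Gamma=\dim_{\F_\ell}H^1(G_{\O}(k),A)^\Gamma,\qquad
\dim_{\F_\ell}H^2(G,A)^\Gamma\le\dim_{\F_\ell}H^2(G_{\O}(k),A)^\Gamma.
\]
Subtracting, $\dim H^2(G,A)^\Gamma-\dim H^1(G,A)^\Gamma\le \dim H^2(G_{\O}(k),A)^\Gamma-\dim H^1(G_{\O}(k),A)^\Gamma=\delta_{k/Q,\O}(A)$ by Definition~\ref{def:delta}, which is exactly the claim. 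One should double-check that $H^i(G_{\O}(k),A)^{\Gal(k/Q)}$ is computed with respect to the correct conjugation action and that $A$ is indeed a $\Gal(k_{\O}/Q)$-module so that $\delta_{k/Q,\O}(A)$ is defined — this holds since $A$ is a $G_{\O}(k)^{\calC}\rtimes\Gamma$-module and $k_{\O}/Q$ is Galois.

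The only genuinely delicate point, and the main thing to get right, is the vanishing $H^1(W,A)^G=0$: I need $W$ to act trivially on $A$ so that $H^1(W,A)=\Hom(W,A)$, and then to identify $\Hom(W,A)^G$ with $\Hom_G(W^{ab},A)$ — the continuous homomorphisms factor through $W^{ab}$, and the $G$-fixed ones are the $G$-equivariant ones (here $G$ acts on $W^{ab}$ by conjugation, which is well-defined since $W\lhd G_{\O}(k)$ and this conjugation action descends to $G=G_{\O}(k)/W$ acting on $W^{ab}$). Everything else is a formal consequence of inflation–restriction plus exactness of $\Gamma$-invariants, so I expect no real obstacle beyond bookkeeping; I would state the triviality of the $W$-action on $A$ explicitly as it is the crux.
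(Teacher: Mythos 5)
Your proposal is correct and follows essentially the same route as the paper: the five-term inflation--restriction sequence for $1\to\ker\varphi\to G_{\O}(k)\to G\to 1$, the vanishing $H^1(\ker\varphi,A)^G=\Hom_G((\ker\varphi)^{ab},A)=0$ from Lemma~\ref{lem:G-prop}, and exactness of $\Gamma$-invariants since $\ell\nmid|\Gamma|$. The point you flag as delicate (triviality of the $\ker\varphi$-action on $A$, so that $H^1=\Hom$) is exactly the step the paper relies on as well.
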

	
	\begin{proof}
		We consider the $\Gamma$-equivariant short exact sequence
		\[
			1 \to M \to G_{\O}(k) \overset{\varphi}{\to} G \to 1.
		\]
		By the Hochschild-Serre exact sequence, we have
		\begin{equation}\label{eq:exs}
			0 \to H^1(G,A) \to H^1(G_{\O}(k), A) \to H^1(M, A)^{G} \to H^2(G,A) \to H^2(G_{\O}(k),A),
		\end{equation}
		which is compatible with the conjugation action by $\Gamma$. Since $M$ acts trivially on $A$, we see that $H^1(M,A)^G=\Hom_G(M^{ab}, A)=0$ by Lemma~\ref{lem:G-prop}\eqref{item:G-prop-2}. So by taking the $\Gamma$-invariants on \eqref{eq:exs} and computing the dimensions, we have that 
		\begin{eqnarray*}
			\dim_{\F_\ell} H^2(G, A)^{\Gamma} - \dim_{\F_\ell} H^1(G, A)^{\Gamma} &\leq& \dim_{\F_\ell} H^2(G_{\O}(k), A)^{\Gamma} - \dim_{\F_\ell} H^1(G_{\O}(k), A)^{\Gamma}\\
			&=& \delta_{k/Q, \O}(A).
		\end{eqnarray*}
	\end{proof}

	Starting from now, we assume that $\calC$ is a finite set of isomorphism classes of finite $\Gamma$-groups all of whose orders are prime to $|\Gamma|$, $\Char Q$ and $|\mu(Q)|$.
	Let $\widehat{\pi}$ denote the $\Gamma$-equivariant surjective map $F_n(\Gamma) \to G$ used in Lemma~\ref{lem:G-prop}\eqref{item:G-prop-1}. Then the pro-$\calC$ completion of $\widehat{\pi}$ is $\pi^{\calC}: {F}_n^{\calC} \twoheadrightarrow G_{\O}(k)^{\calC}$.
	If $Q=\Q$, then $G_{\O}(k)^{\calC}$ is exactly $G_{\O,\infty}(k)^{\calC}$. If $Q$ is a function field, then $k_{\O}/k$ is not split completely at primes over $\infty$. Instead, $G_{\O,\infty}(k)$ is the $\Gamma$-quotient of $G_{\O}(k)$ obtained via modulo the decomposition subgroup $\Gal_\frakp(k_{\O}/k)$ of one prime $\frakp$ of $k$ above $\infty$ (because $\Gamma$ acts transitively on all the primes of $k$ above $\infty$). Since this decomposition subgroup $\Gal_{\frakp}(k_{\O}/k)$ is isomorphic to $\widehat{\Z}$ and $G$ is a quotient of $G_{\O}(k)$, we can define $g_n$ to be an element of $G$ that is the image of one generator of $\Gal_{\frakp}(k_{\O}/k)$. In other words, denoting $G^{\#}$ the quotient $G$ modulo the $\Gamma$-closed normal subgroup generated by $g_n$, we have the following diagram
	\begin{equation}\label{eq:ff-diag}\begin{tikzcd}
		F_n \arrow["\widehat{\pi}", two heads]{r} \arrow[two heads]{d} \arrow["\pi", two heads]{dr} \arrow[bend left, two heads, "\varpi"]{rr} &G \arrow[two heads, "\eta","/{[g_n]}"']{r} \arrow[two heads]{d} & G^{\#} \arrow[two heads]{d} \\
		{F_n}^{\calC} \arrow["\pi^{\calC}", two heads]{r} \arrow[bend right, two heads, "\varpi^{\calC}", swap]{rr}  &G_{\O}(k)^{\calC} \arrow[two heads, "\eta^{\calC}"]{r} & G_{\O,\infty}(k)^{\calC},
	\end{tikzcd}\end{equation}
	where the vertical maps are taking pro-$\calC$ completions. To make the notation consistent between the number field and the function field cases, when $Q=\Q$, we let $g_n=1$, and hence $\eta$ and $\eta^{\calC}$ in \eqref{eq:ff-diag} are both identity maps.
	First of all, we want to determine $m(\widehat{\pi}, \Gamma, G, A)$.

	\begin{proposition}\label{prop:mult-hatpi}
		Let $\ell$ be the exponent of $A$. Assume $\ell\neq \Char(Q)$ is relatively prime to $|\mu(Q)| |\Gamma|$.
		If $Q=\Q$, then 
		\[
			m(\widehat{\pi}, \Gamma, G, A) \leq \dfrac{(n+1)\dim_{\F_{\ell}} A - \dim_{\F_\ell} A^{\Gamma}}{h_{G\rtimes \Gamma}(A)}.
		\] 
		If $Q=\F_q(t)$ and $A\neq \mu_{\ell}$, then 
		\[
			m(\widehat{\pi}, \Gamma, G, A) \leq \dfrac{n\dim_{\F_{\ell}} A - \dim_{\F_\ell} A^{\Gamma}}{h_{G\rtimes \Gamma}(A)}.
		\]
	\end{proposition}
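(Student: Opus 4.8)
The plan is to convert the multiplicity into a cohomological quantity, bound that quantity by $\delta_{k/Q,\O}(A)$, and then feed in the explicit computations of Section~\ref{sect:compute-delta}. Since $A$ is simple as a $G\rtimes\Gamma$-module and $\gcd(|A|,|\Gamma|)=1$, the $\Gamma$-presentation defined by $\widehat{\pi}$ satisfies the hypotheses of Lemma~\ref{lem:d-cohom}; moreover the Hochschild--Serre spectral sequence for $1\to G\to G\rtimes\Gamma\to\Gamma\to1$ degenerates (again because $\ell\nmid|\Gamma|$), so $H^i(G\rtimes\Gamma,A)\cong H^i(G,A)^{\Gamma}$. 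Hence
\[
	m(\widehat{\pi},\Gamma,G,A)=\frac{n\dim_{\F_\ell}A-\xi(A)+\dim_{\F_\ell}H^2(G,A)^{\Gamma}-\dim_{\F_\ell}H^1(G,A)^{\Gamma}}{h_{G\rtimes\Gamma}(A)},
\]
and Lemma~\ref{lem:delta-upper} bounds the difference $\dim_{\F_\ell}H^2(G,A)^{\Gamma}-\dim_{\F_\ell}H^1(G,A)^{\Gamma}$ by $\delta_{k/Q,\O}(A)$. Using $\xi(A)=\dim_{\F_\ell}A^{\Gamma}-\dim_{\F_\ell}A^{G\rtimes\Gamma}$, the two asserted inequalities are equivalent to
\begin{align*}
	\delta_{k/Q,\O}(A) &\le \dim_{\F_\ell}A-\dim_{\F_\ell}A^{G\rtimes\Gamma} && \text{if } Q=\Q, \\
	\delta_{k/Q,\O}(A) &\le -\dim_{\F_\ell}A^{G\rtimes\Gamma} && \text{if } Q=\F_q(t)\text{ and }A\ne\mu_\ell,
\end{align*}
where, by simplicity of $A$, $A^{G\rtimes\Gamma}$ equals $A$ when $A\cong\F_\ell$ is the trivial module and is $0$ otherwise.

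For $Q=\F_q(t)$ I would invoke Proposition~\ref{prop:ff-delta}. If the genus of $k$ is $0$ then $\delta_{k/Q,\O}(A)=-\dim_{\F_\ell}A_{\Gal(k_{\O}/Q)}$, which is $\le0$ always and equals $-1$ when $A\cong\F_\ell$; in both cases the required bound holds. If the genus is positive then $\delta_{k/Q,\O}(A)=\dim_{\F_\ell}(A')^{\Gal(k_{\O}/Q)}-\dim_{\F_\ell}A^{\Gal(k_{\O}/Q)}$, and the key point is that $(A')^{\Gal(k_{\O}/Q)}=0$: a nonzero $\Gal(k_{\O}/Q)$-equivariant map $A\to\mu_\ell$ would be injective by simplicity, hence an isomorphism $A\cong\mu_\ell$; but $\mu_\ell\subset k_{\O}$ (constant-field extensions of a function field are everywhere unramified), so $\mu_\ell$ is an admissible candidate and $A\ne\mu_\ell$ by hypothesis gives a contradiction. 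Thus $\delta_{k/Q,\O}(A)=-\dim_{\F_\ell}A^{\Gal(k_{\O}/Q)}\le0$, and when $A\cong\F_\ell$ the same reasoning (with $\ell\nmid q-1=|\mu(Q)|$, so $\mu_\ell^{\Gal(k_{\O}/Q)}=0$) yields $\delta_{k/Q,\O}(\F_\ell)=-1$; both meet the bound.

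For $Q=\Q$ I would apply Proposition~\ref{prop:nf-delta} with $S=\O$ and $T=S_\ell(k)\cup S_\infty(k)$. First, $\epsilon_{k/\Q,\O}(A)=\dim_{\F_\ell}A$, since the set $I$ is the single $\ell$-adic place of $\Q$ and $\|\#A\|_v$ there is $\ell^{-\dim_{\F_\ell}A}$. Next, Theorem~\ref{thm:EPChar} applies to the pair $(k,T)$ and computes $\chi_{k/\Q,T}(A)$: because $\#A$ is odd ($\ell\nmid|\mu(\Q)|=2$) and the archimedean local Galois group has order $2$, the Tate term $\hH^0(\R,A')$ vanishes, so $\log_\ell\chi_{k/\Q,T}(A)=-\dim_{\F_\ell}(A')^{\Gal(\C/\R)}$, where $\Gal(\C/\R)$ acts through a decomposition group of $\Gal(k_T/\Q)$ above $\infty$; since $(A')^{\Gal(k_T/\Q)}\subseteq(A')^{\Gal(\C/\R)}$, the sum $\log_\ell\chi_{k/\Q,T}(A)+\dim_{\F_\ell}(A')^{\Gal(k_T/\Q)}$ is $\le0$. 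Therefore
\[
	\delta_{k/\Q,\O}(A)\le\dim_{\F_\ell}A-\dim_{\F_\ell}A^{\Gal(k_{\O}/\Q)}\le\dim_{\F_\ell}A,
\]
and when $A\cong\F_\ell$ that sum is exactly $0$ (as $\mu_\ell$ has no nonzero $\Gal(\C/\R)$-fixed point for odd $\ell$), giving $\delta_{k/\Q,\O}(\F_\ell)\le-1+1=0$. In every case the reduced inequality on $\delta_{k/Q,\O}(A)$ holds, which proves the proposition.

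The main obstacle is bookkeeping rather than a single hard idea: one must consistently separate the trivial module $A\cong\F_\ell$ (the only case with $A^{G\rtimes\Gamma}\ne0$, and the case where the extra ``$+1$'' in the $Q=\Q$ bound is genuinely used) from nontrivial simple $A$, and one must assemble the vanishing statements $(A')^{\Gal(k_{\O}/Q)}=0$ and $\hH^0(\R,A')=0$ together with the invariants-versus-coinvariants distinction appearing in $\xi(A)$ and in Proposition~\ref{prop:ff-delta}. All of the substantive input --- the generalized global Euler--Poincar\'e formula (Theorem~\ref{thm:EPChar}), the estimate $[\Sha^2_S(k,A)]\le[\B_S(k,A)]$ feeding Proposition~\ref{prop:nf-delta}, and Lemma~\ref{lem:delta-upper} --- is already in hand.
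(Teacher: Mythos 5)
Your proposal is correct and follows essentially the same route as the paper: reduce the multiplicity to a cohomological expression via Lemma~\ref{lem:d-cohom}/Proposition~\ref{prop:d'-cohom}, bound it by $\delta_{k/Q,\O}(A)$ using Lemma~\ref{lem:delta-upper}, and then evaluate $\delta_{k/Q,\O}(A)$ via Proposition~\ref{prop:ff-delta} in the function field case and Proposition~\ref{prop:nf-delta} together with Theorem~\ref{thm:EPChar} in the number field case. Your bookkeeping is in fact slightly more robust than the paper's in two spots — you cover the genus-$0$ function field case rather than asserting the genus is positive, and at the archimedean places you use the containment $(A')^{\Gal(k_T/\Q)}\subseteq (A')^{\Gal(\C/\R)}$ instead of arguing that the latter vanishes — but these are refinements of the same argument, not a different one.
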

	
	\begin{remark}
		Recall that in Theorem~\ref{thm:main} we assume that $k/Q$ is split completely at $\infty$. In the function field case, $\mu_{\ell}$ is a $\Gal(k_{\O}/Q)$-module but not a $\Gal(k_{\O,\infty}/Q)$-module, so we exclude the situation that $A=\mu_\ell$.
	\end{remark}
	
	\begin{proof}
		By the assumptions, we can apply Proposition~\ref{prop:d'-cohom} to compute the multiplicities. Because $\ell\nmid |\Gamma|$, we have for $i=1,2$ that $H^i(G\rtimes \Gamma, A) = H^i(G, A)^{\Gamma}$. Then by Lemma~\ref{lem:delta-upper}, we have
	\begin{equation}\label{eq:comp-m}
		m(\widehat{\pi}, \Gamma, G, A) \leq \frac{n\dim_{\F_\ell} A -\xi(A) + \delta_{k/Q,\O}(A)}{h_{G\rtimes \Gamma} (A)}.
	\end{equation}
	So we just need to compute $\delta_{k/Q, \O}(A)$.
	
	In the function field case, since $k/Q$ is split completely above $\infty$, the genus of $k$ is positive. By Proposition~\ref{prop:ff-delta}\eqref{item:pos-genus}, 
	\[
		\delta_{k/Q, \O}(A) = \dim_{\F_\ell} \Hom_{\Gal(k_{\O}/Q)}(A, \mu_{\ell}) - \dim_{\F_\ell} A^{\Gal(k_{\O}/Q)}.
	\]
	Recall that $A$ is a simple $\F_{\ell}[\Gal(k_{\O}/Q)]$-module that is not $\mu_{\ell}$, so we see that $\delta_{k/Q, \O}(A)$ is $-1$ if $A=\F_{\ell}$, and is 0 otherwise. So we proved the result in function field case.
	
	In the number field case that $Q=\Q$, we need to compute each terms in the formula in Proposition~\ref{prop:nf-delta}. Let $T=S_{\ell}(k) \cup S_{\infty}(k)$. In this case, $\ell$ is odd as $\mu_2 \subset Q$. First, we apply Theorem~\ref{thm:EPChar}
	\[
		\log_{\ell}\chi_{k/Q, T}(A)= -\dim_{\F_\ell} H^0(\Q_{\infty}, A') = -\dim_{\F_\ell}(A')^{\Gal(\C/\R)},
	\]
	where the first equality uses $\widehat{H}^0(\Q_\infty, A')=0$ because of $\#\calG_{\infty}(\Q)=2$ and \cite[Prop.~1.6.2(a)]{NSW}. Then because $A$ is a simple $\F_{\ell}[\Gal(k_{\O}/Q)]$-module, it is totally real and hence $(A')^{\Gal(\C/\R)}=\Hom_{\Gal(\C/\R)}(A, \mu_{\ell})=0$. So we have $\log_{\ell} \chi_{k/Q, T}(A)=0$. Then note that $\epsilon_{k/Q,\O}(A)$ in the formula in Proposition~\ref{prop:nf-delta} is $\dim_{\F_{\ell}} A$ in this case, and we obtain
	\[
		\delta_{k/Q, \O}(A) \leq \dim_{\F_\ell} \Hom_{\Gal(k_T/Q)}(A, \mu_{\ell}) - \dim_{\F_\ell} A^{\Gal(k_{\O}/Q)} + \dim_{\F_\ell} A,
	\]
	where the right-hand side is 0 if $A=\F_{\ell}$ and is $\dim_{\F_\ell} A$ otherwise. So we proved the number field case.
	\end{proof}

	\begin{lemma}\label{lem:mult-varpi}
		Use the assumptions in Proposition~\ref{prop:mult-hatpi}. Consider the function field case and the diagram \eqref{eq:ff-diag}. When $n$ is sufficiently large, we have
		\[
			m(\varpi, \Gamma, G^{\#}, A) \leq 			\dfrac{(n+1) \dim_{\F_\ell} A - \dim_{\F_\ell} A^{\Gamma}}{h_{G^{\#}\rtimes \Gamma}(A)} 
		\]
	\end{lemma}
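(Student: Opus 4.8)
The plan is to compare $m(\varpi,\Gamma,G^{\#},A)$ with the quantity $m(\widehat{\pi},\Gamma,G,A)$ already bounded in Proposition~\ref{prop:mult-hatpi}, and to show that passing from $G$ to its quotient $G^{\#}=G/[g_n]$ by the normal closure of the single element $g_n$ increases the multiplicity by no more than $\dim_{\F_\ell}A/h_{G^{\#}\rtimes\Gamma}(A)$. In the situation at hand $A$ is a $G_{\O,\infty}(k)^{\calC}\rtimes\Gamma$-module, hence split at $\infty$, so the image of $g_n$ in $G_{\O}(k)^{\calC}$ acts trivially on $A$ and the $G$-action on $A$ factors through $G^{\#}$; thus $A$ is simultaneously a finite simple $G\rtimes\Gamma$- and $G^{\#}\rtimes\Gamma$-module with the same $A^{\Gamma}$, the same $A^{G\rtimes\Gamma}=A^{G^{\#}\rtimes\Gamma}$, the same $\dim_{\F_\ell}A$, $\xi(A)$, and $h(A):=h_{G\rtimes\Gamma}(A)=h_{G^{\#}\rtimes\Gamma}(A)$. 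Since $\gcd(|A|,|\Gamma|)=1$ and both $\widehat{\pi}\colon F_n\rtimes\Gamma\twoheadrightarrow G\rtimes\Gamma$ and $\varpi\colon F_n\rtimes\Gamma\twoheadrightarrow G^{\#}\rtimes\Gamma$ are $\Gamma$-presentations, I would apply Lemma~\ref{lem:d-cohom} to each; subtracting the two formulas (the $n$, $\xi(A)$ and $h(A)$ parts cancel) shows that $h_{G^{\#}\rtimes\Gamma}(A)\cdot\bigl(m(\varpi,\Gamma,G^{\#},A)-m(\widehat{\pi},\Gamma,G,A)\bigr)$ equals
\[
	\bigl(\dim_{\F_\ell}H^2(G^{\#}\rtimes\Gamma,A)-\dim_{\F_\ell}H^1(G^{\#}\rtimes\Gamma,A)\bigr)-\bigl(\dim_{\F_\ell}H^2(G\rtimes\Gamma,A)-\dim_{\F_\ell}H^1(G\rtimes\Gamma,A)\bigr).
\]
So it suffices to bound this difference of cohomological Euler characteristics by $\dim_{\F_\ell}A$.

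For that I would feed $1\to N\to G\to G^{\#}\to 1$, where $N=[g_n]$ is the closed normal subgroup of $G\rtimes\Gamma$ generated by $g_n$ (it is contained in $G$ and acts trivially on $A$), into the inflation--restriction exact sequence and take $\Gamma$-invariants. This is exact because $\ell\nmid|\Gamma|$, and it is compatible with the conjugation action of $\Gamma$, giving a five-term exact sequence relating $H^1(G^{\#}\rtimes\Gamma,A)$, $H^1(G\rtimes\Gamma,A)$, $H^1(N,A)^{G^{\#}\rtimes\Gamma}$, $H^2(G^{\#}\rtimes\Gamma,A)$ and a subspace of $H^2(G\rtimes\Gamma,A)$. (Finiteness of all these spaces follows because $G_{\O}(k)$ is a Poincar\'e duality group of dimension $3$ in the function field case and $H^2(G,A)$ injects into $H^2(G_{\O}(k),A)$ by inflation, the relevant connecting piece $\Hom_G((\ker(G_{\O}(k)\to G))^{ab},A)$ vanishing by Lemma~\ref{lem:G-prop}\eqref{item:G-prop-2}.) Comparing alternating sums of dimensions shows the difference above is at most $\dim_{\F_\ell}H^1(N,A)^{G^{\#}\rtimes\Gamma}$. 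Since $N$ acts trivially on $A$ and $A$ has exponent $\ell$, this group is $\Hom_{G^{\#}\rtimes\Gamma}(N^{ab}/\ell,A)$; and because $N$ is the normal closure of a single element, $N^{ab}/\ell$ is a cyclic $\F_\ell[G^{\#}\rtimes\Gamma]$-module, hence a quotient of $\F_\ell[G^{\#}\rtimes\Gamma]$, so that $\Hom_{G^{\#}\rtimes\Gamma}(N^{ab}/\ell,A)$ embeds into $\Hom_{G^{\#}\rtimes\Gamma}(\F_\ell[G^{\#}\rtimes\Gamma],A)\cong A$ and has $\F_\ell$-dimension at most $\dim_{\F_\ell}A$. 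This completes the reduction.

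Assembling everything, $m(\varpi,\Gamma,G^{\#},A)\le m(\widehat{\pi},\Gamma,G,A)+\dim_{\F_\ell}A/h_{G^{\#}\rtimes\Gamma}(A)$, and then the bound of Proposition~\ref{prop:mult-hatpi} in the function field case with $A\ne\mu_\ell$, namely $m(\widehat{\pi},\Gamma,G,A)\le(n\dim_{\F_\ell}A-\dim_{\F_\ell}A^{\Gamma})/h_{G\rtimes\Gamma}(A)$, gives precisely the asserted $m(\varpi,\Gamma,G^{\#},A)\le((n+1)\dim_{\F_\ell}A-\dim_{\F_\ell}A^{\Gamma})/h_{G^{\#}\rtimes\Gamma}(A)$. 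The main obstacle is the middle step: controlling how the relator module grows when the single relation $g_n=1$ is imposed. The decisive input there is that the normal closure of one element has cyclic mod-$\ell$ abelianization over the ambient group algebra, which caps the newly appearing part of $H^1(N,A)^{G^{\#}\rtimes\Gamma}$ by $\dim_{\F_\ell}A$; one must also be careful that $A$ is genuinely a $G^{\#}\rtimes\Gamma$-module, which is exactly where the hypothesis that $k/Q$ splits completely at $\infty$ enters.
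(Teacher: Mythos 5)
Your argument is correct, but it takes a genuinely different route from the paper's. The paper factors $\varpi$ as $F_n \xrightarrow{\lambda} F_{n-1} \xrightarrow{\phi} G^{\#}$, where $\lambda$ kills the $\Gamma$-normal closure of $x_n$ (after arranging $\widehat{\pi}(x_n)=g_n$); since $F_{n-1}$ is free, every extension by $A$ splits, so Lemma~\ref{lem:m-sum}\eqref{item:m-sum-2} gives the exact additivity $m(\varpi,\Gamma,G^{\#},A)=m(\lambda,\Gamma,F_{n-1},A)+m(\phi,\Gamma,G^{\#},A)$ with $m(\lambda,\Gamma,F_{n-1},A)=\dim_{\F_\ell}A/h_{G^{\#}\rtimes\Gamma}(A)$, and then proves $m(\phi,\Gamma,G^{\#},A)\le m(\widehat{\pi},\Gamma,G,A)$ by an explicit injection $U\mapsto\lambda^{-1}(U)\cap\ker\widehat{\pi}$ between sets of maximal proper normal subgroups. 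You instead apply Lemma~\ref{lem:d-cohom} to both presentations and control the difference of cohomological Euler characteristics by the five-term inflation--restriction sequence for $1\to N\to G\rtimes\Gamma\to G^{\#}\rtimes\Gamma\to1$, capping $\dim_{\F_\ell}H^1(N,A)^{G^{\#}\rtimes\Gamma}=\dim_{\F_\ell}\Hom_{G^{\#}\rtimes\Gamma}(N^{\mathrm{ab}}/\ell,A)\le\dim_{\F_\ell}A$ because $N^{\mathrm{ab}}/\ell$ is a cyclic $\F_\ell[G^{\#}\rtimes\Gamma]$-module. Both arguments rest on the same underlying fact --- one additional normal generator contributes at most $\dim_{\F_\ell}A/h_{G^{\#}\rtimes\Gamma}(A)$ to the multiplicity --- but yours is purely cohomological and avoids the combinatorics of maximal normal subgroups, at the cost of needing finiteness of $H^2(G\rtimes\Gamma,A)$ (which is supplied by Lemma~\ref{lem:delta-upper} together with Proposition~\ref{prop:ff-delta}, so your parenthetical is adequately justified). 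The identifications you rely on ($A^{G\rtimes\Gamma}=A^{G^{\#}\rtimes\Gamma}$, equality of $\xi(A)$ and of the $h$-values for $G$ and $G^{\#}$, triviality of the $N$-action on $A$) all hold because the $G$-action on $A$ factors through $G_{\O,\infty}(k)^{\calC}$ and hence through $G^{\#}$, exactly as you note.
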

	
	\begin{proof}
		Again, we use $x_1, \cdots, x_n$ to denote the generators of $F_n$. We can make $n$ large to assume $\widehat{\pi}(x_n)=g_n$ (recall that the multiplicity depends on $n$ but not on the choice of $\varpi$). Then we have a commutative diagram
		\[\begin{tikzcd}
			F_n \arrow[two heads, "\lambda", "/{[x_n]}"']{rr} \arrow[two heads, "\widehat{\pi}"]{d} \arrow[two heads, "\varpi"']{drr} && F_{n-1} \arrow[two heads, "\phi"]{d} \\
			G \arrow[two heads, "\eta", "/{[g_n]}"']{rr} && G^{\#},
		\end{tikzcd}\]
		where the top map are defined by modulo the $\Gamma$-closed normal subgroup generated by $x_n$. Note that the composition of the top and the right arrows satisfies the conditions in Lemma~\ref{lem:m-sum}\eqref{item:m-sum-2}, so we have
		\[
			m(\varpi, \Gamma, G^{\#}, A) = m(\lambda, \Gamma, F_{n-1}, A) + m(\phi, \Gamma, G^{\#}, A).
		\]
		By the statement and the computation of $H^i(F_n\rtimes \Gamma, A)$ in the proof of Lemma~\ref{lem:d-cohom}, we see that
		\[
			m(\lambda, \Gamma, F_{n-1}, A)=\frac{\dim_{\F_\ell} A}{h_{G^{\#}\rtimes \Gamma}(A)}.
		\]
		So it suffices to prove
		\begin{equation}\label{eq:wts-mult-varpi}
			m(\phi, \Gamma, G^{\#}, A) \leq m(\widehat{\pi}, \Gamma, G, A),
		\end{equation}
		which will immediately follow after we prove the following embedding
		\begin{eqnarray*}
		&&\left\{ U \,\mid\,   \text{max. proper $F_{n-1}\rtimes \Gamma$-normal subgroup of $\ker \phi$ s.t. $\ker \phi/U\simeq_{G^{\#}\rtimes \Gamma} A$}\right\} \\
		&\overset{\kappa}{\longhookrightarrow}& \left\{ V \,\mid\,   \text{max. proper $F_{n}\rtimes \Gamma$-normal subgroup of $\ker \widehat{\pi}$ s.t. $\ker \widehat{\pi}/V\simeq_{G\rtimes \Gamma} A$}\right\}
		\end{eqnarray*}
		mapping $U$ to $\lambda^{-1}(U) \cap \ker \widehat{\pi}$.
		
		Since $\ker \varpi = \ker \widehat{\pi} \ker \lambda$, for each $U$ in the first set, we have
		\[
			\faktor{\ker \widehat{\pi}}{\lambda^{-1}(U) \cap \ker \widehat{\pi}} =\faktor{\lambda^{-1}(U) \ker \widehat{\pi} }{\lambda^{-1}(U)}=\faktor{\ker \varpi}{\lambda^{-1}(U)} \simeq_{G^{\#}\rtimes \Gamma} A,
		\]
		so the map $\kappa$ is well-defined. Also, if $V=\kappa(U)$, then 
		\begin{equation}\label{eq:nie-1}
			\faktor{\ker \varpi}{V\ker \lambda} = \faktor{\ker \widehat{\pi} (V\ker\lambda)}{V\ker \lambda} = \faktor{\ker \widehat{\pi}}{\ker \widehat{\pi} \cap (V \ker \lambda)}.
		\end{equation}
		Since $V\subset \ker \widehat{\pi}$ and $\ker \widehat{\pi}/V$ is a simple module, the last quotient is either 1 or isomorphic to $A$. On the other hand, both of $V$ and $\ker \lambda$ are contained in $\lambda^{-1}(U)$, so is $V\ker \lambda$. Then \eqref{eq:nie-1} implies that $V\ker \lambda =  \lambda^{-1}(U)$. So we see that if $\kappa(U_1)=\kappa(U_2)=V$, then $\lambda^{-1}(U_1)=\lambda^{-1}(U_2)$ and hence $U_1=U_2$. So we conclude that $\kappa$ is injective.
	\end{proof}
	
	\begin{proposition}\label{prop:mult-G-C}
		Let $A$ be a finite simple $G_{\O,\infty}(k)^{\calC}\rtimes \Gamma$-module of exponent $\ell\neq \Char(k)$ relatively prime to $|\mu(Q)||\Gamma|$. When $n$ is sufficiently large, there exists an admissible $\Gamma$-presentation $\calF_n(\Gamma) \twoheadrightarrow G_{\O,\infty}(k)^{\calC}$, and 
		\[
			m_{\ad}^{\calC}(n, \Gamma, G_{\O,\infty}(k)^{\calC}, A)\leq m_{\ad}(n, \Gamma, G^{\#}, A)\leq\dfrac{(n+1) (\dim_{\F_\ell} A - \dim_{\F_\ell} A^{\Gamma})}{h_{G_{\O,\infty}(k)^{\calC} \rtimes \Gamma}(A)}.
		\]
	\end{proposition}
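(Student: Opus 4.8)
The plan is to route everything through the finitely generated group $G^{\#}$ of diagram~\eqref{eq:ff-diag} (with $G^{\#}=G$ when $Q=\Q$), for which the relevant multiplicities were already bounded in Proposition~\ref{prop:mult-hatpi} and Lemma~\ref{lem:mult-varpi}. First I would record three facts about $G^{\#}$. The exponent $\ell$ of $A$ is prime to $|\mu(Q)|\,|\Gamma|$ and to $\Char Q$, so $|A|$, and hence $|G^{\#}|$, is prime to $|\mu(Q)|\,|\Gamma|\,\Char Q$; since $G^{\#}$ is a $\Gamma$-quotient of $G_{\O,\infty}(k)$ it is therefore a $\Gamma$-quotient of the admissible group $\Gal(k^{\#}/k)$ of \cite{LWZB}, hence $G^{\#}$ is itself admissible, of order prime to $|\Gamma|$, and $(G^{\#})^{\calC}=G_{\O,\infty}(k)^{\calC}$. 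Consequently, for $n$ large (the same $n$ for which $G_{\O,\infty}(k)^{\calC}$ is a quotient of $F_n$, using Theorem~\ref{thm:finite-GC}), the surjection $\varpi\colon F_n\to G^{\#}$ of \eqref{eq:ff-diag} can be arranged to satisfy \eqref{eq:presentation-type}, so that $\pi_{\ad}:=\varpi|_{\calF_n}\colon\calF_n\twoheadrightarrow G^{\#}$ is an admissible $\Gamma$-presentation and its pro-$\calC$ completion $\pi_{\ad}^{\calC}\colon\calF_n^{\calC}\twoheadrightarrow(G^{\#})^{\calC}=G_{\O,\infty}(k)^{\calC}$ is an admissible $\Gamma$-presentation of $G_{\O,\infty}(k)^{\calC}$; in particular such a presentation exists. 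Finally, since the $G^{\#}\rtimes\Gamma$-action on $A$ factors through $(G^{\#})^{\calC}\rtimes\Gamma=G_{\O,\infty}(k)^{\calC}\rtimes\Gamma$, we have $h_{G^{\#}\rtimes\Gamma}(A)=h_{G_{\O,\infty}(k)^{\calC}\rtimes\Gamma}(A)$.

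For the first inequality, I would apply Proposition~\ref{prop:C-mult-compute} to the admissible $\Gamma$-group $G^{\#}$, which gives
\[
  m_{\ad}^{\calC}(n,\Gamma,G^{\#},A)=m(\pi_{\ad}^{\calC},\Gamma,(G^{\#})^{\calC},A)\ \le\ m(\pi_{\ad},\Gamma,G^{\#},A)=m_{\ad}(n,\Gamma,G^{\#},A).
\]
By Lemma~\ref{lem:pro-C-completion} the completion $\pi_{\ad}^{\calC}$ coincides with the pro-$\calC$ completion of $\calF_n\xrightarrow{\pi_{\ad}}G^{\#}\twoheadrightarrow G_{\O,\infty}(k)^{\calC}$, so it is one of the admissible $\Gamma$-presentations of $G_{\O,\infty}(k)^{\calC}$ computing $m_{\ad}^{\calC}(n,\Gamma,G_{\O,\infty}(k)^{\calC},A)$; by the independence of the latter from the choice of presentation (also asserted in Proposition~\ref{prop:C-mult-compute}), the left-hand side above equals $m_{\ad}^{\calC}(n,\Gamma,G_{\O,\infty}(k)^{\calC},A)$, which is the first asserted inequality.

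For the second inequality, Corollary~\ref{cor:ad-mult-compute} gives
\[
  m_{\ad}(n,\Gamma,G^{\#},A)=m(n,\Gamma,G^{\#},A)-\frac{n\,\dim_{\F_\ell}A^{\Gamma}}{h_{G^{\#}\rtimes\Gamma}(A)},
\]
and since $G^{\#}$ is pro-$|\Gamma|'$ and $\gcd(|A|,|\Gamma|)=1$, one has $m(n,\Gamma,G^{\#},A)=m(\varpi,\Gamma,G^{\#},A)$. When $Q=\Q$ we have $G^{\#}=G$ and $\varpi=\widehat\pi$, and Proposition~\ref{prop:mult-hatpi} bounds $m(\widehat\pi,\Gamma,G,A)$ by $\big((n+1)\dim_{\F_\ell}A-\dim_{\F_\ell}A^{\Gamma}\big)\big/h_{G\rtimes\Gamma}(A)$; when $Q$ is a function field, $A$ is a $G_{\O,\infty}(k)^{\calC}\rtimes\Gamma$-module and therefore not $\mu_\ell$, so Lemma~\ref{lem:mult-varpi} gives the same bound with $h_{G^{\#}\rtimes\Gamma}(A)$ in the denominator. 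Substituting into the displayed identity and using $h_{G^{\#}\rtimes\Gamma}(A)=h_{G_{\O,\infty}(k)^{\calC}\rtimes\Gamma}(A)$ from the first paragraph yields
\[
  m_{\ad}(n,\Gamma,G^{\#},A)\ \le\ \frac{(n+1)\dim_{\F_\ell}A-(n+1)\dim_{\F_\ell}A^{\Gamma}}{h_{G_{\O,\infty}(k)^{\calC}\rtimes\Gamma}(A)},
\]
which is the second asserted inequality. Chaining the two completes the proof.

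The step I expect to take the most care is the first paragraph: checking that the presentation $\varpi$ of \eqref{eq:ff-diag} can be taken to restrict to $\calF_n$ (i.e.\ to satisfy \eqref{eq:presentation-type}), that $(G^{\#})^{\calC}=G_{\O,\infty}(k)^{\calC}$, and that the quantity $m_{\ad}^{\calC}(n,\Gamma,G_{\O,\infty}(k)^{\calC},A)$ is unambiguous and matches the one produced through $G^{\#}$; once these identifications are made, both inequalities are immediate consequences of Proposition~\ref{prop:C-mult-compute}, Corollary~\ref{cor:ad-mult-compute}, Proposition~\ref{prop:mult-hatpi} and Lemma~\ref{lem:mult-varpi}.
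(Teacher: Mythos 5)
Your proposal follows the paper's own proof essentially step for step: both inequalities are obtained by routing through $G^{\#}$ and combining Proposition~\ref{prop:C-mult-compute} with Corollary~\ref{cor:ad-mult-compute}, Proposition~\ref{prop:mult-hatpi} and Lemma~\ref{lem:mult-varpi}, and the identification $h_{G^{\#}\rtimes\Gamma}(A)=h_{G_{\O,\infty}(k)^{\calC}\rtimes\Gamma}(A)$, together with the bookkeeping that $(G^{\#})^{\calC}=G_{\O,\infty}(k)^{\calC}$ and that the pro-$\calC$ completion of an admissible presentation of $G^{\#}$ is one of $G_{\O,\infty}(k)^{\calC}$, is exactly what the paper does (mostly implicitly).

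The one step you assert but do not prove is the existence claim itself: that $\varpi$ ``can be arranged'' to satisfy \eqref{eq:presentation-type}, i.e.\ that $G^{\#}$ is $\Gamma$-generated by the coordinates of $Y(y_1),\dots,Y(y_n)$ for finitely many elements. Admissibility of $G^{\#}$ does not give this on its own: it only says that $G^{\#}$ is generated by the set of \emph{all} elements $g^{-1}\gamma(g)$, and $G^{\#}$ is in general an infinite profinite group (it is built from the possibly infinite tower $G=\varprojlim G_i$), so one cannot simply pick finitely many generators from a finite group. The paper closes this by using the finiteness of $G_{\O,\infty}(k)^{\calC}$ (Theorem~\ref{thm:finite-GC}) to find $a_1,\dots,a_n$ with $\{Y(a_i)\}$ generating $G_{\O,\infty}(k)^{\calC}$, lifting them to $b_i\in G^{\#}$, and then invoking Lemma~\ref{lem:G-prop}\eqref{item:G-prop-0} — a Frattini-type consequence of the completely nonsplit construction of $G$ — to conclude that $\{Y(b_i)\}$ already generates $G^{\#}$. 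Without this lifting-of-generators argument, the quantity $m_{\ad}(n,\Gamma,G^{\#},A)$ that your two inequalities pivot on is not known to be defined, so you should supply it (you correctly flagged this paragraph as the delicate one, but the missing ingredient is specifically Lemma~\ref{lem:G-prop}\eqref{item:G-prop-0}).
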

	
	\begin{remark}\label{rmk:property-E}
		The proposition shows that $m^{\calC}_{\ad}(n, \Gamma, G_{\O,\infty}(k)^{\calC}, \F_\ell)=0$. In other words, $G_{\O,\infty}(k)^{\calC}$ does not admit any nonsplit central group extension
		\[
			1 \to \F_\ell \to \widetilde{G} \rtimes \Gamma \to G_{\O,\infty}(k)^{\calC}\rtimes \Gamma \to 1,
		\]
		such that $\widetilde{G}$ is of level $\calC$. This is equivalent to the solvability (i.e. the existence of the dashed arrow) of the following embedding problem
		\[\begin{tikzcd}
			& & & G_{\O,\infty}(k)^{\calC}\rtimes \Gamma\arrow[two heads,"\alpha"]{d} \arrow[dashed]{dl} & \\
			1 \arrow{r} & \F_\ell \arrow{r} &\widetilde{H} \rtimes \Gamma \arrow{r} & H \rtimes \Gamma \arrow{r} &1
		\end{tikzcd}\]
		for any nonsplit central group extension in the lower row with $\widetilde{H}$ of level $\calC$, and for any surjection $\alpha$. In \cite{LWZB}, this solvability is called the \emph{Property E} of $G_{\O,\infty}(k)$ and is proven using the classical techniques of embedding problems. So Proposition~\ref{prop:mult-G-C} provides a new proof of the Property E by counting multiplicites.
	\end{remark}
	
	\begin{proof}
		By \cite[Prop.~2.2]{LWZB}, $G_{\O,\infty}(k)$ is an admissible $\Gamma$-group, so is $G^{\#}$, because $G^{\#}$ is a $\Gamma$-quotient of $G_{\O,\infty}(k)$. Since $G_{\O,\infty}(k)^{\calC}$ is finite, when $n$ is large, there exist elements $a_1, \cdots, a_n$ of $G_{\O,\infty}(k)^{\calC}$ such that $\{Y(a_i)\}_{i=1}^n$ forms a generator sets of $G_{\O,\infty}(k)^{\calC}$. Then we choose a preimage $b_i \in G^{\#}$ of $a_i$, and hence $\{Y(b_i)\}_{i=1}^n$ generates $G^{\#}$ by Lemma~\ref{lem:G-prop}\eqref{item:G-prop-0}. Recall that the multiplicity does not depend on the choice of presentation, so we assume $\varpi$ in \eqref{eq:ff-diag} maps $y_i \in F_n$ to $b_i \in G^{\#}$ for each $i=1, \cdots, n$. Then the restriction $\varpi|_{\calF_n}$ is an admissible $\Gamma$-presentation of $G^\#$. We have by Corollary~\ref{cor:ad-mult-compute} that
		\[
			m_{\ad}(n, \Gamma, G^{\#}, A) = m(n, \Gamma, G^{\#}, A) -\dfrac{n \dim_{\F_\ell}A^{\Gamma}}{h_{G^{\#}\rtimes \Gamma}(A)}.
		\]
		Then the desired result follows by Propositions~\ref{prop:C-mult-compute}, \ref{prop:mult-hatpi} and \ref{lem:mult-varpi}.
	\end{proof}

\subsection{Existence of the presentation \eqref{eq:main}}  \label{ss:presentation}

	Finally, we will prove that when $n$ is sufficiently large, there exists a set $X$ of $\calF_n^{\calC}$ containing $n+1$ elements for which the following isomorphism, which is \eqref{eq:main} in Theorem~\ref{thm:main}, holds
	\[
		G_{\O,\infty}(k)^{\calC} \simeq \faktor{\calF_n(\Gamma)^{\calC}}{[r^{-1}\gamma(r)]_{r\in X , \gamma \in \Gamma}}.
	\]
	
	In Proposition~\ref{prop:mult-G-C}, we showed that when $n$ is sufficiently large, there is an admissible $\Gamma$-presentation, denoted by
	\[
		1 \to N \to  \calF_n^{\calC} \overset{\varpi^{\calC}_{\ad}}{\longrightarrow} G_{\O,\infty}(k)^{\calC} \to 1.
	\]
        	Let $M$ be the intersection of all maximal proper $\calF^{\calC}_n \rtimes \Gamma$-normal subgroups of $N$, and define $R=N/M$ and $F=\calF_n^{\calC}/M$. Note that because $\calC$ is finite, we have that $\calF_n^{\calC}$ is finite \cite[Cor.~15.72]{Neumann67}. Then $R$ is a finite direct product $\prod_{i=1}^t A_i^{m_i}$ of finite irreducible $F\rtimes \Gamma$-groups $A_i$'s. Assume $A_i$ and $A_j$ are not isomorphic as $F\rtimes \Gamma$-groups if $i\neq j$. When a factor $A_i$ is abelian, its multiplicity $m_i$ is $m_{\ad}^{\calC}(n, \Gamma, G_{\O,\infty}(k)^{\calC},A_i)$ computed in Proposition~\ref{prop:mult-G-C}.
	
	Let $X$ be a subset of $\calF_n^{\calC}$. Then the closed $\calF_n^{\calC}\rtimes \Gamma$-normal subgroup generated by $\{r^{-1}\gamma(r)\}_{r\in X, \gamma\in \Gamma}$ is $N$ if and only if the closed $F\rtimes\Gamma$-normal subgroup generated by $\{\overline{r}^{-1} \gamma(\overline{r})\}_{\overline{r}\in \overline{X}, \gamma\in \Gamma}$  is $R$, where $\overline{X}$ and $\overline{r}$ are the images of $X$ and $r$ in $R$ respectively. Recall the properties of $\calF_n$ listed at the beginning of \S\ref{sect:pres-ad}. Because of the property \eqref{item:prop-calF-1}, in the definition of $Y$ in \eqref{item:prop-calF-3}, we can take the generator set $\{\gamma_1, \cdots, \gamma_d\}$ to be the whole group $\Gamma$, then 
	\[
		\{r^{-1}\gamma(r)\}_{r\in X, \gamma \in \Gamma}= Y(\{r\}_{r\in X}) \quad \text{and} \quad \{\overline{r}^{-1}\gamma(\overline{r})\}_{\overline{r}\in \overline{X}, \gamma \in \Gamma}= Y(\{\overline{r}\}_{\overline{r}\in \overline{X}}).
	\]
	By \cite[Prop.~4.3]{LWZB}, for a fixed integer $u$, the probability that the images under the map $Y$ of $n+u$ random elements of $R$ generate $R$ as an $F\rtimes \Gamma$-normal subgroup is
	\begin{eqnarray*}
		&& \Prob([Y(\{r_1, \cdots, r_{n+u}\})]_{F\rtimes \Gamma}=R)\\
		&=& \prod_{\substack{1\leq i \leq t \\ A_i \text{ abelian}}} \prod_{j=0}^{m_i-1} (1-h_{F\rtimes \Gamma}(A_i)^j |Y(A_i)|^{-n-u}) \prod_{\substack{1\leq i \leq t \\ A_i \text{ non-abelian}}} (1-|Y(A_i)|^{-n-u})^{m_i}.
	\end{eqnarray*}
	This product in the formula is a finite product.
	By \cite[Lem.~3.5]{LWZB}, we have $|Y(A_i)|=|A_i|/|A_i^{\Gamma}|$ for each $i$. Note that Lemma~\ref{lem:adm-Gamma-fix} shows that $|Y(A_i)| > 1$ when $A_i$ is non-abelian, so the product over non-abelian factors in the above formula is always positive. The term for an abelian factor $A_i$ is positive if and only if 
	\[
		m_i \leq \frac{(n+u)\log_{\ell}|Y(A_i)|}{h_{G_{\O,\infty}(k)^{\calC} \rtimes \Gamma}(A_i)} = \frac{(n+u)(\dim_{\F_\ell} A_i -\dim_{\F_\ell} A_i^{\Gamma})}{h_{G_{\O,\infty}(k)^{\calC} \rtimes \Gamma}(A_i)}.
	\]
	Therefore, by Proposition~\ref{prop:mult-G-C}, $R$ can be $F\rtimes \Gamma$-normally generated by the $Y$-values of $n+1$ elements, and hence we finish the proof.

\section{Exceptional cases}\label{sect:exceptional}

	We will discuss the cases that are not covered by the Liu--Wood--Zureick-Brown conjecture, using the techniques developed in this paper. In this section, the base field $Q$ can be any global field. If $Q$ is a number field, we denote $r_1$ and $r_2$ the numbers of the real embeddings and the complex embeddings of $Q$ respectively.
	
	Again, we let $\Gamma$ be a nontrivial finite group and $k/Q$ a Galois extension of global fields with $\Gal(k/Q) \simeq \Gamma$, such that $\Char(Q)$ and $|\Gamma|$ are relatively prime. We assume that $\ell$ is a prime integer that is not $\Char(Q)$ and is prime to $|\Gamma|$. 
	Recall that $G_{\O}(k)(\ell)$ denotes the pro-$\ell$ completion of $G_{\O}(k)$. So $G_{\O}(k)(\ell)$ is the Galois group of the maximal unramified pro-$\ell$ extension of $k$, which we will denote by $k_{\O}(\ell)/k$. Note that $G_{\O}(k)(\ell)$ is finitely generated, because $\dim_{\F_\ell}H^1(k_{\O}, \F_{\ell})$ is the minimal number of generators of $G_{\O}(k)(\ell)$ and is finite. So when $n$ is sufficiently large, there is a $\Gamma$-presentation $\pi: F'_n(\Gamma)\to G_{\O}(k)(\ell)$.
	Moreover, we assume, throughout this section, that the $\ell$-primary part of the class group of $Q$ is trivial. Then $G_{\O, \infty}(k)(\ell)$ is admissible by the proof of \cite[Prop.~2.2]{LWZB}, and hence we can assume that the presentation $\pi$ induces an admissible presentation, i.e. $\pi^{\ad}:=\pi|_{\calF_n}$ is surjective.
	
	In this section, we use the assumptions above and study the multiplicities from the presentation $\pi^{\ad}$ in the following two cases:
	\begin{enumerate}
		\item When $Q$ is a number field with $\mu_{\ell}\not\subset Q$, and $k/Q$ is not required to be split completely at $S_{\infty}(Q)$ (see Section~\ref{ss:other-sgn}).
		\item When $Q$ contains the $\ell$-roots of unity $\mu_\ell$ (see Section~\ref{ss:rootsof1}).
	\end{enumerate}
	 We will compare the multiplicities in these two cases with the multiplicities from Theorem~\ref{thm:main}, to see why the random group model used in the Liu--Wood--Zureick-Brown conjecture cannot be applied to these two exceptional cases.
	 
	 We point out that we study only $G_{\O}(k)(\ell)$ instead of $G_{\O}(k)^{\calC}$ for a general $\calC$, simply because we want to keep the computation easy in this section and there is no previous work discussing these two exceptional cases beyond the distribution of $\ell$-class tower groups. One can generalize the argument in this section to any finite set $\calC$. 

\subsection{Other signatures}\label{ss:other-sgn}

	Assume $Q$ is a number field with $\mu_{\ell}\not\subset Q$ (so $\ell$ is odd), and $k$ is a $\Gamma$-extension of $Q$. For each $v\in S_{\infty}(Q)$, we denote $\Gamma_v$ to be the decomposition subgroup at $v$ of the extension $k/Q$. 
	
	\begin{lemma}\label{lem:other-sgn-mult}	
	For a finite simple $\F_{\ell}[\Gal(k_{\O}(\ell)/Q)]$-module $A$, we have
		\begin{eqnarray*}
			&&m_{\ad}(n, \Gamma, G_{\O}(k)(\ell), A) \\
			&\leq& 
			\begin{cases}
				r_1 +r_2 -1 & \text{if }A=\F_\ell \\
				n + r_2 & \text{if } A=\mu_{\ell} \\
			\dfrac{(n+r_1+r_2)\dim_{\F_\ell}A - \sum_{v\in S_{\R}(Q)} \dim_{\F_\ell} A/A^{\Gamma_v}-(n+1)\dim_{\F_\ell} A^{\Gamma}}{h_{\Gal(k_{\O}(\ell)/Q)}(A)} & \text{otherwise.}	
			\end{cases}
		\end{eqnarray*}
	\end{lemma}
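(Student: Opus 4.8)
The strategy mirrors the proof of Proposition~\ref{prop:mult-hatpi}, but now we must keep track of the decomposition subgroups $\Gamma_v$ at the archimedean places since $k/Q$ is no longer assumed to split completely at infinity. First I would invoke the admissible-presentation machinery: by Corollary~\ref{cor:ad-mult-compute},
\[
	m_{\ad}(n, \Gamma, G_{\O}(k)(\ell), A) = m(n, \Gamma, G_{\O}(k)(\ell), A) - \frac{n\dim_{\F_\ell} A^{\Gamma}}{h_{\Gal(k_{\O}(\ell)/Q)}(A)},
\]
and by Proposition~\ref{prop:d'-cohom} (applied with the pro-$\ell$ completion playing the role of the pro-$|\Gamma|'$ completion, valid since $\ell \nmid |\Gamma|$) the first term is bounded above by
\[
	\frac{n\dim_{\F_\ell} A - \xi(A) + \delta_{k/Q,\O}(A)}{h_{\Gal(k_{\O}(\ell)/Q)}(A)},
\]
where $\delta_{k/Q,\O}(A) = \dim_{\F_\ell}H^2(G_{\O}(k),A)^{\Gamma} - \dim_{\F_\ell}H^1(G_{\O}(k),A)^{\Gamma}$. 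So everything reduces to estimating $\delta_{k/Q,\O}(A)$ and then assembling the pieces, distinguishing the three cases $A = \F_\ell$, $A = \mu_\ell$, and $A$ otherwise.

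The core computation is to feed $\delta_{k/Q,\O}(A)$ into Proposition~\ref{prop:nf-delta} with $S = \O$, so that $T = S_\ell(k) \cup S_\infty(k)$. This gives
\[
	\delta_{k/Q,\O}(A) \le \log_\ell(\chi_{k/Q,T}(A)) + \dim_{\F_\ell}(A')^{\Gal(k_T/Q)} - \dim_{\F_\ell}A^{\Gal(k_{\O}/Q)} + \epsilon_{k/Q,\O}(A).
\]
I would then compute $\log_\ell(\chi_{k/Q,T}(A))$ using Theorem~\ref{thm:EPChar}: it equals $\sum_{v\in S_\infty(Q)}\bigl(\log_\ell \#\widehat{H}^0(Q_v,A') - \log_\ell \#H^0(Q_v,A')\bigr)$. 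Since $\ell$ is odd and $\Gamma_v$ is cyclic of order $1$ or $2$, for a real place $v$ one has $\widehat{H}^0(Q_v,A') = 0$ and $H^0(Q_v,A') = (A')^{\Gamma_v}$, while for a complex place both vanish. Hence $\log_\ell\chi_{k/Q,T}(A) = -\sum_{v\in S_{\R}(Q)}\dim_{\F_\ell}(A')^{\Gamma_v}$. Next, by local duality / the standard dimension count, $\dim_{\F_\ell}(A')^{\Gamma_v} = \dim_{\F_\ell}A^{\Gamma_v}$ when $\ell \nmid |\Gamma_v|$, so this equals $-\sum_{v\in S_{\R}(Q)}\dim_{\F_\ell}A^{\Gamma_v}$, and combining with $\dim_{\F_\ell}A = \dim_{\F_\ell}A^{\Gamma_v} + \dim_{\F_\ell}(A/A^{\Gamma_v})$ lets me convert the $S_{\R}(Q)$-sum into the $\sum_{v\in S_{\R}(Q)}\dim_{\F_\ell}A/A^{\Gamma_v}$ term appearing in the statement (there are $r_1$ real places and $r_2$ complex ones, so I also pick up the $-r_2$ and $n - r_1 - r_2$ shifts). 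The term $\dim_{\F_\ell}(A')^{\Gal(k_T/Q)}$ is nonzero only when $A' $ contains the trivial $\Gal(k_T/Q)$-module, i.e. when $A \simeq \mu_\ell$ as a $\Gal(k_T/Q)$-module — this is the source of the case split. And $\dim_{\F_\ell}A^{\Gal(k_{\O}/Q)}$ contributes $1$ exactly when $A = \F_\ell$.

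Then I would treat the three cases separately. For $A = \F_\ell$: $\chi$-term is $0$ (all $A^{\Gamma_v} = \F_\ell$ contributes, but via the dimension-count it cancels against the $\dim A$ reshuffling, needs care), $(A')^{\Gal(k_T/Q)} = 0$ since $\mu_\ell \not\subset k$ as $\mu_\ell\not\subset Q$ and... actually one must check $\mu_\ell\not\subset k$, which holds because $[k:Q]$ is prime to $\ell$ and $\mu_\ell\not\subset Q$; and $A^{\Gal(k_{\O}/Q)} = \F_\ell$ gives $-1$, while $\xi(\F_\ell) = 0$ and $h = 1$ and $A^\Gamma = \F_\ell$ gives the $n$ in the numerator of $m(n,\ldots)$ cancelling with the $-n$ from Corollary~\ref{cor:ad-mult-compute} — yielding $m_{\ad} \le \epsilon_{k/Q,\O}(A) - r_2 - 1$ once the archimedean bookkeeping ($-r_2$ from the complex places, via Theorem~\ref{thm:EPChar} contributing nothing while the $\dim A/A^{\Gamma_v}$ sum over real places vanishes) is done. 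For $A = \mu_\ell$: here $(A')^{\Gal(k_T/Q)} = \F_\ell$ contributes $+1$, $A^{\Gal(k_{\O}/Q)} = 0$, $A^{\Gamma} = 0$ (since $\Gamma$ acts nontrivially on $\mu_\ell$ as $\mu_\ell\not\subset Q$ forces the cyclotomic character to be nontrivial — wait, need $\mu_\ell\not\subset k$, same argument), so $\xi(\mu_\ell) = \dim A^\Gamma - \dim A^{G\rtimes\Gamma} = 0$, $h = 1$, and the $\sum_{v\in S_{\R}(Q)}\dim A/A^{\Gamma_v}$ becomes $r_1$ since $\Gamma_v$ acts nontrivially on $\mu_\ell$ for real $v$ (complex conjugation acts by $-1$) — giving $m_{\ad}\le \epsilon_{k/Q,\O}(A) + n - r_1 - r_2$. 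For general $A$, no special terms appear and one gets the displayed fraction directly.

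The main obstacle I anticipate is the archimedean bookkeeping in the $A = \F_\ell$ and $A = \mu_\ell$ cases: one has to carefully track how Theorem~\ref{thm:EPChar}'s ratio of Tate and ordinary $H^0$'s over complex places (where both vanish) versus real places interacts with the $-\sum_{v\in S_{\R}(Q)}\dim_{\F_\ell}A/A^{\Gamma_v}$ term and the $\xi(A)$, $h(A)$ normalizations, to produce exactly $-r_2-1$ and $n-r_1-r_2$. In particular, verifying that $\mu_\ell \not\subset k$ (so that $A = \F_\ell$ and $A = \mu_\ell$ are genuinely distinct and $\Gamma$, $\Gamma_v$ act as claimed) is a small but essential point that uses $\gcd([k:Q],\ell) = 1$ together with $\mu_\ell\not\subset Q$. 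The rest is a routine unwinding of Propositions~\ref{prop:d'-cohom}, \ref{prop:nf-delta} and Corollary~\ref{cor:ad-mult-compute}.
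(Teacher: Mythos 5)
Your overall route is the paper's: reduce via Corollary~\ref{cor:ad-mult-compute} and Proposition~\ref{prop:d'-cohom} to bounding $\delta_{k/Q,\O}(A)$, then feed Theorem~\ref{thm:EPChar} into Proposition~\ref{prop:nf-delta} with $T=S_\ell(k)\cup S_\infty(k)$. However, two concrete steps in your archimedean bookkeeping are wrong, and they are exactly the steps where the content of this lemma lives. First, for a real place $v$ you assert $\dim_{\F_\ell}H^0(Q_v,A')=\dim_{\F_\ell}(A')^{\Gamma_v}=\dim_{\F_\ell}A^{\Gamma_v}$ "by local duality." This is false: $A'=\Hom(A,\mu_\ell)$ is the twisted dual, and complex conjugation acts on $\mu_\ell$ by inversion, so a $\calG_v$-equivariant homomorphism $A\to\mu_\ell$ must kill $A^{\Gamma_v}$ (its image would be fixed by inversion, hence trivial as $\ell$ is odd); the correct count is $\dim_{\F_\ell}H^0(Q_v,A')=\dim_{\F_\ell}\Hom_{\calG_v}(A,\mu_\ell)=\dim_{\F_\ell}A/A^{\Gamma_v}$. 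Your subsequent conversion $-\sum_v\dim A^{\Gamma_v}=-r_1\dim A+\sum_v\dim A/A^{\Gamma_v}$ then produces the $\sum_{v\in S_\R(Q)}\dim_{\F_\ell}A/A^{\Gamma_v}$ term with the wrong sign together with a spurious $-r_1\dim_{\F_\ell}A$, so your computation does not recover the stated bound in the general case.

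Second, your claim that $\mu_\ell\not\subset k$ follows from $\gcd([k:Q],\ell)=1$ and $\mu_\ell\not\subset Q$ is false: $[Q(\mu_\ell):Q]$ divides $\ell-1$, which is prime to $\ell$, so $k\supseteq Q(\mu_\ell)$ is entirely consistent with the hypotheses (e.g.\ $Q=\Q$, $k=\Q(\mu_\ell)$). The second case of the lemma, $A=\mu_\ell$, is non-vacuous precisely when $\mu_\ell\subset k$ but $\mu_\ell\not\subset Q$, so that $\mu_\ell$ is a $\Gal(k_{\O}(\ell)/Q)$-module on which $\Gamma$ acts nontrivially; under your claim this case would not arise at all, and your parenthetical reasoning for $A^\Gamma=0$ (which should come from $\mu_\ell\not\subset Q$, not from $\mu_\ell\not\subset k$) conflates the two fields. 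What you do correctly need, and do have, is $(A')^{\Gal(k_T/Q)}=\mu_\ell^{\Gal(k_T/Q)}=\mu_\ell(Q)=0$ for $A=\F_\ell$, which only uses $\mu_\ell\not\subset Q$. With these two points repaired — computing $\dim H^0(Q_v,A')$ as $\dim A/A^{\Gamma_v}$ at real places and $\dim A$ at complex places, and allowing $\mu_\ell\subset k$ in the second case — the assembly you describe is the paper's proof.
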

	
	\begin{proof}
		Let $T$ be $S_{\infty}(k) \cup S_{\ell}(k)$. Since $\ell$ is odd, $\widehat{H}^0(Q_{v}, A')=0$ for any $v\in S_{\infty}(Q)$, and hence we have 
		\begin{eqnarray*}
			\log_{\ell}(\chi_{k/Q, T}(A)) 			&=& - \sum_{v\in S_{\infty}(Q)}\dim_{\F_\ell} H^0(Q_v, A') \\
			&=& -\sum_{v\in S_{\C}(Q)}\dim_{\F_\ell} A -\sum_{v\in S_{\R}(Q)} \dim_{\F_\ell} A/A^{\Gamma_v}.
		\end{eqnarray*}
		The last equality is because:
		\begin{enumerate}
			\item If $v\in S_{\C}(Q)$, then $\calG_v(Q)=1$ acts trivially on both $\mu_{\ell}$ and $A$.
			\item If $v\in S_{\R}(Q)$, then $\calG_v(Q)\simeq \Z/2\Z$ acts on $\mu_{\ell}$ as taking inverse. Since the action of $\calG_v(Q)$ on $A$ factors through $\Gamma_v$, and $\Gamma_v$ acts on $A/A^{\Gamma_v}$ as taking inverse, we have $\dim_{\F_\ell}(A')^{\calG_v(Q)}=\dim_{\F_\ell}\Hom_{\calG_v(Q)}(A, \mu_{\ell})=\dim_{\F_\ell}\Hom_{\calG_v(Q)}(A/A^{\Gamma_v}, \mu_{\ell})=\dim_{\F_\ell} A/A^{\Gamma_v}$.
		\end{enumerate}	
		By Proposition~\ref{prop:nf-delta}, we have 
		\[
			\delta_{k/Q, S}(A) \leq \begin{cases}
				\epsilon_{k/Q,\O}(A)-r_2-1 & \text{if } A=\F_{\ell} \\
				\epsilon_{k/Q,\O}(A)-r_2-r_1+1 & \text{if }A=\mu_{\ell} \\
				\epsilon_{k/Q,\O}(A)-r_2\dim_{\F_\ell}A-\sum_{v\in S_{\R}(Q)} \dim_{\F_\ell} A/A^{\Gamma_v} & \text{otherwise,}
			\end{cases}
		\]
		where $A$ can be $\mu_{\ell}$ only if $\mu_{\ell}\subset k$.
		Note that by definition, $\epsilon_{k/Q, \O}(A)$ is equal to $[Q:\Q]\dim_{\F_\ell} A$.
		So the desired result following by Proposition~\ref{prop:d'-cohom},  Corollary~\ref{cor:ad-mult-compute}, and Proposition~\ref{prop:C-mult-compute}.
	\end{proof}
	
	\begin{corollary}\label{cor:imag-quad}
		Let $k/\Q$ be an imaginary quadratic field such that $k\neq \Q(\sqrt{-3})$, and $\gamma$ denote the nontrivial element of $\Gamma=\Gal(k/\Q)\simeq \Z/2\Z$. For an odd prime $\ell$, we have the following isomorphism of $\Gamma$-groups
		\begin{equation}\label{eq:imag-quad}
			G_{\O}(k)(\ell)\simeq \faktor{\calF_n(\Gamma)(\ell)}{[r^{-1}\gamma(r)]_{r\in X}}
		\end{equation}
		for sufficiently large positive integer $n$ and some set $X$ consisting of $n$ elements of $\calF_n(\Gamma)(\ell)$.
	\end{corollary}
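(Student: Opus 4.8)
The plan is to specialize the multiplicity bounds of Lemma~\ref{lem:other-sgn-mult} to the case $Q=\Q$, $\Gamma=\Z/2\Z$, $k$ imaginary quadratic (so $r_1=0$, $r_2=1$, and $\Gamma_v=\Gamma$ for the unique archimedean place $v$), and then feed the resulting bounds into the same random-group counting argument used in \S\ref{ss:presentation}. First I would record that in this situation $\epsilon_{k/\Q,\O}(A)=0$: indeed $I=\{v\in S_\ell(\Q): S_v(k)\not\subset\O\}$, but $S=\O$ contains no finite primes, so every $v\in S_\ell(\Q)$ lies in $I$ and $\epsilon_{k/\Q,\O}(A)=-\sum_{v\mid\ell}\log_\ell\|\#A\|_v$; since $A$ is unramified outside $\O$ it is in particular unramified at $\ell$, so $A$ is a $\Gal(k_\O/\Q)$-module and $\ell\nmid\#A$ forces $\|\#A\|_v=1$. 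Wait — more carefully, $A$ has exponent $\ell$, so $\#A$ is a power of $\ell$ and $\log_\ell\|\#A\|_v = -\ord_v(\#A)$ which is $-\dim_{\F_\ell}A$ for $v\mid\ell$; one must check that the relevant count is $\epsilon_{k/\Q,\O}(A)=\dim_{\F_\ell}A$ (matching the computation inside the proof of Proposition~\ref{prop:mult-hatpi}, where $\epsilon_{k/Q,\O}(A)=\dim_{\F_\ell}A$ in the number field case). I would use exactly that value. Then Lemma~\ref{lem:other-sgn-mult} gives, after substituting $r_1=0$, $r_2=1$, $S_\R(\Q)=\emptyset$:
\[
m_{\ad}(n,\Gamma,G_\O(k)(\ell),A)\le
\begin{cases}
\dim_{\F_\ell}\F_\ell - 2 = -1 & A=\F_\ell,\\
\dim_{\F_\ell}\mu_\ell + n - 1 = n & A=\mu_\ell,\\
\dfrac{\dim_{\F_\ell}A+(n-1)\dim_{\F_\ell}A-(n+1)\dim_{\F_\ell}A^\Gamma}{h(A)}=\dfrac{n\dim_{\F_\ell}A-(n+1)\dim_{\F_\ell}A^\Gamma}{h(A)} & \text{otherwise,}
\end{cases}
\]
where $h(A)=h_{\Gal(k_\O(\ell)/\Q)}(A)$. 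Note $A=\mu_\ell$ is genuinely allowed here because $k$ being imaginary quadratic may contain $\mu_\ell$; excluding $k=\Q(\sqrt{-3})$ guarantees $\mu_\ell\not\subset k$ unless $\ell=3$... actually the hypothesis $k\ne\Q(\sqrt{-3})$ is there to handle the unit/root-of-unity subtlety, and I would invoke it precisely at the point where the $A=\mu_\ell$ term or the $h(A)$-denominators could misbehave; most likely it ensures $\mathcal O_k^\times$ is just $\{\pm1\}$ so that no extra contribution appears.

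Next I would run the counting argument of \S\ref{ss:presentation} verbatim with $u=0$. That is, fix $n$ large, take an admissible $\Gamma$-presentation $1\to N\to\calF_n(\Gamma)(\ell)\to G_\O(k)(\ell)\to1$ (using that $G_\O(k)(\ell)$ is admissible, finitely generated, and pro-$\ell$, and that a pro-$\ell$ admissible presentation exists by the standing assumptions of \S\ref{sect:exceptional}), pass to $R=N/M$ a finite direct product $\prod A_i^{m_i}$ of irreducible $F\rtimes\Gamma$-groups, and apply \cite[Proposition~4.3]{LWZB} with $n+u=n$ random elements. The probability that the $Y$-values of $n$ random elements normally generate $R$ is a finite product, positive over nonabelian factors (using Lemma~\ref{lem:adm-Gamma-fix}, which forces $|Y(A_i)|>1$), and the abelian factor $A_i$ contributes a positive term iff $m_i\le n(\dim_{\F_\ell}A_i-\dim_{\F_\ell}A_i^\Gamma)/h(A_i)$. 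Comparing this threshold with the bound above: for a nontrivial abelian $A_i$ with $A_i\ne\mu_\ell$, the bound $n\dim_{\F_\ell}A_i-(n+1)\dim_{\F_\ell}A_i^\Gamma\le n(\dim_{\F_\ell}A_i-\dim_{\F_\ell}A_i^\Gamma)$ is immediate; for $A_i=\F_\ell$ (where $\Gamma$ acts trivially, $A_i^\Gamma=A_i$) the RHS threshold is $0$ and the bound gives $m_i\le-1<0$, vacuous; for $A_i=\mu_\ell$ we have $A_i^\Gamma=0$ (as $\Gamma$ acts by inversion and $\ell$ is odd), so the threshold is $n\cdot1=n$, and the bound gives $m_i\le n$ — exactly matching. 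Hence every abelian factor satisfies the threshold with $u=0$, the probability is strictly positive, so some choice of $X$ with $|X|=n$ works, which gives \eqref{eq:imag-quad}.

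The main obstacle I anticipate is the $A=\mu_\ell$ case, where the inequality is tight ($m_i\le n$ versus threshold $n$): here there is no slack, so I must be sure the bound of Lemma~\ref{lem:other-sgn-mult} for $A=\mu_\ell$ is genuinely $\epsilon_{k/\Q,\O}(\mu_\ell)+n-r_1-r_2 = \dim_{\F_\ell}\mu_\ell + n - 0 - 1 = n$ and not something larger by one — this is exactly where the hypothesis $k\ne\Q(\sqrt{-3})$ (equivalently, $\mathcal O_k^\times=\{\pm1\}$, no extra roots of unity) must be used, paralleling how the unit group enters the proof of Proposition~\ref{prop:nf-delta} and Lemma~\ref{lem:EPChar-mu}. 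A secondary point requiring care is verifying that $A=\mu_\ell$ actually can occur as a simple $\Gal(k_\O(\ell)/\Q)$-module (it can, when $\ell\mid\#\Cl(k)$ and $\mu_\ell\subset k$... or more precisely when the relevant quotient is nontrivial), so that the tight case is not vacuous; but since the argument only needs an upper bound on each $m_i$, even if $\mu_\ell$ never appears the conclusion still holds. Everything else is a direct transcription of \S\ref{ss:presentation}.
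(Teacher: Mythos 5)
Your overall architecture is exactly the paper's: specialize Lemma~\ref{lem:other-sgn-mult} to $Q=\Q$, $k$ imaginary quadratic, and feed the resulting bounds into the counting argument of \S\ref{ss:presentation} (i.e.\ \cite[Proposition~4.3]{LWZB}) with $u=0$. However, there are two concrete errors in the execution. First, $r_1$ and $r_2$ are the signature of the \emph{base field} $Q=\Q$, not of $k$: the correct values are $r_1=1$, $r_2=0$, and $S_{\R}(\Q)=\{\infty\}$ with $\Gamma_\infty=\Gamma$, so the sum $\sum_{v\in S_{\R}(\Q)}\dim_{\F_\ell}A/A^{\Gamma_v}=\dim_{\F_\ell}A-\dim_{\F_\ell}A^{\Gamma}$ does \emph{not} vanish. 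With the correct inputs the lemma gives $m_{\ad}\le 0$ for $A=\F_\ell$ (not $-1$, which is literally false since the multiplicity is a nonnegative integer) and $m_{\ad}\le n(\dim_{\F_\ell}A-\dim_{\F_\ell}A^{\Gamma})/h_{\Gal(k_\O(\ell)/\Q)}(A)$ in the remaining case — which is exactly the threshold from \cite[Proposition~4.3]{LWZB} with $u=0$. Your two mistakes happen to cancel up to a term $\dim_{\F_\ell}A^\Gamma/h(A)\ge 0$, so your final comparison still succeeds, but the intermediate inequalities are not what Lemma~\ref{lem:other-sgn-mult} actually yields.

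Second, the role of the hypothesis $k\neq\Q(\sqrt{-3})$ is not a ``unit-group subtlety'' and has nothing to do with $\calO_k^\times=\{\pm1\}$ (note $\Q(i)$ is allowed and has unit group of order $4$). For an odd prime $\ell$, the only imaginary quadratic field containing $\mu_\ell$ is $\Q(\mu_3)=\Q(\sqrt{-3})$; excluding it forces $\mu_\ell\not\subset k$ for \emph{every} odd $\ell$, and since the $A=\mu_\ell$ branch of Lemma~\ref{lem:other-sgn-mult} only applies when $\mu_\ell\subset k$, the case $A=\mu_\ell$ simply never occurs. The ``tight'' case you worry about at length is therefore vacuous, and the hypothesis is consumed entirely at this one point. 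You should also record explicitly (as the paper does) that for $\Gamma\simeq\Z/2\Z$ the normal subgroup generated by $Y(X)$ coincides with $[r^{-1}\gamma(r)]_{r\in X}$, which is what converts the generation statement from \cite[Proposition~4.3]{LWZB} into the presentation \eqref{eq:imag-quad}.
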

	
	\begin{remark}\label{rmk:imag-quad}
		If we choose the $n$ elements of set $X$ randomly with respect to the Haar measure, then the quotient in \eqref{eq:imag-quad} gives a random group that defines a probability measure on all $n$-generated pro-$\ell$ admissible $\Gamma$-groups. By taking $n\to \infty$, there is a limit probability measure, which can be computed using formulas in \cite{LWZB}. The discussion in \cite[\S~7.2 and Thm.~7.5]{LWZB} shows that this limit probability measure agrees with the probability measure used in the Boston--Bush--Hajir Heuristics \cite{BBH-imaginary}.
	\end{remark}
	
	\begin{proof}
		When $Q=\Q$ and $k$ is imaginary quadratic, we have $r_1=1$, $r_2=0$, and $\Gamma_{\infty}=\Gamma$. Let $A$ be a finite simple $\F_{\ell}[\Gal(k_{\O}(\ell)/\Q)]$-module. Also, $\mu_{\ell}\not\subset k$ for any odd $\ell$ because $k\neq \Q(\sqrt{-3})$, so $A\neq \mu_{\ell}$.
		By Lemma~\ref{lem:other-sgn-mult}	, when $n$ is sufficiently large, we have
		\[
			m_{\ad}(n,\Gamma, G_{\O}(k)(\ell), A) \leq \begin{cases}
				0 & \text{if } A=\F_{\ell} \\
				\dfrac{n(\dim_{\F_\ell}A - \dim_{\F_\ell}A^{\Gamma})}{h_{\Gal(k_{\O}(\ell)/\Q)} A} &\text{otherwise.}
			\end{cases}
		\]
		Note that $\Gamma\simeq \Z/2\Z$ implies that the normal subgroup of $\calF_n(\Gamma)(\ell)\rtimes \Gamma$ generated by $Y(X)$ is exactly $[r^{-1}\gamma(r)]_{r\in X}$. Thus, the corollary follows by \cite[Prop.~4.3]{LWZB}.
	\end{proof}

\subsection{When $Q$ contains the $\ell$-th roots of unity} \label{ss:rootsof1}

	In this subsection, we assume $\mu_{\ell}\subset Q$. In this case, $\mu_{\ell}$ becomes the trivial $\Gal(k_{\O}/Q)$-module $\F_\ell$, which makes the multiplicities in a presentation of $G_{\O}(k)(\ell)$ significantly different from the previous cases.
	
	\begin{lemma}\label{lem:rootsof1}
		Assume $\mu_{\ell}\subset Q$. For a finite simple $\F_{\ell}[\Gal(k_{\O}(\ell)/Q)]$-module $A$, we have
		\begin{enumerate}
			\item If $Q$ is a function field and the genus of $k$ is not $0$, then $\delta_{k/Q, \O}(A)=0$.
			\item If $Q$ is a number field, then $\delta_{k/Q, \O}(A) \leq (r_1+r_2)\dim_{\F_\ell}A$.
		\end{enumerate}
	\end{lemma}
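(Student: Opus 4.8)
The plan is to compute $\delta_{k/Q,\O}(A)$ by assembling the pieces already in hand, paying attention to the role of $\mu_\ell\subset Q$. In both cases $A$ is a finite simple $\F_\ell[\Gal(k_\O(\ell)/Q)]$-module, so in particular $A'=\Hom(A,\mu_\ell)\cong A$ as $\Gal(k_\O(\ell)/Q)$-modules, since $\mu_\ell$ is the trivial module over $Q$. This self-duality $A'\simeq A$ is the structural fact that drives both statements.

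For part (1), $Q$ and $k$ are function fields and $g(k)>0$, so Proposition~\ref{prop:ff-delta}\eqref{item:pos-genus} applies directly and gives $\delta_{k/Q,\O}(A)=\dim_{\F_\ell}(A')^{\Gal(k_\O/Q)}-\dim_{\F_\ell}A^{\Gal(k_\O/Q)}$. Now I would invoke $A'\simeq A$: the two terms on the right are equal, so $\delta_{k/Q,\O}(A)=0$. The only subtlety is to make sure $A'$ is being regarded as a $\Gal(k_\O/Q)$-module in the same way as in Proposition~\ref{prop:ff-delta}, which it is, since the isomorphism $A'\cong A$ is $\Gal(k_\O(\ell)/Q)$-equivariant once $\mu_\ell$ is trivial over $Q$ (hence over $k$), and then we may take $\Gal(k_\O/Q)$-invariants of both.

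For part (2), $Q$ and $k$ are number fields, so I would apply Proposition~\ref{prop:nf-delta} with $S=\O$ and $T=S_\ell(k)\cup S_\infty(k)$. This yields
\[
	\delta_{k/Q,\O}(A)\leq \log_\ell(\chi_{k/Q,T}(A))+\dim_{\F_\ell}(A')^{\Gal(k_T/Q)}-\dim_{\F_\ell}A^{\Gal(k_\O/Q)}+\epsilon_{k/Q,\O}(A).
\]
Next I would evaluate $\log_\ell\chi_{k/Q,T}(A)$ using Theorem~\ref{thm:EPChar}: it equals $\sum_{v\in S_\infty(Q)}\bigl(\dim_{\F_\ell}\widehat H^0(Q_v,A')-\dim_{\F_\ell}H^0(Q_v,A')\bigr)$. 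For $v$ complex, $\calG_v(Q)=1$, so $\widehat H^0(Q_v,A')=0$ and $H^0(Q_v,A')=A'$, contributing $-\dim_{\F_\ell}A$ per complex place, i.e.\ $-r_2\dim_{\F_\ell}A$ total. For $v$ real, since $\mu_\ell\subset Q$ forces $\ell$ to be odd (as $\mu_\ell\subset\R$ only for $\ell$ dividing $2$, and $\mu_2\not\subset$ a field with a real place unless… — more carefully, $\mu_\ell\subset Q\subset\R$ at a real place implies $\ell\mid 2$, but $\ell=2$ would need $\mu_2=\{\pm1\}\subset Q$ which is automatic; so I must handle $\ell=2$ separately or note $\widehat H^0(Q_v,A')=0$ whenever $|\calG_v(Q)|$ is odd and $A'$ has odd order — actually the cleanest route is: $A$ simple of $\ell$-power exponent over a field containing $\mu_\ell$; if $\ell$ is odd then $\widehat H^0(Q_v,A')=0$ for real $v$ by the standard argument, and the $\ell=2$ case with real places does not occur because then $\mu_2\subset\R$ is fine but $A$ simple over $\F_2$ of even order conflicts with... ) — the safe move is to assume $\ell$ odd is forced (which holds: $\mu_\ell\subset Q$ and $Q$ real-embeddable would still allow $\ell=2$, so I should instead argue that for real $v$ the real and $\widehat{}$ contributions cancel as in the proof of Theorem~\ref{thm:EPChar}, or absorb them). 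The remaining term $\dim_{\F_\ell}(A')^{\Gal(k_T/Q)}-\dim_{\F_\ell}A^{\Gal(k_\O/Q)}\leq 0$ since $k_\O\subseteq k_T$ forces $(A')^{\Gal(k_T/Q)}\subseteq (A')^{\Gal(k_\O/Q)}$ and $(A')^{\Gal(k_\O/Q)}\cong A^{\Gal(k_\O/Q)}$ by $A'\simeq A$. Combining, $\delta_{k/Q,\O}(A)\leq -r_2\dim_{\F_\ell}A+\epsilon_{k/Q,\O}(A)$, as claimed.

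The main obstacle I anticipate is the real-place bookkeeping when $\ell=2$: I need to confirm whether $\mu_\ell\subset Q$ with $Q$ a number field having real places is compatible with $\ell=2$ (it is, trivially), and if so, handle the $\widehat H^0$ contributions at real places carefully rather than declaring them zero. The resolution is to note that for real $v$ and $A'$ of $2$-power order, $\widehat H^0(Q_v,A')$ and $H^0(Q_v,A')$ need not vanish, but the inequality in Proposition~\ref{prop:nf-delta} only requires an \emph{upper} bound, and $\dim\widehat H^0(Q_v,A')\leq \dim H^0(Q_v,A')$ always, so $\log_\ell\chi_{k/Q,T}(A)\leq -r_2\dim_{\F_\ell}A$; combined with the nonpositive middle term this suffices. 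Everything else is a routine substitution into results already established.
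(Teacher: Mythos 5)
Your route is the same as the paper's: part (1) is Proposition~\ref{prop:ff-delta}\eqref{item:pos-genus} plus the observation that the two invariant spaces have equal dimension, and part (2) is Proposition~\ref{prop:nf-delta} with $T=S_\ell(k)\cup S_\infty(k)$, Theorem~\ref{thm:EPChar} for $\log_\ell\chi_{k/Q,T}(A)$, and the cancellation of the middle terms. One correction is needed, though: when $\mu_\ell\subset Q$ the identification is $A'=\Hom(A,\mu_\ell)\cong A^\vee$, the \emph{contragredient} of $A$, not $A$ itself; a nontrivial one-dimensional character already fails to be self-dual, so "$A'\simeq A$" is false in general and cannot be the "structural fact" driving the argument. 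What you actually need in both places is only $\dim_{\F_\ell}(A')^{\Gal(k_\O/Q)}=\dim_{\F_\ell}A^{\Gal(k_\O/Q)}$, which does hold: since $A$ is simple, $(A^\vee)^{G}=\Hom_{G}(A,\F_\ell)$ and $A^{G}$ are both zero unless $A$ is the trivial module, in which case both have dimension $\dim_{\F_\ell}A$; this is exactly the identity the paper records. With that repair your part (1) and the middle-term cancellation in part (2) go through. Your treatment of the archimedean places is slightly coarser than the paper's but valid: the paper shows the real-place contribution vanishes exactly when $\ell=2$ (using that $|\Gamma|$ odd forces real places of $Q$ to stay real in $k$ and act trivially on $A'$), whereas you only use that $\widehat H^0(Q_v,A')$ is a quotient of $H^0(Q_v,A')$, giving $\log_\ell\chi_{k/Q,T}(A)\le -r_2\dim_{\F_\ell}A$; since Proposition~\ref{prop:nf-delta} is an inequality anyway, that suffices. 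I would, however, tidy the meandering discussion of whether $\ell$ must be odd: if $\ell$ is odd then $\mu_\ell\subset Q$ forces $Q$ totally imaginary and there are no real places, and if $\ell=2$ the quotient bound (or the paper's exact computation) handles them.
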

	
	\begin{proof}
		Because of the assumption $\mu_{\ell}\subset Q$, we have 
		\begin{equation}\label{eq:dim-A'}
			\dim_{\F_\ell}(A')^{\Gal(k_{\O}/Q)} = \dim_{\F_\ell} (A^{\vee})^{\Gal(k_{\O}/Q)}=\dim_{\F_\ell} A_{\Gal(k_{\O}/Q)}=\dim_{\F_\ell} A^{\Gal(k_{\O}/Q)}.
		\end{equation}
		Then the first statement follows directly by Proposition~\ref{prop:ff-delta}\eqref{item:pos-genus}.
		For the rest we assume that $Q$ is a number field and denote $T=S_{\ell}(k) \cup S_{\infty}(k)$. If $\ell$ is odd, then the assumption $\mu_{\ell}\subset Q$ implies that $Q$ is totally imaginary. Then we can easily see by Theorem~\ref{thm:EPChar} that $\log_{\ell}\chi_{k/Q, T}(A)=-r_2\dim_{\F_\ell} A$, and hence the statement for odd $\ell$ follows by Proposition~\ref{prop:nf-delta} and \eqref{eq:dim-A'}. If $\ell=2$, then we first want to compute for each $v\in S_\infty(Q)$
		\begin{equation}\label{eq:charv}
			\dim_{\F_\ell} \widehat{H}^0(Q_{v}, A')-\dim_{\F_\ell} H^0(Q_v, A').
		\end{equation}
		 For each $v\in S_{\C}(Q)$, we have $\calG_{v}(Q)=1$, and hence \eqref{eq:charv} equals to $-\dim_{\F_\ell} A$. For each $v\in S_{\R}(Q)$, the assumption $\ell \nmid |\Gamma|$ implies that $|\Gamma|$ is odd. So for each $\frakp \in S_v(k)$, $\frakp$ is real, and so is any prime of $k_{\O}(\ell)$ lying above $\frakp$.
		 Thus, $\calG_{\frakp}(k)$ acts trivially on $A$, so it also acts trivially on $A'$, which implies that $\widehat{H}^0(k_{\frakp}, A')=H^0(k_{\frakp}, A')$. Then \eqref{eq:charv} equals $0$, and we obtain the statement for $\ell=2$ by Proposition~\ref{prop:nf-delta} and \eqref{eq:dim-A'}.
	\end{proof}
	
	Then by the same arguments in \S\ref{sect:proof-main}, we obtain the following bounds for the multiplicity of $A$.
	
	\begin{corollary}\label{cor:mult-roots}
		Assume $\mu_\ell \subset Q$. When $k$ is a function filed, we assume that $Q=\F_q(t)$ for some prime power $q$ such that $\ell \mid q-1$ and $k/Q$ is split completely at $\infty$. Let $A$ be a finite simple $\F_{\ell}[\Gal(k_{\O}(\ell)/Q)]$-module. Then for a sufficiently large $n$, we have
		\begin{eqnarray*}
			&&m_{\ad}(n, \Gamma, G_{\O, \infty}(k)(\ell), A) \\
			&\leq& 
			\begin{cases}
				\dfrac{(n+1)\dim_{\F_\ell}A - \xi(A)- n\dim_{\F_\ell}A^{\Gamma}}{h_{G_{\O,\infty}(k)(\ell)\rtimes \Gamma}(A)}&\text{if $Q$ is a function field} \\
				\dfrac{(n+r_1+r_2) \dim_{\F_\ell}A -\xi(A)-n \dim_{\F_\ell}A^{\Gamma}}{h_{G_{\O,\infty}(k)(\ell)\rtimes \Gamma}(A)}&\text{if $Q$ is a number field}.
			\end{cases}
		\end{eqnarray*}
	\end{corollary}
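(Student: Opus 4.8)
The plan is to combine the cohomological input of Lemma~\ref{lem:rootsof1} with the multiplicity machinery built in Sections~\ref{sect:pre-Gamma}--\ref{sect:pro-C}, exactly as in the proof of Proposition~\ref{prop:mult-G-C}. First I would reduce the statement to a bound on an \emph{admissible} $\Gamma$-presentation of $G_{\O,\infty}(k)(\ell)$ by the chain of inequalities $m_{\ad}(n,\Gamma,G_{\O,\infty}(k)(\ell),A)\le \ldots$ obtained from Proposition~\ref{prop:C-mult-compute} (here $\calC$ is the set of all finite $\Gamma$-groups of $\ell$-power order, so pro-$\calC$ completion is pro-$\ell$ completion) together with Corollary~\ref{cor:mult-ineq}. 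As in \S\ref{ss:const-G}, I would introduce the completely nonsplit tower $G=\varprojlim G_i$ attached to $A$ sitting between $G_{\O,\infty}(k)(\ell)$ and $G_{\O}(k)$ (in the function-field case passing to $G^{\#}$ by killing a Frobenius lift $g_n$ as in diagram~\eqref{eq:ff-diag}), so that the multiplicity $m_{\ad}^{\calC}(n,\Gamma,G_{\O,\infty}(k)(\ell),A)$ is bounded by $m_{\ad}(n,\Gamma,G^{\#},A)$.

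Next I would run the computation of $m(\widehat{\pi},\Gamma,G,A)$ via Proposition~\ref{prop:d'-cohom}: since $\ell\nmid|\Gamma|$ we have $H^i(G\rtimes\Gamma,A)=H^i(G,A)^{\Gamma}$, and Lemma~\ref{lem:delta-upper} gives $\dim H^2(G,A)^{\Gamma}-\dim H^1(G,A)^{\Gamma}\le\delta_{k/Q,\O}(A)$. Plugging in the bound on $\delta_{k/Q,\O}(A)$ from Lemma~\ref{lem:rootsof1} — namely $\delta_{k/Q,\O}(A)=0$ in the positive-genus function field case and $\delta_{k/Q,\O}(A)\le\epsilon_{k/Q,\O}(A)-r_2\dim_{\F_\ell}A$ in the number field case — yields an upper bound for $m(\widehat{\pi},\Gamma,G,A)$ of the shape $\bigl(n\dim_{\F_\ell}A-\xi(A)+\delta_{k/Q,\O}(A)\bigr)/h_{G\rtimes\Gamma}(A)$. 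In the function-field case I additionally need Lemma~\ref{lem:mult-varpi}, whose proof goes through verbatim for the pro-$\ell$ setting, to pass from $G$ to $G^{\#}$ and pick up the extra $(n+1)\dim A$ in place of $n\dim A$. Then Corollary~\ref{cor:ad-mult-compute} converts $m(n,\Gamma,G^{\#},A)$ into $m_{\ad}(n,\Gamma,G^{\#},A)$ by subtracting $n\dim_{\F_\ell}A^{\Gamma}/h_{G^{\#}\rtimes\Gamma}(A)$, and since $H^2(\calF_n,A)=0$ for the free pro-$\ell$ group $\calF_n$ nothing else is lost; finally $h_{G^{\#}\rtimes\Gamma}(A)$ may be replaced by $h_{G_{\O,\infty}(k)(\ell)\rtimes\Gamma}(A)$ because the $\Gamma$-action on $A$ factors through the latter. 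Assembling these pieces gives precisely the two displayed inequalities.

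The one genuinely delicate point — the ``main obstacle'' — is the admissibility hypothesis that lets us even talk about an admissible presentation and about $m_{\ad}$: this requires $G_{\O,\infty}(k)(\ell)$ to be an admissible $\Gamma$-group, which by the cited \cite[Proposition~2.2]{LWZB} needs the $\ell$-primary part of $\Cl(Q)$ to be trivial; this is exactly the standing assumption imposed at the start of Section~\ref{sect:exceptional}, so I would simply invoke it. The only other care needed is bookkeeping in the two cases ($r_1,r_2$, the genus, whether $A$ can equal $\mu_\ell=\F_\ell$), but since $\mu_\ell\subset Q$ the module $\mu_\ell$ has collapsed to the trivial module and there is no separate $A=\mu_\ell$ row to track, which is precisely why the bound here looks uniform across simple modules rather than splitting into three cases as in Lemma~\ref{lem:other-sgn-mult}. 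Everything else is a direct transcription of the arguments in \S\ref{ss:determine-mult}, so I expect the proof to be short: ``By the same arguments as in \S\ref{sect:proof-main}, using Lemma~\ref{lem:rootsof1} in place of the computation of $\delta_{k/Q,\O}$ there, together with Proposition~\ref{prop:d'-cohom}, Lemma~\ref{lem:delta-upper}, Corollary~\ref{cor:ad-mult-compute} and Proposition~\ref{prop:C-mult-compute}, we obtain the stated bounds.''
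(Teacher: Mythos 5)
Your proposal is correct and follows essentially the same route as the paper, which disposes of this corollary with the single sentence ``by the same arguments used in Section~\ref{sect:proof-main}'': one reruns the chain Lemma~\ref{lem:delta-upper} $\to$ Proposition~\ref{prop:d'-cohom} $\to$ Lemma~\ref{lem:mult-varpi} (function-field case) $\to$ Corollary~\ref{cor:ad-mult-compute}, substituting the values of $\delta_{k/Q,\O}(A)$ from Lemma~\ref{lem:rootsof1}. Your bookkeeping of the two cases (the extra $+\dim_{\F_\ell}A$ from killing $g_n$ versus the $-r_2\dim_{\F_\ell}A+\epsilon_{k/Q,\O}(A)$ term) and your handling of the admissibility hypothesis match the intended argument exactly.
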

	
	\begin{remark}\label{rmk:roots}
	\begin{enumerate}
	\item\label{item:roots-1} Readers can compare the corollary with Proposition~\ref{prop:mult-G-C}. When $A=\F_\ell$ and $Q$ is $\Q(\zeta_{\ell})$ or $\F_q(t)$ with $\ell\mid q-1$, one can check that the upper bound of the multiplicity is positive, which suggests the failure of the Property E of $G_{\O, \infty}(k)$. Therefore, the random group model used in the Liu--Wood--Zureick-Brown conjection is not expected to work in this exceptional case.
	\item\label{item:roots-2} If the upper bounds in Corollary~\ref{cor:mult-roots} are sharp, then it also suggests that we should not expect the coincidence of the distributions of $G_{\O, \infty}(k)(\ell)$ between the function field case and the number field case.
	For example, when $Q=\Q, \ell=2$ or $Q=\Q(\zeta_3), \ell=3$, the upper bound in the corollary equals the one for  function fields. However, when $Q=\Q(\zeta_\ell)$ with $\ell > 3$, the upper bound is 
	\[
	 	\dfrac{(n+(\ell-1)/2) \dim_{\F_\ell}A -\xi(A)-n \dim_{\F_\ell}A^{\Gamma}}{h_{G_{\O,\infty}(k)(\ell)\rtimes \Gamma}(A)},
	\]
	which is strictly larger than the upper bound for function fields.
	\end{enumerate}
	\end{remark}

\begin{bibdiv}
\begin{biblist}

\bib{Achter2006}{article}{
      author={Achter, Jeffrey~D.},
       title={The distribution of class groups of function fields},
        date={2006},
        ISSN={0022-4049},
     journal={J. Pure Appl. Algebra},
      volume={204},
      number={2},
       pages={316\ndash 333},
         url={https://doi-org.proxy.lib.umich.edu/10.1016/j.jpaa.2005.04.003},
      review={\MR{2184814}},
}

\bib{Achter2008}{incollection}{
      author={Achter, Jeffrey~D.},
       title={Results of {C}ohen-{L}enstra type for quadratic function fields},
        date={2008},
   booktitle={Computational arithmetic geometry},
      series={Contemp. Math.},
      volume={463},
   publisher={Amer. Math. Soc., Providence, RI},
       pages={1\ndash 7},
         url={https://doi-org.proxy.lib.umich.edu/10.1090/conm/463/09041},
      review={\MR{2459984}},
}

\bib{Adam-Malle}{article}{
      author={Adam, Michael},
      author={Malle, Gunter},
       title={A class group heuristic based on the distribution of
  1-eigenspaces in matrix groups},
        date={2015},
        ISSN={0022-314X},
     journal={J. Number Theory},
      volume={149},
       pages={225\ndash 235},
         url={https://doi-org.proxy.lib.umich.edu/10.1016/j.jnt.2014.10.018},
      review={\MR{3296009}},
}

\bib{BBH-imaginary}{article}{
      author={Boston, Nigel},
      author={Bush, Michael~R.},
      author={Hajir, Farshid},
       title={Heuristics for {$p$}-class towers of imaginary quadratic fields},
        date={2017},
        ISSN={0025-5831},
     journal={Math. Ann.},
      volume={368},
      number={1-2},
       pages={633\ndash 669},
         url={https://doi-org.proxy.lib.umich.edu/10.1007/s00208-016-1449-3},
      review={\MR{3651585}},
}

\bib{BBH-real}{article}{
      author={Boston, Nigel},
      author={Bush, Michael~R.},
      author={Hajir, Farshid},
       title={Heuristics for $p$ -class towers of real quadratic fields},
        date={2019},
     journal={Journal of the Institute of Mathematics of Jussieu},
       pages={1–24},
}

\bib{Boston-Wood}{article}{
      author={Boston, Nigel},
      author={Wood, Melanie~Matchett},
       title={Non-abelian {C}ohen-{L}enstra heuristics over function fields},
        date={2017},
        ISSN={0010-437X},
     journal={Compos. Math.},
      volume={153},
      number={7},
       pages={1372\ndash 1390},
         url={https://doi-org.proxy.lib.umich.edu/10.1112/S0010437X17007102},
      review={\MR{3705261}},
}

\bib{Cohen-Lenstra}{incollection}{
      author={Cohen, H.},
      author={Lenstra, H.~W., Jr.},
       title={Heuristics on class groups of number fields},
        date={1984},
   booktitle={Number theory, {N}oordwijkerhout 1983 ({N}oordwijkerhout, 1983)},
      series={Lecture Notes in Math.},
      volume={1068},
   publisher={Springer, Berlin},
       pages={33\ndash 62},
         url={https://doi-org.proxy.lib.umich.edu/10.1007/BFb0099440},
      review={\MR{756082}},
}

\bib{Cohen-Martinet}{article}{
      author={Cohen, H.},
      author={Martinet, J.},
       title={Class groups of number fields: numerical heuristics},
        date={1987},
        ISSN={0025-5718},
     journal={Math. Comp.},
      volume={48},
      number={177},
       pages={123\ndash 137},
         url={https://doi-org.proxy.lib.umich.edu/10.2307/2007878},
      review={\MR{866103}},
}

\bib{EVW}{article}{
      author={Ellenberg, Jordan~S.},
      author={Venkatesh, Akshay},
      author={Westerland, Craig},
       title={Homological stability for {H}urwitz spaces and the
  {C}ohen-{L}enstra conjecture over function fields},
        date={2016},
        ISSN={0003-486X},
     journal={Ann. of Math. (2)},
      volume={183},
      number={3},
       pages={729\ndash 786},
         url={https://doi-org.proxy.lib.umich.edu/10.4007/annals.2016.183.3.1},
      review={\MR{3488737}},
}

\bib{Friedman-Washington}{incollection}{
      author={Friedman, Eduardo},
      author={Washington, Lawrence~C.},
       title={On the distribution of divisor class groups of curves over a
  finite field},
        date={1989},
   booktitle={Th\'{e}orie des nombres ({Q}uebec, {PQ}, 1987)},
   publisher={de Gruyter, Berlin},
       pages={227\ndash 239},
      review={\MR{1024565}},
}

\bib{Garton}{article}{
      author={Garton, Derek},
       title={Random matrices, the {C}ohen-{L}enstra heuristics, and roots of
  unity},
        date={2015},
        ISSN={1937-0652},
     journal={Algebra Number Theory},
      volume={9},
      number={1},
       pages={149\ndash 171},
         url={https://doi-org.proxy.lib.umich.edu/10.2140/ant.2015.9.149},
      review={\MR{3317763}},
}

\bib{Goss}{book}{
      author={Goss, David},
       title={Basic structures of function field arithmetic},
      series={Ergebnisse der Mathematik und ihrer Grenzgebiete (3) [Results in
  Mathematics and Related Areas (3)]},
   publisher={Springer-Verlag, Berlin},
        date={1996},
      volume={35},
        ISBN={3-540-61087-1},
         url={https://doi.org/10.1007/978-3-642-61480-4},
      review={\MR{1423131}},
}

\bib{Golod-Shafarevich}{article}{
      author={Golod, E.~S.},
      author={\v{S}afarevi\v{c}, I.~R.},
       title={On the class field tower},
        date={1964},
        ISSN={0373-2436},
     journal={Izv. Akad. Nauk SSSR Ser. Mat.},
      volume={28},
       pages={261\ndash 272},
      review={\MR{0161852}},
}

\bib{Koch}{book}{
      author={Koch, H.},
       title={Galoissche {T}heorie der {$p$}-{E}rweiterungen},
   publisher={Springer-Verlag, Berlin-New York; VEB Deutscher Verlag der
  Wissenschaften, Berlin},
        date={1970},
        note={Mit einem Geleitwort von I. R. \v{S}afarevi\v{c}},
      review={\MR{0291139}},
}

\bib{Katz-Sarnak1999}{book}{
      author={Katz, Nicholas~M.},
      author={Sarnak, Peter},
       title={Random matrices, {F}robenius eigenvalues, and monodromy},
      series={American Mathematical Society Colloquium Publications},
   publisher={American Mathematical Society, Providence, RI},
        date={1999},
      volume={45},
        ISBN={0-8218-1017-0},
      review={\MR{1659828}},
}

\bib{Liu-rou}{article}{
      author={Liu, Yuan},
       title={Non-abelian {Cohen--Lenstra} heuristics in the presence of roots of unity},
        date={2022},
        note={preprint, arXiv:2202.09471},
}

\bib{Liu-pgp}{article}{
      author={Liu, Yuan},
       title={{On the $p$-rank of class groups of $p$-extensions}},
        date={2022},
        note={preprint, arXiv:2202.09888},
}

\bib{Lubotzky}{article}{
      author={Lubotzky, Alexander},
       title={Pro-finite presentations},
        date={2001},
        ISSN={0021-8693},
     journal={J. Algebra},
      volume={242},
      number={2},
       pages={672\ndash 690},
         url={https://doi.org/10.1006/jabr.2001.8805},
      review={\MR{1848964}},
}

\bib{LW2017}{article}{
      author={Liu, Yuan},
      author={Wood, Melanie~Matchett},
       title={The free group on n generators modulo n + u random relations as n
  goes to infinity},
        date={2018},
     journal={Journal für die reine und angewandte Mathematik},
      number={0},
  url={https://www.degruyter.com/view/journals/crll/ahead-of-print/article-10.1515-crelle-2018-0025/article-10.1515-crelle-2018-0025.xml},
}

\bib{LWZB}{article}{
      author={Liu, Yuan},
      author={Wood, Melanie~Matchett},
      author={Zureick-Brown, David},
       title={A predicted distribution for {Galois} groups of maximal
  unramified extensions},
        date={2019},
        note={preprint, arXiv:1907.05002},
}

\bib{Malle2008}{article}{
      author={Malle, Gunter},
       title={Cohen-{L}enstra heuristic and roots of unity},
        date={2008},
        ISSN={0022-314X},
     journal={J. Number Theory},
      volume={128},
      number={10},
       pages={2823\ndash 2835},
         url={https://doi-org.proxy.lib.umich.edu/10.1016/j.jnt.2008.01.002},
      review={\MR{2441080}},
}

\bib{Malle2010}{article}{
      author={Malle, Gunter},
       title={On the distribution of class groups of number fields},
        date={2010},
        ISSN={1058-6458},
     journal={Experiment. Math.},
      volume={19},
      number={4},
       pages={465\ndash 474},
  url={https://doi-org.proxy.lib.umich.edu/10.1080/10586458.2010.10390636},
      review={\MR{2778658}},
}

\bib{Neumann67}{book}{
      author={Neumann, Hanna},
       title={Varieties of groups},
   publisher={Springer-Verlag New York, Inc., New York},
        date={1967},
      review={\MR{0215899}},
}

\bib{NSW}{book}{
      author={Neukirch, J\"{u}rgen},
      author={Schmidt, Alexander},
      author={Wingberg, Kay},
       title={Cohomology of number fields},
     edition={Second},
      series={Grundlehren der Mathematischen Wissenschaften [Fundamental
  Principles of Mathematical Sciences]},
   publisher={Springer-Verlag, Berlin},
        date={2008},
      volume={323},
        ISBN={978-3-540-37888-4},
  url={https://doi-org.ezproxy.library.wisc.edu/10.1007/978-3-540-37889-1},
      review={\MR{2392026}},
}

\bib{Shusterman}{article}{
      author={Shusterman, Mark},
       title={Balanced presentations for fundamental groups of curves over
  finite fields},
        date={2018},
        note={preprint, arXiv:1811.04192},
}

\bib{Ellenberg-Venkatesh-ICM}{inproceedings}{
      author={Venkatesh, Akshay},
      author={Ellenberg, Jordan~S.},
       title={Statistics of number fields and function fields},
        date={2010},
   booktitle={Proceedings of the {I}nternational {C}ongress of
  {M}athematicians. {V}olume {II}},
   publisher={Hindustan Book Agency, New Delhi},
       pages={383\ndash 402},
      review={\MR{2827801}},
}

\bib{Wood-nonab}{article}{
 AUTHOR = {Wood, Melanie Matchett},
     TITLE = {Nonabelian {C}ohen-{L}enstra moments},
      NOTE = {With an appendix by the author and Philip Matchett Wood},
   JOURNAL = {Duke Math. J.},
  FJOURNAL = {Duke Mathematical Journal},
    VOLUME = {168},
      YEAR = {2019},
    NUMBER = {3},
     PAGES = {377--427},
      ISSN = {0012-7094},
   MRCLASS = {11R29 (11R45)},
  MRNUMBER = {3909900},
MRREVIEWER = {Christopher Frei},
       DOI = {10.1215/00127094-2018-0037},
       URL = {https://doi-org.proxy2.library.illinois.edu/10.1215/00127094-2018-0037},
}

\bib{Wang-Wood19}{article}{
AUTHOR = {Wang, Weitong and Wood, Melanie Matchett},
     TITLE = {Moments and interpretations of the
              {C}ohen-{L}enstra-{M}artinet heuristics},
   JOURNAL = {Comment. Math. Helv.},
  FJOURNAL = {Commentarii Mathematici Helvetici. A Journal of the Swiss
              Mathematical Society},
    VOLUME = {96},
      YEAR = {2021},
    NUMBER = {2},
     PAGES = {339--387},
      ISSN = {0010-2571},
   MRCLASS = {11R29 (11N45 11R32 11R45 60B15)},
  MRNUMBER = {4277275},
MRREVIEWER = {Kevin H. Wilson},
       DOI = {10.4171/cmh/514},
       URL = {https://doi-org.proxy2.library.illinois.edu/10.4171/cmh/514},
}

\end{biblist}
\end{bibdiv}

\end{document}